\newtheorem*{thm*}{Theorem}
\DeclareMathOperator{\Kconv}{\bbK_{conv}}
\DeclareMathOperator{\CRD}{C_{\bbR}(\Delta)}
\DeclareMathOperator{\AffR}{Aff}
\newcommand{\LLLambda}{\Lambda_3}
\newcommand{\LLambda}{\Lambda_2}
\DeclareMathOperator{\Kchoq}{\bbK_{Choq}}
\DeclareMathOperator{\PPU}{PPU}
\newtheorem{theorem}{Theorem}[section]
\newtheorem{prop}[theorem]{Proposition}
\newtheorem{lemma}[theorem]{Lemma}
\newtheorem{thm}{Theorem}[section]
\newtheorem{conj}[thm]{Conjecture}
\newtheorem{problem}[theorem]{Problem}
\newtheorem{quest}[theorem]{Question}
\newtheorem{corollary}[theorem]{Corollary}
\theoremstyle{definition}
\newtheorem{definition}[theorem]{Definition}
\theoremstyle{remark}
\newtheorem{remark}[theorem]{Remark}
\theoremstyle{remark}
\numberwithin{equation}{section}
\def\R{{\mathbb R}}
\def\C{{\mathbb C}}
\def\M{{\mathbb M}}
\def\N{{\mathbb N}}
\def\K{{\mathbb K}}
\def\Q{{\mathbb Q}}
\DeclareMathOperator{\Span}{span} 
 \DeclareMathOperator{\id}{id}
 \DeclareMathOperator{\LIM}{LIM}
\DeclareMathOperator{\diag}{diag} \DeclareMathOperator{\Aff}{Aff}
\DeclareMathOperator{\length}{length}
\DeclareMathOperator{\sa}{sa}
\newcommand{\bbK}{{\mathbb K}}
\newcommand{\bbS}{{\mathbb S}}
\newcommand{\bbP}{{\mathbb P}}
\newcommand{\bbT}{{\mathbb T}}
\newcommand{\p}{\mathfrak p}
\newcommand{\q}{\mathfrak q}
\DeclareMathOperator{\rE}{E}
\DeclareMathOperator{\rR}{R}
\renewcommand{\land}{\wedge}
\newcommand{\cP}{{\mathcal P}}
\newcommand{\bbN}{{\mathbb N}}
\newcommand{\bbC}{\mathbb C}
\newcommand{\bbQ}{\mathbb Q}
\newcommand{\bbR}{\mathbb R}
\newcommand{\cY}{{\mathcal Y}}
\newcommand{\cZ}{{\mathcal Z}}
\newcommand{\cO}{{\mathcal O}}
\newcommand{\cU}{{\mathcal U}}
\newcommand{\rs}{\restriction}
\newcommand{\restrict}{\upharpoonright}
\DeclareMathOperator{\proj}{proj}
\DeclareMathOperator{\Proj}{Proj} 
\DeclareMathOperator{\Sa}{Sa}
\DeclareMathOperator{\Un}{Un}
\DeclareMathOperator{\Pos}{Pos}
\DeclareMathOperator{\Sym}{Sym}
\DeclareMathOperator{\im}{im}
\DeclareMathOperator{\Tensor}{Tensor}
\DeclareMathOperator{\Tenx}{Tensor_0}
\DeclareMathOperator{\Unit}{Unit}
\DeclareMathOperator{\State}{State}
\DeclareMathOperator{\Pure}{Pure} \DeclareMathOperator{\Trace}{Trace}
\newcommand{\cC}{\mathcal C}
\newcommand{\cF}{\mathcal F}
\newcommand{\cB}{\mathcal B}
\newcommand{\cK}{\mathcal K}
\newcommand{\e}{\varepsilon}
\DeclareMathOperator{\Mod}{Mod}
\DeclareMathOperator{\Aut}{Aut}
\DeclareMathOperator{\ran}{ran}
\DeclareMathOperator{\Ad}{Ad}
\DeclareMathOperator{\homo}{hom}
\DeclareMathOperator{\mono}{mono}
\DeclareMathOperator{\Rhom}{R_{hom}}
\DeclareMathOperator{\Rmono}{R_{mono}}
\DeclareMathOperator{\Riso}{R_{iso}}
\DeclareMathOperator{\Rdir}{R_{dir}}
\newcommand{\Rhomx}[1]{\rR_{\homo}^{#1}}
\newcommand{\Rmonox}[1]{\rR_{\mono}^{#1}}
\newcommand{\DeltaN}{\Delta^{\bbN}}
\DeclareMathOperator{\SA}{SA}
\DeclareMathOperator{\SAu}{SA_u}
\DeclareMathOperator{\SAuns}{SA_{uns}}
\DeclareMathOperator{\Au}{\mathfrak A_u}
\DeclareMathOperator{\Gammau}{\Gamma_u}
\DeclareMathOperator{\Gammaex}{\Gamma_{Exact}}
\DeclareMathOperator{\Gammaexu}{\Gamma_{Exact,u}}
\DeclareMathOperator{\hatGammau}{\hat\Gamma_u}
\DeclareMathOperator{\Xiu}{\Xi_{u}}
\DeclareMathOperator{\hatXiu}{\hat\Xi_{u}}
\DeclareMathOperator{\XiAuex}{\Xi_{\Au,Exact}}
\DeclareMathOperator{\Hom}{Hom}
\DeclareMathOperator{\End}{End}
\title[Turbulence, orbit equivalence, and the classification of nuclear C$^*$-algebras]
{Turbulence, orbit equivalence, and the \\ classification of nuclear C$^*$-algebras}
\author{Ilijas Farah, Andrew S. Toms and Asger T\"ornquist}
\address{Department of Mathematics and Statistics\\
York University\\
4700 Keele Street\\
North York, Ontario\\ Canada, M3J 1P3\\
and Matematicki Institut, Kneza Mihaila 34, Belgrade, Serbia}
\email{ifarah@yorku.ca}
\urladdr{http://www.math.yorku.ca/$\sim$ifarah}
\address{Department of Mathematics, Copenhagen University, Universitetsparken 5, 2100 K\o benhavn \O, Denmark}
\email{atornqui@gmail.com}
\address{Department of Mathematics, Purdue University, 150 N University St., West Lafayette IN 47906, USA}
\email{atoms@purdue.edu}
\date{\today}
\begin{document}

\begin{abstract}
We bound the Borel cardinality of the isomorphism relation for nuclear simple separable C$^*$-algebras: It is turbulent, yet Borel reducible to the action of the automorphism group of the Cuntz algebra $\mathcal{O}_2$ on its closed subsets.  The same bounds are obtained for affine homeomorphism of metrizable Choquet simplexes.  As a by-product we recover a result of Kechris and Solecki, namely, that homeomorphism of compacta in the Hilbert cube is Borel reducible to a Polish group action.  These results depend intimately on the classification theory of nuclear simple C$^*$-algebras by $\mathrm{K}$-theory and traces. Both of necessity and in order to lay the groundwork for further study on the Borel complexity of C$^*$-algebras, we prove that many standard C$^*$-algebra constructions and relations are Borel, and we prove Borel versions of Kirchberg's $\mathcal{O}_2$-stability and embedding theorems. We also find a C$^*$-algebraic witness for a $K_\sigma$ hard equivalence relation.
\end{abstract}

\maketitle


\begin{center}{\it The authors dedicate this article to the memory of Greg Hjorth. } \end{center}

\section{Introduction}\label{intro}

The problem of classifying a category of objects by assigning objects of another category as complete invariants is fundamental to many disciplines of mathematics. This is particularly true in C$^*$-algebra theory, where the problem of classifying the nuclear simple separable C$^*$-algebras up to isomorphism is a major theme of the modern theory.  Recent contact between descriptive set theorists and operator algebraists has highlighted two quite different views of what it means to have such a classification. Operator algebraists have concentrated on finding complete invariants which are assigned in a functorial manner, and for which there are good computational tools ($\mathrm{K}$-theory, for instance.) Descriptive set theorists, on the other hand, have developed an abstract \emph{degree theory} of classification problems, and have found tools that allow us to compare the complexity of different classification problems, and, importantly, allow us to rule out the use of certain types of invariants in a complete classification of highly complex concrete classification problems.

The aim of this paper is to investigate the complexity of the classification problem for nuclear simple separable C$^*$-algebras from the descriptive set theoretic point of view. 
A minimal requirement of any reasonable classification is that the invariants are somehow definable or calculable from the objects being classified themselves. For example, it is easily seen that there are at most continuum many non-isomorphic separable C$^*$-algebras, and so it is possible, in principle, to assign to each isomorphism class of separable C$^*$-algebras a unique real number, thereby classifying the separable C$^*$-algebras completely up to isomorphism. Few mathematicians working in C$^*$-algebras would find this a satisfactory solution to the classification problem for separable C$^*$-algebras, let alone nuclear simple separable C$^*$-algebras, since we do not obtain a way of computing the invariant, and therefore do not have a way of effectively distinguishing the isomorphism classes.

Since descriptive set theory is the theory of definable sets and functions in Polish spaces, it provides a natural framework for a theory of classification problems. In the past 30 years, such an abstract theory has been developed. This theory builds on the fundamental observation that in most cases where the objects to be classified are themselves  either countable or separable, there is a natural standard Borel space which parameterizes (up to isomorphism) all the objects in the class. From a descriptive set theoretic point of view, a classification problem is therefore a pair $(X,E)$ consisting of a standard Borel space $X$, the (parameters for) objects to be classified, and an equivalence relation $E$, the relation of isomorphism among the objects in $X$. In most interesting cases, the equivalence relation $E$ is easily definable from the elements of $X$, and is seen to be Borel or, at worst, analytic.

\begin{definition}

Let $(X,E)$ and $(Y,F)$ be classification problems, in the above sense. A \emph{Borel reduction} of $E$ to $F$ is a Borel function $f:X\to Y$ such that
$$
xEy\iff f(x) F f(y).
$$
If such a function $f$ exists then we say that $E$ is \emph{Borel reducible} to $F$, and we write $E\leq_B F$.
\end{definition}

If $f$ is a Borel reduction of $E$ to $F$, then evidently $f$ provides a complete classification of the points of $X$ up to $E$ equivalence by an assignment of $F$ equivalence classes. The ``effective'' descriptive set theory developed in 1960s and 1970s (see e.g. \cite{moschovakis80}) established in a precise way that the class of Borel functions may be thought of as a very general class of calculable functions. Therefore the notion of Borel reducibility provides a natural starting point for a systematic theory of classification which is both generally applicable, and manages to ban the trivialities provided by the Axiom of Choice.  Borel reductions in operator algebras have been studied in the recent work of Sasyk-T\"ornquist \cite{sato09a,sato09b,sato09c}, who consider the complexity of isomorphism for various classes of von Neumann factors, and in that of Kerr-Li-Pichot \cite{kelipi} and Farah \cite{Fa:Dichotomy}, who concentrate on certain representation spaces and, in \cite{kelipi},  group actions on the hyperfinite $\mathrm{II}_1$ factor.  This article initiates the study of Borel reducibility in separable C$^*$-algebras.

In \cite{Kec:C*}, Kechris introduced a standard Borel structure on the space of separable C$^*$-algebras, providing a natural setting for the study of the isomorphism relation on such algebras.  This relation is of particular interest for the subset of (unital) nuclear simple separable C$^*$-algebras, as these are the focus of G. A. Elliott's long running program to classify such algebras via $\mathrm{K}$-theoretic invariants.  To situate our main result for functional analysts, let us mention that an attractive class of invariants to use in a complete classification are the countable structures type invariants, which include the countable groups and countable ordered groups, as well as countable graphs, fields, boolean algebras, etc. If $(X,E)$ is a classification problem, we will say that $E$ is \emph{classifiable by countable structures} if there is a Borel reduction of $E$ to the isomorphism relation for some countable structures type invariant.  If $(X,E)$ is not classifiable by countable structures, then it may still allow \emph{some} reasonable classification, in the sense that it is Borel reducible to the orbit equivalence relation of a Polish group action on a standard Borel space.  Our main result is the following theorem (which is proved in \S\ref{borelreduction} and in \S\ref{s.bga}): 

\begin{theorem}\label{mainintro}
The isomorphism relation $E$ for unital simple separable nuclear C$^*$-algebras is turbulent, hence not classifiable by countable structures.  
Moreover, if $\mathcal L$ is any countable language and $\simeq^{\Mod(\mathcal L)}$ denotes the isomorphism relation for countable models of $\mathcal L$, then $\simeq^{\Mod(\mathcal L)}$ is Borel reducible to $E$. 
On the other hand, $E$ is Borel reducible to the orbit equivalence relation of a Polish group action, namely, the action of $\mathrm{Aut}(\mathcal{O}_2)$ on the closed subsets of $\mathcal{O}_2$.
\end{theorem}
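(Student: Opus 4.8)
The plan is to split the theorem into an \emph{upper bound} ($E \leq_B$ the $\mathrm{Aut}(\mathcal O_2)$-action) and a \emph{lower bound} (turbulence together with $\simeq^{\mathrm{Mod}(\mathcal L)} \leq_B E$), and to route the lower bound entirely through the tracial state space, using the classification of nuclear simple separable C$^*$-algebras by their Elliott invariant as the bridge between operator algebras and descriptive set theory. The guiding principle is that once the rigid part of the Elliott invariant ($K_0$ with its order and unit, and $K_1$) is frozen, isomorphism in a suitable classifiable subclass is controlled exactly by affine homeomorphism of the tracial simplex, so the descriptive complexity of Choquet simplexes can be transported into $E$; and Kirchberg's $\mathcal O_2$-theory caps the complexity from above.

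For the upper bound I would invoke the Borel version of Kirchberg's $\mathcal O_2$-embedding theorem developed earlier in the paper to produce a Borel map $a \mapsto \Phi(a)$ sending a code $a$ for a unital simple separable nuclear algebra $A_a$ to a closed subset of $\mathcal O_2$, namely the image of a chosen unital embedding $A_a \hookrightarrow \mathcal O_2$. If some $\alpha \in \mathrm{Aut}(\mathcal O_2)$ carries $\Phi(a)$ onto $\Phi(b)$ then $A_a \cong A_b$ trivially. The content is the converse: given $A_a \cong A_b$, the given embedding of $A_a$ and the composite embedding through the isomorphism are two unital embeddings of the same nuclear simple unital algebra into $\mathcal O_2$, hence approximately unitarily equivalent by Kirchberg's uniqueness theorem (reflecting $KK(A_a,\mathcal O_2)=0$); an Elliott-style one-sided approximate intertwining then upgrades this to an $\alpha \in \mathrm{Aut}(\mathcal O_2)$ with $\alpha(\Phi(a)) = \Phi(b)$. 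Thus $\Phi$ is a Borel reduction of $E$ to the orbit equivalence relation of $\mathrm{Aut}(\mathcal O_2)$ acting on the closed subsets of $\mathcal O_2$.

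For the lower bound I would first construct a Borel map assigning to a code for a metrizable Choquet simplex $\Delta$ a unital simple separable nuclear algebra $A_\Delta$ lying in a classifiable class, with tracial simplex affinely homeomorphic to $\Delta$ and with all other Elliott data held fixed; the classification theorem then yields $A_\Delta \cong A_{\Delta'}$ iff $\Delta$ and $\Delta'$ are affinely homeomorphic, so affine homeomorphism of metrizable Choquet simplexes is Borel reducible to $E$. It then suffices to show the simplex relation is turbulent and lies above every countable-structures relation. For the latter I would restrict to Bauer simplexes: the extreme boundary of $P(X)$ is a copy of $X$, so affine homeomorphism of $P(X)$ and $P(Y)$ is exactly homeomorphism of the compacta $X,Y$, and since isomorphism of countable graphs (a complete $S_\infty$-relation, hence a target to which every $\simeq^{\mathrm{Mod}(\mathcal L)}$ reduces) Borel reduces to homeomorphism of compact metric spaces, composition gives $\simeq^{\mathrm{Mod}(\mathcal L)} \leq_B E$. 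For turbulence I would exhibit a concrete turbulent action, for instance a translation action of a dense $F_\sigma$ subgroup modelled on $\mathbb R^{\mathbb N}/c_0$, realized as perturbations of the barycentric structure of the simplexes, and transport its orbit equivalence relation into $E$ through the same bridge; since non-classifiability by countable structures is upward closed under $\leq_B$, the conclusion follows.

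The main obstacle, in my estimation, is the Borel realization in the lower bound: arranging $\Delta \mapsto A_\Delta$ to be genuinely Borel while landing inside a class governed by the classification theorem, and simultaneously making the turbulent source relation embed faithfully into the resulting affine-homeomorphism relation. A close second is the pervasive demand for \emph{Borel uniformity} on the upper-bound side — choosing the Kirchberg embedding Borel-measurably and executing the intertwining so that the automorphism witnessing conjugacy depends only on the abstract isomorphism and not on auxiliary selections. Everything else is either formal transfer of complexity under $\leq_B$ or an appeal to the Borel classification and $\mathcal O_2$-embedding machinery assumed from the earlier sections.
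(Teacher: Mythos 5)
Your lower bound follows the same route as the paper: a Borel map from codes for Choquet simplexes to simple unital AI algebras with $K_0\cong(\mathbb Q,\mathbb Q^+,1)$ and prescribed trace simplex, Elliott classification to convert isomorphism into affine homeomorphism, Bauer simplexes to pull in homeomorphism of compacta and hence all countable-structure relations, and Hjorth's turbulence result for compacta to rule out classification by countable structures. (Your sketch of a bespoke turbulent action inside the simplexes is vaguer than necessary; the citation to Hjorth does this work.)

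The upper bound, however, has a genuine gap. You send $a$ to the image $\Phi(a)\subseteq\mathcal O_2$ of an arbitrary Borel-selected unital embedding of $A_a$, and claim that $A_a\cong A_b$ forces $\Phi(a)$ and $\Phi(b)$ to be conjugate by an automorphism of $\mathcal O_2$, via approximate unitary equivalence plus a one-sided intertwining. This is false as stated. Approximate unitary equivalence of two embeddings $\iota_1,\iota_2\colon A\to\mathcal O_2$ gives unitaries $u_n$ with $\mathrm{Ad}(u_n)\circ\iota_1\to\iota_2$ pointwise on $\iota_1(A)$ only; a one-sided telescoping therefore converges on $\iota_1(A)$, not on all of $\mathcal O_2$, and does not produce an automorphism of the ambient algebra. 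Indeed no automorphism can exist in general: take $A=\mathcal O_2$, let $\iota_1$ be the identity (image all of $\mathcal O_2$) and $\iota_2(x)=\gamma(x\otimes 1)$ for a fixed isomorphism $\gamma\colon\mathcal O_2\otimes\mathcal O_2\to\mathcal O_2$ (image a proper unital subalgebra with relative commutant containing a copy of $\mathcal O_2$). These are approximately unitarily equivalent by Kirchberg's uniqueness theorem, yet no $\alpha\in\Aut(\mathcal O_2)$ maps the first image onto the second. The relative commutant of the image is an invariant of the pair under the $\Aut(\mathcal O_2)$-action, and an unnormalized Borel selection of embeddings gives you no control over it, so the forward implication of your reduction fails.

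This is exactly why the paper needs the Borel version of Kirchberg's $A\otimes\mathcal O_2\cong\mathcal O_2$ theorem (its Theorem \ref{t.AtensorO2}), which is the real content of the upper bound. One must embed $A$ in \emph{standard position}, as $G_A(A\otimes 1_{\mathcal O_2})$ for a Borel-chosen isomorphism $G_A\colon A\otimes\mathcal O_2\to\mathcal O_2$, so that the image always has relative commutant absorbing $\mathcal O_2$. Then an abstract isomorphism $\varphi\colon A\to B$ tensors with $\mathrm{id}_{\mathcal O_2}$ to give an isomorphism of pairs $(A\otimes\mathcal O_2, A\otimes 1)\to(B\otimes\mathcal O_2,B\otimes 1)$, and $G_B\circ(\varphi\otimes\mathrm{id})\circ G_A^{-1}$ is the required automorphism of $\mathcal O_2$ matching the images. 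Your proposal correctly flags Borel uniformity as a difficulty, but the missing ingredient is not uniformity of the intertwining; it is the normalization of the embedding, without which no intertwining argument can succeed.
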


\noindent
This establishes that the isomorphism problem for nuclear simple separable unital C$^*$-algebras does not have the maximal complexity among analytic classification problems, and rules out the usefulness of some additional types of invariants for a complete classification of nuclear simple separable unital C$^*$-algebras. It also establishes that this relation has higher complexity than the isomorphism relation of any class of countable structures.  
Remarkably, establishing both the lower and upper $\leq_B$ bounds of Theorem \ref{mainintro} requires that we prove Borel versions of two well-known results from Elliott's $\mathrm{K}$-theoretic classification program for nuclear simple separable C$^*$-algebras.  The lower bound uses the classification of the unital simple approximately interval (AI) algebras via their $\mathrm{K}_0$-group and simplex of tracial states, while the upper bound requires that we prove a Borel version of Kirchberg's Theorem that a simple unital nuclear separable C$^*$-algebra satisfies $A \otimes \mathcal{O}_2 \cong \mathcal{O}_2$.

By contrast with Theorem \ref{mainintro}, we shall establish in \cite{FaToTo} that Elliott's classification of unital AF algebras via the ordered $\mathrm{K}_0$-group amounts to a classification by countable structures.  This will follow from a more general result regarding the Borel computability of the Elliott invariant.  We note that there are non-classification results in the study of simple nuclear C$^*$-algebras which rule out the possibility of classifying all simple nuclear separable C$^*$-algebras via the Elliott invariant in a functorial manner (see \cite{Ror} and \cite{Toms}).  At heart, these examples exploit the structure of the Cuntz semigroup, an invariant whose descriptive set theory will be examined in \cite{FaToTo}.

The proof of Theorem \ref{mainintro} allows us to draw conclusions about the complexity of metrizable Choquet simplexes, too.
\begin{theorem}\label{mainchoquet}
The relation of affine homeomorphism on metrizable Choquet simplexes is turbulent, yet Borel reducible to the orbit equivalence relation of a Polish group action.
\end{theorem}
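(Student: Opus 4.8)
The plan is to route both halves of the theorem through a Borel correspondence between metrizable Choquet simplexes and a tractable subclass of the algebras handled by Theorem~\ref{mainintro}. Fixing a standard Borel parametrization of the metrizable Choquet simplexes (via their spaces of continuous affine functions, as in the coding used above), I would first produce a Borel map $S \mapsto A_S$ sending each simplex $S$ to a unital simple separable nuclear C$^*$-algebra, chosen among the unital simple approximately interval (AI) algebras, whose Elliott invariant has tracial simplex $T(A_S)$ affinely homeomorphic to $S$ while its remaining data are held constant across all $S$: a fixed ordered group with a unique state (e.g.\ $\mathrm K_0 = \mathbb Q$ with its standard order and order unit), $\mathrm K_1 = 0$, and the resulting forced, trivial pairing. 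Such realizations are guaranteed by the range-of-invariant results for AI algebras (Thomsen, Elliott). By Elliott's classification of AI algebras by $\mathrm K_0$ and the tracial simplex, the only invariant that varies with $S$ is the simplex itself, so $A_S \cong A_{S'}$ if and only if $S$ and $S'$ are affinely homeomorphic; thus $S \mapsto A_S$ is a Borel reduction of affine homeomorphism to C$^*$-isomorphism of these algebras.

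For the upper bound I would simply compose this reduction with the one supplied by Theorem~\ref{mainintro}: affine homeomorphism of simplexes reduces to isomorphism of the AI algebras $A_S$, which, being unital simple separable nuclear, reduces to the action of $\mathrm{Aut}(\mathcal O_2)$ on the closed subsets of $\mathcal O_2$. The only real point to verify is that $S \mapsto A_S$ is genuinely Borel, i.e.\ that the passage from a coded simplex to a defining sequence of interval algebras and the formation of the inductive limit can be carried out uniformly in a Borel fashion; this is exactly the kind of ``Borelness of standard constructions'' that the body of the paper is devoted to establishing, so I would appeal to those results. As a side remark, restricting to Bauer simplexes $S = P(K)$ recovers homeomorphism of compacta $K$ as a special case (affine homeomorphism of $P(K)$ is equivalent to homeomorphism of $K$ via $K \mapsto P(K)$), which is the source of the promised Kechris--Solecki by-product.

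For the lower bound --- that affine homeomorphism of simplexes is turbulent, and hence not classifiable by countable structures --- I would reduce a turbulent Polish group action into it, the natural candidate being a standard turbulent relation such as the translation action of $c_0$ on $\mathbb R^{\mathbb N}$ (turbulent by Hjorth, and itself the orbit equivalence relation of a Polish group action, consistently with the upper bound). The idea is to manufacture, Borel-uniformly from each sequence, a metrizable Choquet simplex whose affine-homeomorphism type records the sequence precisely modulo $c_0$: asymptotically vanishing perturbations should be absorbed into the geometry (e.g.\ by placing extreme points or weights along a convergent scheme in which $c_0$-perturbations leave the homeomorphism type unchanged), while differences not lying in $c_0$ are detected by an affine-homeomorphism invariant of the simplex. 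Composing through $S \mapsto A_S$ then transports turbulence to the AI algebras, which is precisely what drives the turbulence half of Theorem~\ref{mainintro}.

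The main obstacle is this lower bound. Two requirements must hold simultaneously and they pull against each other: the encoding must be coarse enough that $c_0$-equivalent sequences yield affinely homeomorphic simplexes (so the map is well defined on orbits), yet fine enough that sequences differing by an element outside $c_0$ yield non-homeomorphic simplexes (so the map is a reduction). This amounts to isolating an affine-homeomorphism invariant of the constructed simplexes that captures \emph{exactly} the $c_0$-class, together with a direct verification of Hjorth's turbulence criteria --- dense, meager orbits whose local orbits are somewhere dense --- for the source action. Everything else in the argument is bookkeeping about Borelness and an appeal to the already-cited classification and realization theorems; the genuine work is in the geometric encoding and the matching invariant.
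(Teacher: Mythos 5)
Your upper bound is correct and is exactly the paper's route: the Borel assignment $S\mapsto A_S$ of a simple unital AI algebra with $\mathrm K_0\cong(\Q,\Q^+,1)$, trivial $\mathrm K_1$, and $T(A_S)\cong S$ is Theorem~\ref{t.reduction} (via the parameterization $\Lambda$ and Lemma~\ref{biglimit}), Elliott's classification turns it into a reduction as in Corollary~\ref{c.reduction}, and composing with Theorem~\ref{t.belowgrp} gives the $\Aut(\mathcal O_2)$ upper bound. The problem is your lower bound, which you leave as a plan rather than a proof. You propose to encode the translation action of $c_0$ on $\R^{\N}$ directly into the geometry of a simplex, and you yourself identify the crux --- an affine-homeomorphism invariant of the constructed simplexes that captures exactly the $c_0$-class --- without supplying it. As written, the two requirements you name (coarseness on $c_0$-orbits versus fineness off them) are not reconciled by any concrete construction, so the turbulence half of the theorem is not established.

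The irony is that you already hold the missing piece: your ``side remark'' that $K\mapsto P(K)$ identifies homeomorphism of compacta with affine homeomorphism of Bauer simplexes \emph{is} the lower bound. The map $K\mapsto P(K)$ is Borel (Lemma~\ref{L.Choq.4}) and is a reduction because Bauer simplexes are affinely homeomorphic if and only if their extreme boundaries are homeomorphic, and $\partial P(K)=K$. Since Hjorth proved that homeomorphism of compact subsets of the Hilbert cube is turbulent (hence not classifiable by countable structures --- this is the result \cite[4.22]{hjorth00} invoked in Corollary~\ref{c.reduction}), turbulence transfers immediately to affine homeomorphism of metrizable Choquet simplexes with no new encoding needed. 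You should either take this route, or, if you insist on a direct $c_0$-encoding, actually produce the simplex-valued construction and the matching invariant; until then the first clause of the theorem is unproved in your write-up.
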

\noindent
Furthermore, and again as a by-product of Theorem \ref{mainintro}, we recover an unpublished result of Kechris and Solecki:
\begin{theorem}[Kechris-Solecki, 2006]\label{T.KS}
The relation of homeomorphism on compact subsets of the Hilbert cube is Borel reducible to the orbit equivalence relation of a Polish group action.
\end{theorem}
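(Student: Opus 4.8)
The plan is to deduce Theorem \ref{T.KS} as a corollary of the machinery built for Theorem \ref{mainintro}, by routing homeomorphism of compacta in the Hilbert cube through the C$^*$-algebraic and simplex classifications already established. The key observation is that to each compact metrizable space $X$ one can functorially associate the commutative C$^*$-algebra $C(X)$ of continuous complex-valued functions on $X$, and that $X \cong Y$ (homeomorphism) if and only if $C(X) \cong C(Y)$ as C$^*$-algebras, by the Gelfand--Naimark duality between compact Hausdorff spaces and unital commutative C$^*$-algebras. Thus if I can show that the assignment $X \mapsto C(X)$ is a Borel map from the standard Borel space of compact subsets of the Hilbert cube $[0,1]^{\bbN}$ into Kechris's standard Borel space of separable C$^*$-algebras, then homeomorphism of compacta reduces to the isomorphism relation on (a Borel class of) separable C$^*$-algebras, and I can then appeal to the upper bound of Theorem \ref{mainintro} --- or, more precisely, to the intermediate step producing a Borel reduction to a Polish group action --- to conclude.

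First I would fix the parametrization: compact subsets of the Hilbert cube are coded as points of the hyperspace $K([0,1]^{\bbN})$ equipped with the Vietoris/Hausdorff-metric Borel structure, which is standard Borel. Next I would produce, in a Borel-in-$X$ fashion, a sequence of self-adjoint (or generating) elements of $C(X)$ whose generated C$^*$-algebra is all of $C(X)$; the natural choice is to restrict the coordinate projection functions $\pi_n \colon [0,1]^{\bbN} \to [0,1]$ to $X$, since these separate points of $X$ and, together with constants, generate $C(X)$ by Stone--Weierstrass. The content is that the norms, sums, products, and adjoints of these generators --- everything needed to write down a point in Kechris's parametrizing space --- depend in a Borel way on the compact set $X$, which one checks using that evaluation and supremum over a Hausdorff-continuously-varying compact set are Borel operations. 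This realizes $X \mapsto C(X)$ as a Borel map landing in the commutative unital separable C$^*$-algebras.

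Having obtained the Borel reduction from homeomorphism of compacta to isomorphism of separable commutative C$^*$-algebras, the final step is to compose with the upper-bound half of Theorem \ref{mainintro}, namely the Borel reduction of C$^*$-algebra isomorphism to the orbit equivalence relation of a Polish group action (the action of $\Aut(\mathcal{O}_2)$ on the closed subsets of $\mathcal{O}_2$). Here I would note that the upper-bound construction is not tied to simplicity or nuclearity in a way that obstructs its application --- commutative C$^*$-algebras $C(X)$ are nuclear, and the relevant embedding/classification apparatus applies --- so the composite Borel map witnesses $\cong_{\text{homeo}} \leq_B (\text{Polish group action})$, which is exactly Theorem \ref{T.KS}. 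The main obstacle I anticipate is the verification of Borelness of $X \mapsto C(X)$: one must be careful that the generators depend Borel-measurably on $X$ in the chosen coding and that passing to the C$^*$-algebra they generate is a Borel operation on the parametrizing space, rather than merely an abstract functorial assignment. This requires the explicit Borel-computability lemmas for standard C$^*$-algebra constructions that the paper develops, and it is the place where the descriptive-set-theoretic bookkeeping, rather than any deep operator-algebraic input, does the real work.
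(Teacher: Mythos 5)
Your first step is fine: the assignment $X\mapsto C(X)$ can indeed be made Borel (e.g.\ by representing $C(X)$ on $\ell^2$ of a Kuratowski--Ryll-Nardzewski--selected dense sequence in $X$ and taking the restricted coordinate functions as generators), and Gelfand--Naimark duality gives a Borel reduction of homeomorphism of compacta to isomorphism of unital commutative separable C$^*$-algebras. The gap is in the final step, where you assert that ``the upper-bound construction is not tied to simplicity\dots in a way that obstructs its application.'' It is. The upper bound of Theorem~\ref{mainintro} (Theorem~\ref{t.belowgrp}) applies only to \emph{simple} unital nuclear separable algebras: the reduction to the $\Aut(\mathcal O_2)$-action rests on the Borel version of Kirchberg's theorem that $A\otimes\mathcal O_2\cong\mathcal O_2$, and on the fact that all unital endomorphisms of $A\otimes\mathcal O_2$ are approximately inner; both require $A$ simple (Kirchberg's theorem is an equivalence: $A\otimes\mathcal O_2\cong\mathcal O_2$ \emph{iff} $A$ is simple, separable, unital and nuclear). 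Since $C(X)$ is simple only when $X$ is a point, the composite you propose does not exist: there is no way to arrange the embedded copy of $C(X)$ in $\mathcal O_2$ to have the large relative commutant that makes isomorphism coincide with conjugacy by an automorphism of $\mathcal O_2$.

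The paper closes exactly this gap by inserting a genuinely nontrivial intermediate step that converts $X$ into a \emph{simple} algebra while remembering $X$ up to homeomorphism: $X$ is sent to the Bauer simplex $P(X)$ (Lemma~\ref{L.Choq.4}), and then, via the Borel AI construction of Theorem~\ref{t.reduction} and Corollary~\ref{c.reduction}, to a simple unital AI algebra with trivial $\mathrm{K}$-theory $(\bbQ,\bbQ^+,1)$ whose tracial state simplex is $P(X)$; Elliott's classification of simple AI algebras then guarantees that isomorphism of these algebras coincides with homeomorphism of the $X$'s. Only after this detour into the simple nuclear class does the $\Aut(\mathcal O_2)$ upper bound apply. (This is also why the paper's Proposition~\ref{P.abelian} treats abelian algebras by passing to their pure state spaces and then back through the simple AI construction, rather than feeding $C(X)$ directly into Theorem~\ref{t.belowgrp}.) So the missing ingredient in your argument is precisely the Borel realization of Bauer simplexes as trace simplexes of simple AI algebras; without it, the reduction to a Polish group action does not follow.
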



Finally, we show that a Borel equivalence relation which is {\it not} Borel reducible to any orbit equivalence relation of a Polish group action
has a C*-algebraic witness. 
Recall that $E_{K_\sigma}$ is the complete $K_\sigma$ equivalence relation. 

\begin{theorem}\label{biembeda}
$E_{K_\sigma}$ is Borel reducible to 
bi-embeddability of unital AF algebras.
\end{theorem}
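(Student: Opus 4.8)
The plan is to run the whole argument on the level of ordered $K_0$ and then transport it back to the algebras using the Borel machinery of the earlier sections. By Elliott's classification of AF algebras, a unital $*$-homomorphism between unital AF algebras is, up to approximate unitary equivalence, the same as a unit-preserving positive homomorphism of dimension groups $(K_0(A),[1_A])\to(K_0(B),[1_B])$; moreover such a $*$-homomorphism is injective exactly when the induced $K_0$-map is injective, since kernels are ideals and $K_0$ is faithful on ideals of AF algebras. Hence $A$ embeds unitally in $B$ iff there is an injective, positive, unit-preserving homomorphism $(K_0(A),[1_A])\to(K_0(B),[1_B])$. As the passage from a Bratteli diagram to a code for its AF algebra is Borel and the relevant invariants are Borel-computable by the preceding results, it suffices to produce a Borel assignment $x\mapsto (G_x,u_x)$ from the domain of $E_{K_\sigma}$ to dimension groups with order unit, given by explicit Bratteli diagrams, such that $G_x$ embeds unitally in $G_y$ iff $x\preceq_{K_\sigma} y$. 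Note that for this direction I only need the (easy) functoriality and the lifting of group maps, not a Borel version of any deep structure theorem.

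Here I use the presentation of $E_{K_\sigma}$ as the symmetrization of a complete $K_\sigma$ \emph{quasi-order} $\preceq_{K_\sigma}$, which by Rosendal may be realized on a compact product $X=\prod_n F_n$ of finite sets with $|F_n|\to\infty$, where $x\preceq_{K_\sigma} y$ means $\exists C\,\forall n\,(x(n)\le y(n)+C)$ and $x\,E_{K_\sigma}\,y$ means $\sup_n|x(n)-y(n)|<\infty$. A reduction of $\preceq_{K_\sigma}$ to the embeddability quasi-order $\hookrightarrow$ of unital AF algebras automatically reduces its symmetrization $E_{K_\sigma}$ to bi-embeddability, so the displayed equivalence is exactly what is required. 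I would encode $x$ coordinatewise into a dimension group built from a direct sum of matrix blocks, of the shape
\[
G_x=\mathbb{Z}\,u_x\oplus\bigoplus_n \mathbb{Z}\,e_n \subseteq \mathbb{Z}^{\mathbb{N}}
\]
with the pointwise order, arranged so that $u_x$ corresponds to the sequence $(x(n))_n$ and the $e_n$ are the coordinate atoms, so that $x(n)=\max\{\,j\ge 0 : j\,e_n\le u_x\,\}$. A unit-preserving positive injection $\phi:G_x\to G_y$ is then determined by the positive elements $\phi(e_n)\ge 0$ together with $\phi(u_x)=u_y$, and since $\sum_{n\le N} x(n)\,e_n\le u_x$ for every $N$, positivity forces the \emph{single} inequality $\sum_n x(n)\,\phi(e_n)\le u_y$ in the fixed element $u_y$. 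This one inequality simultaneously bounds the contributions of all coordinates, and it is precisely the algebraic source of the ``$\exists C\,\forall n$'' quantifier shape characteristic of a $K_\sigma$ relation.

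The two directions then go as follows. For the easy direction, given $C$ with $x(n)\le y(n)+C$ for all $n$, I would write down an explicit multiplicity system realizing $\phi$: after tensoring all blocks by a fixed UHF algebra to guarantee subdivisibility, the $C$ units of slack available uniformly in each coordinate are absorbed by a single choice of nonnegative integer multiplicities dominated by $u_y$, producing an injective positive unital map, hence an embedding $A_x\hookrightarrow A_y$. The hard direction, which I expect to be the main obstacle, is the converse: if $x(n)-y(n)$ is unbounded then no unital embedding may exist. The difficulty is that in the inductive limit the coordinate blocks can a priori be split and recombined, so one must build enough \emph{rigidity} into the diagram that an embedding cannot transport mass across coordinates by unbounded amounts; I would do this by attaching to the $n$-th block a labelling device (for instance making the $n$-th minimal order-ideal carry a distinct prime divisibility, or a rapidly separated reference size) that any positive unital map is forced to respect, so that $\phi(e_n)$ genuinely compares $x(n)$ with $y(n)$ rather than with a far-away coordinate. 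Granting this, the coupling inequality $\sum_n x(n)\phi(e_n)\le u_y$ yields a uniform constant $C$ (controlled by the finite defect of $\phi$ on the labelling devices) with $x(n)\le y(n)+C$ for all $n$, and an unbounded difference contradicts it in some coordinate.

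The remaining points are routine but must be checked: that each $G_x$ is genuinely a dimension group (unperforation is immediate from the pointwise order, and Riesz interpolation is either verified directly or arranged by passing to the interpolation closure, which leaves the order-unit data intact), that the labelling devices cannot be circumvented by a clever positive map, and that the assignment $x\mapsto A_x$ is Borel as a map into Kechris's standard Borel space of separable C$^*$-algebras. With these in hand, $x\mapsto A_x$ is a Borel reduction of $\preceq_{K_\sigma}$ to unital AF embeddability, and symmetrizing gives the desired reduction of $E_{K_\sigma}$ to bi-embeddability of unital AF algebras.
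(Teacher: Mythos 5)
There is a genuine gap, and it sits at the heart of the construction. In $G_x=\Z u_x\oplus\bigoplus_n\Z e_n$ with $u_x=(x(n))_n$ and the pointwise order, a \emph{unit-preserving} positive map must send $u_x$ to $u_y$ exactly; combined with the coordinatewise rigidity you intend to impose (so that $\phi(e_n)$ is supported over the $n$-th coordinate, i.e.\ $\phi(e_n)=k_ne_n$ with $k_n\geq 1$), your coupling inequality $\sum_{n\leq N}x(n)\phi(e_n)\leq u_y$ forces $x(n)\leq x(n)k_n\leq y(n)$ for \emph{every} $n$. That is exact pointwise domination, whose symmetrization is equality, not $E_{K_\sigma}$; and if $x(n)=y(n)+1$ for even one $n$ there is no unital positive map at all, so the ``easy direction'' fails as well. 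The rigidity you need for the converse is in direct tension with the additive slack $C$ you need for the forward direction, and the remark about ``tensoring by a fixed UHF algebra to absorb $C$ units of slack'' does not supply a mechanism: it changes the group and the order unit, and nothing in the construction says where a bounded defect can go. The hard direction (that the ``labelling devices'' cannot be circumvented) is also left entirely unproved, and it is the crux. Two smaller but real issues: unital embeddability of AF algebras is \emph{not} equivalent to existence of an injective positive unital map on $(K_0,[1])$ --- the diagonal embedding $\C\oplus\C\hookrightarrow M_2(\C)$ induces the non-injective map $(m,n)\mapsto m+n$; the correct condition is that no nonzero positive element is killed --- and it is not verified that $G_x$ has Riesz interpolation (passing to an ``interpolation closure'' could change the embeddability relation).

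The paper resolves both problems simultaneously with a different device. It encodes $f$ into the UHF algebra $A_f=\bigotimes_i M_{p_i^{f(i)}}(\C)$, where Glimm's classification supplies the rigidity for free ($A_f\hookrightarrow A_g$ iff $f(i)\leq g(i)$ for all $i$, by divisibility of supernatural numbers), and it creates the slack by forming the unitization of $A_f\otimes\bigoplus_{n}B_n$ with $B_n=\bigotimes_i M_{p_i^{n}}(\C)$: the summand $B_n$ absorbs an additive error of size $n$ uniformly in every coordinate, and \emph{simplicity} of $A_f$ forces any embedding to factor through a single summand $A_g\otimes B_n$, which is exactly where the uniform constant $C$ comes from. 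Some version of both ingredients --- a coordinatewise-rigid encoding \emph{and} an absorber indexed by the allowed defect, together with a reason the absorber is used one summand at a time --- would have to be added to your construction before it could work.
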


The early sections of this paper are dedicated to establishing that a variety of standard constructions in C$^*$-algebra theory are Borel computable, and that a number of important theorems in C$^*$-algebra theory have Borel computable counterparts.  This is done both of necessity---Theorems \ref{mainintro}--\ref{biembeda} depend on these facts---and to provide the foundations for a general theory of calculability for constructions in C$^*$-algebra theory.  Constructions that are shown to be Borel computable include passage to direct limits, minimal tensor products, unitization, and the calculation of states, pure states, and traces.  Theorems for which we establish Borel counterparts include Kirchberg's Exact Embedding Theorem, as well as Kirchberg's $A\otimes\mathcal O_2\simeq\mathcal O_2$ Theorem for unital simple separable nuclear $A$. 

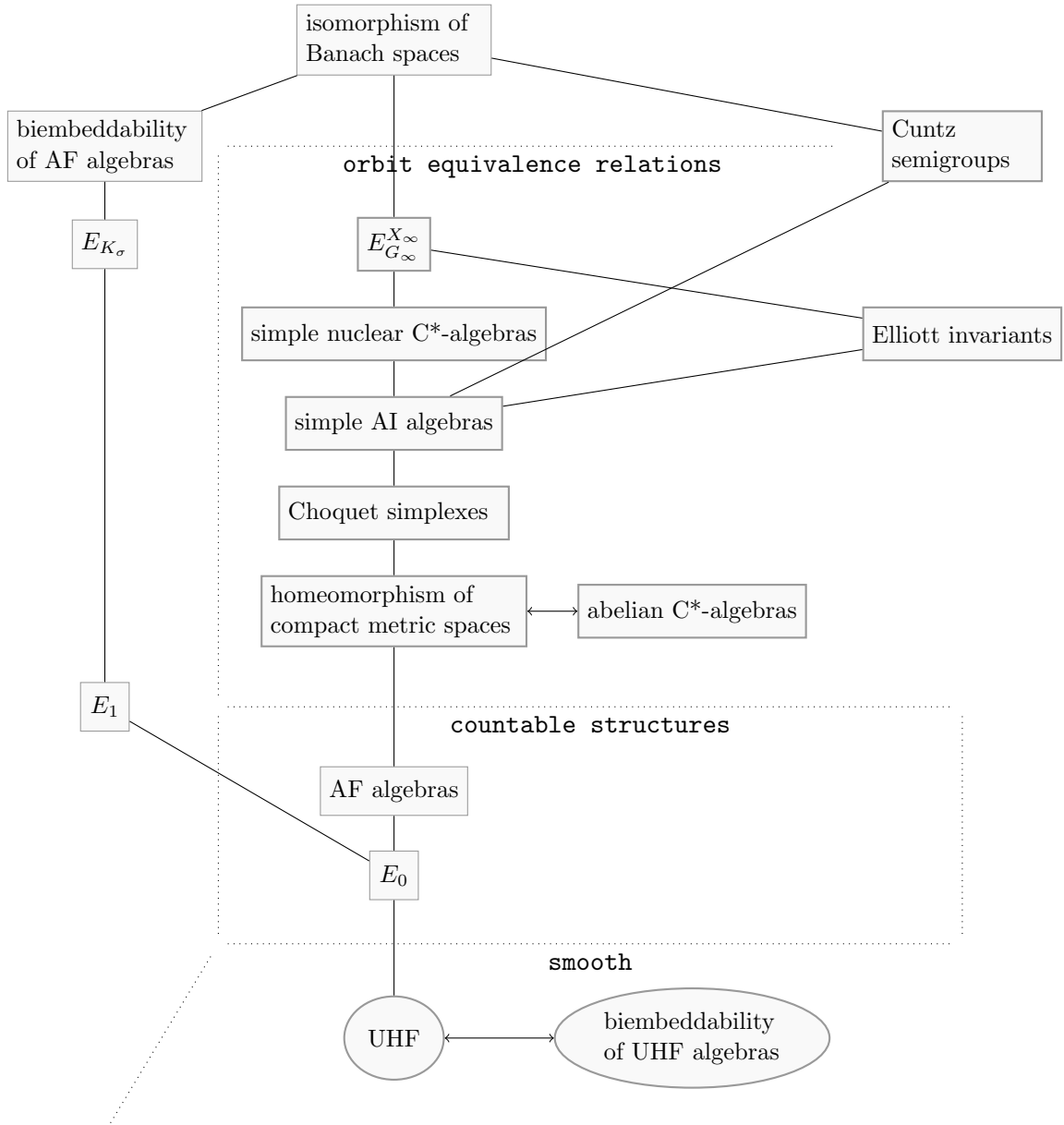
\begin{figure}
\label{Figure1}
\usetikzlibrary{shapes}

\tikzstyle{smooth}=[ellipse,
										font={\small},
                                                thick,
                                                text centered, 
                                                minimum size=1.2cm,
                                                draw=gray!80,
                                                fill=gray!05]

\tikzstyle{dark-side}=[rectangle,
										font={\small},
                                                thin,
                                                minimum size=.7cm,
                                                draw=gray!80,
                                                fill=gray!05]

\tikzstyle{unknown}=[rectangle,
										font={\small},
                                                thick,
                                                minimum size=.3in,
                                                draw=gray!80,
                                                fill=gray!05]

\tikzstyle{orbit}=[rectangle,
										font={\small},
                                                thick,
                                                minimum size=.3in,
                                                draw=gray!80,
                                                fill=gray!05]

\tikzstyle{pioneone}=[rectangle,
										font={\small},
                                                thick,
                                                minimum size=1.2cm,
                                                draw=gray!80,
                                                fill=gray!30]

\tikzstyle{ctble}=[rectangle,
										font={\small},
                                                thick,
                                                minimum size=1.2cm,
                                                draw=gray!80,
                                                fill=gray!05]

\noindent\begin{tikzpicture}
  \matrix[row sep=0.5cm,column sep=0.1cm] {
&& & \node  (Banach)[dark-side] {\parbox{1in}{isomorphism of\\ Banach spaces}}; & & 
\\
 \node (biAF)[dark-side] {\parbox{1in}{biembeddability of AF algebras}}; 
& \node (orbit-l) {}; 
& &  & 
 &  \node (orbit-r) {}; & 
\node  (Cuntz)[unknown] {\parbox{.8in}{Cuntz\\ semigroups}}; 
\\
\node (EKsigma)[dark-side] {$E_{K_\sigma}$}; 
& & & 
\node (max-orbit)[orbit] {$E^{X_\infty}_{G_\infty}$};
\\
&& & 
\node(nuclear)[orbit] {simple nuclear C*-algebras};
& & &
 \node(Ell)[orbit]{Elliott invariants}; 
\\
& 
& 
&
\node (AI)[orbit] {simple AI algebras}; & 
\\
& &  & \node (Choquet)[orbit] {\parbox{1.2in}{Choquet simplexes}}; 
& & 
\\
&&  
& \node  (cpct)[orbit]{\parbox{1.4in}{ {homeomorphism of\\ compact metric spaces}}}; 
&\node (abelian) [orbit]{abelian C*-algebras};
& 
\\ 
\node (Eone)[dark-side]{$E_1$};
& \node (ctble-l) {}; 
& & &  & & \node  (ctble-r) {};
\\
& &  &  \node (AF)[dark-side]{AF algebras} ;
\\
& & & 
\node (Ezero) [dark-side] {$E_0$};
& 
& 
\\
&  \node  (smooth-l) {}; 
& & &  & & 
\node (smooth-r) {};
\\
& & &  \node (UHF) [smooth]{UHF};
&  
\node (biUHF)[smooth] {\parbox{1in}{biembeddability of UHF algebras}};
\\
\node (bottom-l) {};
\\
};
\path (Banach)edge  (biAF);
\path(AI) edge (Choquet); 
\path[double](abelian) edge[<->] (cpct); 
\path (Ezero) edge (AF);
\path (AF) edge (cpct);
\path (Eone) edge  (Ezero); 
\path (Ezero) edge (UHF); 
\path (Eone) edge (EKsigma);
\path (EKsigma) edge (biAF);
\path (AI) edge (nuclear);
\path (AI) edge (Cuntz);
\path (AI) edge (Ell);
\path (nuclear) edge (max-orbit);
\path(max-orbit) edge (Banach); 
\path (Cuntz) edge (Banach);
\path (Ell) edge (max-orbit);
 \path[double] (UHF) edge[<->](biUHF); 
\path  (cpct) edge (Choquet); 
\path[dotted] (orbit-l) edge node[below]{\tt orbit equivalence relations} (orbit-r); 
\path[dotted](orbit-l) edge (ctble-l);
\path[dotted](ctble-l) edge (smooth-l);
\path[dotted](ctble-r) edge (smooth-r);
\path[dotted] (ctble-l) edge node[below]{\tt countable structures} (ctble-r); 
\path[dotted] (smooth-l) edge node[below]{\tt smooth} (smooth-r); 
\path[dotted] (bottom-l) edge (smooth-l); 
\end{tikzpicture}
\caption{Borel reducibility diagram}
\end{figure}

Figure~\ref{Figure1} summarizes the Borel reductions we obtain in this article, in addition to some known reductions. All classes of C*-algebras occurring in the diagram are unital and separable. 
Unless otherwise specified, the equivalence relation on a given class is the isomorphism relation. 
The bi-reducibility between the isomorphism for UHF algebras and bi-embeddability of UHF algebras
is an immediate consequence of Glimm's characterization of UHF algebras, or rather of its (straightforward) Borel version. 
$E_0$ denotes the eventual equality relation in the space $2^{\N}$. 
The fact that $E_0$ is the minimal non-smooth Borel equivalence relation is 
the Glimm--Effros dichotomy, proved by Harrington, Kechris and Louveau (see  \cite{Hj:Borel}). 
$E_1$ denotes the eventual equality relation in $[0,1]^\N$. 
By \cite{KeLou:Structure} $E_1$ is not Borel-reducible to any Polish group action. 
$E_{K_\sigma}$ is the complete $K_\sigma$ equivalence relation, and 
$E^{X_\infty}_{G_\infty}$ is the maximal orbit equivalence relation of a Polish group action (see \cite{Hj:Borel}). 
The nontrivial direction of Borel bi-reducibility between abelian C*-algebras and compact metric spaces follows from 
 Lemma~\ref{L.SPT}.
A Borel reduction from compact metric spaces to Choquet simplexes is given in   Lemma~\ref{L.Choq.4}.
A Borel reduction from Choquet simplexes to simple AI algebras is given in 
Corollary~\ref{c.reduction}. 
The Borel version of Elliott's reduction of simple AI algebras to Elliott invariant follows from Elliott's classification result and
the fact that the computation 
of the Elliott invariant is Borel, proved in  \cite{FaToTo}.
The reduction of the Elliott invariant to $E^{X_\infty}_{G_\infty}$, as well as the facts about the Cuntz semigroup, is proved in 
 \cite{FaToTo}. 
Bi-embeddability of AF algebras is proved to be above $E_{K_\sigma}$ in Section~\ref{S.biembed}.
The isomorphism of separable Banach spaces is the complete analytic equivalence relation by \cite{FeLouRo}. Some of the reductions in Figure~\ref{Figure1} are not known to be sharp. 
For example, it is not known whether the homeomorphism of compact metric spaces is
equireducible with $E^{X_\infty}_{G_\infty}$ (cf. remark at the end of \S\ref{S.problems}).

The standard reference for descriptive set theory is \cite{Ke:Classical} and specific facts about Borel-reducibility can 
be found in \cite{hjorth00, Hj:Borel} and \cite{gao09}. 
The general theory of C*-algebras can be found in \cite{blackadar} and references for Elliott's classification 
program are \cite{Ror:Classification} and \cite{et}.

The paper is organized as follows:  In section \ref{S.parameterize} we introduce a notion of standard Borel parameterization of a category of objects, and define several equivalent parameterizations of the class of separable C$^*$-algebras.  In section \ref{S.Basicdef} we prove that most standard constructions in C$^*$-algebra theory correspond to Borel functions and relations. 
In Section \ref{S.choquet} we give a parameterization of the set of metrizable separable Choquet simplexes. In section \ref{borelreduction} we establish the lower bound of Theorem \ref{mainintro}.  A Borel version of Kirchberg's Exact Embedding Theorem is obtained in section \ref{s.exact}.  The upper bound in Therorem \ref{mainintro} is proved in Section \ref{s.bga} using the Borel version of Kirchberg's $A\otimes\mathcal O_2\simeq\mathcal O_2$ Theorem;  Theorem \ref{mainchoquet} is also established. 
  Section \ref{S.biembed} establishes Theorem \ref{biembeda}, and Section \ref{S.problems} discusses several questions that remain open and warrant further investigation.

\subsection*{Acknowledgements} Ilijas Farah was partially supported by NSERC. A. Toms was supported by NSF grant DMS-0969246 and the 2011 AMS Centennial Fellowship.
Asger T\"ornquist wishes to acknowledge generous support from the following grants: The Austrian Science Fund FWF grant no. P 19375-N18, The Danish Council for Independent Research (Natural Sciences) grant no. 10-082689/FNU, and Marie Curie re-integration grant no. IRG- 249167, from the European Union.
We would like to thank the referee for a very useful report.


\section{Parameterizing separable C$^*$-algebras}\label{S.parameterize}
\label{S.Space}

In this section we describe several standard Borel spaces that in a natural way parameterize the set of all separable C$^*$-algebras. To make this precise, we adopt the following definition.


\begin{definition}\label{d.parameterization}
Let $\mathscr C$ be a category of objects.

\begin{enumerate}[\indent (1)]

\item A \emph{standard Borel parameterization} of $\mathscr C$ is a pair $(X,f)$ consisting of standard Borel space $X$ and a function $f:X\to\mathscr C$ such that $f(X)$ meets each isomorphism class in $\mathscr C$. (For brevity, we often simply call $(X,f)$ a \emph{parameterization} of $\mathscr C$.  We will also usually abuse notation by suppressing $f$ and writing $X$.  Finally, note that despite the terminology, it is $X$ rather than the parameterization that is standard Borel.)

\item The equivalence relation $\simeq^{(X,f)}$ on $X$ is defined by
$$
x\simeq^{(X,f)} y\iff f(x)\text{ is isomorphic to } f(y).
$$

\item A parameterization $(X,f)$ is called \emph{good} if $\simeq^{(X,f)}$ is analytic as a subset of $X\times X$.

\item Let $(X,f)$ and $(Y,g)$ be two parameterizations of the same category $\mathscr C$. A \emph{homomorphism} of $(X,f)$ to $(Y,g)$ is a function $\psi:X\to Y$ such that for $\psi(x)$ is isomorphic to $g(\psi(x))$ for all $x\in X$. An isomorphism of $(X,f)$ and $(Y,g)$ is a bijective homomorphism; a monomorphism is an injective homomorphism.

\item We say that $(X,f)$ and $(Y,g)$ are \emph{equivalent} if there is a Borel isomorphism from $(X,f)$ to $(Y,g)$.\label{it.equiv}

\item We say that $(X,f)$ and $(Y,g)$ are \emph{weakly equivalent} if there are Borel homomorphisms $\psi:X\to Y$ of $(X,f)$ to $(Y,g)$ and $\phi:Y\to X$ of $(Y,g)$ to $(X,f)$.
\end{enumerate}
\end{definition}

When $f$ is clear from the context, we will allow a slight \emph{abus de langage} and say that $X$ is a parameterization of $\mathscr C$ when $(X,f)$ is. Further, we will usually write $\simeq^X$ for $\simeq^{(X,f)}$. Note that by the Borel Schr\"oder-Bernstein Theorem (\cite[Theorem~15.7]{Ke:Classical}), (\ref*{it.equiv}) is equivalent to

\begin{enumerate}[\indent({\ref*{it.equiv}}')]
\item There are Borel monomorphisms $\psi:X\to Y$ of $(X,f)$ to $(Y,g)$ and $\phi: Y\to X$ of $(Y,g)$ to $(X,f)$.
\end{enumerate}

We now introduce four different parameterizations of the class of separable C$^*$-algebras, which we will later see are all (essentially) equivalent and good.


\subsection{The space $\Gamma(H)$.} \label{ss.gamma} 
Let $H$ be a separable infinite dimensional Hilbert space and let as usual $\cB(H)$ denote the space of bounded operators on $H$. The space $\cB(H)$ becomes a standard Borel space when equipped with the Borel structure generated by the weakly open subsets. Following \cite{Kec:C*} we let
$$
\Gamma(H)=\cB(H)^{\bbN},
$$
and equip this with the product Borel structure. Every $\gamma\in \Gamma(H)$ is identified with a sequence $\gamma_n$, for $n\in \bbN$, of elements of $\cB(H)$. 
For each $\gamma\in\Gamma(H)$ we let $C^*(\gamma)$ be the C$^*$-algebra generated by the sequence $\gamma$. If we identify each $\gamma\in\Gamma(H)$ with the $C^*(\gamma)$, then naturally $\Gamma(H)$ parameterizes all separable C$^*$-algebras acting on $H$. Since every separable C$^*$-algebra is isomorphic to a C$^*$-subalgebra of $\cB(H)$ this gives us a standard Borel parameterization of the category of all separable C$^*$-algebras. If the Hilbert space $H$ is clear from the context we will write $\Gamma$ instead of $\Gamma(H)$. Following Definition \ref{d.parameterization},  we define 
$$
\gamma\simeq^{\Gamma}\gamma'\iff C^*(\gamma)\text{ is isomorphic to } C^*(\gamma').
$$


\subsection{The space $\hat\Gamma(H)$.} \label{ss.hatgamma}
Let $\Q(i)=\Q+i\Q$ denote the complex rationals. Following \cite{Kec:C*}, let $(\p_j: j\in \N)$ enumerate the non-commutative $*$-polynomials without constant term in the formal variables $X_k$, $k\in\N$, with coefficients in $\Q(i)$, and for $\gamma\in \Gamma$ write $\p_j(\gamma)$ for the evaluation of $\p_j$ with $X_k=\gamma(k)$. Then $C^*(\gamma)$ is the norm-closure of $\{\p_j(\gamma): j\in \bbN\}$. The map $\Gamma\to\Gamma:\gamma\mapsto\hat\gamma$ where $\hat\gamma(j)=\p_j(\gamma)$ is clearly a Borel map from $\Gamma$ to $\Gamma$. If we let
$$
\hat\Gamma(H)=\{\hat\gamma:\gamma\in\Gamma(H)\},
$$
then $\hat\Gamma(H)$ is a standard Borel space and provides another parameterization of the C$^*$-algebras acting on $H$; we suppress $H$ and write $\hat\Gamma$ whenever possible. For $\gamma\in\hat\Gamma$, let $\check\gamma\in\Gamma$ be defined by 
$$
\check\gamma(n)=\gamma(i)\iff \p_i=X_n,
$$
and note that $\hat\Gamma\to\Gamma:\gamma\mapsto\check\gamma$ is the inverse of $\Gamma\to\hat\Gamma:\gamma\mapsto\hat\gamma$. We let $\simeq^{\hat\Gamma}$ be
$$
\gamma\simeq^{\hat\Gamma}\gamma'\iff C^*(\gamma)\text{ is isomorphic to } C^*(\gamma').
$$
It is clear from the above that $\Gamma$ and $\hat\Gamma$ are equivalent parameterizations.

An alternative picture of $\hat\Gamma(H)$ is obtained by considering the free (i.e., surjectively universal) countable unnormed $\Q(i)$-$*$-algebra $\mathfrak A$. We can identify $\mathfrak A$ with the set $\{\p_n:n\in\N\}$. Then
$$
\hat \Gamma_{\mathfrak A}(H)=\{f:\mathfrak A\to\mathcal B(H):f\text{ is a $*$-homomorphism}\}
$$
is easily seen to be a Borel subset of $\cB(H)^{\mathfrak A}$. For $f\in\hat\Gamma_{\mathfrak A}$ let $C^*(f)$ be the norm closure of $\im(f)$, and define
$$
f\simeq^{\hat\Gamma_{\mathfrak A}} f'\iff   C^*(f)\text{ is isomorphic to } C^*(f').
$$
Clearly the map $\hat\Gamma\to\hat\Gamma_{\mathfrak A}:\gamma\mapsto f_\gamma$ defined by $f_\gamma(\p_j)=\gamma(j)$ provides a Borel bijection witnessing that $\hat\Gamma$ and $\hat\Gamma_{\mathfrak A}$ are equivalent (and therefore they are also equivalent to $\Gamma$.)

We note for future reference that if we instead consider the free countable \emph{unital} unnormed $\Q(i)$-$*$-algebra $\Au$ and let
$$
\hat \Gamma_{\Au}(H)=\{f:\Au\to\mathcal B(H):f\text{ is a \emph{unital} $*$-homomorphism}\},
$$
then this gives a parameterization of all unital C$^*$-subalgebras of $\mathcal B(H)$. Note that $\Au$ may be identified with the set of all formal $*$-polynomials in the variables $X_k$ with coefficients in $\Q(i)$ (allowing a constant term.)


\subsection{The space $\Xi$.}\label{ss.xi}

Consider the Polish space $\R^\N$. We let $\Xi$ be the space of all $\delta\in \R^\N$ such that for some separable C$^*$-algebra $A$ and a sequence $y=(y_n)$ in $A$ generating it we have that
$$
\delta(j)=\|\p_j(y)\|_A.
$$
Each $\delta\in\Xi$ defines a seminorm $\|\p_j\|_\delta=\delta(j)$ on $\mathfrak A$ which satisfies the C$^*$-axiom. Letting $I=\{\p_j:\delta(j)=0\}$ we obtain a norm on $\mathfrak A/I$. The completion of this algebra is then a C$^*$-algebra, which we denote by $B(\delta)$. It is clearly isomorphic to any C$^*$-algebra $A$ with $y=(y_n)$ as above satisfying $\|\p_j(y)\|=\delta(j)$.


\begin{lemma} 
The set $\Xi$ is closed in $\bbR^{\bbN}$. 
\end{lemma}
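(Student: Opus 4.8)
The plan is to express $\Xi$ as a countable intersection of closed subsets of $\bbR^\bbN$. To each $\delta\in\bbR^\bbN$ associate the function $p_\delta\colon\mathfrak A\to\bbR$ determined by $p_\delta(\p_j)=\delta(j)$, and observe that $\delta\in\Xi$ if and only if $p_\delta$ is a C$^*$-seminorm on the free $*$-algebra $\mathfrak A$, that is, $p_\delta$ is nonnegative, subadditive, homogeneous over $\Q(i)$, submultiplicative, $*$-invariant, and satisfies the C$^*$-identity $p_\delta(a^*a)=p_\delta(a)^2$. The point of this reformulation is that each of these axioms translates into a closed condition on $\delta$.

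First I would establish this characterization. For the forward implication, if $\delta\in\Xi$ I choose a C$^*$-algebra $A$ with a generating sequence $y=(y_n)$ satisfying $\delta(j)=\|\p_j(y)\|_A$; the assignment $X_k\mapsto y_k$ extends to a $*$-homomorphism $\pi\colon\mathfrak A\to A$, and then $p_\delta=\|\cdot\|_A\circ\pi$ is the pullback of the C$^*$-norm of $A$, which visibly inherits all the seminorm axioms. For the converse I would run the construction already recorded in the paragraph preceding the lemma: assuming only that $p_\delta$ is an abstract C$^*$-seminorm, its kernel $I=\{\p_j:\delta(j)=0\}$ is a $*$-ideal, the quotient $\mathfrak A/I$ carries the induced norm $\|\overline{\p_j}\|=p_\delta(\p_j)$, and its completion is a C$^*$-algebra $B(\delta)$. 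The images $y_k$ of the generators $X_k$ generate $B(\delta)$ and satisfy $\|\p_j(y)\|_{B(\delta)}=\delta(j)$, so $\delta\in\Xi$.

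Next I would check closedness. Since $\mathfrak A$ is free, each algebraic identity $\p_k=\p_i+\p_j$, $\p_k=\lambda\p_i$ (for $\lambda\in\Q(i)$), $\p_k=\p_i\p_j$, $\p_k=\p_i^*$, and $\p_k=\p_i^*\p_i$ holds for explicit, computable index tuples, and for each such tuple the corresponding axiom becomes a constraint on finitely many coordinates of $\delta$, namely $\delta(k)\le\delta(i)+\delta(j)$, $\delta(k)=|\lambda|\,\delta(i)$, $\delta(k)\le\delta(i)\delta(j)$, $\delta(k)=\delta(i)$, and $\delta(k)=\delta(i)^2$, together with $\delta(j)\ge 0$. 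Coordinate projections $\bbR^\bbN\to\bbR$ are continuous, and these are non-strict inequalities and equalities between continuous functions of the coordinates, so each defines a closed set. As $\Xi$ is the intersection over the countably many relevant tuples, it is a countable intersection of closed sets, hence closed.

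The only genuinely delicate point will be the backward direction of the characterization: verifying that an abstractly postulated C$^*$-seminorm really produces a C$^*$-algebra realizing $\delta$. This is precisely the $\mathfrak A/I$ completion described just before the lemma, so no new work is required. Everything else is routine, but I would take care to include submultiplicativity and $*$-invariance among the explicit conditions rather than trying to derive them from the C$^*$-identity, so that sufficiency of the conditions is immediate and the identification $\Xi=\bigcap(\text{closed sets})$ is transparent.
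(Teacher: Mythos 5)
Your proof is correct, but it takes a different route from the paper's. The paper argues by sequences and ultraproducts: given $\delta^n\in\Xi$ converging pointwise to $\delta$, it realizes each $\delta^n$ by a concrete pair $(A_n,y^n)$, forms the ultraproduct $\prod_{\cU}A_n$ along a nonprincipal ultrafilter, and checks that the subalgebra generated by the diagonal sequences $\mathbf y_i=(y^0_i,y^1_i,\dots)$ witnesses $\delta\in\Xi$, since $\|\p_j(\mathbf y)\|=\lim_{n\to\cU}\delta^n(j)=\delta(j)$. You instead identify $\Xi$ with the set of C$^*$-seminorms on the free $*$-algebra $\mathfrak A$ and observe that each axiom is a non-strict inequality or equality in finitely many coordinates, so $\Xi$ is a countable intersection of closed sets. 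The paper actually records your characterization as a remark immediately after the lemma (``an alternative description of $\Xi$ is obtained by considering the set of $f\in\R^{\mathfrak A}$ which define a C$^*$-seminorm on $\mathfrak A$''), but only uses it to note Borelness; you push it to give closedness directly. The trade-off is roughly this: the ultraproduct argument needs no syntactic analysis and no verification that an abstract C$^*$-seminorm is realizable, but it invokes ultrafilters and the (mild) observation that norms pass to ultraproducts as $\cU$-limits; your argument is more elementary and makes the topological structure of $\Xi$ completely explicit, at the cost of having to justify that the completion of $\mathfrak A/I$ under an abstract C$^*$-seminorm is a genuine C$^*$-algebra realizing $\delta$ --- which the paper has already set up in its definition of $B(\delta)$, so you are right that no new work is needed there. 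One small point to make explicit: since the fixed enumeration $(\p_j)$ could list a polynomial more than once, you should include the (closed) coherence condition $\delta(k)=\delta(k')$ whenever $\p_k=\p_{k'}$, so that $p_\delta$ is well defined on $\mathfrak A$.
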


\begin{proof} Assume $\delta^n\in \Xi$ converges to $\delta\in \R^\N$ pointwise. Fix C$^*$-algebras $A_n$ and sequences $y^n=(y^n_i\in A_n: i\in \bbN)$ such that $\delta^n(j)=\|\p_j(y^n)\|_{A_n}$ for all $n$ and $j$. For a nonprincipal ultrafilter $\cU$ on $\bbN$, let $A_\infty$ be the subalgebra of the ultraproduct $\prod_{\cU} A_n$ generated by the elements $\mathbf y_i=(y^0_i,y^1_i,\dots)$, for $i\in \bbN$. Then clearly $\delta(j)=\lim_{n\to \cU} \|\p_j(y^n)\|_{A_n}=\|\p_j(\mathbf y_i)\|_{A_\infty}$, hence $A$ witnesses $\delta\in \Xi$. 
\end{proof}

Thus $\Xi$ provides yet another parameterization of the category of separable C$^*$-algebras, and we define in $\Xi$ the equivalence relation
$$
\delta\simeq^{\Xi}\delta'\iff B(\delta)\text{ is isomorphic to } B(\delta').
$$
Below we will prove that this parameterization is equivalent to $\Gamma$ and $\hat\Gamma$. Note that an alternative description of $\Xi$ is obtained by considering the set of $f\in\R^{\mathfrak A}$ which define a C$^*$-seminorm on $\mathfrak A$; this set is easily seen to be Borel since the requirements of being C$^*$-seminorm are Borel conditions.


\subsection{The space $\hat\Xi$.}\label{ss.hatxi}

Our last parameterization is obtained by considering the set
$$
\hat\Xi \subseteq \N^{\N\times\N}\times\N^{\Q(i)\times\N}\times\N^{\N\times\N}\times\N^\N\times\R^\N
$$
of all tuples $(f,g,h,k,r)$ such that the operations (with $m,n$ in $\N$ and $q\in \bbQ(i)$) defined by
\begin{align*}
& m+_f n=f(m,n) \\
& q\cdot_g n = g(q,n)\\
& m\cdot_h n=h(m,n)\\
& m^{*_k}=k(m)\\
& \|n\|_r=r(n)
\end{align*}
give $\N$ the structure of a normed $*$-algebra over $\Q(i)$ which further satisfies the ``C$^*$-axiom'',
$$
\|n\cdot_{h} n^{*_k}\|_r=\|n\|_r^2
$$
for all $n\in\N$. The set $\hat\Xi$ is Borel since the axioms of being a normed $*$-algebra over $\Q(i)$ are Borel conditions. For $A\in\hat\Xi$, let $\hat B(A)$ denote the completion of $A$ with respect to the norm and equipped with the extension of the operations on $A$ to $\hat B(A)$. Note in particular that the operation of scalar multiplication may be uniquely extended from $\Q(i)$ to $\C$. We define for $A_0,A_1\in\hat\Xi$ the equivalence relation
$$
A_0\simeq^{\hat\Xi} A_1\iff \hat B(A_0)\text{ is isomorphic to } \hat B(A_1).
$$
For future reference, we note that the infinite symmetric group $\Sym(\N)$ acts naturally on $\hat\Xi$: If $\sigma\in\Sym(\N)$ and $(f,g,h,k,r)\in\hat\Xi$, we let $\sigma\cdot f\in\N^{\N\times\N}$ be defined by
$$
(\sigma\cdot f)(m,n)=k\iff f(\sigma^{-1}(m),\sigma^{-1}(n))=\sigma^{-1}(k),
$$
and defined $\sigma\cdot g$, $\sigma\cdot h$, $\sigma\cdot k$ and $\sigma\cdot r$ similarly. Then we let $\sigma\cdot (f,g,h,k,r)=(\sigma\cdot f, \sigma\cdot g,\sigma\cdot h, \sigma\cdot k, \sigma\cdot r)$. It is clear that $\sigma$ induces an isomorphism of the structures $(f,g,h,k,r)$ and $\sigma\cdot (f,g,h,k,r)$. However, it clearly \emph{does not} induce the equivalence relation $\simeq^{\hat\Xi}$, which is strictly coarser.

\begin{remark} 
(1) It is useful to think of $\Gamma$ and $\hat\Gamma$ as parameterizations of \emph{concrete} C$^*$-algebras, while $\Xi$ and $\hat\Xi$ can be thought of as parameterizing \emph{abstract} C$^*$ algebras.

(2) The parameterizations $\Gamma$, $\hat\Gamma$ and $\Xi$ all contain a unique element corresponding to the trivial C$^*$-algebra, which we denote by $0$ in all cases. Note that $\hat\Xi$ does not parameterize the trivial C$^*$-algebra.
\end{remark}


\subsection{Equivalence of $\Gamma$, $\hat\Gamma$, $\Xi$ and $\hat\Xi$.}
\label{S.Equivalence}

We now establish that the four parameterizations described above give us equivalent parameterizations of the non-trivial separable C$^*$-algebras. First we need the following lemma.


\begin{lemma}\label{l.injection}
Let $X$ be a Polish space and let $Y$ be any of the spaces $\Gamma,\hat\Gamma,\Xi$ or $\hat\Xi$. Let $f:X\to Y$ be a Borel function such that $f(x)\neq 0$ for all $x\in X$. Then there is a Borel injection $\tilde f:X\to Y$ such that for all $x\in X$, $f(x)\simeq^Y \tilde f(x)$.
\end{lemma}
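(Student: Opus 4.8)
The plan is to ``spread out'' $f$ by attaching to each algebra $f(x)$ some redundant generating data that codes $x$ injectively but already lies inside the algebra coded by $f(x)$, so that the isomorphism class is not disturbed. Two ingredients are needed. First, a Borel injection $\iota\colon X\to 2^{\bbN}$ (used for $\Gamma,\hat\Gamma,\Xi$), resp.\ a Borel injection $\iota\colon X\to (0,1)\setminus\bbQ$ (used for $\hat\Xi$); these exist because $X$ is standard Borel, by the Borel isomorphism theorem (\cite{Ke:Classical}). Second, a Borel choice $a(x)$ of a distinguished \emph{nonzero} element of the algebra coded by $f(x)$: in each of the four spaces the norm of a generator is a Borel function of the parameter --- for $\Gamma$ because $T\mapsto\|T\|$ is weakly lower semicontinuous, hence Borel, and for $\Xi,\hat\Xi$ because the relevant norm is literally a coordinate of the parameter --- so ``the least generator of positive norm'' is Borel and, since $f(x)\neq 0$, everywhere defined.

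For $\Gamma$, writing $\gamma=f(x)$ and $a=a(x)$, I would set $\tilde f(x)=\tilde\gamma$ with $\tilde\gamma_{2n}=\gamma_n$ and $\tilde\gamma_{2n+1}=\iota(x)(n)\cdot a$, so each odd entry is $a$ or $0$. As scalar multiplication and the coordinate maps are Borel, $\tilde f$ is Borel; since the odd entries lie in $C^*(\gamma)$ and the even entries already generate it, $C^*(\tilde\gamma)=C^*(\gamma)$, so $\tilde f(x)\simeq^\Gamma f(x)$. For injectivity, the even coordinates recover $\gamma$, hence $a$, and then the odd coordinates ($=a\neq 0$ or $0$) recover the bits $\iota(x)(n)$, hence $x$. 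The case $\hat\Gamma$ I would handle by transport: the mutually inverse Borel bijections $\gamma\mapsto\hat\gamma$ and $\delta\mapsto\check\delta$ of \S\ref{ss.hatgamma} satisfy $C^*(\check\delta)=C^*(\delta)$, so applying the $\Gamma$ case to $x\mapsto\check{f(x)}$ and then post-composing with $\gamma\mapsto\hat\gamma$ yields the desired Borel injection into $\hat\Gamma$.

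For $\Xi$, with $\delta=f(x)$ and $m$ the least index with $y_m\neq 0$, I take generators $z_{2n}=y_n$, $z_{2n+1}=\iota(x)(n)\,y_m$ and define $\tilde f(x)(j)=\|\mathfrak p_j(z)\|$. Because each bit lies in $\{0,1\}\subseteq\bbQ(i)$, the substitution $X_{2n}\mapsto X_n,\ X_{2n+1}\mapsto\iota(x)(n)X_m$ carries $\mathfrak p_j$ to a single polynomial of $\mathfrak A$, so $\tilde f(x)(j)=\delta\big(\phi(j,m,\iota(x)\restriction F_j)\big)$ for a recursive $\phi$ and a finite $F_j$, which is Borel; the sequence $z$ generates the same algebra, so $B(\tilde f(x))=B(\delta)$. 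Injectivity follows as before: the even-variable norms recover $\delta$ (hence $m$ and $\|y_m\|=\delta(j_m)>0$), and then $\iota(x)(n)=\|z_{2n+1}\|/\|y_m\|\in\{0,1\}$.

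The main obstacle is $\hat\Xi$, where the underlying set is fixed as $\bbN$ and only the operations vary, so $\{0,1\}$-multiples add nothing; here I would use the irrational $\iota(x)$. With $b=a(x)$ and $c=\iota(x)b$, every $\bbQ(i)$-$*$-polynomial in the old elements and $c$ equals $\sum_{j=0}^{d}\iota(x)^j a_j$ with $a_j$ old (using $c^{*}=\iota(x)b^{*}$), so $A'_x:=\langle\bbN\cup\{c\}\rangle$ is a countable dense $*$-subalgebra of $\hat B(f(x))$ consisting exactly of such finite real-polynomials. I represent its elements by finite tuples of old elements and enumerate $A'_x$ by first occurrence, deciding equality of two representatives via $\big\|\sum_j\iota(x)^j a_j\big\|=\lim_{\bbQ(i)\ni q_j\to\iota(x)^j} r\big(\text{index of }\textstyle\sum_j q_j a_j\big)$; this is Borel, being a limit of values of the Borel norm function $r$ on $\bbQ(i)$-combinations of old elements. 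Transporting the operations through this Borel bijection produces $\tilde f(x)\in\hat\Xi$ with $\hat B(\tilde f(x))=\hat B(f(x))$, and arranging the enumeration so that index $0$ carries $b$ and index $1$ carries $c$ makes $\iota(x)=r'(1)/r'(0)$ a Borel function of $\tilde f(x)$, which forces injectivity. The two points I expect to require the most care are precisely this Borel computability of the norm of a real-coefficient polynomial in the adjoined generator, and the bookkeeping that keeps $\iota(x)$ Borel-recoverable after the enumeration; the other three cases are routine once the Borelness of the norm on $\cB(H)$ and of scalar multiplication is in hand.
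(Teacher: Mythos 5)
Your proof is correct. For $\Gamma$, $\hat\Gamma$ and $\Xi$ it is essentially the paper's argument: in each case one pads the generating data with a redundant, Borel-recoverable code for $x$ drawn from inside the algebra itself, so that the generated algebra is unchanged (the paper codes $x$ as an infinite subset of $\N$ via the integer multiples $i\gamma(n_0(\gamma))$ placed at positions $2^i$, and in the $\Xi$ case instead rescales one generator so that its norm is exactly $x\in\R_+$; your bit-interleaving at odd coordinates is equivalent bookkeeping). The genuine divergence is at $\hat\Xi$. The paper exploits the fact that a $\hat\Xi$-structure lives on the fixed underlying set $\N$ whose labeling carries no information: it chooses, Borel in the structure and in $x\in[2\N]^\infty$, a permutation $\sigma$ of $\N$ with $x=\{2^n\cdot_{\sigma\cdot g}1:n\in\N\}$ and sets $\tilde f(x)=\sigma\cdot f(x)$, so that the relabeled structure is isomorphic to the original and $x$ is read off from a single definable set of labels. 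Your route --- adjoining $c=\iota(x)b$ for irrational $\iota(x)\in(0,1)$, observing that the generated $\Q(i)$-$*$-algebra consists of the sums $\sum_j\iota(x)^ja_j$ with $a_j$ old, deciding equality of representatives through the limit of $r$ applied to $\Q(i)$-perturbations of the coefficients (legitimate, since the norm depends Lipschitz-continuously on the coefficients with constant $\max_j\|a_j\|$), re-enumerating by first occurrence with $b$ and $c$ placed at indices $0$ and $1$, and recovering $\iota(x)=r'(1)/r'(0)$ --- does go through, including in the degenerate case where $c$ already happens to lie in the old algebra, but it is substantially more laborious than the relabeling trick and buys nothing extra here; its only merit is to illustrate that one may enlarge the dense $\Q(i)$-subalgebra rather than merely permute its labels.
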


\begin{proof}
$Y=\Gamma$: We may assume that $X=[\N]^\infty$, the space of infinite subsets of $\N$. (Under the natural identification this is a $G_\delta$ subset of $2^\N$ and therefore Polish. It is then homeomorphic to the set of irrationals.) Given $\gamma\in\Gamma\setminus \{0\}$ and $x\in X$, let $n_0(\gamma)\in\N$ be the least such that $\gamma(n_0)\neq 0$ and define
$$
\gamma_x(k)=\left\{\begin{array}{ll}
i\gamma(n_0(\gamma)) & \text{ if } k=2^i \text{ for some } i\in x;\\
\gamma(j) & \text{ if } k=3^j \text{ for some } j\in\N;\\
0         & \text{otherwise}.
\end{array}
\right.
$$
Clearly $C^*(\gamma)=C^*(\gamma_x)$, and $\tilde f:X\to \Gamma\setminus\{0\}$ defined by $\tilde f(x)= (f(x))_x$ is a Borel injection.

$Y=\hat\Gamma$: Clear, since $\Gamma$ and $\hat\Gamma$ are equivalent.

$Y=\Xi$. We may assume that $X=\R_+$. Fix $x\in X$, let $\delta=f(x)$, and let $n_0(\delta)\in\N$ be least such that $\p_{n_0}=X_{i_0}$ for some $i_0\in\N$ and $\delta(n_0)\neq 0$. Let $A$ be a C$^*$-algebra and $y=(y_n)$ be a dense sequence in $A$ such that $\delta(n)=\|\p_n(y)\|_A$, and let $\tilde y=(\tilde y_n)$ be
$$
\tilde y_n=\left\{\begin{array}{ll}
\frac x {\|y_n\|_A}y_n & \text{if } n=i_0;\\
y_n & \text{otherwise.}
\end{array}\right.
$$
Then define $\tilde f(x)(n)=\|\p_n(\tilde y)\|_A$. Clearly $\tilde f(x)\simeq^{\Xi} f(x)$ for all $x\in X$, and since $\|\p_{n_0}(\tilde y)\|_A=x$, the function $\tilde f$ is injective. (Note that $\tilde f(x)$ does not depend on the choice of $A$ and $y$, so it is in fact a function.) Finally, $\tilde f$ is Borel by \cite[14.12]{Ke:Classical}, since
\begin{align*}
\tilde f(x)=\delta'\iff &(\exists\gamma,\gamma'\in\Gamma)(\exists\delta\in\Xi\setminus\{0\}) f(x)=\delta\wedge (\forall n) \delta(n)=\|\p_n(\gamma)\|\wedge\\
&(\forall i) ((i\neq n_0(\delta)\wedge \gamma'(i)=\gamma(i))\vee (i=n_0(\delta)\wedge\gamma'(i)=\frac x {\|\gamma(i)\|}\gamma(i))),
\end{align*}
gives an analytic definition of the graph of $\tilde f$ (with $n_0(\delta)$ defined as above.)

$Y=\hat\Xi$: Assume $X=[2\N]^\infty$, the infinite subsets of the \emph{even} natural numbers. Given $(f,g,h,k,r)\in\hat\Xi$ and $x\in X$, we can find, in a Borel way, a permutation $\sigma=\sigma_{(f,g,h,j,r),x}$ of $\N$ so that 
$$
x=\{2^n\cdot_{\sigma\cdot g} 1: n\in\N\}.
$$ 
Then $\tilde f(x)=\sigma_{(f,g,h,j,r),x}\cdot f(x)$ works.
\end{proof}

\begin{remark} The classical principle \cite[14.12]{Ke:Classical} that a function whose graph is analytic is Borel will be used frequently in what follows, usually without comment.

\end{remark}


\begin{prop}\label{p.xihatxiequiv}
$\Xi\setminus\{0\}$ and $\hat\Xi$ are equivalent.
\end{prop}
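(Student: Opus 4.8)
The plan is to prove equivalence through its Borel Schr\"oder--Bernstein reformulation: by \cite[Theorem~15.7]{Ke:Classical} it suffices to exhibit Borel monomorphisms $\Xi\setminus\{0\}\to\hat\Xi$ and $\hat\Xi\to\Xi\setminus\{0\}$. Moreover, Lemma~\ref{l.injection} converts any Borel homomorphism with nowhere-trivial values into a Borel injection lying in the same isomorphism classes, so I only need to produce Borel \emph{homomorphisms} in each direction and let the lemma supply injectivity for free. Thus the task splits into two evaluations that must be carried out uniformly (that is, Borel) in the parameter.

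First I would treat the easy direction $\hat\Xi\to\Xi\setminus\{0\}$. Given $A=(f,g,h,k,r)$, regard its underlying set $\N$ as the dense $\Q(i)$-$*$-subalgebra of $\hat B(A)$ and take $y_n:=n$; since $(y_n)$ enumerates this dense set it generates $\hat B(A)$. Each $\p_j$ has no constant term, so $\p_j(y)$ can be computed inside $A$ by threading the substitutions through the operations $f,g,h,k$, producing an index $e_j(A)\in\N$ that depends Borel-measurably on $A$. Setting $\Psi(A)(j):=r(e_j(A))=\|\p_j(y)\|_{\hat B(A)}$ gives $\Psi(A)\in\Xi$ with $B(\Psi(A))\cong\hat B(A)$ by the defining property of $B(\cdot)$, and $\Psi(A)\neq0$ because the genuine norm $r$ is positive on the infinitely many nonzero elements of $A$. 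Hence $\Psi$ is a Borel homomorphism into $\Xi\setminus\{0\}$.

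The hard part is the reverse direction $\Xi\setminus\{0\}\to\hat\Xi$, where the obstacle is that $\delta$ only equips the free algebra $\mathfrak A$ with a C$^*$-\emph{seminorm}, while a point of $\hat\Xi$ must carry a genuine norm. I would therefore realize the quotient $\mathfrak A/I_\delta$, with $I_\delta=\{\p_j:\delta(j)=0\}$, as a structure on $\N$, uniformly Borel in $\delta$. Fix once and for all recursive index functions for the operations of $\mathfrak A$ (e.g.\ $\mathrm{sub}(i,j)$ with $\p_{\mathrm{sub}(i,j)}=\p_i-\p_j$, and analogues for sum, product, involution and each scalar multiple). Then $i\equiv_\delta j\iff\delta(\mathrm{sub}(i,j))=0$ is a Borel-in-$\delta$ equivalence relation on $\N$, the set $T_\delta$ of $\equiv_\delta$-least representatives is Borel, and---because $\delta\neq0$ forces $B(\delta)\neq0$ and hence makes $\mathfrak A/I_\delta$ an infinite $\Q(i)$-vector space (the scalar multiples of any nonzero vector being pairwise distinct)---the set $T_\delta$ is infinite. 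This is precisely where excising $0$ from $\Xi$ is needed, matching the Remark that $\hat\Xi$ omits the trivial algebra. Let $e_\delta:\N\to T_\delta$ enumerate $T_\delta$ increasingly, transport the four operations through $e_\delta$ by replacing each output index with its $\equiv_\delta$-least representative, and declare the norm to be $n\mapsto\delta(e_\delta(n))$. All of this is Borel in $\delta$, the resulting norm is genuine (we have divided out the kernel) and still satisfies the C$^*$-axiom, so the output $\Phi(\delta)$ lies in $\hat\Xi$ and its completion is $B(\delta)$; thus $\Phi$ is a Borel homomorphism.

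Finally I would apply Lemma~\ref{l.injection} to $\Phi$ and $\Psi$ to upgrade them to Borel monomorphisms between $\Xi\setminus\{0\}$ and $\hat\Xi$, and invoke Borel Schr\"oder--Bernstein to conclude that the two parameterizations are equivalent. The only genuinely delicate step is the uniform Borel selection of the transversal $T_\delta$ together with the verification that it is infinite; the rest is bookkeeping against the fixed recursive coding of $\mathfrak A$.
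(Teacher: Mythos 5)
Your proposal is correct and follows essentially the same route as the paper: both directions are handled exactly as in the paper's proof (evaluating the $\p_j$ at $X_i=i$ inside $A$ to get a point of $\Xi$, and in the other direction forming the Borel equivalence relation $\delta(\p_m-\p_n)=0$ on $\N$, selecting least representatives, and transporting the operations and norm of $\mathfrak A/I_\delta$), with Lemma~\ref{l.injection} and Borel Schr\"oder--Bernstein upgrading weak equivalence to equivalence. Your explicit verification that the transversal $T_\delta$ is infinite (via the infinitude of the nonzero $\Q(i)$-vector space $\mathfrak A/I_\delta$) is a detail the paper leaves implicit, but it is not a different argument.
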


\begin{proof}
By the previous Lemma, it suffices to show that $\Xi\setminus\{0\}$ and $\hat\Xi$ are weakly equivalent.

Identify $\Xi\setminus\{0\}$ with a subset of $\R^{\mathfrak A}$ in the natural way, and define 
$$
E=\{(\delta,m,n)\in\Xi\setminus\{0\}\times\N\times\N: \delta(\p_m-\p_n)=0\}.
$$
Then the section $E_\delta=\{(m,n)\in\N^2: (\delta,m,n)\in E\}$ defines an equivalence relation on $\N$. Let $f_n:\Xi\to\N$ be Borel functions such that each $f_n(\delta)$ is the least element in $\N$ not $E_\delta$-equivalent to $f_m(\delta)$ for $m<n$. If we let $I_\delta=\{\p_n: \delta(\p_n)=0\}$,  then $n\mapsto f(n) I_\delta$ provides a bijection between $\mathfrak A/I_\delta$ and $\N$, and from this we can define (in a Borel way) algebra operations and the norm on $\N$ corresponding to $A\in\hat\Xi$ such that $A\simeq \mathfrak A/I_\delta$.

Conversely, given a normed $\Q(i)$-$*$-algebra $A\in\hat\Xi$ (with underlying set $\N$), an element $\delta_A\in\Xi$ is defined by letting $\delta_A(n)=\|\p_n(X_i=i:i\in\N)\|_A$, where $\p_n(X_i=i:i\in\N)$ denotes the evaluation of $\p_n$ in $A$ when letting $X_i=i$.
\end{proof}


\begin{prop}\label{p.gammaxiequiv}
$\Gamma$ and $\Xi$ are equivalent. Thus $\Gamma$, $\hat\Gamma$ and $\Xi$ are equivalent parameterizations of the separable C$^*$-algebras, and $\Gamma\setminus\{0\}$, $\hat\Gamma\setminus\{0\}$, $\Xi\setminus\{0\}$ and $\hat\Xi$ are equivalent parameterizations of the non-trivial separable C$^*$-algebras.
\end{prop}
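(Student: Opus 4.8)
The plan is to prove the single equivalence $\Gamma\equiv\Xi$ and then read off the rest: the text has already noted $\Gamma\equiv\hat\Gamma$, and Proposition~\ref{p.xihatxiequiv} gives $\Xi\setminus\{0\}\equiv\hat\Xi$, so chaining these with $\Gamma\equiv\Xi$ yields that $\Gamma,\hat\Gamma,\Xi$ are equivalent and that $\Gamma\setminus\{0\},\hat\Gamma\setminus\{0\},\Xi\setminus\{0\},\hat\Xi$ are equivalent. To get $\Gamma\equiv\Xi$ I would construct Borel homomorphisms of parameterizations in both directions between the nonzero parts, upgrade each to a Borel monomorphism using Lemma~\ref{l.injection}, and then invoke the Borel Schr\"oder--Bernstein theorem (exactly as in the monomorphism reformulation of equivalence following Definition~\ref{d.parameterization}) to produce a Borel isomorphism $\Gamma\setminus\{0\}\to\Xi\setminus\{0\}$; sending $0\mapsto 0$ extends this to $\Gamma\equiv\Xi$.

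The forward map $\Phi\colon\Gamma\to\Xi$, $\Phi(\gamma)(j)=\|\p_j(\gamma)\|_{\cB(H)}$, is the straightforward one. Addition, multiplication and adjoint are Borel for the weak Borel structure on $\cB(H)$ (their matrix coefficients are given by absolutely convergent series of Borel functions), and $T\mapsto\|T\|$ is a countable supremum of Borel functions, hence Borel; thus $\Phi$ is Borel. Since $C^*(\gamma)$ is a concrete C$^*$-algebra we have $\Phi(\gamma)\in\Xi$ with $B(\Phi(\gamma))\cong C^*(\gamma)$, and $\Phi$ carries $0$ to $0$ and nonzero elements to nonzero elements, so it is a homomorphism of parameterizations.

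The content is the reverse map $\Psi\colon\Xi\setminus\{0\}\to\Gamma$, which must pass from the abstract C$^*$-norm $\delta$ to a concrete generating sequence of operators with $C^*(\Psi(\delta))\cong B(\delta)$. I would realize $B(\delta)$ by a GNS construction performed Borel-uniformly in $\delta$. Using the dense sequence of normalized elements $\p_j/\delta(j)$ (over $j$ with $\delta(j)\neq 0$), select positive functionals $\phi_{\delta,j}$ of norm at most one on $\mathfrak A/I_\delta$ nearly attaining the norm on $(\p_j/\delta(j))^*(\p_j/\delta(j))$, and set $\phi_\delta=\sum_j 2^{-j}\phi_{\delta,j}$; this is a faithful positive functional, because any nonzero $a$ lies close to some normalized $\p_j/\delta(j)$, forcing $\phi_{\delta,j}(a^*a)>0$. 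The inner product $\langle a,b\rangle=\phi_\delta(b^*a)$ on $\mathfrak A/I_\delta$ is then Borel in $\delta$; a Borel Gram--Schmidt applied to the images of the $\p_j$---skipping linearly dependent vectors just as the functions $f_n$ are defined in the proof of Proposition~\ref{p.xihatxiequiv}---identifies the completion with $H$, and $\Psi(\delta)(n)$ is taken to be the left-multiplication operator by the image of $X_n$, recorded through its (Borel-computable) matrix coefficients.

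The main obstacle is the Borel selection of the faithful functional, that is, the Borelness of $\delta\mapsto\phi_{\delta,j}$. I expect to settle this by a measurable uniformization argument: viewing functionals through their values on the $\p_j$, the set of pairs $(\delta,\phi)$ with $\phi$ a positive functional of norm at most one on $\mathfrak A/I_\delta$ satisfying $\phi((\p_j/\delta(j))^*(\p_j/\delta(j)))\ge 1-1/j$ is Borel with nonempty compact sections (the quasi-state space is weak$^*$-compact), so a Borel selector exists. Granting this, every remaining ingredient---the inner product, the Gram--Schmidt orthogonalization, and the extraction of matrix coefficients---is built from the arithmetic and norm operations already seen to be Borel, and the verification that $\Psi$ is a homomorphism of parameterizations is routine.
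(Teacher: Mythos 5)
Your proposal is correct and has the same overall architecture as the paper's proof: reduce the statement to weak equivalence of $\Gamma$ and $\Xi$ via Lemma~\ref{l.injection} and the Borel Schr\"oder--Bernstein theorem, take $\gamma\mapsto(\|\p_j(\gamma)\|)_{j}$ in the forward direction, and build the reverse map by a GNS construction carried out Borel-uniformly in $\delta$, with the required functionals extracted by a selection theorem from a Borel set with compact sections. The one place you genuinely diverge is in how faithfulness of the resulting representation is secured. The paper applies the Kuratowski--Ryll-Nardzewski theorem to the Borel map $\delta\mapsto S(\delta)\in K(\C^{\N})$ to obtain a sequence of functionals \emph{dense} in $S(\delta)$ and then direct-sums the corresponding GNS representations over $H=\bigoplus_n H_n$; density in $S(\delta)$ is what makes the sum faithful, and no individual functional need be faithful. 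You instead manufacture a single faithful positive functional $\phi_\delta=\sum_j 2^{-j}\phi_{\delta,j}$ from nearly norm-attaining functionals and take one cyclic GNS representation. Both routes rest on the same selection machinery and both work; yours trades the direct-sum bookkeeping for the (correct, but worth writing out) verification that $\phi_\delta$ is faithful, which uses that for every nonzero $a$ there are arbitrarily large $j$ with $\p_j/\delta(j)$ close to $a/\|a\|$, so that $1-1/j$ can be kept bounded away from the perturbation error. One small point to attend to in your version, as in the paper's: Lemma~\ref{l.gammamaps}(2) is stated for infinite-dimensional $H_x$, so when $B(\delta)$ --- and hence the GNS space of $\phi_\delta$ --- is finite-dimensional you must pad the orthonormal system produced by your Gram--Schmidt step before transporting the left-multiplication operators to the fixed infinite-dimensional Hilbert space $H$.
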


For the proof of this we need the following easy (but useful) Lemma:


\begin{lemma}\label{l.gammamaps}

Let $H$ be a separable infinite dimensional Hilbert space. Then:

(1) A function $f:X\to\Gamma(H)$ on a Polish space $X$ is Borel if and only if for some (any) sequence $(e_i)$ with dense span in $H$ we have that the functions
$$
x\mapsto (f(x)(n)e_i|e_j)
$$
are Borel, for all $n,i,j\in\N$.

(2) Suppose $g:X\to\bigcup_{x\in X}\Gamma(H_x)$ is a function such that for each $x\in X$ we have $g(x)\in\Gamma(H_x)$, where $H_x$ is a separable infinite dimensional Hilbert space, and there is a system $(e^x_i)_{j\in\N}$ with $\Span\{e^x_i:i\in\N\}$ dense in $H_x$. If for all $n,i,j\in\N$ we have that the functions
$$
X\to\C: x\mapsto (e_i^x|e_j^x)
$$ 
and
$$
X\to\C: x\mapsto (g(n)(x)e_i^x|e_j^x)
$$
are Borel, then there is a Borel $\hat g: X\to\cB(H)$ and a family $T_x:H\to H_x$ of linear isometries such that for all $n\in\N$,
$$
g(x)(n)=T_x\hat g(x)(n) T_x^{-1}.
$$
\end{lemma}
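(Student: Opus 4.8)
The plan is to reduce everything to an explicit description of the Borel structure on $\cB(H)$, which is generated by the weak operator topology and hence by the functionals $T\mapsto(T\xi|\eta)$, $\xi,\eta\in H$. The key observation for Part~(1) is that for any sequence $(e_i)$ of dense span the $\sigma$-algebra generated by the countably many maps $T\mapsto(Te_i|e_j)$ already equals the full weak Borel structure. One inclusion is clear, since each such map is weakly continuous. For the converse, given $\xi,\eta\in H$ I would choose finite linear combinations $\xi_n\to\xi$ and $\eta_n\to\eta$ of the $e_i$; then $T\mapsto(T\xi_n|\eta_n)$ is a finite linear combination of the maps $T\mapsto(Te_i|e_j)$, and it converges pointwise on $\cB(H)$ to $T\mapsto(T\xi|\eta)$, which is therefore measurable with respect to $\sigma(\{T\mapsto(Te_i|e_j)\})$. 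Since the functionals $T\mapsto(T\xi|\eta)$ generate the weak Borel structure, the two $\sigma$-algebras coincide. Part~(1) follows at once: as $\Gamma(H)=\cB(H)^{\N}$ carries the product Borel structure, $f$ is Borel iff each coordinate $x\mapsto f(x)(n)$ is Borel into $\cB(H)$, which by the previous sentence happens iff $x\mapsto(f(x)(n)e_i|e_j)$ is Borel for all $i,j$. The argument applies to every sequence of dense span, giving the ``some/any'' equivalence.

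For Part~(2) I would transport each $\Gamma(H_x)$ to the fixed space $H$ by a Borel family of unitaries obtained from a measurable Gram--Schmidt procedure applied to $(e^x_i)$. Fix an orthonormal basis $(f_k)$ of $H$. Let $w^x_m$ denote the component of $e^x_m$ orthogonal to $\Span\{e^x_0,\dots,e^x_{m-1}\}$; both $w^x_m$, written as a linear combination of the $e^x_i$, and $\|w^x_m\|$ are computed from the Gram matrix $(e^x_p|e^x_q)_{p,q<m}$ by finite linear algebra, hence are Borel in $x$. Setting $S_x=\{m:w^x_m\neq0\}$, enumerating it increasingly as $m^x_0<m^x_1<\cdots$, and putting $u^x_k=w^x_{m^x_k}/\|w^x_{m^x_k}\|$, I obtain an orthonormal basis $(u^x_k)_{k\in\N}$ of $H_x$ (it is infinite and complete because $\Span\{e^x_i\}$ is dense in the infinite-dimensional $H_x$), with each $u^x_k=\sum_p c^x_{k,p}e^x_p$ a finite combination whose coefficients $c^x_{k,p}$ are Borel in $x$. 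Define $T_x\colon H\to H_x$ by $T_xf_k=u^x_k$; this is a surjective linear isometry, and I set $\hat g(x)(n)=T_x^{-1}g(x)(n)T_x$, so that $g(x)(n)=T_x\hat g(x)(n)T_x^{-1}$ as required.

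It remains to verify that $\hat g$ is Borel, which by Part~(1) amounts to showing $x\mapsto(\hat g(x)(n)f_i|f_j)$ is Borel for all $n,i,j$. Since $T_x^{-1}=T_x^{*}$, one computes
$$
(\hat g(x)(n)f_i|f_j)=(g(x)(n)T_xf_i|T_xf_j)=(g(x)(n)u^x_i|u^x_j)=\sum_{p,q}c^x_{i,p}\,\overline{c^x_{j,q}}\,(g(x)(n)e^x_p|e^x_q),
$$
a finite sum of products of Borel functions of $x$ --- the inner products $(g(x)(n)e^x_p|e^x_q)$ being Borel by hypothesis --- hence Borel. I expect the main obstacle to be making the Gram--Schmidt step genuinely Borel: the index set $S_x$ of surviving vectors varies with $x$, so the maps $x\mapsto m^x_k$ and $x\mapsto c^x_{k,p}$ must be assembled by partitioning $X$ into the Borel pieces on which a prescribed pattern of vanishing of the $\|w^x_m\|$ holds and checking measurability on each piece. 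The vanishing conditions are Borel (they are determinant, or pseudo-inverse, conditions on the Gram matrix), so the bookkeeping goes through; this is the single point demanding care, the remainder being routine linear algebra together with the principle \cite[14.12]{Ke:Classical} that a function with analytic graph is Borel.
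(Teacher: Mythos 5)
Your proof is correct and follows essentially the same route as the paper's: part (1) by identifying the weak Borel structure on $\cB(H)$ with the $\sigma$-algebra generated by the matrix coefficients $T\mapsto(Te_i|e_j)$, and part (2) by a measurable Gram--Schmidt orthonormalization of $(e_i^x)$, conjugation by the resulting isometries $T_x$, and a matrix-coefficient computation to verify Borelness of $\hat g$. The only difference is that you spell out details the paper leaves implicit --- the pointwise-limit argument behind ``(1) is clear'' and the bookkeeping for indices at which Gram--Schmidt degenerates --- which is welcome but does not constitute a different argument.
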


We postpone the proof of Lemma \ref{l.gammamaps} until after the proof of Proposition \ref{p.gammaxiequiv}.


\begin{proof}[Proof of Proposition \ref{p.gammaxiequiv}]
By Lemma \ref{l.injection}, it is again enough to show that $\Gamma$ and $\Xi$ are weakly equivalent. For the first direction, the map $\psi:\Gamma\to\Xi$ given by
$$
\psi(\gamma)(n)=\|\p_n(\gamma)\|
$$
clearly works.

For the other direction we rely on the GNS construction (e.g. \cite[II.6.4]{blackadar}). For each $\delta\in \Xi$ let $S(\delta)$ be the space of all $\phi\in \bbC^{\bbN}$ such that
\begin{enumerate}
\item $|\phi(k)|\leq \delta(k)$ for all $k$,
\item $\phi(k)=\phi(m)+\phi(n)$, whenever $\p_k=\p_m+\p_n$,
\item  $\phi(k)\geq 0$ whenever $\p_k=\p_m^*\p_m$ for some $m$.
\end{enumerate}
Then $S(\delta)$ is a compact subset of $\bbC^{\bbN}$ for each $\delta\in\Xi$, and so since the relation
$$
\{(\delta,\phi)\in\Xi\times\C^\N:\phi\in S(\delta)\}
$$
is Borel, it follows by \cite[28.8]{Ke:Classical} that $\Xi\to K(\bbC^{\bbN}): \delta\mapsto S(\delta) $ is a Borel function into the Polish space $K(\bbC^{\bbN})$ of compact subsets of $\bbC^{\bbN}$. Consider the set
$$
N=\{(\delta,\phi,n,m)\in\Xi\times\C^\N\times\N\times\N: \phi\in S(\delta)\wedge (\exists k) \p_k=(\p_n-\p_m)^*(\p_n-\p_m)\wedge \phi(k)=0\}.
$$
Then for each $\delta$ and $\phi$ the relation $N_{\delta,\phi}=\{(n,m)\in\N^2:(\delta,\phi,n,m)\in N\}$ is an equivalence relation on $\N$. Without any real loss of generality we can assume that $N_{\delta,\phi}$ always has infinitely many classes. Let $\sigma_n:\Xi\times\C^\N\to\N$ be a sequence of Borel maps such that for all $\delta$ and $\phi$ fixed the set
$$
\{\sigma_n(\delta,\phi):n\in\N\}
$$
meets every $N_{\delta,\phi}$ class once. For $\delta$ and $\phi$ fixed we can then define an inner product on $\N$ by
$$
(n|m)_{\delta,\phi}=\phi(k)\iff \p_k=\p_{\sigma_n(\delta,\phi)}^* \p_{\sigma_m(\delta,\phi)}.
$$
Let $H(\delta,\phi)$ denote the completion of this pre-Hilbert space. Then there is a unique operator $\gamma(n)\in \cB(H(\delta,\phi))$ extending the operator acting on $(\N,(\cdot|\cdot)_{\delta,\phi})$ defined by letting $\gamma_{\delta,\phi}(n)(m)=k$ iff there is some $k'\in\N$ such that 
$$
\p_{\sigma_n(\delta,\phi)}\p_{\sigma_m(\delta,\phi)}=\p_{k'}
$$ 
and $(\delta,\phi,k',\sigma_k(\delta,\phi))\in N$. Note that $n\mapsto \gamma_{\delta,\psi}$ corresponds to the GNS representation of the normed $*$-algebra over $\Q(i)$ that corresponds to $\delta$. Since the elements of $\N$ generate $H(\delta,\psi)$ and the map
$$
(\delta,\psi)\mapsto (\gamma_{\delta,\psi}(n)(i)|j)_{\delta,\psi}
$$
is Borel, it follows from Lemma \ref{l.gammamaps} that there is a Borel function 
$$
\Xi\times\C^\N\to\Gamma(H): (\delta,\psi)\mapsto \tilde\gamma(\delta,\psi)
$$
such that $\tilde\gamma(\delta,\psi)\in\Gamma(H)$ is conjugate to $(\gamma_{\delta,\phi}(n))_{n\in\N}\in\Gamma(H(\delta,\psi))$ for all $\delta,\psi$.

Since the map $\Xi\to K(\bbC^{\bbN}): \delta\mapsto S(\delta)$ is Borel,
by  the Kuratowski--Ryll-Nardzewski theorem (\cite[Theorem~12.13]{Ke:Classical})
there are Borel maps $\phi_n\colon \Xi\to \bbC^{\bbN}$ such that for every $\delta\in \Xi$
the set $\phi_n(\delta)$, for $n\in \bbN$, is dense in $S(\delta)$. Writing $H=\bigoplus_{n=1}^\infty H_n$ where $H_n$ are infinite dimensional Hilbert spaces, we may then by the above find a Borel map $\Xi\to\Gamma(H):\delta\to\gamma(\delta)$ such that the restriction $\gamma(\delta)\restrict H_n$ is conjugate to $\tilde\gamma(\delta,\phi_n)$. Since the sequence $(\phi_n(\delta))$ is dense in $S(\delta)$, it follows that $\gamma(\delta)$ is a faithful representation of the algebra corresponding to $\delta$.
\end{proof}


\begin{proof}[Proof of Lemma \ref{l.gammamaps}]
(1) is clear from the definition of $\Gamma(H)$. To see (2), first note that the Gram-Schmidt process provides orthonormal bases $(f_i^x)_{i\in\N}$ for the $H_x$ such that
$$
f^x_i=\sum_{j=1}^i r_{i,j}^xe_j^x
$$
and the coefficient maps $x\mapsto r_{i,j}^x$ are Borel. Therefore the maps
$$
X\mapsto\C: (g(n)(x)f_i^x|f_j^x)
$$
are Borel for all $n,i,j\in\N$, and so we may in fact assume that $(e_i^x)_{i\in\N}$ forms an orthonormal basis to begin with. But then if $(e_i)_{i\in\N}$ is an orthonormal basis a function $\hat g:X\to\Gamma(H)$ is defined by
$$
\hat g(x)(n)=S\iff (\forall i,j) (Se_i, e_j)=(g(n)(x)e_i^x, e_j^x),
$$
and since this also provides a Borel description of the graph of $\hat g$, $\hat g$ is Borel by \cite[14.12]{Ke:Classical}. Finally, defining $T_x:H\to H_x$ to be the isometry mapping $e_i$ to $e_i^x$ for each $x$ provides the desired conjugating map.
\end{proof}


\subsection{Parameterizing unital C$^*$-algebras}\label{ss.paramunital}
We briefly discuss the parameterization of unital C$^*$-algebras. Define
$$
\Gammau=\{\gamma\in\Gamma: C^*(\gamma)\text{ is unital}\}.
$$
We will see (Lemma \ref{l.unital}) that this set is Borel. We can similarly define $\hatGammau\subseteq\hat\Gamma$, $\Xiu\subseteq\Xi$ and $\hatXiu\subseteq\hat\Xi$. However, as noted in \ref{ss.hatgamma}, the set
$$
\hat \Gamma_{\Au}(H)=\{f:\Au\to\mathcal B(H):f\text{ is a \emph{unital} $*$-homomorphism}\}
$$
is Borel and naturally parameterizes the unital C$^*$-subalgebras of $\cB(H)$. In analogy, we define
$$
\Xi_{\Au}=\{f\in\R^{\Au}:f\text{ defines a C$^*$-seminorm on } \Au\text{ with } f(1)=1\},
$$
which is also Borel. Then a similar proof to that of Proposition \ref{p.gammaxiequiv} shows:

\begin{prop}\label{p.gammaxiunitalequiv}
The Borel sets $\hat\Gamma_{\Au}$ and $\Xi_{\Au}$ provide equivalent parameterizations of the unital $C^*$-algebras.
\end{prop}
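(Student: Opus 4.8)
The plan is to follow the proof of Proposition \ref{p.gammaxiequiv} almost verbatim, replacing $\Gamma$ by $\hat\Gamma_{\Au}$ and $\Xi$ by $\Xi_{\Au}$ throughout while keeping careful track of the unit. By the Borel Schr\"oder-Bernstein theorem it is enough to produce Borel monomorphisms in both directions, and, exactly as in Proposition \ref{p.gammaxiequiv}, this reduces to establishing weak equivalence once one has the evident unital analogues of Lemma \ref{l.injection}. Those analogues are proved by the same padding argument: given a Borel, everywhere-nonzero map into $\hat\Gamma_{\Au}$ (resp. $\Xi_{\Au}$) one rescales or duplicates a \emph{non-unit} generator to force injectivity. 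Since the element $1\in\Au$ corresponding to the constant polynomial $1$ is never disturbed, the constraints $f(1)=1$ (resp. $s(1)=1$) are preserved and the isomorphism class is unchanged. Thus it suffices to exhibit Borel homomorphisms $\hat\Gamma_{\Au}\to\Xi_{\Au}$ and $\Xi_{\Au}\to\hat\Gamma_{\Au}$.

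The forward direction is immediate: for a unital $*$-homomorphism $f\colon\Au\to\mathcal B(H)$ set $s_f(\p)=\|f(\p)\|$. This is a $C^*$-seminorm on $\Au$, and since $f(1)$ is the identity operator we have $s_f(1)=1$, so $s_f\in\Xi_{\Au}$; moreover $\hat B(s_f)\cong C^*(f)$. Borelness of $f\mapsto s_f$ follows from the unital analogue of Lemma \ref{l.gammamaps}(1), exactly as the map $\psi(\gamma)(n)=\|\p_n(\gamma)\|$ was seen to be Borel in the proof of Proposition \ref{p.gammaxiequiv}.

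For the reverse direction I would run the GNS construction of Proposition \ref{p.gammaxiequiv}, modified to produce a \emph{unital} representation. Given $s\in\Xi_{\Au}$, let $S_u(s)$ be the set of $\phi\in\C^{\Au}$ satisfying the three conditions defining $S(\delta)$ there (domination by $s$, additivity, and positivity on elements of the form $\p_m^*\p_m$) together with the normalization $\phi(1)=1$; these are precisely the states of the unital $C^*$-algebra $\hat B(s)$. As before $S_u(s)$ is a compact subset of $\prod_{\p}\{z\colon|z|\le s(\p)\}$, it is nonempty because a unital $C^*$-algebra always carries states, and $s\mapsto S_u(s)$ is Borel by \cite[28.8]{Ke:Classical}; a Borel sequence $\phi_n(s)$ dense in $S_u(s)$ is then extracted via the Kuratowski--Ryll-Nardzewski theorem (\cite[Theorem~12.13]{Ke:Classical}). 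Performing the GNS construction for each $\phi_n(s)$ yields cyclic representations which, because $\phi_n(s)(1)=1$, send $1$ to the identity operator on their block; assembling them as a direct sum over $H=\bigoplus_n H_n$ and invoking Lemma \ref{l.gammamaps} to secure Borel dependence produces a Borel map $\Xi_{\Au}\to\Gamma(H)$ whose value is a faithful \emph{unital} representation of $\hat B(s)$. Reading off the images of the generators $X_k$ and declaring $1\mapsto\mathrm{id}_H$ determines an element of $\hat\Gamma_{\Au}(H)$, which is the desired Borel homomorphism.

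The one genuinely new point, and hence the step deserving the most care, is the bookkeeping of the unit: one must check that restricting to functionals with $\phi(1)=1$ still gives a compact, Borel-parameterized, nonempty family, that each GNS representation (and hence their direct sum) is unital, and that the padding in the unital injection lemma leaves $1$ fixed so the output stays in $\hat\Gamma_{\Au}$ resp. $\Xi_{\Au}$. All of these are routine once isolated, so no machinery beyond that of Proposition \ref{p.gammaxiequiv} is required.
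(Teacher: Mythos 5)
Your proof is correct and follows exactly the route the paper intends: the paper's "proof" of this proposition is literally the remark that "a similar proof to that of Proposition \ref{p.gammaxiequiv} shows" the claim, and you have filled in precisely that adaptation (norm map one way, unit-normalized GNS construction the other, plus the unital analogue of Lemma \ref{l.injection}, which the paper also leaves to the reader). The points you flag as needing care — nonemptiness and compactness of the state space with $\phi(1)=1$, unitality of the GNS direct sum, and the padding fixing $1$ — are exactly the right ones and are handled correctly.
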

\noindent In \S 3 we will see that $\hat\Gamma_{\Au}$ and $\Xi_{\Au}$ are also equivalent to $\Gammau$ (and therefore also $\hatGammau$, $\Xiu$ and $\hatXiu$.)  For future use, we fix once and for all an enumeration $(\q_n)_{n\in\N}$ of all the formal $\Q(i)$-$*$-polynomials (allowing constant terms), so that naturally $\Au=\{\q_n:n\in\N\}$. Also for future reference, we note that Lemma \ref{l.injection} holds for $Y=\hat\Gamma_{\Au}$ and $Y=\Xi_{\Au}$ (the easy proof is left to the reader.)


\subsection{Basic maps and relations} We close this section by making two simple, but useful, observations pertaining to the parameterization $\Gamma$. While Borel structures of the weak operator topology, strong operator topology, $\sigma$-weak operator topology and $\sigma$-strong operator topology all coincide, the Borel structure of the norm topology is strictly finer. However, we have:

\begin{lemma} \label{L.B.1} Every norm open ball in $\cB(H)$ is a Borel subset of $\Gamma$, and for every $\e>0$ the set $\{(a,b): \|a-b\|<\e\}$ is Borel.
It follows that the maps 
$$
\cB(H)\to\bbR: a\mapsto \|a\|, \ \ \cB(H)^2\to\R: (a,b)\mapsto \|a-b\| 
$$
are Borel.
\end{lemma}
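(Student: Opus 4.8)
The plan is to compute the operator norm as a countable supremum of functions that are manifestly Borel for the weak Borel structure on $\cB(H)$, and then to read off both the balls and the two norm maps from this. The subtlety to keep in mind is exactly the one flagged just before the statement: the norm topology is strictly finer than the weak operator topology, so a norm ball is \emph{not} Borel merely by virtue of being norm-open; one must exhibit it using only weakly Borel data.

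First I would fix, using separability of $H$, a countable norm-dense subset $D$ of the closed unit ball of $H$. For fixed $\xi,\eta\in H$ the functional $a\mapsto(a\xi|\eta)$ is continuous for the weak operator topology, by the very definition of that topology, and hence is Borel for the Borel structure on $\cB(H)$ generated by the weakly open sets; consequently $a\mapsto|(a\xi|\eta)|$ is Borel as well. Since for each fixed $a$ the map $(\xi,\eta)\mapsto(a\xi|\eta)$ is norm-continuous, density of $D$ in the unit ball yields
$$
\|a\|=\sup_{\xi,\eta\in D}|(a\xi|\eta)|,
$$
so that $a\mapsto\|a\|$ is a countable supremum of Borel functions and is therefore Borel.

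With this in hand the remainder is bookkeeping. For the open ball of radius $\e$ about a point $a_0$ I would write
$$
\{a:\|a-a_0\|<\e\}=\bigcup_{\substack{r\in\Q\\ 0\le r<\e}}\ \bigcap_{\xi,\eta\in D}\{a:|(a\xi|\eta)-(a_0\xi|\eta)|\le r\},
$$
a countable union of countable intersections of weakly closed (hence Borel) sets, so it is Borel; the same formula with $a_0$ replaced by a second variable $b$, using the jointly Borel functions $(a,b)\mapsto(a\xi|\eta)-(b\xi|\eta)$, shows $\{(a,b):\|a-b\|<\e\}$ is Borel. To conclude that $a\mapsto\|a\|$ and $(a,b)\mapsto\|a-b\|$ are Borel maps into $\bbR$ it then suffices that the preimages of a generating family of open subsets of $\bbR$ be Borel: the preimage of $(-\infty,\e)$ under each map is exactly one of the sets just shown to be Borel (the ball about $0$, respectively the difference set), and the preimage of $(c,\infty)$ is the complement of the corresponding closed ball. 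The only genuinely substantive step is the identity $\|a\|=\sup_{\xi,\eta\in D}|(a\xi|\eta)|$ for a single fixed countable $D$, and I expect this to be the crux: it is where separability of $H$ (to produce a countable $D$) and joint norm-continuity of $(\xi,\eta)\mapsto(a\xi|\eta)$ (to pass from the supremum over $D$ to the supremum over the full unit ball) are both used, while everything after it is a routine combination of Borel sets and functions.
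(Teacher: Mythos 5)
Your proof is correct and is essentially the paper's argument made explicit: the paper simply observes that $\{a:\|a\|>\e\}$ is weakly open (equivalently, that the operator norm is weakly lower semicontinuous via the variational formula $\|a\|=\sup|(a\xi|\eta)|$), so norm-open balls are $F_\sigma$ in the weak Borel structure, which is exactly what your countable-supremum computation over a dense set $D$ establishes. The only cosmetic difference is that the paper's openness of $\{a:\|a\|>\e\}$ does not even require the countable dense set, but both routes rest on the same identity and yield the same $F_\sigma$ decomposition of the balls.
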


\begin{proof} Clearly $\{a: \|a\|>\e\}$ is weakly open for all $\e\geq 0$.
Hence norm open balls are $F_\sigma$.
\end{proof}

\begin{lemma} \label{L.equality.borel}
The relations 
$$
\{(\gamma,\gamma')\in \Gamma\times\Gamma: C^*(\gamma)\subseteq C^*(\gamma')\}
$$
and
$$
\{(\gamma,\gamma')\in\Gamma\times\Gamma: C^*(\gamma)= C^*(\gamma')\}
$$ 
are Borel. 
\end{lemma}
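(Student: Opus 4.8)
The plan is to reduce the inclusion relation to a countable Boolean combination of conditions, each of the form ``the norm of a difference of two polynomial evaluations is small,'' and then invoke the two facts already available: that polynomial evaluation is Borel, and that the norm-distance map is Borel.

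First I would assemble the ingredients. By the discussion in \S\ref{ss.hatgamma}, the map $\Gamma\to\cB(H)$, $\gamma\mapsto\p_k(\gamma)$ is Borel for each fixed $k$, since it is the $k$-th coordinate of the Borel map $\gamma\mapsto\hat\gamma$. By Lemma~\ref{L.B.1} the map $\cB(H)^2\to\R$, $(a,b)\mapsto\|a-b\|$ is Borel. Composing these (and using that $\gamma\mapsto\gamma(n)$ is merely a coordinate projection, hence Borel), for each pair $(n,k)$ the function
$$
(\gamma,\gamma')\mapsto\|\gamma(n)-\p_k(\gamma')\|
$$
is Borel on $\Gamma\times\Gamma$.

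Next I would set up the logical characterization. Here $C^*(\gamma)$ is the C$^*$-algebra generated by the operators $\gamma(n)$, while $C^*(\gamma')$ is the norm-closure of $\{\p_k(\gamma'):k\in\bbN\}$. The key (and only slightly delicate) point is that a C$^*$-subalgebra of $\cB(H)$ contains all of $C^*(\gamma)$ as soon as it contains the generators $\gamma(n)$; so it suffices to test membership of the generators rather than of every polynomial evaluation $\p_j(\gamma)$. Since $C^*(\gamma')$ is norm-closed, membership of $\gamma(n)$ in it is exactly approximability by the $\p_k(\gamma')$. This yields
$$
C^*(\gamma)\subseteq C^*(\gamma')\iff \forall n\,\forall m\,\exists k\ \|\gamma(n)-\p_k(\gamma')\|<\tfrac1m .
$$
The right-hand side is a countable intersection over $n,m$ of countable unions over $k$ of the Borel sets $\{(\gamma,\gamma'):\|\gamma(n)-\p_k(\gamma')\|<1/m\}$, and is therefore Borel. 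This disposes of the first relation.

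Finally, the equality relation is
$$
\{(\gamma,\gamma'):C^*(\gamma)=C^*(\gamma')\}=\{(\gamma,\gamma'):C^*(\gamma)\subseteq C^*(\gamma')\}\cap\{(\gamma,\gamma'):C^*(\gamma')\subseteq C^*(\gamma)\},
$$
the intersection of a Borel set with its image under the coordinate flip $(\gamma,\gamma')\mapsto(\gamma',\gamma)$ (a homeomorphism), hence Borel. There is no serious obstacle once Lemma~\ref{L.B.1} and the Borelness of polynomial evaluation are in place; the whole argument is just the observation that ``belonging to the norm-closure of a fixed countable family of Borel-parameterized operators'' is an arithmetically definable, and thus Borel, condition, together with the reduction from polynomial evaluations to the finitely-indexed generators.
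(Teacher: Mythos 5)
Your proof is correct and is essentially the paper's own argument: the paper characterizes the inclusion by $(\forall n)(\forall \e>0)(\exists m)$ of a norm condition and cites Lemma~\ref{L.B.1}, exactly as you do (with $\e$ replaced by $1/m$), and obtains equality as the conjunction of the two inclusions. Your added remarks — that it suffices to test the generators $\gamma(n)$ rather than all $\p_j(\gamma)$, and that the flip map is a homeomorphism — are correct elaborations of steps the paper leaves implicit.
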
 

\begin{proof} 
We have
$$ 
C^*(\gamma)\subseteq C^*(\gamma')\iff (\forall n)(\forall \e>0)(\exists m)\|\gamma_n'-\p_m(\gamma)\|<\e,
$$ 
which is Borel by Lemma~\ref{L.B.1}. 
\end{proof}


\section{Basic definability results}\label{S.Basicdef}

In this section we will show that a wide variety of standard C$^*$-algebra constructions correspond to Borel relations and functions in the spaces $\Gamma$ and $\Xi$.


\begin{prop} \label{P.isomorphism} 
\ 

\begin{enumerate}[{\rm \indent (1)}]

\item The relation $\precsim$ on $\Gamma$, defined by $\gamma\precsim\delta$ if
and only if $C^*(\gamma)$ is isomorphic to a subalgebra of
$C^*(\delta)$, is analytic.

\item The relation $\simeq^{\Gamma}$ is analytic. In particular, $\Gamma$, $\hat\Gamma$, $\Xi$ and $\hat\Xi$ are good standard Borel parameterizations of the class of separable $C^*$-algebras. 
\end{enumerate}
\end{prop}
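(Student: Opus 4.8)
The plan is to exhibit both relations as projections of Borel sets, using the C$^*$-algebraic principle that a $*$-homomorphism out of $C^*(\gamma)$ is determined by the images of the generators $\gamma_n$, and that such a map exists, is isometric, and has a prescribed image precisely when the generator-images satisfy norm conditions testable on the dense $\Q(i)$-$*$-subalgebra $\{\p_j(\gamma):j\in\N\}$. Concretely, for (1) the reformulation I would prove is
\[
\gamma\precsim\delta \iff \exists b\in\Gamma\ \Big[(\forall n)\,b_n\in C^*(\delta)\ \wedge\ (\forall j)\,\|\p_j(b)\|=\|\p_j(\gamma)\|\Big].
\]
For the forward direction of this reformulation: if $b$ is such a witness, then $b_n\in C^*(\delta)$ forces $C^*(b)\subseteq C^*(\delta)$, and since the $\p_j$ exhaust a family of polynomials closed under sum, product, adjoint and $\Q(i)$-scalars, the norm equalities make $\p_j(\gamma)\mapsto\p_j(b)$ a well-defined isometric $\Q(i)$-$*$-isomorphism of dense subalgebras, extending to an isometric $*$-isomorphism of $C^*(\gamma)$ onto $C^*(b)\subseteq C^*(\delta)$. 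Conversely, any isomorphism of $C^*(\gamma)$ onto a subalgebra of $C^*(\delta)$ yields such a $b$ by applying it to the generators, using that an injective $*$-homomorphism is automatically isometric.

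The remaining (Borel) bookkeeping is then routine. Quantifying $\exists b\in\Gamma$ is a projection along a standard Borel factor, so it suffices that the matrix be Borel in $(\gamma,\delta,b)$. The map $\gamma\mapsto\p_j(\gamma)$ is Borel from $\Gamma$ to $\cB(H)$ (the same fact used to build $\hat\Gamma$), and the norm and difference-norm maps are Borel by Lemma~\ref{L.B.1}, so $(\gamma,b)\mapsto\big(\|\p_j(\gamma)\|,\|\p_j(b)\|\big)$ is Borel and the clause $\|\p_j(b)\|=\|\p_j(\gamma)\|$ defines a Borel set; the clause $b_n\in C^*(\delta)$, which unfolds as $(\forall k)(\exists m)\,\|b_n-\p_m(\delta)\|<1/k$, is Borel by the argument already used in Lemma~\ref{L.equality.borel}. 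Hence $\precsim$ is analytic. For (2) I would refine the witness condition to enforce surjectivity:
\[
\gamma\simeq^{\Gamma}\delta \iff \exists b\in\Gamma\ \Big[(\forall j)\,\|\p_j(b)\|=\|\p_j(\gamma)\|\ \wedge\ C^*(b)=C^*(\delta)\Big].
\]
The clause $C^*(b)=C^*(\delta)$ both absorbs the membership requirement ($b_n\in C^*(b)=C^*(\delta)$) and makes the isometric embedding built above have closed image $\overline{\{\p_j(b)\}}=C^*(b)=C^*(\delta)$, i.e.\ an isomorphism \emph{onto} $C^*(\delta)$; it is Borel by Lemma~\ref{L.equality.borel}. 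Thus $\simeq^{\Gamma}$ is analytic. Goodness of the other parameterizations follows formally: $\hat\Gamma$, $\Xi$, and $\hat\Xi$ are Borel-equivalent to $\Gamma$ (or to $\Gamma\setminus\{0\}$) by Proposition~\ref{p.xihatxiequiv} and Proposition~\ref{p.gammaxiequiv}, these Borel isomorphisms conjugate the isomorphism relations, and the preimage of an analytic set under a Borel map is analytic, so analyticity transfers to $\simeq^{\hat\Gamma}$, $\simeq^{\Xi}$, and $\simeq^{\hat\Xi}$.

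The main obstacle is the C$^*$-algebraic reduction, not the descriptive set theory: one must verify that matching the norms of the \emph{countably many} $\Q(i)$-polynomials $\p_j$ on the generators is exactly equivalent to the existence of the desired isometric (and, for (2), surjective) $*$-isomorphism, rather than merely a contractive homomorphism. This rests on the $\p_j$ forming a dense $*$-subalgebra closed under the operations and on the equivalence of injectivity and isometry for $*$-homomorphisms; once these are in place, the analyticity is delivered by a single existential quantifier over $\Gamma$ applied to a Borel matrix, the only mild verification being the Borelness of polynomial evaluation, which is already in use in the construction of $\hat\Gamma$.
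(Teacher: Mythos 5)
Your proof is correct, but it takes a different route from the paper's. The paper introduces the notion of a \emph{code} for a $*$-homomorphism: a map $\Phi:\N\to\N^\N$ such that for each $m$ the polynomials $\p_{\Phi(m)(k)}(\gamma')$ form a Cauchy sequence whose limits $a_m$ satisfy, clause by clause, additivity, multiplicativity, compatibility with adjoints, and the norm inequality $\|\p_m(\gamma)\|\leq\|a_m\|$ (with equality for monomorphisms and an extra density-of-image clause for isomorphisms). These yield Borel relations $\Rmono$ and $\Riso$ on $\Gamma\times\Gamma\times(\N^\N)^\N$, and $\precsim$ and $\simeq^\Gamma$ are then the projections along the code coordinate. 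Your witness is instead an actual operator sequence $b\in\Gamma=\cB(H)^\N$, with membership $b_n\in C^*(\delta)$ enforced as a Borel clause and with the single countable family of norm \emph{equalities} $\|\p_j(b)\|=\|\p_j(\gamma)\|$ doing triple duty: well-definedness of $\p_j(\gamma)\mapsto\p_j(b)$ (via the difference polynomials), the homomorphism property (automatic, since the $\p_j$ are closed under the operations and evaluation commutes with them), and isometry. For the bare purpose of Proposition~\ref{P.isomorphism} your encoding is arguably leaner, since it avoids separately axiomatizing the algebraic clauses. What the paper's heavier machinery buys is reusability: the Borel relations $\Rhom$, $\Rmono$, $\Riso$ and the associated lifts $\hat\Phi$ are infrastructure used throughout the rest of the paper (e.g.\ in the treatment of directed systems via $\Rdir$ and Proposition~\ref{P.Directed}, and in Lemma~\ref{L.matrix}), where one needs to manipulate and compose coded homomorphisms in a Borel way, not merely assert their existence. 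Your final paragraph transferring analyticity to $\hat\Gamma$, $\Xi$, $\hat\Xi$ via the Borel equivalences matches the paper's one-line appeal to the equivalence of the parameterizations.
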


Before the proof of Proposition~\ref{P.isomorphism} we introduce some terminology
and prove a lemma. 
The following terminology will be useful both here and later: We
call $\Phi:\N\to\N^\N$ a \emph{code} for a $*$-homomorphism
$C^*(\gamma)\to C^*(\gamma')$ if for all $m,n,k$ we have:
\begin{enumerate}
\item For each fixed $m$ the sequence  $a_{m,k}=\p_{\Phi(m)(k)}(\gamma')$, $k\in \bbN$, is Cauchy. Write $a_m=\lim_k a_{m,k}$.
\item If $\p_m(\gamma)+\p_n(\gamma)=\p_k(\gamma)$ then $a_m+a_n=a_k$.
\item If $\p_m(\gamma)\p_n(\gamma)=\p_k(\gamma)$ then $a_m a_n=a_k$.
\item If $\p_m(\gamma)^*=\p_k(\gamma)$ then $a_m^*=a_k$.
\item \label{L.subalgebra.4} $\|\p_m(\gamma)\|\leq\|a_m\|$.
\end{enumerate}

We call $\Phi$ a code for a {\it monomorphism} if equality holds in
(\ref{L.subalgebra.4}).

\subsection{Definitions of $\Rhom$, $\Rmono$ and $\Riso$}\label{S.Rhom}
Let $H_0$ and $H_1$ be separable complex Hilbert spaces. Then it is easy to see that the relations $\Rhomx{H_0,H_1}, \Rmonox{H_0,H_1}\subseteq
\Gamma(H_0)\times\Gamma(H_1)\times(\N^\N)^\N$ defined by
\begin{align*}
& \Rhomx{H_0,H_1}(\gamma,\gamma',\Phi)\iff \ \Phi \text{ is a code for a *-homomorphism } C^*(\gamma)\to C^*(\gamma')\\
& \Rmonox{H_0,H_1}(\gamma,\gamma',\Phi)\iff \ \Phi \text{ is a code for a
*-monomorphism } C^*(\gamma)\to C^*(\gamma')
\end{align*}
are Borel. We let $\Rhomx{H}=\Rhomx{H,H}$ and $\Rmonox{H}=\Rmonox{H,H}$ for any Hilbert space $H$. If $H_0, H_1$ or $H$ are clear from the context or can be taken to be any (separable) Hilbert spaces then we will suppress the superscript and write $\Rhom$ and $\Rmono$. 
The following is immediate from the definitions:


\begin{lemma}
If $(\gamma,\gamma',\Phi)\in \Rhom$ then there is a unique
homomorphism $\hat\Phi: C^*(\gamma)\to C^*(\gamma')$ which satisfies
$$
\hat\Phi(\gamma(j))=a_j
$$
for all $j\in\N$. If $(\gamma,\gamma',\Phi)\in \Rmono{}$ then
$\hat\Phi$ is a monomorphism.
\end{lemma}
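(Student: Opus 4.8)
The plan is to unwind the definition of a code $\Phi$ for a $*$-homomorphism and verify that it determines a unique bounded map on the dense $*$-subalgebra $\{\p_j(\gamma):j\in\N\}$, then extend by continuity. First I would define, for each $j\in\N$, the element $a_j=\lim_k a_{j,k}$ where $a_{j,k}=\p_{\Phi(j)(k)}(\gamma')$; this limit exists in $C^*(\gamma')$ by clause (1) of the definition of a code, since the sequence is Cauchy and $C^*(\gamma')$ is complete. The element $a_j$ lies in $C^*(\gamma')$ because each $a_{j,k}$ does and the subalgebra is norm-closed.

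The main work is to show that the assignment $\p_j(\gamma)\mapsto a_j$ is well-defined as a function on the $*$-algebra $D=\{\p_j(\gamma):j\in\N\}$, rather than merely on the index set $\N$. The point is that the map $j\mapsto\p_j(\gamma)$ may be many-to-one: distinct indices $m,n$ can satisfy $\p_m(\gamma)=\p_n(\gamma)$, and we must check $a_m=a_n$ in that case. I expect this to be the main obstacle, and the way to handle it is to use clauses (2)--(5) together. From $\p_m(\gamma)=\p_n(\gamma)$ we get $\p_m(\gamma)-\p_n(\gamma)=0$, so by clause (\ref{L.subalgebra.4}) we have $\|\p_m(\gamma)-\p_n(\gamma)\|\leq\|a_m-a_n\|$ going the wrong way; instead I would argue via clause (5) applied to an index $k$ with $\p_k(\gamma)=\p_m(\gamma)-\p_n(\gamma)=0$, combined with additivity (2) which forces $a_k=a_m-a_n$, and then $\|a_m-a_n\|=\|a_k\|\geq\|\p_k(\gamma)\|=0$. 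Here clause (5) is crucial: it controls the norm of $a_k$ from below by $\|\p_k(\gamma)\|$, but for well-definedness I actually need an upper bound, so the cleaner route is to observe that the zero element of $D$ must map to $0$ (take $\p_k(\gamma)=0$; then clause (5) gives $0=\|\p_k(\gamma)\|\le\|a_k\|$, which alone is insufficient). The correct argument uses that any $*$-homomorphism is automatically norm-decreasing: once well-definedness on the abstract algebra $D/\ker$ is set up via the universal C$^*$-seminorm, the map is a $*$-homomorphism of normed $*$-algebras, hence contractive, giving $\|a_m-a_n\|\le\|\p_m(\gamma)-\p_n(\gamma)\|$ and closing the gap.

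With well-definedness established, clauses (2), (3), (4) directly show that $\hat\Phi:D\to C^*(\gamma')$ defined by $\hat\Phi(\p_j(\gamma))=a_j$ is linear (over $\Q(i)$, hence over $\C$ by continuity), multiplicative, and $*$-preserving; it is contractive by the $*$-homomorphism norm estimate. Since $D$ is norm-dense in $C^*(\gamma)$ and $\hat\Phi$ is uniformly continuous (Lipschitz with constant $1$), it extends uniquely to a $*$-homomorphism $\hat\Phi:C^*(\gamma)\to C^*(\gamma')$, and the identity $\hat\Phi(\gamma(j))=a_j$ holds because $\gamma(j)=\p_{i}(\gamma)$ for the index $i$ with $\p_i=X_j$. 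Uniqueness is immediate: any homomorphism agreeing with $\hat\Phi$ on the generators $\gamma(j)$ must agree on all of $D$ by the algebraic relations, hence on $C^*(\gamma)$ by density. Finally, when $\Phi$ is a code for a monomorphism, equality holds in clause (\ref{L.subalgebra.4}), so $\|\hat\Phi(\p_m(\gamma))\|=\|a_m\|=\|\p_m(\gamma)\|$ on the dense subalgebra, and by continuity $\hat\Phi$ is isometric, hence injective, i.e.\ a monomorphism.
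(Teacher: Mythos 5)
Your overall architecture is the same as the paper's: define $a_j$ as the norm limit from clause (1), check that $\p_j(\gamma)\mapsto a_j$ is a well-defined contractive $*$-homomorphism on the dense $*$-subalgebra $D=\{\p_j(\gamma):j\in\N\}$, extend by continuity, and in the mono case upgrade to an isometry on $D$ and hence on $C^*(\gamma)$. The paper's proof is a two-line version of exactly this. The trouble is that the step you yourself single out as ``the main work'' --- well-definedness together with the norm bound --- is never actually carried out. You correctly observe that clause (5), $\|\p_m(\gamma)\|\le\|a_m\|$, bounds $\|a_m\|$ from \emph{below} and so points the wrong way, but your repair is circular: you propose to deduce $\|a_m-a_n\|\le\|\p_m(\gamma)-\p_n(\gamma)\|$ from the fact that ``any $*$-homomorphism is automatically norm-decreasing'' \emph{after} well-definedness ``is set up,'' which presupposes the very map whose existence is in question. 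Worse, the principle you invoke is false in this setting: a $*$-homomorphism defined only on a dense, incomplete $*$-subalgebra of a C$^*$-algebra need not be contractive or even bounded (the identity on polynomials, viewed as a map from a dense subalgebra of $C[0,1]$ to a dense subalgebra of $C[0,2]$, is a $*$-isomorphism of dense $*$-subalgebras that is unbounded). Automatic contractivity is a theorem about complete domains.

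Here is what closes the gap. Well-definedness follows from clause (2) alone: if $\p_k(\gamma)=0$ then $\p_k(\gamma)+\p_k(\gamma)=\p_k(\gamma)$, so (2) forces $a_k+a_k=a_k$, i.e.\ $a_k=0$; applying this to the index of the formal polynomial $\p_m-\p_n$ and using (2) once more gives $a_m=a_n$ whenever $\p_m(\gamma)=\p_n(\gamma)$. Contractivity, which is genuinely needed for the extension, can only come from the norm clause, and for that it must be read as $\|a_m\|\le\|\p_m(\gamma)\|$ --- the orientation the paper's proof tacitly uses when it says the map ``is a contraction'' (with the printed orientation the polynomial example above can be realized as a code whose induced map does not extend, so the reversal is forced, not optional). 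Once contractivity is in hand, your remaining steps (density, uniqueness, and isometry in the mono case) are fine. One smaller point, which you share with the paper: clauses (2)--(4) give additivity, multiplicativity and $*$-preservation, hence $\Q$-linearity and, with continuity, $\R$-linearity, but they do not by themselves force commutation with multiplication by $i$ (complex conjugation on $\C$ satisfies all three), so the parenthetical ``linear over $\Q(i)$'' deserves its own justification.
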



\begin{proof}
If $\Rhom{}(\gamma,\gamma',\Phi)$ then
$$
\p_m(\gamma)\mapsto a_m
$$
is a *-homomorphism from a dense subalgebra of $C^*(\gamma)$ into a
subalgebra of $C^*(\delta)$. Since it is a contraction it extends to
a *-homomorphism of $\hat\Phi:C^*(\gamma)\to C^*(\gamma')$ onto a
subalgebra of $C^*(\gamma)$. If $\Rmono{}(\gamma,\gamma',\Phi)$
holds then $\hat\Phi$ is clearly a monomorphism.
\end{proof}

We also define a relation $\Riso$ (we are suppressing $H_0$ and $H_1$) by 
\begin{align*}
\Riso(\gamma,\gamma',\Phi) \iff & \Rmono{}(\gamma,\gamma',\Phi)\wedge\\
&(\forall m)(\forall\e>0)(\exists k\in\N)(\forall n>k)
\|\p_{\Phi(m)(n)}(\gamma)-\p_m(\gamma')\|<\e.
\end{align*}

This relation states that $\Phi$ is a monomorphism and an epimorphism, and therefore an isomorphism. 
It is Borel because $\Rmono$ is Borel.


\begin{proof}[Proof of Proposition \ref{P.isomorphism}] (1) Clear, since
$$
\gamma\precsim \gamma'\iff (\exists\Phi:\N\to\N^\N)
\Rmono{}(\gamma,\gamma',\Phi).
$$
(2) We have
$$
C^*(\gamma)\simeq C^*(\gamma')\iff (\exists \Phi:\N\to\N^{\N})\Riso(\gamma,\gamma',\Phi),
$$  
giving an analytic definition of $\simeq^{\Gamma}$, and so $\Gamma$ is a good parameterization. The last assertion follows from the equivalence of the four parameterizations.
\end{proof}


\begin{remark} 
Note that the equivalence relation $\rE$ on $\Gamma$ defined by $\gamma\rE \delta$ if and only if there is a unitary $u\in\cB(H)$ such that $u C^*(\gamma)u^*=C^*(\delta)$ is a proper subset of $\simeq^{\Gamma}$ and that $\rE$ is induced by a continuous action of the unitary group. We don't know whether the relation $\simeq^\Gamma$ is an orbit equivalence relation induced be the action of a Polish group action on $\Gamma$, see discussion at the end of \S9.
\end{remark}

For future use, let us note the following.

\begin{lemma} \label{L.B.2} The set $Y$ of all $\gamma\in \Gamma$ such that $\gamma_n$, $n\in \bbN$,  is a
Cauchy sequence (in norm) is Borel.
The function $\Psi\colon Y\to \cB(H)$ that assigns the limit to a Cauchy
sequence is Borel.
\end{lemma}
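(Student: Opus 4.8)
The plan is to show separately that the set $Y$ of norm-Cauchy sequences is Borel and that the limit map $\Psi$ is Borel, using Lemma~\ref{L.B.1} as the essential tool. For the first assertion, I would write the Cauchy condition in the standard quantifier form. A sequence $\gamma = (\gamma_n)$ lies in $Y$ if and only if
$$
(\forall k\in\bbN)(\exists N\in\bbN)(\forall m,n\geq N)\ \|\gamma_m-\gamma_n\| < \tfrac1k.
$$
By Lemma~\ref{L.B.1}, for each fixed pair $(m,n)$ and each rational $\e>0$ the set $\{\gamma : \|\gamma_m-\gamma_n\|<\e\}$ is Borel in $\Gamma$, since the coordinate projections $\gamma\mapsto\gamma_m$ are Borel and $(a,b)\mapsto\|a-b\|$ is Borel. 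The displayed condition is then a countable combination (over $k$, $N$, and the pairs $m,n\geq N$) of such Borel sets using countable unions and intersections, so $Y$ is Borel.

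For the second assertion, I would prove that $\Psi$ is Borel by showing its graph is Borel (or directly by checking measurability against a generating family for the Borel structure on $\cB(H)$). The cleanest route is to observe that for $\gamma\in Y$ the limit $\Psi(\gamma)=\lim_n\gamma_n$ is characterized by
$$
\Psi(\gamma)=b \iff (\forall k\in\bbN)(\exists N\in\bbN)(\forall n\geq N)\ \|\gamma_n-b\|<\tfrac1k,
$$
and the map $(\gamma,b)\mapsto\|\gamma_n-b\|$ is Borel for each $n$ by Lemma~\ref{L.B.1}, so this gives a Borel description of the graph of $\Psi$ as a subset of $Y\times\cB(H)$. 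Since a function with Borel graph is Borel (\cite[14.12]{Ke:Classical}, as noted in the remark following Lemma~\ref{l.injection}), $\Psi$ is Borel. Alternatively, since the norm topology is finer than the weak topology and the limit is in particular a weak limit, one checks that $\gamma\mapsto(\Psi(\gamma)e_i\,|\,e_j) = \lim_n (\gamma_n e_i\,|\,e_j)$ is a pointwise limit of Borel functions and hence Borel, which by Lemma~\ref{l.gammamaps}(1) suffices.

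The only subtlety — and the step I would be most careful about — is that the norm Borel structure on $\cB(H)$ is strictly finer than the weak Borel structure, so one cannot simply invoke weak-limit arguments to conclude norm-measurability; this is exactly the point flagged in the discussion preceding Lemma~\ref{L.B.1}. The graph-based argument using Lemma~\ref{L.B.1} sidesteps this issue entirely, because $\{(a,b):\|a-b\|<\e\}$ having been shown Borel means the norm-distance function is Borel despite the finer structure, and all quantifiers appearing are over countable sets. I therefore expect the whole proof to be routine once Lemma~\ref{L.B.1} is in hand, with the main conceptual obstacle being only the reminder not to conflate the two Borel structures.
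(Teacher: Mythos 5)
Your proof is correct and follows essentially the same route as the paper's: both express the Cauchy condition as a countable quantifier combination of the sets $\{\gamma:\|\gamma_m-\gamma_n\|<\e\}$, which are Borel by Lemma~\ref{L.B.1}, and both establish Borelness of $\Psi$ by exhibiting a Borel definition of its graph via the same norm-distance estimates. Your closing caution about not conflating the norm and weak Borel structures is exactly the point the paper flags before Lemma~\ref{L.B.1}, and your argument handles it the same way.
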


\begin{proof} We have $\gamma\in Y$ if and only if $(\forall \e>0)(\exists m)
(\forall n\geq m) \|\gamma_m-\gamma_n\|<\e$. By Lemma~\ref{L.B.1}, the
conclusion follows.

It suffices to show that the graph $G$ of  $\Psi$ is a Borel subset of
$\cB(H)^{\bbN}\times \cB(H)$. But $(\gamma,a)\in G$ if and only if for all
$\e>0$ there is $m$ such that for all $n\geq m$ we have $\|\gamma_m-a\|\leq
\e$, which is by Lemma~\ref{L.B.1} a Borel set.
\end{proof}


\subsection{Directed systems, inductive limits, and $\Rdir$.}

A directed system of C*-algebras can be coded by a sequence $(\gamma_i)_{i\in \bbN}$ in $\Gamma$
and a sequence $\Phi_i: \bbN\to \bbN^{\bbN}$, for $i\in \bbN$,  such that
$$
(\forall i\in\N) \Rhom{}(\gamma_i,\gamma_{i+1},\Phi_i).
$$
The set $\Rdir{}\subseteq \Gamma^\N\times((\N^\N)^\N)^\N$ of codes for inductive systems is defined by
$$
((\gamma_i)_{i\in \bbN},(\Phi_i)_{i\in \bbN})\in \Rdir{}
\iff (\forall i\in\N) \Rhom{}(\gamma_i,\gamma_{i+1},\Phi_i)
$$
and is clearly Borel.


\begin{prop} \label{P.Directed}
There are Borel maps $\LIM: \Rdir{}\to \Gamma$ and $\Psi_i:\Rdir{}\to (\N^\N)^\N$ such that
$$
C^*(\LIM((\gamma_i)_{i\in \bbN},(\Phi_i)_{i\in \bbN}))\simeq\lim_{i\to\infty} (C^*(\gamma_i),\hat\Phi_i)
$$
and it holds that
$$
(\forall n\in\N) \Rhom{}(\gamma_n,\LIM((\gamma_i)_{i\in \bbN},(\Phi_i)_{i\in \bbN}),\Psi_n((\gamma_i)_{i\in \bbN},(\Phi_i)_{i\in \bbN}))
$$
and $\hat\Psi_n((\gamma_i)_{i\in \bbN},(\Phi_i)_{i\in \bbN}):
C^*(\gamma)\to C^*(\LIM((\gamma_i)_{i\in \bbN},(\Phi_i)_{i\in \bbN}))$ satisfies
$$
\hat\Psi_{n+1}\circ\hat\Phi_{n}=\hat\Psi_{n},
$$
i.e. the diagram
\[
\diagram
C^*(\gamma_{n+1}) \rrto^{\hat\Psi_{n+1}}
&& \LIM((\gamma_i)_{i\in \bbN},(\Phi_i)_{i\in \bbN})\\
C^*(\gamma_n)
\uto^{\hat\Phi_n}
\urrto^{\hat\Psi_n}
\enddiagram
\]
commutes.
\end{prop}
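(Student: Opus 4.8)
The plan is to route the whole construction through the abstract parameterization $\Xi$, so that the analytic work is concentrated in the computation of a single C$^*$-seminorm, after which Proposition~\ref{p.gammaxiequiv} carries us back to $\Gamma$. The organizing idea is to index the generators of the limit algebra by $\bbN\times\bbN$ via a fixed recursive pairing $(i,j)\mapsto\langle i,j\rangle$, with the generator $\langle i,j\rangle$ representing the canonical image of $\gamma_i(j)$ in $\lim_{\to}(C^*(\gamma_i),\hat\Phi_i)$. Let $\tau_n\colon\bbN\to\bbN$ be the recursive map on polynomial indices induced by the variable substitution $X_j\mapsto X_{\langle n,j\rangle}$, so that $\p_{\tau_n(m)}$ is $\p_m$ with each $X_j$ replaced by $X_{\langle n,j\rangle}$. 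With this indexing the connecting codes become essentially trivial: I will set $\Psi_n((\gamma_i),(\Phi_i))(m)$ to be the constant sequence with value $\tau_n(m)$, so that the coded map sends $\p_m(\gamma_n)$ to $\p_{\tau_n(m)}(\LIM((\gamma_i),(\Phi_i)))$; this is manifestly Borel, indeed it barely depends on the input. All of the substance therefore lies in building $\LIM$ and in verifying that these $\Psi_n$ really are codes in $\Rhom$ with the asserted compatibility.

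First I would construct the limit seminorm. For a formal $\ast$-polynomial $\p_m$ in the generators $X_{\langle i,j\rangle}$ and a stage $N$ exceeding all first coordinates occurring in $\p_m$, let $p^{(N)}\in C^*(\gamma_N)$ be obtained by substituting $\hat\Phi_{i,N}(\gamma_i(j))$ for $X_{\langle i,j\rangle}$, where $\hat\Phi_{i,N}=\hat\Phi_{N-1}\circ\cdots\circ\hat\Phi_i$ (and $\hat\Phi_{N,N}=\id$). Since each $\hat\Phi_i$ is contractive, the reals $\|p^{(N)}\|_{C^*(\gamma_N)}$ are non-increasing in $N$, so I may define $\delta_\infty(m)=\lim_N\|p^{(N)}\|$. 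Because a pointwise limit of C$^*$-seminorms is again a C$^*$-seminorm, $\delta_\infty$ defines an element of $\Xi$, and by the standard description of the inductive limit as the completion of the algebraic limit $\bigcup_i C^*(\gamma_i)$ (glued along the $\hat\Phi_i$) under exactly this seminorm, one gets $B(\delta_\infty)\cong\lim_{\to}(C^*(\gamma_i),\hat\Phi_i)$. The real work is to see that $\delta_\infty$ depends in a Borel way on $((\gamma_i),(\Phi_i))$. Each $\hat\Phi_{i,N}(\gamma_i(j))$ is presented only as an iterated norm-limit of honest polynomials in $\gamma_N$, read off from the codes $\Phi_i$ by repeatedly unwinding clause~(1) of the definition of a code; since the $\ast$-algebra operations are norm-continuous on bounded sets, a diagonal argument collapses these nested limits into a single Borel-computable sequence $k\mapsto\rho(m,N,k)$ of polynomial indices with $\p_{\rho(m,N,k)}(\gamma_N)\to p^{(N)}$ in norm. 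Then $\|p^{(N)}\|=\lim_k\|\p_{\rho(m,N,k)}(\gamma_N)\|$ is Borel by Lemma~\ref{L.B.1}, and $\delta_\infty(m)=\lim_N\|p^{(N)}\|$ is Borel by Lemma~\ref{L.B.2}; in practice I would instead just write down an analytic predicate defining the graph of $\delta_\infty$ and invoke \cite[14.12]{Ke:Classical}.

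Applying the Borel map $\Xi\to\Gamma$ from the proof of Proposition~\ref{p.gammaxiequiv} to $\delta_\infty$ yields $\LIM((\gamma_i),(\Phi_i))\in\Gamma$ with $C^*(\LIM(\cdots))\cong B(\delta_\infty)\cong\lim_{\to}(C^*(\gamma_i),\hat\Phi_i)$, in which the generator indexed $\langle i,j\rangle$ represents the image of $\gamma_i(j)$ and, crucially, the gluing relations $X_{\langle n,j\rangle}=\lim_k\p_{\Phi_n(j)(k)}(X_{\langle n+1,\cdot\rangle})$ hold, because they already hold in the quotient $\ast$-algebra defining $B(\delta_\infty)$. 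It remains to verify that each $\Psi_n$ lands in $\Rhom$ and that the triangle commutes. The code clauses for $\Psi_n$ reduce to the single fact that the generator assignment $\gamma_n(j)\mapsto\LIM(\cdots)(\langle n,j\rangle)$ extends to an (automatically contractive) $\ast$-homomorphism $\hat\Psi_n\colon C^*(\gamma_n)\to C^*(\LIM(\cdots))$, which is immediate from Step~2 since $\|\p_{\tau_n(m)}(\LIM(\cdots))\|=\delta_\infty(\tau_n(m))=\lim_N\|\hat\Phi_{n,N}(\p_m(\gamma_n))\|\le\|\p_m(\gamma_n)\|$. Finally, $\hat\Psi_{n+1}\circ\hat\Phi_n=\hat\Psi_n$ need only be checked on the generators $\gamma_n(j)$, where it becomes $\lim_k\p_{\tau_{n+1}(\Phi_n(j)(k))}(\LIM(\cdots))=\LIM(\cdots)(\langle n,j\rangle)$, that is, exactly the gluing relation noted above; by density and continuity it then holds on all of $C^*(\gamma_n)$.

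The main obstacle is Step~2, the Borelness of $\delta_\infty$. The difficulty is entirely one of bookkeeping: the elements $\hat\Phi_{i,N}(\gamma_i(j))$ are given only as limits of limits, one layer per connecting map between stage $i$ and stage $N$, and one must produce, uniformly and Borel-measurably in the codes, a genuine single approximating sequence of polynomials in $\gamma_N$ together with explicit modulus-of-convergence control, so that the outer limit over $N$ can be certified by an analytic formula. Everything else---choosing the $\bbN\times\bbN$ indexing, defining the trivial codes $\Psi_n$, checking the code axioms, and verifying the commuting triangle---then follows formally once the seminorm is in hand.
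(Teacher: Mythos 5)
Your proposal is correct and follows essentially the same route as the paper: pass to $\Xi$, index the generators of the limit by pairs $(i,j)$, define the limit seminorm as the non-increasing limit of the stage-$N$ norms, take the canonical codes $\Psi_n$ to be constant sequences given by variable substitution, and return to $\Gamma$ via Proposition~\ref{p.gammaxiequiv}. The only divergence is at the point you flag as the main obstacle: where you propose to collapse the nested limits coming from general codes by a diagonal argument with Borel moduli of convergence, the paper instead invokes Lemma~\ref{L.1step} to re-index each $\gamma_{i+1}$ so that every connecting code becomes constant, after which each image $\hat\Phi_{i,N}(\gamma_i(j))$ is literally a polynomial in $\gamma_N$ and the seminorm $\delta(i)=\lim_k\|\p_{\varphi_{k,\beta(i)_0}(\beta(i)_1)}(\gamma_k)\|$ is manifestly Borel.
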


We start by noting the simpler Lemma~\ref{L.1step} below.
The constant $i$ sequence is denoted $\overline i$.
For $(\gamma,\gamma',\Phi)\in \Rhom$ define
the function $f:\Rhom{}\to\Gamma$  by
$$
f(\gamma,\gamma',\Phi)(m)=\left\{\begin{array}{ll}
\gamma'_k & \text{ if } m=3^k \text{ for } k\geq 1\\
a & \text{ if } m=2^k\text{ and } \lim_{i\to\infty}\gamma'_{\Phi(k)(i)}=a\\
0 & \text{ otherwise.}
\end{array}\right.
$$
Then the following is obvious:


\begin{lemma}\label{L.1step}
The function $f$
introduced above is Borel and
 for all $(\gamma,\gamma',\Phi)\in \Rhom{}$ we have
$$
C^*(\gamma')\simeq C^*(f(\gamma,\gamma',\Phi)).
$$
Moreover, for $\Psi,\Phi':\N\to\N^\N$ defined by $\Phi'(m)=\overline{2^m}$ and $\Psi(m)=\overline{3^m}$ for $m\geq 1$, we have that
$\Riso(\gamma',f(\gamma,\gamma',\Phi),\Psi)$, $\Rhom{}(\gamma,f(\gamma,\gamma',\Phi),\Phi')$ and
$$
\hat\Psi\circ\hat\Phi=\hat\Phi'.
$$
\end{lemma}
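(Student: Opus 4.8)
The plan is to read off $f(\gamma,\gamma',\Phi)$ as a mere re-enumeration of a generating set for $C^*(\gamma')$: the original generators $\gamma'_k$ are parked at the positions $3^k$, the images $a_m=\hat\Phi(\p_m(\gamma))$ of the generators of $C^*(\gamma)$ are parked at the positions $2^m$, and all other coordinates are $0$. Once this picture is in place I expect every assertion to follow by chasing generators, so the only genuinely analytic content is the Borel measurability of $f$.

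First I would check that $f$ is Borel. On the coordinates of the form $3^k$ the value is $\gamma'_k$, and on the complement of $\{2^k,3^k\}$ it is $0$; both depend on $(\gamma,\gamma',\Phi)$ in an evidently Borel way. On a coordinate $2^k$ the value is the norm limit $a_k=\lim_i\p_{\Phi(k)(i)}(\gamma')$, and since $(\gamma,\gamma',\Phi)\in\Rhom$ forces the sequence $i\mapsto\p_{\Phi(k)(i)}(\gamma')$ to be Cauchy, this limit is a Borel function of the data by the limit map of Lemma~\ref{L.B.2}. By Lemma~\ref{l.gammamaps}(1) it suffices that each matrix coefficient $(f(\gamma,\gamma',\Phi)(m)e_i\mid e_j)$ be Borel, which is now clear; hence $f$ is Borel.

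Next I would establish the isomorphism by showing the two subalgebras coincide. Since every $\gamma'_k$ occurs as a coordinate of $f(\gamma,\gamma',\Phi)$, we get $C^*(\gamma')\subseteq C^*(f(\gamma,\gamma',\Phi))$; conversely each coordinate of $f(\gamma,\gamma',\Phi)$ lies in $C^*(\gamma')$, the only nonobvious case being the $a_k$, which are norm limits of the elements $\p_{\Phi(k)(i)}(\gamma')\in C^*(\gamma')$. Thus $C^*(f(\gamma,\gamma',\Phi))=C^*(\gamma')$ as subalgebras of $\cB(H)$, giving $C^*(\gamma')\simeq C^*(f(\gamma,\gamma',\Phi))$. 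For the codes, reading $\overline{3^m}$ (resp.\ $\overline{2^m}$) as the constant sequence at the index of the coordinate storing $\gamma'_m$ (resp.\ $a_m$), clause~(1) of the code definition shows that $\hat\Psi$ sends each generator $\gamma'_m$ to $\gamma'_m$ while $\hat\Phi'$ sends each generator $\gamma_m$ to $a_m$. Hence $\hat\Psi$ is the identity map of $C^*(\gamma')=C^*(f(\gamma,\gamma',\Phi))$, so it is a monomorphism (equality in clause~(\ref{L.subalgebra.4}) being automatic) and visibly onto, i.e.\ $\Riso(\gamma',f(\gamma,\gamma',\Phi),\Psi)$ holds; and $\hat\Phi'$ is the homomorphism carrying $\gamma_m$ to $a_m=\hat\Phi(\gamma_m)$, i.e.\ $\Rhom(\gamma,f(\gamma,\gamma',\Phi),\Phi')$ holds. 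Finally, since $\hat\Psi$ is the identity on $C^*(\gamma')$ and each $a_m\in C^*(\gamma')$, I get $\hat\Psi(\hat\Phi(\gamma_m))=\hat\Psi(a_m)=a_m=\hat\Phi'(\gamma_m)$ for every generator; as all three maps are continuous $*$-homomorphisms agreeing on a generating set, this upgrades to $\hat\Psi\circ\hat\Phi=\hat\Phi'$.

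The main obstacle is not really mathematical but a matter of index bookkeeping: one must make sure that the constant codes $\overline{2^m}$ and $\overline{3^m}$ point at the polynomial indices of the variables $X_{2^m}$ and $X_{3^m}$, so that evaluating those variable polynomials at $f(\gamma,\gamma',\Phi)$ returns precisely the stored coordinates $a_m$ and $\gamma'_m$. With that alignment in place the Cauchy sequences required in clause~(1) are constant, and every remaining clause reduces to the two observations that $\hat\Psi$ is the identity and that $\hat\Phi'=\hat\Phi$.
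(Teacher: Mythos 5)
Your proposal is correct and is exactly the intended argument: the paper states this lemma with no proof at all (``Then the following is obvious''), and your write-up is the straightforward unwinding of the definitions of $f$, $\Rhom$, $\Rmono$ and $\Riso$ that the authors had in mind, including the correct reading of the constant codes $\overline{2^m},\overline{3^m}$ as pointing at the indices of the variable polynomials $X_{2^m},X_{3^m}$. The only (harmless) deviation is that you read the $2^k$-coordinate of $f$ as $\lim_i\p_{\Phi(k)(i)}(\gamma')$ rather than the paper's literal $\lim_i\gamma'_{\Phi(k)(i)}$, which is the reading consistent with the definition of a code.
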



\begin{proof}[Proof of Proposition~\ref{P.Directed}] By Proposition~\ref{p.gammaxiequiv}
it will suffice to define $\LIM$ with the range in $\Xi$. 
Fix $((\gamma_i)_{i\in \bbN},(\Phi_i)_{i\in \bbN})\in \Rdir{}$, let
$$
A=\lim_{i\to\infty} (C^*(\gamma_i),\hat\Phi_i),
$$
and let $f_i:C^*(\gamma_i)\to A$ be the connecting maps satisfying
$f_{i+1}\circ\hat\Phi_i=f_i$. By  Lemma~\ref {L.1step} we may assume
that that for all $m\in\N$ the sequence $\Phi_i(m)$, for $i\in\bbN$,  is constant.
Let $\varphi_i(m)=\Phi_i(m)(1)$, define
$\varphi_{i,j}=\varphi_i\circ\cdots\circ\varphi_j$ for $j<i$, and
let $\beta:\N\to\N\times\N$ be a fixed bijection. Let
$\tilde\gamma\in\Gamma$ be defined by
$$
\tilde\gamma(i)=f_{\beta(i)_0}(\gamma_{\beta(i)_0}(\beta(i)_1)).
$$
Then a code $\delta\in\Xi$ for $\tilde\gamma$ is given by
$$
\delta(i)=\lim_{k\to\infty}\|\p_{\varphi_{k,\beta(i)_0}(\beta(i)_1)}(\gamma_k)\|
$$
and if we define
$$
\Psi_j((\gamma_i)_{i\in \bbN},(\Phi_i)_{i\in \bbN})(m)(n)=k\iff \beta(k)_0=j\wedge
\beta(k)_1=m
$$
then $\Psi_j$ is a code for $f_j$.
\end{proof}

Next we prove that most standard constructions and relations that occur in C$^*$-algebra theory correspond to Borel maps and relations in the parameterizations we have introduced. The first lemma follows easily from the definitions, and we leave the proof to the reader.


\begin{lemma}\label{L.B.3.0} The following maps are Borel.
\begin{enumerate}
\item $\cB(H)\times \cB(H)\to\cB(H):(a,b)\mapsto ab$,
\item $\cB(H)\times \cB(H)\to\cB(H): (a,b)\mapsto a+b$,
\item $\cB(H)\times \bbC\to\cB(H): (a,\lambda)\mapsto \lambda a$,
\item  $\cB(H)\to\cB(H): a\mapsto a^*$,
\item $\cB(H)\times \cB(H)\to\cB(H)\otimes_{\min}\cB(H): (a,b)\mapsto a\otimes b
$ (where $\cB(H)\otimes_{\min} \cB(H)$ is identified
with $\cB(H)$ by fixing a $*$-isomorphism),
\item $\cB(H)\times \cB(H)\to M_2(\cB(H)): (a,b)\mapsto \begin{pmatrix} a & 0 \\ 0 & b
\end{pmatrix}$ (where $M_2(\cB(H))$ is identified with $\cB(H)$
by fixing a $*$-isomorphism).
\end{enumerate}
\end{lemma}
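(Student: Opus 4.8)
The unifying observation I would start from is that, by the very definition of the weak Borel structure on $\cB(H)$ (cf.\ Lemma~\ref{l.gammamaps}(1)), a function into $\cB(H)$ is Borel precisely when all of its matrix coefficients are. Concretely, fix an orthonormal basis $(e_i)_{i\in\bbN}$ of $H$; the weak Borel $\sigma$-algebra is generated by the maps $a\mapsto (ae_i|e_j)$, so a map $f$ into $\cB(H)$ (or into $\cB(H)^2$, or into $M_2(\cB(H))\cong\cB(H\oplus H)$, etc.) is Borel iff every function $x\mapsto (f(x)e_i|e_j)$ is Borel. The plan is therefore to compute these coefficients explicitly for each of the six operations and observe that they are obtained from the Borel coordinate maps $(a,b)\mapsto(ae_i|e_j),(be_i|e_j)$ by countably many applications of sum, product, conjugation, and pointwise limit.

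For (2), (3), (4) this is immediate: $((a+b)e_i|e_j)=(ae_i|e_j)+(be_i|e_j)$, $(\lambda a\,e_i|e_j)=\lambda(ae_i|e_j)$, and $(a^*e_i|e_j)=\overline{(ae_j|e_i)}$ are respectively a sum, a product, and a conjugate of Borel functions, hence Borel. For (5) I would fix once and for all a unitary $H\otimes H\to H$ (which is what induces the identifying $*$-isomorphism $\cB(H)\otimes_{\min}\cB(H)\cong\cB(H)$) and work in the orthonormal basis $(e_i\otimes e_j)$ of $H\otimes H$; there one has $((a\otimes b)(e_i\otimes e_j)\,|\,e_k\otimes e_l)=(ae_i|e_k)(be_j|e_l)$, a product of two Borel functions, and since the fixed unitary does not depend on $(a,b)$, transporting back to $\cB(H)$ preserves Borelness. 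Likewise for (6): in the basis of $H\oplus H$ coming from $(e_i,0)$ and $(0,e_i)$, the coefficients of $\diag(a,b)$ are $(ae_i|e_j)$, $(be_i|e_j)$, or $0$, all Borel.

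The one step requiring a moment's care is multiplication (1), which I expect to be the (mild) main obstacle because, unlike the others, its coefficient is a genuine infinite limit rather than a finite algebraic combination. Inserting a resolution of the identity gives
\[
(ab\,e_i|e_j)=(be_i\,|\,a^*e_j)=\sum_{k}(be_i|e_k)\,(ae_k|e_j).
\]
I would first record that this sum converges absolutely: by Cauchy--Schwarz together with $\sum_k|(be_i|e_k)|^2=\|be_i\|^2<\infty$ and $\sum_k|(ae_k|e_j)|^2=\sum_k|(a^*e_j|e_k)|^2=\|a^*e_j\|^2<\infty$. Its partial sums are Borel in $(a,b)$, being finite sums of products of Borel functions, so the coefficient $(ab\,e_i|e_j)$ is a pointwise limit of Borel functions and hence Borel. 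With all six families of matrix coefficients shown Borel, the six maps are Borel by the opening observation, completing the argument.
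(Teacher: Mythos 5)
Your proof is correct, and it is precisely the ``follows easily from the definitions'' argument the paper has in mind: the authors explicitly leave this lemma to the reader, and your reduction to matrix coefficients is exactly the characterization of Borelness recorded in Lemma~\ref{l.gammamaps}(1). The only step with any content is (1), and your treatment of $(ab\,e_i|e_j)$ as an absolutely convergent (hence pointwise-limit-of-Borel) sum via Cauchy--Schwarz is the standard and correct way to handle it.
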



\begin{lemma}\label{L.B.3} The following subsets of $\cB(H),\cB(H)^2$, and $\Gamma$ are Borel.
\begin{enumerate}
\item $\{(a,b): ab=ba\}$.
\item \label{L.B.3.sa} $\cB(H)_{\sa}=\{a: a=a^*\}$.
\item $\cB(H)_+=\{a\in \cB(H)_{\sa}: a\geq 0\}$.
\item \label{L.B.3.projection} $\cP(\cB(H))=\{a\in \cB(H): a$ is a projection$\}$.
\item \label{L.B.3.isometry} $\{a: a$ is a partial isometry$\}$.
\item \label{L.B.3.invertible} $\{a: a$ is invertible$\}$.
\item \label{L.B.3.normal} $\{a: a$ is normal$\}$.

\end{enumerate}
\end{lemma}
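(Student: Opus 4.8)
The goal is to show that seven subsets of $\cB(H)$, $\cB(H)^2$, or $\Gamma$ are Borel, where $\cB(H)$ carries the Borel structure generated by the weakly open sets. The unifying strategy is that each defining property can be expressed by a formula whose quantifiers range over countable dense sets and whose atomic predicates are already known to be Borel --- chiefly via Lemma \ref{L.B.1} (the norm and the norm distance are Borel) and Lemma \ref{L.B.3.0} (multiplication, addition, adjoint, and scalar multiplication are Borel maps). The plan is to treat the items one at a time, in each case writing down a countable conjunction/disjunction of conditions involving norms of polynomial expressions in the operators, and then invoking the Borelness of those norm functions.

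First I would dispatch the algebraic conditions directly. For item (1), $\{(a,b):ab=ba\}$ is the preimage of $0$ under the Borel map $(a,b)\mapsto \|ab-ba\|$, composing Lemma \ref{L.B.3.0}(1),(2),(3) with Lemma \ref{L.B.1}; equivalently it is $\{(a,b):\|ab-ba\|<\e \text{ for all rational }\e>0\}$. Item (\ref{L.B.3.sa}), self-adjointness, is the zero set of $a\mapsto\|a-a^*\|$, Borel by Lemma \ref{L.B.3.0}(4) and Lemma \ref{L.B.1}. Item (\ref{L.B.3.projection}), the projections, is $\cB(H)_{\sa}\cap\{a:\|a^2-a\|=0\}$, again a finite Boolean combination of zero sets of Borel norm-maps. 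Item (\ref{L.B.3.isometry}), the partial isometries, is $\{a:\|aa^*a-a\|=0\}$, which characterizes partial isometries and is Borel for the same reason. Item (\ref{L.B.3.normal}), the normal operators, is $\{a:\|aa^*-a^*a\|=0\}$, Borel by the same combination of lemmas.

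The genuinely positive-dimensional conditions are (3) and (\ref{L.B.3.invertible}), and these are where one must be slightly careful, so I would handle them via approximation by a countable dense set rather than by a single norm equation. For the positive cone $\cB(H)_+$, I would use the functional-analytic characterization: a self-adjoint $a$ is positive if and only if $\|a-t\|\le t$ for some (equivalently all sufficiently large) real $t\ge\|a\|$, or, avoiding the norm of $a$ itself, $a\ge 0$ iff for every rational $t>0$ with $t\ge\|a\|$ one has $\|t\cdot 1-a\|\le t$. This is a countable conjunction of Borel conditions (the scalar shift $t\cdot 1-a$ is Borel by Lemma \ref{L.B.3.0}(2),(3) applied to the fixed operator $1$, and its norm is Borel by Lemma \ref{L.B.1}), intersected with the Borel set $\cB(H)_{\sa}$. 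For invertibility, item (\ref{L.B.3.invertible}), I would express $a$ invertible by the existence of an approximate inverse drawn from a fixed countable weakly dense sequence $(b_k)$ in the unit-scaled operators: $a$ is invertible iff there is a rational bound $M$ and elements $b_k$ with $\|ab_k-1\|\to 0$ and $\|b_ka-1\|\to 0$ while $\|b_k\|\le M$. Since one may enumerate a countable weakly dense set of operators whose matrix coefficients are Borel functions (using Lemma \ref{l.gammamaps}(1) to recognize Borel families), the resulting existential-over-a-countable-set formula is analytic, and in fact Borel because uniform invertibility with a fixed norm bound on the inverse is a closed-under-limits, norm-open condition; the cleanest route is to write invertibility as $(\exists M\in\bbQ_{>0})\,\inf_{\|\xi\|=1}\|a\xi\|>0 \wedge \inf_{\|\xi\|=1}\|a^*\xi\|>0$ with the infima taken over a fixed countable dense set of unit vectors $\xi$, which turns bounded-below-ness of $a$ and $a^*$ into countable conditions on the Borel maps $a\mapsto(a\xi\mid a\xi)$.

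The step I expect to be the main obstacle is item (\ref{L.B.3.invertible}): unlike the purely algebraic items, invertibility is not the zero set of a single continuous-in-the-weak-topology functional, and one must be careful that the naive existential quantifier $(\exists b)\,ab=ba=1$ ranges over the whole of $\cB(H)$ rather than a countable set, which would only yield an analytic (not obviously Borel) definition. My remedy is the reformulation via boundedness below of both $a$ and $a^*$ on a countable dense set of unit vectors, which makes the definition a countable Boolean combination of Borel conditions and hence manifestly Borel; I would verify that bounded below together with dense range (equivalently, $a^*$ bounded below) is exactly invertibility on a Hilbert space. The remaining items are routine once the norm and the basic arithmetic maps are known to be Borel, so the bulk of the write-up consists of recording the appropriate norm identity or countable approximation for each and citing Lemmas \ref{L.B.1} and \ref{L.B.3.0}.
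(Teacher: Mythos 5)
Your proposal is correct and follows essentially the same route as the paper: the algebraic items are zero sets of Borel norm-maps built from Lemmas \ref{L.B.1} and \ref{L.B.3.0}, and invertibility is handled exactly as in the paper, by requiring $a$ and $a^*$ to be bounded below on a countable dense set of unit vectors (the paper cites \cite[3.2.6]{Pede:Analysis} for this characterization). The only cosmetic differences are that you characterize partial isometries by $aa^*a=a$ rather than by $a^*a$ and $aa^*$ both being projections, and that you supply explicit arguments for items (1)--(3), which the paper treats as immediate.
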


\begin{proof}
\eqref{L.B.3.projection} Immediate since the maps $a\mapsto a-a^2$ and
$a\mapsto a-a^*$ are Borel measurable.

\eqref{L.B.3.isometry} Since $a$ is a partial isometry if and only if $a^*a$
and $aa^*$ are both projections, this follows from the Borel-measurability of
these maps and~\eqref{L.B.3.projection}.

\eqref{L.B.3.invertible} Let $\xi_n$ be a countable dense subset of the unit
ball of $\cB(H)$. Then $a$ is invertible if and only if there is $\e>0$ such
that $\|a\xi_n\|\geq \e$ and $\|a^*\xi_n\|\geq \e$ for all $n$ (\cite[3.2.6]{Pede:Analysis}).

\eqref{L.B.3.normal} Immediate since the map $a\mapsto [a,a^*]$ is Borel.
\end{proof}

Next we consider formation of the matrix algebra over a C$^*$-algebra. For this purpose, fix bijections $\beta_n:\N\to \N^{n\times n}$ for each $n$. While the next Lemma is in some sense a special case of the Lemma that follows it (which deals with tensor products) the formulation given below will be used later for the proof of Theorem \ref{mainintro}.


\begin{lemma}\label{L.matrix}
For each $n\in\N$ there are Borel functions $M_n:\Gamma(H)\to \Gamma(H^n)$ and $\theta_n:\Gamma(H)\times(\N^\N)^n\to\N^\N$ such that

(1)
$$
M_n(\gamma)=(\left(\begin{array}{ccc}
    \gamma_{\beta_n(l)(1,1)} &\cdots & \gamma_{\beta_n(l)(1,n)}\\
    \vdots & & \vdots\\
    \gamma_{\beta_n(l)(n,1)} &\cdots & \gamma_{\beta_n(l)(n,n)}\\
\end{array}\right):l\in\N)
$$

(2) If $(\gamma,\Psi_i)\in \Rhomx{H}$ for all $i=1,\ldots, n$ then
$$
(\gamma,M_n(\gamma),\theta_n(\gamma,\Psi_1,\ldots,\Psi_n))\in \Rhomx{H,H^n}
$$
and
$$
\theta_n(\gamma,\Psi_1,\ldots,\Psi_n)(k)(i)=m\implies \p_m(M_n(\gamma))=\diag(\gamma(\Psi_1(k)(i)),\ldots,\gamma(\Psi_n(k)(i))).
$$
That is, $\theta_n(\gamma,\Psi_1,\ldots,\Psi_n)$ codes the diagonal embedding twisted by the homomorphisms $\hat\Psi_i$.
\end{lemma}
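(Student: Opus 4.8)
The plan is to construct the matrix amplification map explicitly and verify Borelness by appealing to the characterization of Borel maps into $\Gamma$ given in Lemma~\ref{l.gammamaps}(1). I would first fix the bijections $\beta_n\colon\N\to\N^{n\times n}$ as in the statement, and identify $H^n$ with $H\otimes\C^n$ so that an operator on $H^n$ is an $n\times n$ matrix $(a_{s,t})_{s,t\le n}$ of operators on $H$ acting blockwise. For each $\ell\in\N$, the $\ell$-th generator of $M_n(\gamma)$ is declared to be the matrix whose $(s,t)$-entry is $\gamma_{\beta_n(\ell)(s,t)}$. This is exactly formula~(1). To see that $M_n$ is Borel, I would use Lemma~\ref{l.gammamaps}(1): fixing an orthonormal basis $(e_i)$ of $H$ and letting $(e_i^{(s)})$ denote the corresponding basis vectors of $H^n$ sitting in the $s$-th copy of $H$, the matrix coefficients $\gamma\mapsto (M_n(\gamma)(\ell)\, e_i^{(s)}\mid e_j^{(t)})$ equal $\gamma\mapsto (\gamma_{\beta_n(\ell)(t,s)} e_i\mid e_j)$, which are Borel in $\gamma$ by the definition of the Borel structure on $\Gamma(H)$. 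Hence $M_n$ is Borel by Lemma~\ref{l.gammamaps}(1).

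Next I would address part~(2). The key observation is that a $*$-polynomial $\p_k$ evaluated at the generators of $M_n(\gamma)$ produces, entrywise, the same $*$-polynomial evaluated at the corresponding scalar-indexed generators of $\gamma$, because the matrix operations in $M_n(\gamma)$ are performed blockwise and $\p_k$ has no constant term. Concretely, since $\beta_n(\ell)$ ranges over all of $\N^{n\times n}$, for any choice of indices there is some $\ell$ realizing the matrix $\diag$ we want; this lets us express the diagonal embedding twisted by the $\hat\Psi_i$. For the code $\theta_n$, given codes $\Psi_1,\dots,\Psi_n$ for $*$-homomorphisms out of $C^*(\gamma)$, I want $\theta_n(\gamma,\Psi_1,\dots,\Psi_n)(k)$ to witness that the image of $\p_k(\gamma)$ under the map $a\mapsto\diag(\hat\Psi_1(a),\dots,\hat\Psi_n(a))$ lies in $C^*(M_n(\gamma))$. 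So I would define $\theta_n(\gamma,\Psi_1,\dots,\Psi_n)(k)(i)$ to be an index $m$ such that $\p_m(M_n(\gamma))$ equals the diagonal operator $\diag(\gamma(\Psi_1(k)(i)),\dots,\gamma(\Psi_n(k)(i)))$; such an $m$ exists because $\beta_n$ hits every diagonal index pattern, and it can be found uniformly (hence in a Borel way) as the least such witness. The displayed implication in~(2) then holds by construction, and membership in $\Rhomx{H,H^n}$ follows by checking that the code respects the algebraic relations, which it does because each $\hat\Psi_i$ is a $*$-homomorphism and diagonal matrices form a $*$-subalgebra closed under the blockwise operations.

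The main obstacle, and the only genuinely delicate point, is the bookkeeping required to verify the Cauchy and $*$-algebraic conditions (1)--(5) in the definition of a code for a $*$-homomorphism for the map $\theta_n$, while simultaneously ensuring that the selection of the witnessing index $m$ is Borel (equivalently, definable by an analytic graph condition so that \cite[14.12]{Ke:Classical} applies). The Cauchy condition transfers because $\diag$ is an isometry onto its range in $M_n(\cB(H))\cong\cB(H)$, so norm-convergence of $\gamma_{\Psi_i(k)(\cdot)}$ in each slot yields convergence of the diagonal; the compatibility with sums, products, and adjoints is automatic from blockwise diagonal arithmetic. The Borelness of the index selection reduces, via Lemma~\ref{L.B.1} and Lemma~\ref{L.B.2}, to the Borelness of norm comparisons and of the limit map, so once the graph of $\theta_n$ is written out as an analytic condition—``$m$ is least such that $\p_m(M_n(\gamma))$ coincides with the prescribed diagonal operator''—the result follows. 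I expect the entire verification to be routine given the infrastructure already established, with the indexing through $\beta_n$ being the part that demands the most care in writing down cleanly.
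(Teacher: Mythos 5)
Your construction is the same as the paper's, which disposes of this lemma in two lines: (1) is taken to be clear, and $\theta_n(\gamma,\Psi_1,\dots,\Psi_n)(k)(i)$ is defined to be the least $m$ such that $\p_m(M_n(\gamma))=\diag(\gamma(\Psi_1(k)(i)),\dots,\gamma(\Psi_n(k)(i)))$ --- exactly your definition. The extra verifications you supply (Borelness of $M_n$ via the matrix-coefficient criterion of Lemma~\ref{l.gammamaps}, Borelness of the least-witness selection via an analytic graph, transfer of the Cauchy and $*$-algebraic conditions through the blockwise diagonal) are the routine bookkeeping the paper omits.

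One caution about a justification you add that the paper does not: the claim that the witnessing index $m$ exists ``because $\beta_n$ hits every diagonal index pattern'' is not right. Surjectivity of $\beta_n$ onto $\N^{n\times n}$ only guarantees that every $n\times n$ matrix \emph{all} of whose entries are terms of $\gamma$ occurs among the generators of $M_n(\gamma)$; a diagonal matrix has off-diagonal entries equal to $0$, and nothing forces $0$ to occur among the $\gamma_j$, nor forces the off-diagonal parts to cancel in some polynomial. (For instance, if $\gamma_j=1$ for every $j$, then every generator of $M_2(\gamma)$ is the matrix with all four entries equal to $1$, and no $*$-polynomial without constant term in it equals $\diag(1,1)$.) The paper simply takes existence of the witness for granted; what saves the lemma is that in the one place it is invoked (Lemma~\ref{biglimit}) the sequence $\gamma$ and the codes $\Psi_i$ are arranged so that the required diagonal elements really are realized by polynomials in the generators of $M_n(\gamma)$. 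So your definition of $\theta_n$ matches the paper's, but the reason you give for it being well defined should be dropped, or replaced by the remark that existence is a hypothesis discharged in the application.
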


\begin{proof}
(1) is clear. (2) follows by letting $\theta_n(\gamma,\psi_1,\ldots,\psi_n)(k)(i)=m$ if and only if $m$ is the least such that
$$
\p_m(M_n(\gamma))=\diag(\gamma(\Psi_1(k)(i)),\ldots,\gamma(\psi_n(k)(i))).
$$
\end{proof}


\begin{lemma}\label{L.tensor} There is a Borel-measurable map $\Tensor\colon
\Gamma\times \Gamma\to \Gamma$ such that
\[
C^*(\Tensor(\gamma,\delta))\cong C^*(\gamma)\otimes_{\min}
C^*(\delta)
\]
for all $\gamma$ and $\delta$ in $\Gamma$.

Moreover, there is a Borel-measurable map 
$\Tenx\colon \Gamma\times\Gamma\to \Gamma$ such that if $1\in C^*(\delta)$
then $C^*(\Tenx(\gamma,\delta))$ is the canonical copy of $C^*(\gamma)$ inside
$C^*(\gamma)\otimes_{\min} C^*(\delta)$. 
\end{lemma}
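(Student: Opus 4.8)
The plan is to realize both tensor products \emph{spatially}. Since $\gamma$ and $\delta$ both act on $H$, we form the elementary tensors $\gamma_n\otimes\delta_m$ (and, for the second map, $\gamma_n\otimes 1_\delta$ once the unit $1_\delta$ of $C^*(\delta)$ has been located), regard them as elements of $\cB(H)\otimes_{\min}\cB(H)\cong\cB(H)$ via the fixed $*$-isomorphism of Lemma~\ref{L.B.3.0}(5), and take the sequence of these as a point of $\Gamma(H)$. All the descriptive-set-theoretic input is contained in the facts that $(a,b)\mapsto a\otimes b$, multiplication, addition, scalar multiplication and $a\mapsto a^*$ are Borel (Lemma~\ref{L.B.3.0}), that the norm is Borel (Lemma~\ref{L.B.1}), and that the limit of a norm-Cauchy sequence of operators is computed by a Borel function (Lemma~\ref{L.B.2}).

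For $\Tensor$, fix a bijection $l\mapsto(n_l,m_l)$ of $\N$ with $\N\times\N$ and put $\Tensor(\gamma,\delta)_l=\gamma_{n_l}\otimes\delta_{m_l}$; this is Borel by Lemma~\ref{L.B.3.0}(5). The $*$-algebra generated by $\{\gamma_n\otimes\delta_m:n,m\in\N\}$ is $\Span\{w(\gamma)\otimes v(\delta): w,v \text{ $*$-monomials}\}$, since elementary tensors multiply coordinatewise. As the $w(\gamma)$ are dense in $C^*(\gamma)$ and the $v(\delta)$ are dense in $C^*(\delta)$, joint norm-continuity of $\otimes$ on bounded sets shows that the closure of this span contains every $a\otimes b$ with $a\in C^*(\gamma)$ and $b\in C^*(\delta)$, hence the whole algebraic tensor product; its closure in the spatial representation is by definition $C^*(\gamma)\otimes_{\min}C^*(\delta)$ (recall that on faithfully represented algebras the minimal norm \emph{is} the spatial norm). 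Thus $C^*(\Tensor(\gamma,\delta))\cong C^*(\gamma)\otimes_{\min}C^*(\delta)$.

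For $\Tenx$ the canonical copy of $C^*(\gamma)$ is $\{a\otimes 1_\delta: a\in C^*(\gamma)\}=C^*(\{\gamma_n\otimes 1_\delta\})$, so the task reduces to producing $1_\delta$ Borel-uniformly. I would obtain it as a Borel limit of an approximate unit assembled from the generators: for each $k$ let $u_k=\p_{j(k)}(\delta)$, where $j(k)$ is the least index with $\|\p_{j(k)}(\delta)\|\le 2$ and $\|\p_{j(k)}(\delta)\delta_i-\delta_i\|<1/k$ for all $i\le k$ (a Borel search by Lemmas~\ref{L.B.1} and~\ref{L.B.3.0}, with a default value assigned when no such index exists). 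When $1\in C^*(\delta)$ such indices exist for every $k$, and then $(u_k)$ is a uniformly bounded sequence in $C^*(\delta)$ with $\|u_k\delta_i-\delta_i\|\to 0$ for each fixed $i$; by boundedness and density it follows that $u_k a\to a$ for all $a\in C^*(\delta)$, i.e. $(u_k)$ is a bounded left approximate unit. Taking $a=1_\delta$ gives $u_k=u_k1_\delta\to 1_\delta$ in norm, so $(u_k)$ is Cauchy with limit $1_\delta$. Consequently $\gamma_n\otimes 1_\delta=\lim_k(\gamma_n\otimes u_k)$ is Borel by Lemma~\ref{L.B.2}, and setting $\Tenx(\gamma,\delta)_n=\gamma_n\otimes 1_\delta$ yields a Borel map with $C^*(\Tenx(\gamma,\delta))=\overline{\Span\{w(\gamma)\otimes 1_\delta\}}=C^*(\gamma)\otimes 1_\delta$, the canonical copy.

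The first map is essentially free once Lemma~\ref{L.B.3.0}(5) is in hand, so the main obstacle is the second. Two points need care there: first, checking that the Borel search really returns an approximant at each stage $k$ whenever $C^*(\delta)$ is unital; and second, the functional-analytic fact that a \emph{bounded} left approximate identity of a \emph{unital} C$^*$-algebra converges in norm to the unit (the step $u_k=u_k1_\delta\to 1_\delta$). This last point is what lets us trade the otherwise inaccessible element $1_\delta$ for a norm limit to which Lemma~\ref{L.B.2} applies.
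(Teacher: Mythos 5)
Your construction of $\Tensor$ follows the same route as the paper's (elementary tensors of the listed operators, pushed into $\cB(H)$ via a fixed embedding of $\cB(H)\otimes_{\min}\cB(H)$), but the algebraic justification is wrong, and the defect is substantive rather than cosmetic. The $*$-algebra generated by $\{\gamma_n\otimes\delta_m\}$ is the span of $w(\gamma)\otimes v(\delta)$ only for $*$-monomials $w,v$ \emph{of the same length}, because a product of elementary tensors multiplies both legs simultaneously; you cannot manufacture $\gamma_1\otimes\delta_1\delta_2$ from these generators in general. Concretely, take $C^*(\gamma)=C^*(\delta)=C_0((0,1])$ each with the single generator $f(t)=t$; then $C^*(f\otimes f)=\{h(st):h\in C_0((0,1])\}$ is a proper subalgebra of $C_0((0,1]^2)=C^*(\gamma)\otimes_{\min}C^*(\delta)$, since it fails to separate $(1,\tfrac12)$ from $(\tfrac12,1)$. (The paper's one-line proof has the same literal slip.) The repair is cheap: tensor the polynomials rather than the generators, i.e.\ set $\Tensor(\gamma,\delta)_l=\Psi(\p_{n_l}(\gamma)\otimes\p_{m_l}(\delta))$. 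The linear span of these entries contains $\p(\gamma)\otimes\q(\delta)$ for every pair of polynomials, is dense in the algebraic tensor product by joint norm-continuity of $\otimes$ on bounded sets, and so generates $C^*(\gamma)\otimes_{\min}C^*(\delta)$; also note that it is the polynomials, not the monomials $w(\gamma)$, that are dense in $C^*(\gamma)$.

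For $\Tenx$ you have taken a genuinely harder route than the paper, and it contains a gap. The paper reads ``$1\in C^*(\delta)$'' as the identity operator of $\cB(H)$ lying in $C^*(\delta)$ and simply puts $\Tenx(\gamma,\delta)_m=\Psi(\gamma_m\otimes 1_{\cB(H)})$; no search for a unit is needed and Borelness is immediate from Lemma~\ref{L.B.3.0}. Your search for $1_\delta$ does not work as written: from $\|u_k\delta_i-\delta_i\|\to0$ for the \emph{generators} only, you get $u_kb\to b$ for $b$ in the closed right ideal of $C^*(\delta)$ generated by the $\delta_i$, and that ideal can be proper even when the algebra is unital, so $(u_k)$ need not be a left approximate unit. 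For instance, if $\delta_1=e_{12}$ generates $M_2(\C)$ (tensored with $1_{H'}$ so that the identity operator lies in the algebra), then $u=e_{11}$ satisfies $\|u\|\le 2$ and $u\delta_1=\delta_1$ exactly, so your search may return $u_k=e_{11}$ for every $k$, and the limit is not the unit. The fix is to run the test against the dense sequence $\p_i(\delta)$ for $i\le k$ rather than against the $\delta_i$: such approximants still exist (take $\p_j(\delta)$ close to $1_\delta$), and then $u_kb\to b$ on a dense set with a uniform bound, hence for all $b$, hence $u_k=u_k1_\delta\to 1_\delta$ and the remainder of your argument goes through.
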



\begin{proof}
Fix a *-isomorphism $\Psi\colon \cB(H)\otimes_{\min} \cB(H)\to \cB(H) $. Define
$$
\Tensor(\gamma,\delta)_{2^m(2n+1)}=\Psi(\gamma_m \otimes \delta_n).
$$
Then $\Tensor$ is clearly Borel and the algebra generated by
$\Tensor(\gamma,\delta)$ is $C^*(\gamma)\otimes_{\min} C^*(\delta)$. For the moreover part,  $\Tenx(\gamma)_m=\Psi(\gamma_m\otimes 1)$ clearly works.
\end{proof}

It is not difficult to see that the set $\{\gamma\in \Gamma: 1\in C^*(\gamma)\}$ is Borel (cf. Lemma~\ref{l.unital}) but we shall not need this fact.


\begin{lemma}\label{L.C(X,A)} If $X$ is a locally compact Hausdorff
space then there is a Borel measurable map $\Phi\colon \Gamma\to
\Gamma$ such that
\[
\Phi(\gamma)\cong C_0(X,C^*(\gamma))
\]
for all $\gamma\in\Gamma$. In particular, letting $X=(0,1)$, we conclude that
there is a Borel map $\Phi$ such that $\Phi(\gamma)$ is isomorphic to
the suspension of $C^*(\gamma)$. 
\end{lemma}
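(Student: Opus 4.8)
The plan is to reduce the construction of $C_0(X,C^*(\gamma))$ to a minimal tensor product against a \emph{fixed} algebra, and then to invoke the Borel tensor-product map $\Tensor$ of Lemma~\ref{L.tensor}. The essential algebraic input is the standard natural isomorphism
$$
C_0(X,A)\cong C_0(X)\otimes_{\min} A,
$$
valid for every C$^*$-algebra $A$; since $C_0(X)$ is abelian, hence nuclear, the minimal norm is the unique C$^*$-tensor norm, so this identification is unambiguous (see e.g.\ \cite{blackadar}). For the output to lie in $\Gamma$ one needs $C_0(X,C^*(\gamma))$ to be separable, which holds once $X$ is second countable; this is harmless and covers the application $X=(0,1)$ used for the suspension.

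First I would fix, once and for all, a single element $\gamma_0\in\Gamma$ with $C^*(\gamma_0)\cong C_0(X)$. Such a $\gamma_0$ exists because $C_0(X)$ is a separable C$^*$-algebra and therefore admits a faithful representation on $H$, whose image is generated by a countable sequence of operators. Crucially, $\gamma_0$ depends only on $X$ and not on $\gamma$, so it functions as a genuine constant. Then I would simply set
$$
\Phi(\gamma)=\Tensor(\gamma_0,\gamma).
$$
By Lemma~\ref{L.tensor} we have $C^*(\Tensor(\gamma_0,\gamma))\cong C^*(\gamma_0)\otimes_{\min}C^*(\gamma)\cong C_0(X)\otimes_{\min}C^*(\gamma)\cong C_0(X,C^*(\gamma))$, as desired. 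Borel measurability of $\Phi$ is immediate, since $\Tensor$ is Borel by Lemma~\ref{L.tensor} and precomposition with the constant-coordinate map $\gamma\mapsto(\gamma_0,\gamma)$ preserves Borelness. The suspension statement follows by specializing to $X=(0,1)$.

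The only genuine content beyond bookkeeping is the identification $C_0(X)\otimes_{\min}A\cong C_0(X,A)$, which I expect to be the single point that must be cited carefully; everything else is a routine consequence of the already-established Borel measurability of the minimal tensor product. The remaining subtlety to check is merely that a fixed faithful representation $\gamma_0$ of $C_0(X)$ can be chosen, which is automatic from separability of $C_0(X)$.
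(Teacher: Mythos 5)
Your proposal is correct and is essentially the paper's own proof: the paper likewise deduces the lemma immediately from Lemma~\ref{L.tensor} via the identification $C_0(X,A)\cong C_0(X)\otimes_{\min}A$, with a fixed parameter for $C_0(X)$ in one tensor factor. Your added remarks (nuclearity of $C_0(X)$ making the tensor norm unambiguous, and second countability of $X$ for separability) are sensible elaborations of points the paper leaves implicit.
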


\begin{proof}
This is immediate from Lemma~\ref{L.tensor} since $C(X,A)\cong C(X)\otimes_{\min} A$.
\end{proof}


\begin{lemma}\label{L.unitization}
There is a Borel function $\Unit\colon \Gamma\to \Gamma$ such that
$C^*(\Unit(\gamma))$ is isomorphic to the unitization of
$C^*(\gamma)$.
\end{lemma}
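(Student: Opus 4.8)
The plan is to realize the (forced) unitization concretely as operators, so that the generators depend on $\gamma$ in a manifestly Borel way, and then simply invoke the Borelness of the diagonal embedding from Lemma~\ref{L.B.3.0}. Recall that for any C$^*$-algebra $A$ the unitization is $A^+=A\oplus\bbC$ with the twisted product $(a,\lambda)(b,\mu)=(ab+\lambda b+\mu a,\lambda\mu)$ and unit $(0,1)$; this adjoins a fresh unit whether or not $A$ is already unital. I would represent $A^+$ faithfully on $H\oplus H$ by $(a,\lambda)\mapsto\diag(a+\lambda 1_H,\lambda 1_H)$. A direct computation shows this is a $*$-homomorphism, and it is injective because the second block $\lambda 1_H$ recovers $\lambda$ and then the first block recovers $a$. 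Identifying $M_2(\cB(H))$ with $\cB(H)$ by the fixed $*$-isomorphism used in Lemma~\ref{L.B.3.0}, I would set
$$
\Unit(\gamma)(m)=\begin{cases}\diag(1_H,1_H) & \text{if } m=0,\\ \diag(\gamma_{m-1},0) & \text{if } m\geq 1.\end{cases}
$$

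The next step is to verify that $C^*(\Unit(\gamma))$ is the image of this representation. The operators $\diag(\gamma_{m-1},0)$ generate $\{\diag(a,0):a\in C^*(\gamma)\}$, and adjoining the single operator $\diag(1_H,1_H)$ yields exactly $\{\diag(a+\lambda 1_H,\lambda 1_H):a\in C^*(\gamma),\ \lambda\in\bbC\}$, which is precisely the (faithful) image of $C^*(\gamma)^+$ under the representation above. Hence $C^*(\Unit(\gamma))\cong C^*(\gamma)^+$ for every $\gamma$, as required, and the construction is insensitive to whether $C^*(\gamma)$ already has a unit.

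For Borelness, it suffices to check that each coordinate $\gamma\mapsto\Unit(\gamma)(m)$ is Borel, since $\Gamma=\cB(H)^{\N}$ carries the product Borel structure. The $m=0$ coordinate is the constant $\diag(1_H,1_H)$, and for $m\geq 1$ the coordinate is $\gamma\mapsto\diag(\gamma_{m-1},0)$, which is the composition of the (Borel) projection $\gamma\mapsto(\gamma_{m-1},0)$ with the diagonal map $(a,b)\mapsto\diag(a,b)$ shown to be Borel in Lemma~\ref{L.B.3.0}(6). Thus $\Unit$ is Borel.

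There is no serious obstacle here: the only point requiring care is the convention that ``the unitization'' means the forced unitization $A\oplus\bbC$, together with the routine check that the concrete representation $(a,\lambda)\mapsto\diag(a+\lambda 1_H,\lambda 1_H)$ is a faithful $*$-homomorphism whose generated algebra is exactly the image. Once these are in hand, Borelness is immediate from Lemma~\ref{L.B.3.0}.
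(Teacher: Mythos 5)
Your proof is correct; it reaches the paper's conclusion via a slightly different concrete realization of the forced unitization. The paper's proof stays on the single Hilbert space $H$: it fixes a partial isometry $v$ with $vv^*=1$ and $v^*v$ a projection of codimension one, sets $\Unit(\gamma)_0=1_H$ and $\Unit(\gamma)_{n+1}=v^*\gamma_n v$, and uses that $a\mapsto v^*av$ is a faithful $*$-homomorphism (since $vv^*=1$) whose image consists of operators annihilating the one-dimensional complement of $\ran(v^*v)$, so that $1_H$ is genuinely a new unit and the generated algebra is $v^*C^*(\gamma)v+\bbC 1_H\cong C^*(\gamma)^+$. You instead use the textbook faithful representation $(a,\lambda)\mapsto\diag(a+\lambda 1_H,\lambda 1_H)$ of the forced unitization on $H\oplus H$, transported back to $\cB(H)$ via the fixed identification from Lemma~\ref{L.B.3.0}(6). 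Both constructions are coordinatewise Borel for the same trivial reason (each output coordinate is a fixed Borel function of a single input coordinate), and both deliver the forced unitization whether or not $C^*(\gamma)$ is already unital. What your version buys is that the freshness of the adjoined unit is visible in the second diagonal block, so no separate argument is needed that $1_H$ lies outside the embedded copy of $C^*(\gamma)$; what the paper's version buys is that it avoids doubling the Hilbert space and makes no appeal to the $M_2$ identification. Either argument is complete.
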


\begin{proof} Fix a partial isometry $v$ such that $vv^*=1$ and $v^*v$ is a projection onto a space of codimension 1. Let $\Unit(\gamma)_0=1$ and
$\Unit(\gamma)_{n+1}=v^* \gamma_n v$. Then $C^*(\gamma)$ is as
required.
 \end{proof}


\subsection{Effective enumerations}


\begin{lemma}\label{L.E.1}
\ 
\begin{enumerate}[\indent $(1)$]
\item \label{L.E.1.2} There is a Borel map $\Sa\colon \Gamma\to \Gamma$ such that for
every $\gamma\in \Gamma$ the set $\{\Sa(\gamma)(n): n\in \bbN\}$ is a
norm-dense subset of the set of self-adjoint elements
of~$C^*(\gamma)$.
\item \label{L.E.1.4} There is a Borel map $\Un:\Gamma\to\Gamma$ such that the set $\{\Un(\gamma):n\in\N\}$ is norm-dense in the set of unitaries in $C^*(\gamma)$ whenever $C^*(\gamma)$ is unital.
\item \label{L.E.1.1} There is a Borel map $\Pos\colon \Gamma\to \Gamma$
such that for every $\gamma\in \Gamma$ the set $\{\Pos(\gamma)(n):n\in \bbN\}$ is a norm-dense subset of the set of positive elements
 of~$C^*(\gamma)$.
\item \label{L.E.1.3} There is a Borel map $\Proj\colon \Gamma\to \Gamma$ such that for every
$\gamma\in \Gamma$ the set $\{\Proj(\gamma)(n): n\in \bbN\}$ is a
norm-dense subset of the set of projections of~$C^*(\gamma)$.
\end{enumerate}
\end{lemma}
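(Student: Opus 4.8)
The plan is to treat all four parts uniformly: for each I exhibit an explicit countable family of elements of $C^*(\gamma)$, built from the generators by the algebraic operations and continuous functional calculus, check that this family is norm-dense in the relevant subset, and observe that each member depends on $\gamma$ in a Borel way. Borelness of the resulting map $\Gamma\to\Gamma$ is then automatic from Lemma~\ref{l.gammamaps}(1) together with the Borel operations of Lemmas~\ref{L.B.3.0}, \ref{L.B.1} and~\ref{L.B.2}; recall that $\gamma\mapsto\p_j(\gamma)$ is Borel for each $j$ and that $\{\p_j(\gamma):j\in\N\}$ is dense in $C^*(\gamma)$. Parts~(\ref{L.E.1.2}) and~(\ref{L.E.1.1}) are the easy cases. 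For~(\ref{L.E.1.2}) I would set $\Sa(\gamma)(j)=\tfrac12(\p_j(\gamma)+\p_j(\gamma)^*)$: the map $a\mapsto\tfrac12(a+a^*)$ is a norm contraction fixing the self-adjoint elements, so it carries the dense set $\{\p_j(\gamma)\}$ onto a dense subset of the self-adjoint part of $C^*(\gamma)$. For~(\ref{L.E.1.1}) I would set $\Pos(\gamma)(j)=\p_j(\gamma)^*\p_j(\gamma)$: every positive $b$ is $c^*c$ with $c=b^{1/2}\in C^*(\gamma)$, and approximating $c$ by the $\p_j(\gamma)$ and using joint norm-continuity of $x\mapsto x^*x$ on bounded sets yields density. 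Both maps are visibly Borel.

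For the projections in part~(\ref{L.E.1.3}) I would run a spectral-gap argument on the self-adjoints produced in~(\ref{L.E.1.2}). Fix once and for all a sequence of polynomials $g_k$ with $g_k(0)=0$ converging uniformly to $\chi_{(1/2,\infty)}$ on the compact set $\{t:|t-\tfrac12|\ge\tfrac1{\sqrt8}\}\cap[-1,2]$. On the Borel set where $\|\Sa(\gamma)(j)-\Sa(\gamma)(j)^2\|\le\tfrac18$ (Borel by Lemmas~\ref{L.B.1} and~\ref{L.B.3.0}) the spectrum of $a:=\Sa(\gamma)(j)$ lies in that compact set, so $g_k(a)$ is norm-Cauchy and its limit, computed by Lemma~\ref{L.B.2}, is the spectral projection $\chi_{(1/2,\infty)}(a)\in C^*(\gamma)$; off this set I would output $0$. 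For density, any projection $q$ is a limit $a_j:=\Sa(\gamma)(j)\to q$, whence $\|a_j-a_j^2\|\to 0$ eventually drops below $\tfrac18$, and continuity of the functional calculus at the spectrum $\{0,1\}$ of $q$ forces $\chi_{(1/2,\infty)}(a_j)\to q$.

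The unitaries in part~(\ref{L.E.1.4}) are the main obstacle, for two reasons: the unit of $C^*(\gamma)$ need not be $1_{\cB(H)}$, and the identity component of the unitary group need not exhaust it, so exponentials of self-adjoints will not do. Instead I would use the polar decomposition, after first computing the unit Borel-ly. Take the strictly positive element $h=\sum_n 2^{-n}\p_n(\gamma)^*\p_n(\gamma)/(1+\|\p_n(\gamma)\|^2)\in C^*(\gamma)$ (a norm-convergent sum, Borel by Lemma~\ref{L.B.2}), and note that $f_n(t)=nt/(1+nt)$ has $f_n(0)=0$, so $f_n(h)\in C^*(\gamma)$, and that $f_n(h)=nh(1_{\cB(H)}+nh)^{-1}$ is computable in $\cB(H)$ because $1_{\cB(H)}+nh\ge 1$ is genuinely invertible there. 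Since $(f_n(h))$ is an approximate unit for $C^*(\gamma)$, in the unital case it converges in norm to the unit $p$, so $p$ is Borel in $\gamma$ by Lemma~\ref{L.B.2} (the circularity of having to know a lower spectral bound for $h$ in advance is exactly what this approximate-unit route avoids). With $p$ in hand, invertibility of $a$ inside the algebra is Borel, since by spectral permanence $a$ is invertible in $C^*(\gamma)$ iff $a+(1_{\cB(H)}-p)$ is invertible in $\cB(H)$ (Lemma~\ref{L.B.3}(\ref{L.B.3.invertible})), and the lower bound $\e=1/\|(a^*a+1_{\cB(H)}-p)^{-1}\|$ for the $A$-spectrum of $a^*a$ is Borel by Lemma~\ref{L.B.1}. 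I would then define $\Un(\gamma)(j)$ to be the unitary part $\p_j(\gamma)\,(\p_j(\gamma)^*\p_j(\gamma))^{-1/2}$ when $\p_j(\gamma)$ is invertible in $C^*(\gamma)$ and $p$ otherwise, forming $(\cdot)^{-1/2}$ by constant-term-free polynomials approximating $t^{-1/2}$ on $[\e,\|\p_j(\gamma)^*\p_j(\gamma)\|]$ and taking the Borel limit of Lemma~\ref{L.B.2}. Density holds because the invertibles of $C^*(\gamma)$ are open, the $\p_j(\gamma)$ are dense, and the unitary-part map is continuous on the invertibles and fixes every unitary.

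The one genuinely delicate point throughout is realising continuous functional calculus—inverse square roots, spectral projections, the approximate unit—as a Borel operation. I handle this uniformly: approximate the relevant continuous function by a fixed sequence of polynomials on a compact spectral window that is chosen Borel-ly in $\gamma$, and invoke the Borel limit map of Lemma~\ref{L.B.2}. The care needed is in verifying that the chosen window genuinely contains the spectrum and stays bounded away from the singularities, which is precisely what the gap estimate $\|a-a^2\|\le\tfrac18$ and the lower bound $\e$ above are engineered to guarantee.
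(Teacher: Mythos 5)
Your parts (1), (3) and (4) are correct and essentially coincide with the paper's proof: the same formulas $\tfrac12(\p_n(\gamma)+\p_n(\gamma)^*)$ and $\p_n(\gamma)^*\p_n(\gamma)$ appear there, and for the projections the paper likewise applies a functional calculus converging to a step function to a dense family of elements (positive rather than self-adjoint, which is immaterial) whose spectrum has a gap around $1/2$, realized as a Borel limit via Lemma~\ref{L.B.2}; your fixed polynomial sequence on the window cut out by $\|a-a^2\|\le\tfrac18$ plays exactly the role of the paper's iterates $f^j$ on the window cut out by norm-Cauchyness.

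Part (2) is where you genuinely diverge, and your instinct is sound. The paper simply sets $\Un(\gamma)(n)=\exp(i\Sa(\gamma)(n))$, which is a one-line Borel map; but exponentials of self-adjoints lie in the connected component $U_0$ of the unitary group, an open and hence closed subgroup, so their norm-closure misses every unitary outside $U_0$ (already for $C^*(\gamma)\cong C(\bbT)$, where $z\mapsto z$ has nonzero winding number). Your polar-decomposition route --- a Borel computation of the unit $p$ from an approximate unit built out of a strictly positive element, a Borel test for invertibility of $\p_j(\gamma)$ relative to $p$ via invertibility of $\p_j(\gamma)+(1_{\cB(H)}-p)$ in $\cB(H)$, and then the unitary part $\p_j(\gamma)(\p_j(\gamma)^*\p_j(\gamma))^{-1/2}$ computed by constant-term-free polynomials and Lemma~\ref{L.B.2} --- is correct: density follows because the invertibles are open, the $\p_j(\gamma)$ are dense, and the unitary-part map is continuous on the invertibles and fixes every unitary. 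What each approach buys: the paper's is short but only reaches $\overline{U_0}$, while yours costs a page of bookkeeping (including a self-contained Borel detection of unitality, which the paper only establishes afterwards in Lemma~\ref{l.unital} using part (4)) and in exchange actually proves the stated density for algebras with disconnected unitary group.
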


\begin{proof}
\eqref{L.E.1.2} Let $\Sa(\gamma)(n)=\frac 12(\p_n(\gamma)+\p_n(\gamma)^*)$ for all $n$.
Clearly each $\Sa(\gamma)(n)$ is self-adjoint. If  $a\in C^*(\gamma)$ is self-adjoint
then $\|a-\Sa(\gamma)(n)\|\leq \|a-\p_n(\gamma)\|$. Therefore the range of $\Sa$ is norm-dense
subset of the set of self-adjoint elements of $C^*(\gamma)$.

\eqref{L.E.1.4} Let $\Un(\gamma)(n)=\exp(i\Sa(\gamma)(n))$.

 \eqref{L.E.1.1} Let
$\Pos(\gamma)(n)=\p_n(\gamma)^* \p_n(\gamma)$ for all $n$.
Pick a positive $a\in C^*(\gamma)$ and fix $\e>0$.
Pick $b\in C^*(\gamma)$ such that $a=b^*b$. Let $n$ be such that
$\|\p_n(\gamma)-b\|<\e/(2\|b\|)$ and $\|\p_n(\gamma)\|\leq \|b\|$.
Then
\[
\p_n(\gamma)^*\p_n(\gamma)-a=(\p_n(\gamma)^*-b^*)\p_n(\gamma)+
b^*(\p_n(\gamma)-b)
\]
and the right hand side clearly has norm $<\e$.

\eqref{L.E.1.3} Fix a function $f\colon \bbR\to [0,1]$ such that the
iterates $f^n$, $n\in \bbN$, of $f$ converge uniformly to the
function defined by $g(x)=0$, $x\leq 1/4$ and $g(x)=1$ for $x\geq
3/4$ on $(-\infty,1/4]\cup [3/4,\infty)$. For example, we can take
\[
f(x)=\begin{cases} 0, & x\leq 0\\
\frac x2, & 0<x\leq \frac 14\\
\frac 32 x -\frac 14,& \frac 14<x\leq \frac 34\\
  1-(1-x)/2, & \frac 34 <x\leq 1\\
 1, & x>1.
 \end{cases}
\]
The set $X=\cB(H)_{\sa}$  is a Borel subset of $\cB(H)$ by
Lemma~\ref{L.B.3}. Note that $X\cap \{\Pos(\gamma)(n): n\in \bbN\}$ is
dense in $X\cap C^*(\gamma)$. Let $\Psi\colon X\to \cB(H)^\bbN$ be
defined~by
\[
\Psi(a)(n)=f^n(a).
\]
By Lemma~\ref{L.B.2} the set $Y=\{b\in X: \Psi(b)$ is Cauchy$\}$ is Borel. For $n$ such that $\Pos(\gamma)(n)\in Y$ let $\Proj(\gamma)(n)$ be the limit of this
sequence, and let $\Proj(\gamma)(n)=0$ otherwise. By Lemma~\ref{L.B.2} again, $\Proj$ is Borel.

Fix  $\gamma$ and $n$. Clearly, the operator $\Proj(\gamma)(n)$ is
positive and its spectrum is a subset of $\{0,1\}$. Therefore it is a
projection in $C^*(\gamma)$. We need to check that for every
projection $p\in C^*(\gamma)$ and $\e>0$ there is $n$ such that
$\|\Proj(\gamma)(n)-p\|<\e$.

We may assume $\e<1/4$.  Pick $n$ so that $\|\Pos(\gamma)(n)-p\|<\e$.
Since $\e<1/4$, the spectrum of $\Pos(\gamma)(n)$
is included in $(-\e,\e)\cup (1-\e,1+\e)\subseteq (-1/4,1/4)\cup (3/4,5/4)$ and therefore the sequence
$f^j(\Pos(\gamma)(n))$, $j\in \bbN$, converges to a projection, $q$.
Clearly $\|p-q\|<2\e$.
\end{proof}


Recall from \ref{ss.paramunital} that $\Gammau$ denotes the set of $\gamma\in\Gamma$ parameterizing unital C$^*$-algebras. From the previous Lemma we now obtain:

\begin{lemma} \label{l.unital}
The set $\Gammau$ is Borel, and there is a Borel map $u:\Gammau\to \N$ such that $\Proj(\gamma)(u(\gamma))$ is the unit in $C^*(\gamma)$.
\end{lemma}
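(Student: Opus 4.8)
The plan is to show two things: that $\Gammau$ is a Borel subset of $\Gamma$, and that the index of the unit can be selected in a Borel way. The key idea is that a C$^*$-algebra $C^*(\gamma)$ is unital if and only if it contains a projection $p$ that acts as a unit, i.e. $pa = ap = a$ for every $a$ in the algebra; and by density it suffices to test this condition against the canonical dense sequence $\p_n(\gamma)$ generating $C^*(\gamma)$. Since the previous lemma (Lemma~\ref{L.E.1}\eqref{L.E.1.3}) provides a Borel map $\Proj$ enumerating a norm-dense set of projections of $C^*(\gamma)$, I would search over the candidate units $\Proj(\gamma)(n)$ for $n \in \N$.

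First I would write down the defining condition. The plan is to show
$$
\gamma \in \Gammau \iff (\exists n)(\forall m)\ \Proj(\gamma)(n)\,\p_m(\gamma) = \p_m(\gamma) = \p_m(\gamma)\,\Proj(\gamma)(n).
$$
The forward direction is clear: if $C^*(\gamma)$ has a unit $1$, then $1$ is a projection, so by density of the $\Proj(\gamma)(n)$ in the projections there is some $n$ with $\Proj(\gamma)(n)$ close to $1$; but in fact one must be slightly careful, since a projection close to the unit need not equal the unit. I would instead argue that if the algebra is unital then the unit itself is a projection in $C^*(\gamma)$, and a unit is automatically a norm-isolated projection — any projection within distance $1$ of the identity in a unital C$^*$-algebra already equals the identity. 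Hence the dense sequence $\Proj(\gamma)(n)$ must actually \emph{hit} the unit exactly for some $n$, which makes the existential quantifier over an exact equality correct. The reverse direction is immediate: if some $\Proj(\gamma)(n)$ commutes with and fixes every generator $\p_m(\gamma)$, then by density and continuity of multiplication it fixes every element of $C^*(\gamma)$ and hence is the unit.

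To conclude Borelness, I would observe that the displayed condition is built from Borel ingredients: $\Proj$ is Borel by Lemma~\ref{L.E.1}, the evaluation maps $\gamma \mapsto \p_m(\gamma)$ are Borel, multiplication and the relation $\{(a,b): a=b\}$ (equivalently $\|a-b\|<\e$ for all $\e$, hence $\|a-b\|=0$) are Borel by Lemma~\ref{L.B.1} and Lemma~\ref{L.B.3.0}. The predicate inside the quantifiers is therefore Borel; the block $(\forall m)$ is a countable intersection of Borel sets, hence Borel, and the resulting relation $R(\gamma,n)$ is Borel. Thus $\Gammau = \{\gamma : (\exists n)\, R(\gamma,n)\}$ is the projection of a Borel set along $\N$; since the $n$-section is countable and the unit, when it exists, is hit exactly, I would apply Borel uniformization — here the Luzin–Novikov / number-uniformization theorem, since for each $\gamma$ the set of witnessing $n$ is a nonempty subset of $\N$ — to extract a Borel map $u:\Gammau \to \N$ with $R(\gamma, u(\gamma))$ for all $\gamma$. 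For instance $u(\gamma)$ can be taken to be the least such $n$, which is manifestly Borel, and then $\Proj(\gamma)(u(\gamma))$ is by construction the unit of $C^*(\gamma)$.

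The main obstacle I anticipate is the subtle point in the forward direction: justifying that the dense enumeration $\Proj(\gamma)(n)$ contains the unit \emph{exactly}, rather than merely approximately, so that the existential quantifier can range over exact equalities (which are Borel) instead of $\e$-approximations whose limit might fall outside the enumerated set. The resolution is the standard fact that in a unital C$^*$-algebra the identity is an isolated point among projections in the norm topology — a projection $p$ with $\|1-p\|<1$ satisfies $1-p$ invertible yet $1-p$ a projection, forcing $1-p=0$. Once this rigidity is in hand, everything else is a routine verification that the ingredients assembled in the previous lemmas are Borel.
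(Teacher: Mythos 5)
Your proposal is correct and follows essentially the same route as the paper: both rest on the observation that the unit is norm-isolated among projections (the paper phrases this as $p\leq q$, $p\neq q$ implies $\|p-q\|=1$; you phrase it as $\|1-p\|<1$ forcing $p=1$), so the dense enumeration $\Proj(\gamma)(n)$ must contain the unit exactly, and both then define $u(\gamma)$ as the least $n$ witnessing the resulting Borel condition. The appeal to Luzin--Novikov is superfluous since, as you note, taking the least witness already gives a Borel selection.
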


\begin{proof} For projections $p$ and $q$ we have that $p\leq q$ and
$p\neq q$ implies $\|p-q\|=1$. Therefore   $C^*(\gamma)$ is unital if
and only $\Proj(\gamma)(n)$ is its unit for some $n$. Also,  $p$ is a
unit in $A$ if and only if $pa=a=ap$ when $a$ ranges over a dense
subset of $A$. Therefore $C^*(\gamma)$ is unital if and only if there
is $m$ such that for all $n$ we have
\begin{equation}\label{eq.unit}
\Proj(\gamma)(m)\p_n(\gamma)=\p_n(\gamma)\Proj(\gamma)(m)=\p_n(\gamma).
\end{equation} To define $u:\Gammau\to\N$, simply let $u(\gamma)=m$ if and only if $m\in\N$ is least such that \ref{eq.unit} holds for all $n\in\N$.
\end{proof}

\begin{corollary}\label{c.paramunital}
The parameterization $\hat\Gamma_{\Au}$, $\Xi_{\Au}$, $\Gammau$, $\hatGammau$, $\Xiu$ and $\hatXiu$ of unital separable C$^*$-algebras all equivalent. 
\end{corollary}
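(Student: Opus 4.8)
The plan is to exploit transitivity of the equivalence relation on parameterizations---the composition of two Borel isomorphisms is again a Borel isomorphism---so that it suffices to assemble a chain of equivalences linking the six spaces. Two of the required links are already in hand: Proposition \ref{p.gammaxiunitalequiv} gives $\hat\Gamma_{\Au}\equiv\Xi_{\Au}$. For the remaining four, I would first note that $\Gammau$, $\hatGammau$, $\Xiu$ and $\hatXiu$ are Borel (for $\Gammau$ this is Lemma \ref{l.unital}, and the other three follow by the obvious analogue). The Borel isomorphisms realizing $\Gamma\equiv\hat\Gamma\equiv\Xi$ in Proposition \ref{p.gammaxiequiv} and $\hat\Xi\equiv\Xi\setminus\{0\}$ in Proposition \ref{p.xihatxiequiv} all preserve the isomorphism type of the associated C$^*$-algebra; since unitality is an isomorphism invariant, each such map carries unital parameters to unital parameters and hence restricts to a Borel isomorphism of the corresponding unital subspaces. (Note that $\Xiu\subseteq\Xi\setminus\{0\}$, as a unital algebra is nontrivial.) This yields $\Gammau\equiv\hatGammau\equiv\Xiu\equiv\hatXiu$.

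It therefore remains only to bridge the two families, for which I would prove $\Xiu\equiv\Xi_{\Au}$. By Lemma \ref{l.injection}, which holds for $\Xi_{\Au}$ as remarked in \S\ref{ss.paramunital} and for the Borel set $\Xiu\subseteq\Xi$ by the same argument, it is enough to establish weak equivalence, i.e. Borel homomorphisms in both directions. In the direction $\Xi_{\Au}\to\Xiu$ I would simply promote the unit to a generator: pulling a C$^*$-seminorm $f$ on $\Au$ with $f(1)=1$ back along the $*$-homomorphism from $\mathfrak A$ that sends one free generator to $1$ and the remaining generators to the $X_k$ produces a $\delta\in\Xi$ whose associated algebra $B(\delta)$ is the whole unital completion determined by $f$, hence unital, so $\delta\in\Xiu$; this map is manifestly Borel and preserves the isomorphism class.

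The main obstacle is the reverse direction $\Xiu\to\Xi_{\Au}$, because a parameter $\delta$ records only the norms of the polynomials $\p_j$ in the free \emph{non-unital} algebra $\mathfrak A$, whereas an element of $\Xi_{\Au}$ must assign norms to all of $\Au$, including purely constant contributions. The resolution is that the unit $e$ of the unital algebra $B(\delta)$ lies in $B(\delta)=\overline{\im(\mathfrak A\to B(\delta))}$, so it is a norm-limit of images of non-unital polynomials. Consequently, for any unital polynomial $\q=c+p$ (with $c\in\Q(i)$ and $p$ without constant term), the value $\|ce+[p]\|_{B(\delta)}$ is a limit of values $\delta(m_k)$ coming from non-unital polynomials $c\p_{j_k}+p$ approximating it, and setting $f(\q)$ equal to this value defines a C$^*$-seminorm on $\Au$ with $f(1)=\|e\|=1$. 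That $\delta\mapsto f$ is Borel then follows from the analytic-graph criterion \cite[14.12]{Ke:Classical}, exactly as in the earlier arguments. Combining this weak equivalence with Lemma \ref{l.injection} gives $\Xiu\equiv\Xi_{\Au}$, and chaining together all the equivalences established above completes the proof.
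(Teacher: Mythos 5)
Your proof is correct, and its first half (restricting the already-established equivalences $\Gamma\equiv\hat\Gamma\equiv\Xi$ and $\Xi\setminus\{0\}\equiv\hat\Xi$ to the Borel unital subclasses, since the witnessing maps preserve isomorphism class and unitality is an isomorphism invariant) is exactly what the paper does. Where you diverge is in the bridge between the two families of parameterizations. The paper links $\Gammau$ with $\hat\Gamma_{\Au}$: the direction $\hat\Gamma_{\Au}\to\Gammau$ is the trivial map $f\mapsto(f(\q_n))_n$, while the direction $\Gammau\to\hat\Gamma_{\Au}$ is handled by a GNS argument analogous to the proof of Proposition~\ref{p.gammaxiequiv} --- this is genuinely needed there because the unit of $C^*(\gamma)$ need not act as the identity on $H$, so one must re-represent the algebra before one can read off a \emph{unital} $*$-homomorphism of $\Au$ into $\cB(H)$. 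You instead bridge at the abstract level, linking $\Xiu$ with $\Xi_{\Au}$: one direction is pullback of the seminorm along the substitution sending a free generator of $\mathfrak A$ to $1$, and the harder direction extends a seminorm on $\mathfrak A$ to one on $\Au$ by approximating the unit of $B(\delta)$ in norm by images of non-unital polynomials, with Borelness via the analytic-graph criterion. This sidesteps all representation-theoretic issues (no GNS needed) at the cost of a slightly more delicate Borel selection of an approximating sequence for the unit, which you correctly note can be phrased as an analytic graph; the detail you supply there is in fact greater than the paper's, which leaves its GNS step as a one-line remark. Both routes are valid, and yours is arguably the more self-contained given that $\Xi$ and $\Xi_{\Au}$ are purely combinatorial objects.
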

\begin{proof}
It is clear from the previous Lemma and Propositions \ref{p.xihatxiequiv} and \ref{p.gammaxiequiv} that $\Gammau$, $\hatGammau$, $\Xiu$ and $\hatXiu$ are equivalent standard Borel parameterizations. On the other hand, it is easy to see that Lemma \ref{l.injection} hold for $Y=\hat\Gamma_{\Au}$, and so it is enough to show weak equivalence of $\Gammau$ and $\hat\Gamma_{\Au}$. In one direction, the natural map $\hat\Gamma_{\Au}\to\Gamma:f\to\gamma(f)$ given by $\gamma(f)(n)=f(\q_n)$ clearly works. The other direction can be proven by a GNS argument analogous to the proof of proposition \ref{p.gammaxiequiv}. 
\end{proof}

\subsection{Effros Borel structure}\label{S.Eff}
 If $X$ is a Polish space then $F(X)$ denotes the space of all 
closed subsets of $X$ equipped with the $\sigma$-algebra  generated by the sets
\[
\{K\in F(X): K\cap U\neq \emptyset\}
\]
for $U\subseteq X$ open. 
This is a standard Borel space (\cite[\S 12.C]{Ke:Classical}) 
and its subspaces are typically used as Borel spaces of 
separable Banach spaces, von Neumann algebras, etc. Since by a result of 
Junge and Pisier there is no universal separable C*-algebra (\cite{JunPis}), the space of subalgebras of a given separable C*-algebra cannot be a Borel space of all C*-algebras. 
However, the subspace of $F(\cO_2)$ consisting of subalgebras 
of $\cO_2$ (where $\cO_2$ is the Cuntz algebra with two generators) 
is, by  a result of Kirchberg,  a Borel space of all exact C*-algebras (see \S\ref{s.bga} and cf. 
Lemma~\ref{l.borelassign}). 

For $A\subseteq X\times Y$ and $x\in X$ let $A_x$ denote the (vertical) 
\emph{section} of $A$ at $x$, that is, $A_x=\{y: (x,y)\in A\}$. Below (and later) we will need the following well-known fact (see \cite[28.8]{Ke:Classical})

\begin{lemma}\label{L.Effros} Let $X$ and $Y$ be Polish spaces and assume that 
$A\subseteq X\times Y$ is Borel and all sections $A_x$ are compact. Then the set $A^+=\{(x,A_x): x\in X\}$ is a Borel subset of 
$X\times F(Y)$, and the map $x\mapsto A_x$ is Borel. 
\end{lemma}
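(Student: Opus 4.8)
The statement has two parts, and I would obtain the second from the first. Once we know that the section map $\Phi\colon X\to F(Y)$, $\Phi(x)=A_x$, is Borel, the set $A^+$ is precisely its graph; and the graph of a Borel map into a standard Borel space is Borel, since $F(Y)$ is standard Borel (\cite[\S 12.C]{Ke:Classical}) and equality of closed sets is a Borel relation (it can be tested against a fixed countable basis $\{U_n\}$ via $K=L\iff\forall n\,(K\cap U_n\neq\emptyset\leftrightarrow L\cap U_n\neq\emptyset)$). Concretely, $A^+=\{(x,K): \forall n\,(A_x\cap U_n\neq\emptyset\leftrightarrow K\cap U_n\neq\emptyset)\}$, which is Borel once each $\{x:A_x\cap U_n\neq\emptyset\}$ is Borel and using that $\{K:K\cap U_n\neq\emptyset\}$ is an Effros generator. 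So the entire content is the Borelness of $\Phi$.

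To prove $\Phi$ Borel, recall that the Effros Borel structure on $F(Y)$ is generated by the sets $\{K:K\cap U\neq\emptyset\}$ for $U$ open, and it suffices to let $U$ range over a countable basis. Hence it is enough to show that $\{x:A_x\cap U\neq\emptyset\}$ is Borel for every open $U\subseteq Y$. First I would reduce to the case that $Y$ is compact: embed $Y$ as a $G_\delta$ subset of the Hilbert cube $Q$. Compactness of each $A_x$ is intrinsic and hence unchanged, $A$ remains Borel as a subset of $X\times Q$, and for $U=V\cap Y$ with $V$ open in $Q$ we have $A_x\cap U=A_x\cap V$ because $A_x\subseteq Y$. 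Thus we may assume $Y=Q$ is compact metrizable.

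With $Y$ compact, write the open set $U$ as an increasing union $U=\bigcup_n K_n$ of compact sets, e.g.\ $K_n=\{y:\operatorname{dist}(y,Y\setminus U)\geq 1/n\}$. Since $A_x\cap U\neq\emptyset$ iff $A_x\cap K_n\neq\emptyset$ for some $n$, we obtain
$$
\{x:A_x\cap U\neq\emptyset\}=\bigcup_n \proj_X\big(A\cap(X\times K_n)\big),
$$
where each $A\cap(X\times K_n)$ is Borel with compact sections $A_x\cap K_n$. This reduces everything to the projection theorem: if $C\subseteq X\times Y$ is Borel with compact sections and $Y$ is compact, then $\proj_X C$ is Borel.

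That projection theorem is the crux and the main obstacle, since the naive projection of a Borel set is only analytic. It is the compact-section case of the Arsenin--Kunugui theorem (\cite{Ke:Classical}), and I would either invoke it directly or prove it by the standard transfinite derivative argument: one attaches to each section a tree, iterates a Cantor--Bendixson-type derivative, and uses compactness to guarantee that the process terminates at a countable ordinal with a rank depending on $x$ in a Borel fashion; stratifying $\proj_X C$ according to this rank exhibits it as a Borel set. Granting this, $\{x:A_x\cap U\neq\emptyset\}$ is a countable union of Borel sets, so $\Phi$ is Borel and, as noted above, $A^+$ is Borel as well.
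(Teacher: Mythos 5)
Your proposal is correct, but note first that the paper does not actually prove this lemma: it records it as a known fact and cites \cite[28.8]{Ke:Classical}, and that cited theorem is essentially the statement of the lemma itself (a Borel set with compact sections has a Borel section map into the hyperspace). Measured against that, your argument is a valid chain of reductions whose only non-elementary ingredient is the same deep fact in a slightly different guise. Concretely: deriving Borelness of $A^+$ from Borelness of $x\mapsto A_x$ via the countable-basis test for equality of closed sets is fine; reducing to basic open $U$ is fine; the passage to the Hilbert cube and the decomposition $U=\bigcup_n K_n$ correctly reduce everything to the assertion that $\proj_X$ of a Borel set with compact sections is Borel. Two caveats. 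First, that projection statement and the cited Theorem~28.8 are interderivable by exactly the sort of two-line arguments you give (e.g.\ $\proj_X(A)$ is the preimage of $\{K:K\neq\emptyset\}$ under the section map), so your proof does not genuinely lower the difficulty --- it relocates the citation; that is acceptable and matches the paper's own practice. Second, your closing sketch of a ``transfinite derivative argument'' is too vague to serve as a proof of the compact-section projection theorem (the standard proofs go through Novikov-style separation or a careful unfolding with well-founded trees, and the details are not routine), so you should lean on the direct citation rather than on that sketch.
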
 


\subsection{Coding states}\label{ss.codingstates}

Roughly following \cite[\S 2]{Kec:C*}, we shall  describe a coding of states
on $C^*(\gamma)$. If $\phi$ is a functional on $C^*(\gamma)$ then,
being norm-continuous, it is uniquely determined by its restriction
to $\{\p_n(\gamma): n\in \bbN\}$. Also, writing $\Delta(r)=\{z\in
\bbC: |z|\leq r\}$ we have $\|\phi\|\leq 1$ if and only if for every
$n$ we have $\phi(\p_n(\gamma))\in \Delta(\|\p_n(\gamma)\|)$.
Therefore we can identify $\phi$ with $\hat\phi\in \prod_n \Delta(\|\p_n(\gamma)\|)$.
Clearly, the set of $\hat\phi$ such that $\phi$ is additive is compact in the product
metric topology. Since $\phi$ is positive if and only if $\phi(\p_n^*(\gamma)\p_n(\gamma))\geq 0$
for all $n$, the set of all states is also compact.
Similarly, the set of all traces is compact.
 By the obvious rescaling of the coordinates, we can identify $\prod_n \Delta(\|\p_n(\gamma)\|)$
 with $\DeltaN$ (writing $\Delta$ for $\Delta(1)$).
Consider the space $\bbK=K_c(\DeltaN)$ of compact 
subsets of $\DeltaN$ and its subspace  $\Kconv$ of of compact convex subsets of $\DeltaN$.

\begin{lemma} \label{L.SPT}
With the above identifications, there are Borel maps
$\bbS\colon \Gamma\to \bbK$, $\bbP\colon \Gamma\to \bbK$ and
$\bbT\colon \Gamma\to \bbK$ such that $\bbS(\gamma)$ is the set of
all states on $C^*(\gamma)$, $\bbP(\gamma)$ is the closure of the set of all pure
states on $C^*(\gamma)$ and $\bbT(\gamma)$ is the set of all tracial
states on $C^*(\gamma)$.
\end{lemma}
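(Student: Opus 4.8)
The engine of the proof is Lemma~\ref{L.Effros}: for each of $\bbS,\bbP,\bbT$ it suffices to produce a Borel set $R\subseteq\Gamma\times\DeltaN$ all of whose vertical sections $R_\gamma$ are compact and equal to the desired subset of $\DeltaN$, whereupon Lemma~\ref{L.Effros} yields that $\gamma\mapsto R_\gamma$ is Borel into $\bbK$. Under the identification preceding the statement, a functional $\phi$ with $\|\phi\|\le 1$ is coded by $\hat\phi(n)=\phi(\p_n(\gamma))\in\Delta(\|\p_n(\gamma)\|)$, and I pass to the common space $\DeltaN$ by the fibrewise rescaling $\psi(n)=\hat\phi(n)/\|\p_n(\gamma)\|$ (with $\psi(n)=0$ when $\|\p_n(\gamma)\|=0$). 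Since $n\mapsto\|\p_n(\gamma)\|$ is Borel by Lemma~\ref{L.B.1}, this rescaling and its inverse are Borel in $(\gamma,\psi)$, so it is harmless to impose all conditions directly on $\psi\in\DeltaN$ while reading off $\phi(\p_n(\gamma))=\|\p_n(\gamma)\|\,\psi(n)$; note that membership $\psi\in\DeltaN$ already encodes exactly $\|\phi\|\le 1$.

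For $\bbS$ the plan is to define $R_{\bbS}$ as the conjunction of the following conditions on $(\gamma,\psi)$, each Borel (via Lemma~\ref{L.B.1}) and, for fixed $\gamma$, closed in $\psi$:
\begin{enumerate}
\item additivity and $\Q(i)$-homogeneity: $\phi(\p_k(\gamma))=\phi(\p_m(\gamma))+\phi(\p_n(\gamma))$ whenever $\p_k=\p_m+\p_n$, and $\phi(\p_k(\gamma))=q\,\phi(\p_m(\gamma))$ whenever $\p_k=q\p_m$;
\item well-definedness: $\phi(\p_m(\gamma))=\phi(\p_n(\gamma))$ whenever $\|\p_m(\gamma)-\p_n(\gamma)\|=0$ (Borel in $\gamma$ by Lemma~\ref{L.B.1});
\item positivity: $\phi(\p_k(\gamma))\ge 0$ whenever $\p_k=\p_m^*\p_m$.
\end{enumerate}
These mirror the defining conditions of $S(\delta)$ in the proof of Proposition~\ref{p.gammaxiequiv}, and together with the built-in bound $\|\phi\|\le 1$ they cut out exactly the positive functionals carried by an honest functional on the dense subalgebra, which extends by continuity to $C^*(\gamma)$. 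Each clause is a closed condition, so the sections are closed in the compact space $\DeltaN$ and hence compact, and Lemma~\ref{L.Effros} applies. To single out genuine states I would impose the normalisation $\|\phi\|=1$, computed in Borel fashion as $\sup\{\phi(\Pos(\gamma)(n)):\|\Pos(\gamma)(n)\|\le 1\}=1$ using the dense sequence of positive elements from Lemma~\ref{L.E.1}; in the unital case this reduces to the closed condition $\phi(\Proj(\gamma)(u(\gamma)))=1$ (Lemma~\ref{l.unital}), which preserves compactness of the sections. The map $\bbT$ is handled identically, appending to the list the closed Borel trace identity $\phi(\p_m(\gamma)\p_n(\gamma))=\phi(\p_n(\gamma)\p_m(\gamma))$ for all $m,n$, together with the normalisation.

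The map $\bbP$ is the main obstacle, since the pure states are not closed and one must output their closure. Following Kechris~\cite{Kec:C*}, I would realise the pure states as the extreme points of the compact convex quasi-state section $Q_\gamma:=(R_{\bbS})_\gamma$: the relation ``$\psi$ is \emph{not} extreme in $Q_\gamma$'' is the projection, along the compact fibres $Q_\gamma$, of the Borel set $\{(\gamma,\psi,\psi_1,\psi_2):\psi_1,\psi_2\in Q_\gamma,\ \psi=\tfrac12(\psi_1+\psi_2),\ \psi_1\ne\psi_2\}$, and projection along compact fibres keeps Borel sets Borel; hence ``extreme'' defines a Borel relation $E$. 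To pass to the closure I would use that $\overline{E_\gamma}\cap U\ne\emptyset\iff E_\gamma\cap U\ne\emptyset$ for open $U$, and that because $E_\gamma$ is contained in the compact set $Q_\gamma$ this equivalence can be leveraged to show $\gamma\mapsto\overline{E_\gamma}$ is Borel into $F(\DeltaN)$ with compact sections, yielding $\bbP(\gamma)=\overline{E_\gamma}$ up to the point $0$. The delicate step — and the part I expect to require the most care — is exactly this last one: verifying that the closure operation applied to the genuinely non-closed Borel relation of extreme points produces a Borel map with compact sections, where the compactness of the ambient quasi-state space must be used in an essential way.
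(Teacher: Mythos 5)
Your handling of $\bbS$ and $\bbT$ is essentially the paper's own argument made explicit: the paper disposes of these two cases in one line by appealing to the coding discussion in \S\ref{ss.codingstates} together with Lemma~\ref{L.Effros}, and your list of coordinatewise closed Borel conditions (additivity, positivity on the dense set of elements $\p_m(\gamma)^*\p_m(\gamma)$, the trace identity) is exactly what that appeal unpacks to; your well-definedness clause is in fact redundant given additivity and the norm bound, but harmless. (Both you and the paper pass over the fact that for non-unital $C^*(\gamma)$ the set of states of norm exactly $1$ need not be weak$^*$-closed, so the normalisation clause is a closed condition only in the unital case; this wrinkle is inherited from the source and is not a defect of your argument relative to it.)

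For $\bbP$ there is a genuine gap, located exactly at the step you flag. First, a small repair: the fibres of the set you project to get ``not extreme'' are only $\sigma$-compact, not compact, because of the open condition $\psi_1\neq\psi_2$; this is fixed by writing $\psi_1\neq\psi_2$ as the countable union of the closed conditions $d(\psi_1,\psi_2)\geq 1/n$. The serious problem is the closure step. The set $\{\gamma:\mathrm{ext}(Q_\gamma)\cap U\neq\emptyset\}$ is the projection of a Borel set whose $\gamma$-sections are $G_\delta$ subsets of $Q_\gamma$ (the extreme points of a compact convex metrizable set are $G_\delta$ but in general not $\sigma$-compact), so the only a priori bound is that this projection is analytic; containment of the sections in the compact set $Q_\gamma$ does not help, since projections of Borel sets with merely $G_\delta$ sections are in general properly analytic. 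The missing idea --- which is the content of the cited \cite[Lemma~2.2]{Kec:C*} that the paper's proof defers to --- is the Krein--Milman/Milman characterisation: for $U$ open and $K$ compact convex, $\mathrm{ext}(K)\cap U=\emptyset$ if and only if $K=\overline{\conv}(K\setminus U)$ (Krein--Milman gives one direction, Milman's partial converse the other, using that $K\setminus U$ is closed). Since $\gamma\mapsto Q_\gamma\setminus U$ is Borel into $K_c(\DeltaN)$ by Lemma~\ref{L.Effros}, $K\mapsto\overline{\conv}(K)$ is continuous for the Hausdorff metric, and equality of compacta is Borel, this renders $\{\gamma:\mathrm{ext}(Q_\gamma)\cap U\neq\emptyset\}=\{\gamma:\overline{\mathrm{ext}(Q_\gamma)}\cap U\neq\emptyset\}$ Borel for every open $U$, which is precisely what is needed for $\gamma\mapsto\overline{\mathrm{ext}(Q_\gamma)}$ to be Borel with compact values. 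Without this (or an equivalent device) your construction of $\bbP$ does not go through.
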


\begin{proof}
For $\bbS$ and $\bbT$ this is obvious from the above discussion and Lemma~\ref{L.Effros}. 
The existence of $\bbP$ can be proved by a proof similar to that of
\cite[Lemma~2.2]{Kec:C*}.
\end{proof}


\begin{lemma} \label{L.States} There are Borel maps
$\State\colon \Gamma\to (\DeltaN)^{\bbN}$, $\Pure\colon
\Gamma\to (\DeltaN)^{\bbN}$ and $\Trace\colon \Gamma\to
(\DeltaN)^{\bbN}$ such that $\State(\gamma)(m)$, for $m\in
\bbN$, is a dense subset of $\bbS(\gamma)$,  $\Pure(\gamma)(m)$, for
$m\in \bbN$  is a dense subset of $\bbP(\gamma)$ and
$\Trace(\gamma)(m)$, for $m\in \bbN$, is a dense subset of
$\bbT(\gamma)$. 
\end{lemma}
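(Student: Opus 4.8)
The plan is to reduce all three assertions to a single application of the Kuratowski--Ryll-Nardzewski selection theorem, using that Lemma~\ref{L.SPT} has already produced the Borel maps $\bbS,\bbP,\bbT\colon\Gamma\to\bbK$ whose values are exactly the compact sets we wish to enumerate. This is the same mechanism employed in the proof of Proposition~\ref{p.gammaxiequiv}, where a dense sequence of Borel selectors $\phi_n$ was extracted from the Borel map $\delta\mapsto S(\delta)$.

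First I would note that, since $\DeltaN$ is compact metrizable, every closed subset is compact and the Vietoris Borel structure on $\bbK=K_c(\DeltaN)$ agrees with the Effros Borel structure on the space $F(\DeltaN)$ of closed subsets; hence $\bbS,\bbP,\bbT$ may be viewed as Borel maps of $\Gamma$ into $F(\DeltaN)$. I would then apply \cite[Theorem~12.13]{Ke:Classical} to each: for a Borel map $\gamma\mapsto F(\gamma)$ into $F(\DeltaN)$ with nonempty values there is a sequence of Borel maps $s_m\colon\Gamma\to\DeltaN$ with $s_m(\gamma)\in F(\gamma)$ and $\{s_m(\gamma):m\in\bbN\}$ dense in $F(\gamma)$. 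Taking $F=\bbS$ and setting $\State(\gamma)(m)=s_m(\gamma)$, and defining $\Pure$ and $\Trace$ from $\bbP$ and $\bbT$ in the same way, yields the required maps.

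The only delicate point --- and the one I would flag as the main (if minor) obstacle --- is that the selection theorem demands nonempty fibers, whereas for $\gamma$ with $C^*(\gamma)=0$ the relevant sets may be empty. This is harmless: the set $\{\gamma:\bbS(\gamma)=\emptyset\}$ is Borel, since in the Effros structure $\{K:K\ne\emptyset\}=\{K:K\cap\DeltaN\ne\emptyset\}$ is a generator and hence Borel, and similarly for $\bbP$ and $\bbT$. On the complement of this Borel set the fibers are nonempty and the selection theorem applies verbatim; on the exceptional set I would simply define $\State$, $\Pure$ and $\Trace$ to be a fixed constant sequence, the density requirement being vacuous there. With this caveat the lemma follows directly from \cite[Theorem~12.13]{Ke:Classical}.
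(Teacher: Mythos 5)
Your proposal is correct and, for $\State$ and $\Trace$, is exactly the paper's argument: apply the Kuratowski--Ryll-Nardzewski theorem to the Borel maps $\bbS$ and $\bbT$ from Lemma~\ref{L.SPT} (your handling of the possibly empty fibers for the trivial algebra is a harmless extra precaution). The only divergence is $\Pure$: the paper instead invokes the explicit construction of \cite[Corollary~2.3]{Kec:C*}, which produces genuine pure states, whereas your KRN application only produces points of the closed set $\bbP(\gamma)$ --- but since $\bbP(\gamma)$ is defined as the \emph{closure} of the pure states and the lemma only asks for a dense subset of $\bbP(\gamma)$, your version proves the statement as written.
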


\begin{proof} For $\State$ and $\Trace$ this is a consequence of the
previous lemma and the Kuratowski--Ryll-Nardzewski Theorem 
 (\cite[Theorem~12.13]{Ke:Classical}). 
The  construction of the
map $\Pure$, was given in \cite[Corollary~2.3]{Kec:C*}.
\end{proof}


\section{Choquet and Bauer simplexes}
\label{S.choquet}

Let us first recall the pertinent definitions. 
All compact convex sets considered here will be metrizable, and 
therefore without a loss of generality subsets of the Hilbert cube.  
For such $S$ its \emph{extreme boundary}, denoted $\partial S$, 
is the set of its extremal points. By the Krein--Milman theorem 
$S$ is contained in the closure of the convex hull of $\partial S$. 
A {\it metrizable Choquet simplex} is a simplex $S$ as above with the following property:  for every point $x$ in 
$S$ there exists a unique probability boundary measure $\mu$
 (i.e., a measure concentrated on $\partial S$) 
such that $x$ is the barycentre of $\mu$. 
This notion has a number of equivalent definitions, see \cite[\S II.3]{alfsen71}. 
The isomorphism relation in the category of Choquet simplexes is affine homeomorphism.

The extreme boundary of a Choquet simplex $S$ 
is always $G_\delta$, and in the case that it is compact $S$ is said to be a \emph{Bauer simplex}. 
It is not difficult to see that in this case $S$ is isomorphic to the space $P(\partial S)$ of Borel probability measures on $\partial S$. In particular Bauer simplexes $S$ and $L$ are isomorphic if and only if 
their extreme boundaries $\partial S$ and $\partial L$ are homeomorphic. 

Let  $\Delta_{n}$ denote the $n$-simplex ($n\in\N$). 
Every metrizable Choquet simplex $S$ can be represented as 
an inverse limit of finite-dimensional Choquet simplexes 
\begin{equation}\label{invlim}
S\simeq\lim_{\leftarrow} (\Delta_{n_i},\psi_i),
\end{equation}
where and $\psi_i:\Delta_{n_i} \to \Delta_{n_{i-1}}$ is an affine surjection
 for each $i \in \mathbb{N}$.  
This was proved in \cite[Corollary to Theorem~5.2]{LaLi} and we shall prove a Borel 
version of  this result in Lemma~\ref{L.Choq.2}.

\subsubsection{Order unit spaces} \label{OrderUnit} 
Let $(A,A^+)$ be an ordered real Banach space.
Here $A^+$ is a cone in $A$ and the order is defined by $a\leq b$ if and only if $b-a\in A^+$. 
Such a space is \emph{Archimedean} if for every $a\in A$ the set $\{ra: r\in \bbR^+\}$ has 
an upper bound precisely when $a$ is \emph{negative}, i.e., $a\leq 0$. 
An element $1_A\in A$ is an \emph{order unit} if  for every $a\in A$ there is $r\in \bbR^+$ such that 
$-r1_A\leq a\leq r1_A$.  
We say that an Archimedean ordered vector space with a distinguished unit $(A,A^+,1_A)$ is 
an \emph{order unit space}, and define a norm on $A$ by 
\[
\|a\|=\inf\{r>0: -r1_A\leq a\leq r1_A\}. 
\]
Our interest in order unit spaces stems from the fact that the 
 category of separable complete order unit spaces is the dual category to the category of metrizable Choquet simplexes. For a Choquet simplex $S$, the associated dual object is 
 $\Aff(S)$, the real-valued affine functions on $S$, with the natural 
 ordering and order unit set to be the constant function with value 1.
 Conversely, given an order unit space $(A,A^+,1_A)$, the associated dual object is the space of positive real functionals $\phi$ on $A$ of norm one, with respect to the weak*-topology. 
 In the case of Bauer simplexes $S$  there is also a natural identification of the 
complete separable order unit spaces $\Aff(S)$ and $C_\mathbb{R}(\partial S)$ 
obtained by restriction. 
In particular,  
for the simplex $\Delta_n$  we have 
\[
\Aff(\Delta_n) \cong (\mathbb{R}^{n+1}, (\mathbb{R}^+)^{n+1}, (1,1,\ldots,1)).  
\]
Setting $e_0$ to be the origin in $\mathbb{R}^n$, the co-ordinate functions $f_k:\Delta_n \to \mathbb{R}$, $0 \leq k \leq n$, given by the formula
\[
f_k(e_i) = \left \{ \begin{array}{ll} 1 & i=k \\ 0 & i \neq k \end{array} \right.
\]
on vertices and extended affinely, form a canonical basis for $\Aff(\Delta_n)$.

Let $X$ and $Y$ be separable order unit spaces 
with order units $1_X$ and $1_Y$. 
Let as usual $L(X,Y)$ denote the set of linear, continuous maps, and let $L_1(X,Y)=\{T\in L(X,Y): \|T\|\leq 1\}$. The space $L_1(X,Y)$ is a Polish space when given the strong topology.  
The set of order unit preserving maps in $L_1$, 
$$
L_{\rm ou}(X,Y)=\{T\in L_1(X,Y): (\forall x,x'\in X) x\leq x'\implies T(x) \leq T(x')\wedge T(1_X)=1_Y\}
$$
is a closed subset of $L_1(X,Y)$, and is therefore Polish in its own right.  
(Our definition of $L_{\rm ou}$ involves some redundancy since it is 
 a standard fact that $T\in L_1(X,Y)$ such that $T(1_X)=1_Y$ is automatically order preserving.)

\subsection{Parameterizing metrizable Choquet simplexes and their duals.}\label{simplexbasic} 

\subsubsection{The space $\Lambda$} \label{S.Lambda}
If $X=\Aff(K)$ and $Y=\Aff(L)$ for metrizable Choquet simplexes $K$ and $L$, then $L_{\rm ou}(X,Y)$ is the set of morphisms dual to the affine continuous maps from $L$ to $K$.  It follows from (\ref{invlim}) that the separable complete order unit spaces all arise as direct limits of sequences 
\[
\mathbb{R}^{m_1} \stackrel{\phi_1}{\longrightarrow} \mathbb{R}^{m_2} \stackrel{\phi_2}{\longrightarrow} \mathbb{R}^{m_3} \stackrel{\phi_3}{\longrightarrow} \cdots
\]
with $\phi_n \in L_{\rm ou}(\mathbb{R}^{m_n},\mathbb{R}^{m_{n+1}})$.  
Since we can identify  an operator in 
$L_{\rm ou}(\mathbb{R}^{m_n},\mathbb{R}^{m_{n+1}})$ 
with its matrix, $L_{\rm ou}(\mathbb{R}^{m_n},\mathbb{R}^{m_{n+1}})$ 
is affinely homeomorphic with a closed 
subspace of  $m_n\times m_{n+1}$ matrices.
We can therefore parameterize the separable complete order unit spaces (and therefore their duals) using 
$$
\Lambda=\N^\N\times \prod_{(m,n)\in \N^2} L_{\rm ou}(\R^m,\R^n)
$$
in the following way:   each $(f,\psi)\in \Lambda$ corresponds to the limit $X(f,\psi)$ of the system
$$
\R^{f(1)}\underset{\psi(f(1),f(2))}{\longrightarrow} \R^{f(2)}\underset{\psi(f(2),f(3))}{\longrightarrow} \R^{f(3)}\underset{\psi(f(3),f(4))}{\longrightarrow}\cdots.
$$
Since $\Lambda$ is a Polish space with respect to  the product topology, we have what we will refer to as the \emph{standard Borel space of metrizable Choquet simplexes}.  We note that our parameterization is similar in spirit to that of $\Gamma$, as we identify our objects with something akin to a dense sequence.
This is a good Borel parameterization (see Definition~\ref{d.parameterization}). 



\subsubsection{The space $\LLambda$} \label{S.LLambda}
The following Borel space  
of metrizable Choquet simplexes was essentially defined by Lazar and 
Lindenstrauss in \cite{LaLi} where the emphasis was put on Banach spaces $X$ instead of 
the simplexes $B(X^*)$. Another difference is that in \cite{LaLi} the authors studied a wider class
of spaces whose dual is $L^1$. 

A simple analysis of an $n\times (n+1)$ matrix  
shows that, modulo permuting the basis of $\bbR^{n+1}$, 
every $\phi\in L_{\rm ou}(\bbR^n,\bbR^{n+1})$ is of the form  
\begin{equation}\label{representing}
\textstyle\phi (x_1,x_2,\dots, x_n)=(x_1,x_2,\dots, x_n, \sum_{i=1}^n a_{i} x_i)
\end{equation}
where $0\leq a_i \leq 1$ and $\sum_i a_i=1$ is in $L_{\rm ou}(\bbR^n,\bbR^{n+1})$.

A \emph{representing matrix} of a Choquet simplex 
is a matrix $(a_{ij})_{(i,j) \in \mathbb{N}^2}$ in which all entries are non-negative, 
 $\sum_{i=1}^n a_{in}=1$, and $a_{in}=0$ for $i>n$. 
By the above, such a matrix codes a directed  system 
\[
\mathbb{R}^{1} \stackrel{\phi_1}{\longrightarrow} \mathbb{R}^{2} \stackrel{\phi_2}{\longrightarrow} \mathbb{R}^{3} \stackrel{\phi_3}{\longrightarrow} \cdots
\]
where 
$\phi_n (x_1,x_2,\dots, x_n)=(x_1,x_2,\dots, x_n, \sum_{i=1}^n a_{in} x_i)$. 
A limit of this directed system is a Banach space $X$ and the unit ball of its dual is
a Choquet simplex with respect to its weak*-topology.  This is because an inverse limit of Choquet simplexes is again a Choquet simplex. We let $\LLambda$ denote the set of all representing matrices, which is a closed set when viewed as a subset of $[0,\infty)^{\N\times\N}$.

On p. 184 of \cite{LaLi} the authors refer to the Borel space of representing matrices 
 when they point out that 
``It seems to be a very difficult problem to determine the set of all representing matrices of a given separable infinite-dimensional predual of $L_1(\mu)$. 
We know the answer to this question only for one such space, namely the space of Gurarii   and even here the situation  is not entirely clear.''
Gurarii space is dual to the Poulsen simplex and the Lazar--Lindenstrauss characterization 
alluded to above implies that a dense $G_\delta$ set of representing matrices corresponds to 
the Poulsen simplex. (By removing zeros, 
here we identify the matrix $a_{in}$, $i\leq n\in \bbN$ with an element of $\prod_n [0,1]^n$.)
This can be taken as a remark about the Borel complexity of certain set, 
close to the point of view of the present paper or of~\cite{Kec:C*}.

\subsubsection{The space $\LLLambda$} \label{S.LLLambda}
Let $\delta_n=2^{-2n}$ and for 
 each $n$ 
consider the set of all $\phi\in L_{\rm ou}(\bbR^n,\bbR^{n+1})$ 
of the form \eqref{representing} such that all $a_{i}$  are 
of the form $k2^{-2n}$ for $k\in \bbN$. 
 Let  $\cF_n$ be the set of all  $n\times (n+1)$ matrices  
 representing such $\phi$. 
Modulo permuting basis of $\bbR^{n+1}$, the set $\cF_n$ is $\delta_n$-dense in 
$L_{\rm ou}(\bbR^n,\bbR^{n+1})$.

\begin{lemma} \label{L.Choq.-1}
 For all $m\leq n$ in $\bbN$ and every $\Phi\in L_{\rm ou}(\bbR^m,\bbR^n)$
 there are $F_i\in \cF_i$ for $m\leq i\leq n$  such that 
$F_{n-1}\circ \dots \circ F_{m+1}\circ F_m$  
is within $2^{-m}$
from $\Phi$ composed with a permutation of the canonical basis of $\bbR^n$ in the operator norm. 
\end{lemma}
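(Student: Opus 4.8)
The plan is to dualize, factor $\Phi$ into single steps of the normal form \eqref{representing}, discretize each step into $\cF_i$, and then control the accumulated error using the rapid decay of the $\delta_i$. Throughout, $\bbR^k$ carries the order-unit norm, which is the sup norm, so the operator norm on $L_{\mathrm{ou}}(\bbR^k,\bbR^l)$ is the maximal absolute row sum and every coordinate permutation acts as an isometry.

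First I would produce an exact factorization $\Phi=\psi_{n-1}\circ\cdots\circ\psi_m$ with each $\psi_i\in L_{\mathrm{ou}}(\bbR^i,\bbR^{i+1})$ of the form \eqref{representing} up to a permutation of the basis of $\bbR^{i+1}$. Passing to the dual affine map $g=\Phi^\ast\colon\Delta_{n-1}\to\Delta_{m-1}$ between simplices, this amounts to writing $g$ as a composition of $n-m$ single-vertex collapses $\Delta_{j}\to\Delta_{j-1}$, each of which fixes all but one vertex and sends the remaining vertex to an arbitrary point of the target; these are exactly the simplex maps dual to \eqref{representing}. Since the vertices of $\Delta_{m-1}$ are extreme, each of them is the image of some vertex of $\Delta_{n-1}$, so one may choose a source vertex over each target vertex to obtain a system of distinct representatives. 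The remaining $n-m$ surplus vertices are then collapsed one at a time: if $v$ is a surplus vertex and $F$ is the face spanned by the other vertices, then $g|_F$ is still onto $\Delta_{m-1}$ (it retains the representatives), so $g(v)$ has a preimage $q\in F$; the collapse $s$ fixing $F$ and sending $v\mapsto q$ satisfies $g=(g|_F)\circ s$, and $g|_F$ has one fewer surplus vertex. Iterating gives the factorization, and in the coordinates placing the collapsed vertex last each $\psi_i$ is literally of the form \eqref{representing}.

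Given this factorization, for each $i$ I would invoke the stated $\delta_i$-density of $\cF_i$ (modulo permuting the basis of $\bbR^{i+1}$) to choose $F_i\in\cF_i$ with $\|\psi_i-\rho_i^{-1}F_i\|\le\delta_i=2^{-2i}$ for a coordinate permutation $\rho_i$ of $\bbR^{i+1}$; concretely this only rounds the convex coefficients of $\psi_i$ to nearby multiples of $2^{-2i}$. Writing $d_i=\rho_i^{-1}F_i$, a standard telescoping estimate for contractions gives $\|\Phi-d_{n-1}\circ\cdots\circ d_m\|\le\sum_i\|\psi_i-d_i\|$. The interspersed permutations can be pushed to the left: composing a normal-form map with a coordinate permutation is again of the form \eqref{representing} with permuted (still dyadic) coefficients, so $d_{n-1}\circ\cdots\circ d_m=\Pi^{-1}\circ F_{n-1}\circ\cdots\circ F_m$ after relabelling the $F_i$ within $\cF_i$, where $\Pi$ is a single permutation of the basis of $\bbR^n$. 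Since $\Pi$ is an isometry, this yields
\[
\bigl\|F_{n-1}\circ\cdots\circ F_m-\Pi\circ\Phi\bigr\|\ \le\ \sum_{i=m}^{n-1}2^{-2i}\ \le\ \tfrac{4}{3}\,4^{-m}\ \le\ 2^{-m},
\]
the last inequality holding for every $m\ge1$ because $\tfrac43 2^{-m}\le1$.

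The main obstacle is the factorization step: one must ensure that each collapse can be carried out while keeping the residual map a surjection of simplices, so that the normal form \eqref{representing} still applies at the next stage — this is precisely where the attainment of every target vertex by a source vertex is used. The remaining discretization and error analysis are routine, the essential quantitative point being that $\delta_i=2^{-2i}$ decays geometrically fast enough that the error accumulated over the $n-m$ steps stays below $2^{-m}$ irrespective of $n$.
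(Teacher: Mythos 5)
Your proposal is correct and takes essentially the same route as the paper's proof: an exact factorization of $\Phi$ into single-coordinate steps of the normal form \eqref{representing}, a $\delta_i=2^{-2i}$ approximation of each step by an element of $\cF_i$, and a telescoping estimate $\sum_{i\ge m}2^{-2i}\le 2^{-m}$; the paper just performs the factorization directly on the $n\times m$ matrix (after permuting the basis of $\bbR^n$ so that $a_{ii}=1$, each $\Phi_k$ appends the $k{+}1$-st row padded with zeros) rather than dually via iterated vertex collapses. Both arguments rest on the same implicit hypothesis that the dual affine map is a surjection of simplices (equivalently, that each column of the matrix contains an entry equal to $1$), which you make explicit at the system-of-distinct-representatives step.
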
 

\begin{proof} 
The linear operator $\Phi$ 
is coded by an $n\times m$-matrix $(a_{ij})$ that has at least one entry equal to 1 in each column. 
After possibly re-ordering the basis, we may then assume $a_{ii}=1$ for all $i\leq m$. 
Furthermore, we can canonically write $\Phi$ as a composition of
$m-n$ operators 
\[
\Phi_{n-1}\circ \Phi_{n-2}\circ \dots \circ \Phi_{m}
\]
so that $\Phi_{k}\in L_{\rm ou}(\bbR^k,\bbR^{k+1})$, and the last row of the matrix of 
$\Phi_{k}$ is the $k$-th row of the matrix of $\Phi$ padded with zeros. 
 Now choose 
 $F_{n-1}, \dots, F_{m}$ in $\prod_{k=m}^{n-1} \cF_k$
 such that $\|F_k-\Phi_{k}\|<2^{-2k}$. 
Then  
  $F=F_{n-1}\circ  \dots\circ F_{m}$ 
is within $2^{-m}$ of $\Phi$ in the operator norm, as required. 
\end{proof}

Let $\LLLambda$ be the compact metric space  $\prod_n \cF_n$. 
By identifying $\psi\in \LLLambda$ with $(\id,\psi)\in \Lambda$, one sees that each element of $\LLLambda$ represents a Choquet simplex. 
We fix a well-ordering $\prec_{\cF}$ of finite sequences of elements of $\bigcup_n \cF_n$, to be 
used in the proof of Lemma~\ref{L.Choq.2}. 

\subsubsection{The space $\Kchoq$} Recall that $\Kconv$ is the space of all compact convex subsets of the Hilbert cube.  Let $\Kchoq$ denote the space of all Choquet 
simplexes in $K\in \Kconv$. In Lemma~\ref{L.Choq.2} we shall show that $\Kchoq$ is a 
Borel subspace of $\bbK$, and therefore $\Kchoq$ is the `natural' parameterization 
of Choquet simplexes.

\subsubsection{Our Borel parameterizations of Choquet simplexes are weakly equivalent}
Weak equivalence of Borel parameterizations was defined in 
(4') of Definition~\ref{d.parameterization}.

\begin{prop} \label{P.Choquet} 
The  four Borel parameterizations of  Choquet simplexes introduced above, 
$\Lambda$, 
$\LLambda$, 
  $\LLLambda$,
  and    $\Kchoq$, 
are all  weakly 
equivalent. 
  \end{prop}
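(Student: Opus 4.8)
The plan is to use that weak equivalence of parameterizations is an equivalence relation: it is reflexive and symmetric by definition, and transitive because the composition of two Borel homomorphisms is again a Borel homomorphism. It therefore suffices to exhibit Borel homomorphisms arranged in a single oriented cycle through all four spaces,
\[
\LLLambda \hookrightarrow \LLambda \hookrightarrow \Lambda \xrightarrow{\ \rho\ } \Kchoq \xrightarrow{\ \lambda\ } \LLLambda,
\]
since following the cycle then yields a Borel homomorphism between any ordered pair of the four parameterizations. The two inclusions are immediate and change nothing: an element of $\LLLambda=\prod_n\cF_n$ is by construction a sequence of one-step maps whose coefficients assemble into a representing matrix, so $\LLLambda\subseteq\LLambda$, and a representing matrix $(a_{in})$ is identified with the system $(\id,\psi)\in\Lambda$ whose connecting maps are the $\phi_n$ of \eqref{representing}. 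In each case the coded Choquet simplex is literally unchanged, so the inclusions are (continuous, hence Borel) homomorphisms. This reduces the proposition to the construction of the realization map $\rho$ and the coding map $\lambda$.

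For $\rho\colon\Lambda\to\Kchoq$ I would realize the simplex dual to $X(f,\psi)$ as a concrete inverse limit. Writing $m_i=f(i)$ and $\phi_i=\psi(f(i),f(i+1))\in L_{\rm ou}(\R^{m_i},\R^{m_{i+1}})$, the state space of $X(f,\psi)$ is the inverse limit of the dual finite-dimensional simplexes under the adjoint affine maps $\phi_i^*\colon\Delta_{m_{i+1}-1}\to\Delta_{m_i-1}$; this is a metrizable Choquet simplex by the defining property of $\Lambda$. Fixing once and for all affine embeddings of each $\Delta_k$ into the Hilbert cube and a coordinatewise identification $(\Delta^\N)^\N\cong\Delta^\N$, I set
\[
\rho(f,\psi)=\Big\{(x_i)_i\in\textstyle\prod_i\Delta_{m_i-1}:(\forall i)\ \phi_i^*(x_{i+1})=x_i\Big\}.
\]
This is a compact convex subset of the Hilbert cube that is affinely homeomorphic to the state space of $X(f,\psi)$, so $\rho$ is a homomorphism. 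Its graph is defined by a Borel condition in $((f,\psi),(x_i))$ with compact sections, so $\rho$ is Borel by Lemma~\ref{L.Effros}, and its range lies in the subspace $\Kchoq$ of $\bbK$.

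The coding map $\lambda\colon\Kchoq\to\LLLambda$ is the Borel form of the Lazar--Lindenstrauss inverse-limit representation \eqref{invlim}, and I expect this to be the main obstacle, since it requires passing from the purely geometric datum of a compact convex subset $S$ of the Hilbert cube to a canonically chosen sequence of one-step maps in $\prod_n\cF_n$. The strategy is: first use \eqref{invlim} to write $S$ as an inverse limit of finite-dimensional simplexes along affine surjections $\psi_i\colon\Delta_{n_i}\to\Delta_{n_{i-1}}$, extracting this data Borel-measurably in $S$; then discretize each dual connecting map by Lemma~\ref{L.Choq.-1} into a composition of maps from the $\cF_k$, at each stage selecting the $\prec_{\cF}$-least admissible finite sequence so that the overall assignment is Borel. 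I would isolate the first step as Lemma~\ref{L.Choq.2}, which simultaneously shows that $\Kchoq$ is a Borel subset of $\bbK$.

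Two points will require care. First, the Borel selection in the Lazar--Lindenstrauss step: the classical proof is not obviously uniform in $S$, and making it Borel-measurable is the heart of Lemma~\ref{L.Choq.2}, with the fixed well-ordering $\prec_{\cF}$ rendering the subsequent discretization canonical. Second, since Lemma~\ref{L.Choq.-1} only replaces each connecting map by a $2^{-m}$-close composition, I must verify that passing to these summably close approximants does not alter the isomorphism class of the limit; this is an approximate-intertwining argument for the dual order unit spaces, guaranteeing that the limit of the discretized system is affinely homeomorphic to $S$. Granting these two points, $\lambda$ is a Borel homomorphism, the cycle closes, and transitivity of weak equivalence yields the proposition.
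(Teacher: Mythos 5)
Your proposal is correct and follows essentially the same route as the paper: the identical cycle of Borel homomorphisms through $\Kchoq\to\LLLambda\to\LLambda\to\Lambda\to\Kchoq$, with the two canonical inclusions, the hard arrow $\Kchoq\to\LLLambda$ delegated to Lemma~\ref{L.Choq.2}, and a realization map $\Lambda\to\Kchoq$. The only divergence is in that last arrow, where you realize the simplex directly as an inverse limit of finite-dimensional simplexes under the adjoint maps (Borel via Lemma~\ref{L.Effros}), whereas the paper forms the predual Banach space $B(f,\psi)\subseteq\bbR^{\bbN}$ of coherent sequences and passes to its dual unit ball via Lemma~\ref{L.dual.1}; these are dual descriptions of the same construction and both work.
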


A proof of Proposition~\ref{P.Choquet} will take up the rest of this section. 
Clearly the space  $\Kconv$  is a closed subset
of $K_c(\Delta^{\bbN})$. 
In the following consider the Effros Borel space $F(\CRD)$ of all closed subsets
of $\CRD$ (see \S\ref{S.Eff}).

Recall that a \emph{peaked partition of unity} in an order-unit space $(A.A^+,1_A)$
is a finite set $f_1,\dots, f_n$ of positive elements of $A$ such that $\sum_i f_i=1_A$
and $\|f_i\|=1$ for all $i$. A peaked partition of unity $P'$ \emph{refines} a peaked 
partition of unity $P$ if every element of $P$ is a convex combination of 
the elements of $P'$.  

We shall need two facts about real Banach spaces. 
For a separable Banach space $X$ let $S(X)$ denote the space of closed subspaces of $X$, 
with respect to the Effros Borel structure (\S\ref{S.Eff}). 
It was proved by Banach that $S(\CRD)$ is universal for separable Banach spaces, and therefore this space with respect to its Effros Borel structure can be considered as the standard Borel space of separable Banach spaces (see Lemma~\ref{L.dual.2}).  
Consider the space 
$\Kconv \subseteq K_c(\DeltaN)$ of compact convex
subsets of the Hilbert cube, $\DeltaN$. With respect to the Borel structure induced by the Hausdorff metric, this is
the standard Borel space of all compact convex metrizable spaces.
For a Banach space $X$ let $B(X^*)$ denote the unit ball of the dual of $X$, with respect to the weak*-topology. Then $B(X^*)$ is a compact convex space, and it is metrizable if $X$ is 
separable. 
The idea in the following is taken from of the proof of 
\cite[Lemma~2.2]{Kec:C*}.

\begin{lemma} \label{L.dual.1} 
If $X$ is a separable Banach space
then there is a Borel map $\Phi\colon S(X)\to \Kconv$ such that 
$\Phi(Y)$ is affinely homeomorphic to the unit ball $B(Y^*)$ of $Y^*$, with respect to 
its weak*-topology. 
\end{lemma}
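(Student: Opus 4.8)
The plan is to realize $B(Y^*)$ as an explicit compact convex subset of the Hilbert cube $\DeltaN$ by coding a functional via its values on a Borel‑chosen dense sequence of $Y$, and then to verify that this coding depends on $Y$ in a Borel fashion by means of Lemma~\ref{L.Effros}.

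First I would use that $S(X)$ carries the Effros Borel structure and that each $Y\in S(X)$ is a closed subset of the separable space $X$, so that the Kuratowski--Ryll-Nardzewski selection theorem (\cite[Theorem~12.13]{Ke:Classical}) provides Borel maps $d_n\colon S(X)\to X$ with $\{d_n(Y):n\in\bbN\}$ norm-dense in $Y$ for every $Y$. Setting $\tilde d_n(Y)=d_n(Y)/\|d_n(Y)\|$ when $d_n(Y)\neq 0$ and $\tilde d_n(Y)=0$ otherwise (again Borel), a norm-$\le 1$ functional $\phi$ on $Y$ is determined by continuity by its values on this dense set, and $|\phi(\tilde d_n(Y))|\le 1$. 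Hence
$$
e_Y\colon B(Y^*)\to \DeltaN,\qquad e_Y(\phi)=\bigl(\phi(\tilde d_n(Y))\bigr)_{n\in\bbN},
$$
is a well-defined injection whose coordinates are weak\*-continuous and affine. As $Y$ is separable, $B(Y^*)$ is weak\*-compact and metrizable, so $e_Y$ is an affine homeomorphism onto its image; I define $\Phi(Y)=e_Y(B(Y^*))$, a compact convex subset of $\DeltaN$, i.e. an element of $\Kconv$, affinely homeomorphic to $B(Y^*)$.

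The hard part will be establishing that $Y\mapsto\Phi(Y)$ is Borel, and the key is to describe membership $\xi\in\Phi(Y)$ \emph{without} an existential quantifier over functionals (which would merely give an analytic set). A Hahn--Banach argument shows that $\xi=(\xi_n)$ lies in $\Phi(Y)$, i.e. there is $\phi\in B(Y^*)$ with $\phi(\tilde d_n(Y))=\xi_n$ for all $n$, if and only if
$$
\Bigl|\sum_{j\in F} q_j \xi_j\Bigr|\le \Bigl\|\sum_{j\in F} q_j\,\tilde d_j(Y)\Bigr\|
$$
for every finite $F\subseteq\bbN$ and every rational tuple $(q_j)_{j\in F}$. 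Necessity is immediate; for sufficiency these inequalities make the assignment $\sum q_j\tilde d_j(Y)\mapsto\sum q_j\xi_j$ a well-defined $\bbQ$-linear contraction on the rational span of $\{\tilde d_n(Y)\}$, which extends by continuity to a norm-$\le 1$ functional on its closed span $Y$. Consequently
$$
A=\{(Y,\xi)\in S(X)\times\DeltaN:\xi\in\Phi(Y)\}
$$
is the intersection, over the countably many pairs $\bigl(F,(q_j)\bigr)$, of the sets $\{(Y,\xi):|\sum_j q_j\xi_j|\le\|\sum_j q_j\tilde d_j(Y)\|\}$.

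Finally I would observe that each of these sets is Borel, since $Y\mapsto\tilde d_j(Y)$ is Borel, the norm is continuous, and $\xi\mapsto\xi_j$ is continuous; hence $A$ is Borel and every section $A_Y=\Phi(Y)$ is compact. By Lemma~\ref{L.Effros} the assignment $Y\mapsto\Phi(Y)$ is then a Borel map from $S(X)$ into $\bbK$, and since each value is convex it is a Borel map into $\Kconv$. Combined with the affine homeomorphism $\Phi(Y)\cong B(Y^*)$ from the first step, this yields the required $\Phi$. I expect the only genuine obstacle to be the replacement of the analytic ``there exists $\phi$'' description of $\Phi(Y)$ by the countable family of rational linear inequalities, as everything else reduces to routine Borelness checks and the cited selection and section theorems.
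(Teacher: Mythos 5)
Your proposal is correct and follows essentially the same route as the paper: select a dense sequence in $Y$ via Kuratowski--Ryll-Nardzewski, code each $\phi\in B(Y^*)$ by its values on that sequence, cut the image out of $\DeltaN$ by countably many Borel conditions with compact convex sections, and invoke Lemma~\ref{L.Effros} to conclude Borelness. The only cosmetic difference is that the paper enumerates the rational span directly and characterizes the image by additivity equalities together with the coordinatewise bounds $|\phi(n)|\le\|h_n(Y)\|$ (then rescales into the Hilbert cube), whereas you keep the original dense sequence and characterize the image by Hahn--Banach inequalities over finite rational combinations; both yield the same countable family of Borel constraints.
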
 

\begin{proof} By Kuratowski--Ryll-Nardzewski's theorem (\cite[Theorem~12.13]{Ke:Classical})
there are Borel  $f_n\colon S(X)\to X$ for $n\in \bbN$
 such that $\{f_n(Y): n \in \bbN\}$ 
is a dense subset of $Y$ for every $Y$. 

Fix an enumeration $F_n=(r_{n,i}: i\leq k_n)$, for $n\in \bbN$, of finite sequences of rationals. 
Define $h_n\colon S(X) \to X$ by 
\[
h_n(Y)=\sum_{i=1}^{k_n} r_{n,i} f_i(Y).
\]
Then $\{h_n(Y): n\in \bbN\}$ is a dense linear 
subspace of $Y$ for each $Y\in S(X)$. 

Let $\Delta(Y)=\prod_n [-\|h_n(Y)\|,\|h_n(Y)\|]$. 
Let $K(Y)$ be the set of all $\phi \in \Delta(Y)$ such that 
\begin{enumerate}
\item [(*)]  $F_i + F_j = F_l$ (where the sum is taken pointwise) implies $\phi(i)+\phi(j)=\phi(l)$, 
for all $i,j$ and $l$. 
\end{enumerate}
Such a $\phi$ defines a functional of norm $\leq 1$ on a dense subspace of $Y$, 
and therefore extends to an element of $B(Y^*)$.  
Moreover, every functional in $B(Y^*)$ is obtained in this way. 
Therefore the set of  $\phi$ satisfying (*) 
is affinely homeomorphic to $B(Y^*)$. 

It remains to rescale $K(Y)$.  
Let $\Phi(Y)=\{\phi \in \DeltaN: (\phi(n)\|h_n(Y)\|)_{n\in \bbN}\in K(Y)\}$. 
Then $\phi\in \Phi(Y)$ if and only if 
\[
\phi(i)\|h_i(Y)\|+ \phi(j)\|h_j(Y)\|=\phi(l)\|h_l(Y)\|
\]
for all triples $i,j,l$ satisfying $F_i + F_j = F_l$ (a condition not depending on $Y$). 

Since the map $y\mapsto \|y\|$ is continuous, the map $Y\mapsto \Phi(Y)$ 
is Borel, and clearly $\Phi(Y)$ is affinely homeomorphic to $K(Y)$ and therefore to $B(Y^*)$. 
\end{proof}


\begin{lemma} \label{L.dual.2} 
There is a Borel map  $\Psi\colon \Kconv\to S(\CRD)$ such that the 
Banach spaces $\AffR(K)$ and  $ \Psi(K)$ are isometrically isomorphic for all $K$.
\end{lemma}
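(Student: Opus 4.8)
The plan is to exhibit $\AffR(K)$ as the isometric image of a Borel family of embeddings into $\CRD$, each obtained by precomposing with a continuous surjection onto $K$. First I would fix, once and for all, functions that generate the affine functions uniformly in $K$. Writing $Q=\DeltaN$ for the Hilbert cube and $\pi_n\colon Q\to\Delta\subseteq\bbC$ for the coordinate projections, put $g_0\equiv 1$ and $g_{2n-1}=\mathrm{Re}\,\pi_n$, $g_{2n}=\mathrm{Im}\,\pi_n$ for $n\ge 1$; these are fixed continuous affine functions on $Q$. Let $(h_n)_{n\in\bbN}$ enumerate their rational linear combinations. Since the continuous linear functionals on $\bbC^{\bbN}$ are spanned by the coordinate functionals, and since on any compact convex $K$ the restrictions of the continuous affine functions of the ambient space are uniformly dense in $\AffR(K)$ (a standard fact, see e.g. \cite{alfsen71}), the countable set $\{h_n\restriction K:n\in\bbN\}$ is dense in $\AffR(K)$ for every $K\in\Kconv$.

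Next I would build the embeddings. Realize $Q$ as a compact convex subset of a separable real Hilbert space $\mathcal H$, so that every $K\in\Kconv$ is a nonempty closed convex subset of $\mathcal H$, and let $P_K\colon\mathcal H\to K$ be the nearest-point projection; it is nonexpansive and fixes $K$ pointwise, so $P_K\restriction Q\colon Q\to K$ is a continuous surjection. Fixing in addition a single continuous surjection $s\colon\Delta\to Q$ (one exists because $Q$ is a Peano continuum, hence a continuous image of $[0,1]\subseteq\Delta$), I set $q_K=P_K\circ s\colon\Delta\to K$, again a continuous surjection. As $q_K$ is onto $K$, the map $T_K\colon\AffR(K)\to\CRD$, $T_K(f)=f\circ q_K$, is a linear isometry, because $\|f\circ q_K\|=\sup_{t\in\Delta}|f(q_K(t))|=\sup_{y\in K}|f(y)|=\|f\|$. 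I then define $\Psi(K)$ to be the closed linear span in $\CRD$ of $\{h_n\circ q_K:n\in\bbN\}$; since $T_K$ maps the dense set $\{h_n\restriction K\}$ onto $\{h_n\circ q_K\}$, we get $\Psi(K)=T_K(\AffR(K))$, so $\Psi(K)$ is isometrically isomorphic to $\AffR(K)$, as required.

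Finally I would verify that $K\mapsto\Psi(K)$ is Borel into $S(\CRD)$. By Kuratowski--Ryll-Nardzewski (\cite[Theorem~12.13]{Ke:Classical}) and the standard fact that the closed linear span of a Borel sequence of vectors is a Borel map into the Effros Borel space of closed subspaces, it suffices that each $K\mapsto h_n\circ q_K\in\CRD$ be Borel. Since $\CRD$ is separable, its Borel structure is generated by evaluations at a countable dense set of points of $\Delta$, so it is enough that $K\mapsto h_n(q_K(t))$ be Borel for each fixed $t$. This map is $h_n$ composed with $K\mapsto P_K(s(t))$, whose graph $\{(K,y):y\in K\text{ and }\langle s(t)-y,\,z-y\rangle\le 0\text{ for all }z\in K\}$ is Borel (the quantifier over $z\in K$ being reduced to a countable dense selection from $K$, again by Kuratowski--Ryll-Nardzewski); hence $K\mapsto P_K(s(t))$ is Borel, and so is its composition with the continuous $h_n$.

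The step I expect to be the main obstacle is precisely this Borel dependence of the embedding on $K$: once the nearest-point projection $P_K$ is in hand, the uniform density of $\{h_n\restriction K\}$ and the isometry property are soft, but one must check with care that $K\mapsto P_K$ is Borel, which is where the variational characterization of the projection and the Effros structure on $\Kconv$ enter. A minor secondary point is matching the target space: if $\CRD$ is literally $C_{\bbR}(\Delta)$ for a fixed universal compact metric $\Delta$, the passage through $Q$ is absorbed into the single fixed surjection $s$ and introduces no additional Borel complexity, so no separate argument is needed there.
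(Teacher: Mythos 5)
Your proposal is correct and takes essentially the same route as the paper's own proof: both use the nearest-point projection onto the convex set $K$ composed with a fixed continuous surjection from $\Delta$ onto the Hilbert cube to obtain a continuous surjection $q_K\colon\Delta\to K$ depending Borel-measurably on $K$, then realize $\Psi(K)$ as the image of the precomposition isometry $f\mapsto f\circ q_K$, with Borelness into the Effros space checked via a countable dense family. The only cosmetic difference is that you generate the dense subset of $\AffR(K)$ from rational combinations of the coordinate functionals, whereas the paper produces it by pushing a dense sequence of $\CRD$ through the same construction.
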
 

\begin{proof} 
 Identify $\DeltaN$ with $\prod_n [-1/n,1/n]$. Consider the compatible 
$\ell_2$ metric $d_2$ on $\DeltaN$ and  the set
\[
\cZ=\{(K,x,y): K\in \Kconv, x\in \DeltaN, y\in K, \text{ and  } d_2(x,y)=\inf_{z\in K} d_2(x,z)\}.
\]
Since the map $(K,x)\mapsto \inf_{z\in K} d_2(x,z)$ is continuous 
on $\{K\in \Kconv: K\neq\emptyset\}$, 
this set is closed. Also, for every pair $K,x$ there is the unique point $y$ such that 
$(K,x,y)\in \cZ$ (e.g., \cite[Lemma~3.1.6]{Pede:Analysis}). 
By compactness, the function $\chi$ that sends $(K,x)$ to the unique $y$ 
such that $(K,x,y)\in \cZ$ is 
continuous. Fix a continuous surjective map $\eta\colon \Delta\to \DeltaN$.  
Then  $\chi_K(x)=\chi(K,\eta(x))$ defines a continuous surjection 
from $\Delta$ onto $K$  and $K\mapsto \chi_K$ is a continuous
map from $\Kconv$ into $C(\Delta, \DeltaN)$ with respect to the uniform metric. 
The set 
\[
\cY=\{(K,f)\in \Kconv\times C_{\bbR}(\Delta) :  
\text{ for some }
g\in \AffR(K)\}
\]
 is closed. 
To see this, note that $(K,f)\notin \cY$ iff one of the following two conditions
happens: 
\begin{enumerate}
\item There are $x$ and $y$ such that $f(x)\neq f(y)$ but $\chi(K,f)(x)=\chi(K,f)(y)$, or
\item There are $x,y,z$ and $0<t<1$ such that 
$f(tx+(1-t)y)\neq z$ but 
$$
t\chi(K,f)(x)+(1-t)\chi(K,f)(y)=\chi(K,f)(z).
$$ 
\end{enumerate}
We need to prove that the map that sends $K$ to $\cY_K=\{f: (K,f)\in \cY\}$ is Borel. 
Since $\cY_K$ is clearly isometric to $\Aff(K)$, this will conclude the proof. 

Let $g_n$, for $n\in \bbN$, be a countable dense subset of $\CRD$. 
By compactness  
\[
h_n(K)= (g_n\rs K)\circ \chi_K\circ \eta
\]
 is a continuous map from $\Kconv$ to $\CRD$ such that $h_n(K)\in \cY_K$. 
 Moreover, the set $\{h_n(K): n\in \bbN\}$  is dense in $\cY_K$ for every $K$. 
 Since $\cY_K\neq \emptyset$, 
 we conclude that the  map $\Psi(K)=\cY_K$ is Borel. 
This follows by \cite[12.14]{Ke:Classical} or directly by noticing that 
 $\Psi^{-1}(\{X\in S(\CRD): X\cap U\neq \emptyset\}=\bigcap_n h^{-1}(U)$
is Borel for every  open 
$U\subseteq \CRD$. 
 \end{proof} 

Let $\Psi\colon \Kconv\to S(\CRD)$ be the Borel-measurable 
map that sends $K$ to $\Aff(K)\subseteq \CRD$ from Lemma~\ref{L.dual.2}. 
 For every $K$ and  $n$ the set 
 \[
 \PPU_n(K)\subseteq (\CRD)^n
 \] 
of all $n$-tuples in $\Psi(K)$  forming a peaked partition of unity 
is closed, by compactness of $K$.  

The following lemma is a reformulation of \eqref{invlim}. 

\begin{lemma} \label{L.Choq.1} For a metrizable compact convex 
set $K$ the following are equivalent.  
\begin{enumerate}
\item\label{L.Choq.1.1}  $K$ is a Choquet simplex, 
\item\label{L.Choq.1.2}  for every finite $F\subseteq \Aff(K)$, every $\epsilon>0$
  and every peaked partition of unity $P$ in $\Aff(K)$  there is a peaked partition of unity $P'$
 that refines $P$ and is such that every element of $F$ is within $\e$ of the span of $P'$. \qed
\end{enumerate}
 \end{lemma}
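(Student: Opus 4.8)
The plan is to prove the equivalence by setting up a dictionary between peaked partitions of unity in $\Aff(K)$ and affine continuous surjections of $K$ onto finite-dimensional simplexes, under which the refinement relation corresponds to factorization of such surjections. Once this is in place, both implications become transcriptions of the Lazar--Lindenstrauss inverse-limit representation~\eqref{invlim}.

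The dictionary goes as follows. Given a peaked partition of unity $P=\{g_1,\dots,g_m\}$ in $\Aff(K)$, the map $\pi_P=(g_1,\dots,g_m)\colon K\to\mathbb R^m$ takes values in $\Delta_{m-1}$ since $g_i\ge 0$ and $\sum_i g_i=1$; it is affine and continuous, and it is \emph{surjective} because $\|g_i\|=1$ forces $g_i$ to attain the value $1$ at some point $x_i$, so that $\pi_P(x_i)$ is the $i$-th vertex of $\Delta_{m-1}$ and the image, being convex and containing all vertices, is all of $\Delta_{m-1}$. Conversely, every affine surjection onto a finite simplex arises this way from the pullbacks of the coordinate functions, and one has $\Span(P)=\pi_P^*(\Aff(\Delta_{m-1}))$. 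The peaked condition also forces the $g_i$ to be linearly independent (evaluate at the peak points $x_j$), and a routine check then shows that $P'$ refines $P$ precisely when $\pi_P$ factors as $\psi\circ\pi_{P'}$ for an affine surjection $\psi$ between the corresponding simplexes; in that case $\Span(P)\subseteq\Span(P')$.

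For the implication (2)$\Rightarrow$(1) I would use condition (2) to manufacture the inverse system directly. Fixing a countable dense subset $\{d_n\}$ of $\Aff(K)$ and any starting PPU $P_0$, repeatedly apply (2) to produce PPUs $P_0,P_1,P_2,\dots$ with $P_{n+1}$ refining $P_n$ and with $d_1,\dots,d_n$ within $2^{-n}$ of $\Span(P_n)$; then $\bigcup_n\Span(P_n)$ is dense in $\Aff(K)$. Dualizing via the dictionary yields an inverse system of finite simplexes with affine surjective bonding maps, and the density of $\bigcup_n\Span(P_n)$ guarantees that the projections separate points, so that $K$ is affinely homeomorphic to the inverse limit. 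Since an inverse limit of Choquet simplexes along affine surjections is again a Choquet simplex (as already noted in~\S\ref{S.LLambda}), $K$ is a simplex.

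For the converse (1)$\Rightarrow$(2), \eqref{invlim} supplies an inverse system whose pullback PPUs $P_i$ refine one another and satisfy $\overline{\bigcup_i\Span(P_i)}=\Aff(K)$, so given a finite $F$ and $\e>0$ some $\Span(P_i)$ already lies within $\e$ of every element of $F$. The remaining---and, I expect, the only genuinely substantial---point is that an \emph{arbitrary} given PPU $P$ admits a common refinement with this particular $P_i$, i.e.\ a PPU $P'$ refining both; granting this, $\Span(P_i)\subseteq\Span(P')$ finishes the proof. This common-refinement (directedness) property is exactly where the simplex hypothesis is essential, and I would derive it from the Riesz decomposition property of $\Aff(K)$, which characterizes simplexes: given $P=\{g_k\}$ and $P_i=\{h_l\}$ with $\sum_k g_k=1=\sum_l h_l$, iterated Riesz decomposition produces a ``coupling'' $r_{kl}\ge 0$ in $\Aff(K)$ with $\sum_l r_{kl}=g_k$ and $\sum_k r_{kl}=h_l$, and the nonzero $r_{kl}$, rescaled to have norm one, form a common refinement. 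The main obstacle is precisely this coupling construction and the verification of the peaked normalization; everything else is bookkeeping through the dictionary.
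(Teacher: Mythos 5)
The paper itself offers no written proof of this lemma: it is stated with the proof box attached directly to the statement, and the preceding sentence (``The following lemma is a reformulation of \eqref{invlim}'') signals that both directions are being read off from the Lazar--Lindenstrauss inverse-limit representation. Your dictionary between peaked partitions of unity and affine surjections onto finite-dimensional simplexes is correct, and your derivation of (2)$\Rightarrow$(1) from it is sound: build a refining chain $P_0,P_1,\dots$ with $\bigcup_n\Span(P_n)$ dense, dualize to an inverse system with affine surjective bonding maps, check that the induced map $K\to\varprojlim\Delta_{n_i}$ is injective (dense spans separate points) and surjective (nested nonempty compact preimages), and invoke the fact that an inverse limit of simplexes is a simplex. (One caveat: for refinement to match factorization you must read ``refines'' as ``is a positive linear combination of the elements of $P'$, with column sums $1$''; under the paper's literal ``convex combination'' even $\{t,1-t\}$ would fail to refine $\{1\}$ on $K=[0,1]$ and statement (2) would be false.)

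The genuine gap is in (1)$\Rightarrow$(2), at the common-refinement step, and it is not just a verification you deferred: the construction you propose fails. Iterated Riesz decomposition applied to $\sum_k g_k=1=\sum_l h_l$ does yield a coupling $r_{kl}\ge 0$ with the correct marginals, and the nonzero $r_{kl}$ form a partition of unity refining both $P$ and $P_i$; but this partition is in general not peaked, and rescaling each $r_{kl}$ to norm one destroys the identity $\sum_{k,l}r_{kl}=1$, so the rescaled family is no longer a partition of unity. Concretely, on $K=\Delta_2$ take $P=\{f_2+\tfrac12 f_3,\ f_1+\tfrac12 f_3\}$ and $Q=\{\tfrac12 f_2+f_3,\ f_1+\tfrac12 f_2\}$, both peaked; a natural Riesz coupling contains $r_{11}=\tfrac12 f_2+\tfrac12 f_3$, which has norm $\tfrac12$. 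A peaked common refinement does exist here (namely $\{f_1,f_2,f_3\}$), but it is not the coupling and has fewer elements than the coupling does. Producing a \emph{peaked} refinement that almost captures a prescribed finite set --- equivalently, enlarging a given $\ell_\infty^n$-subspace of $\Aff(K)$ spanned by a PPU to a larger one of the same kind --- is exactly the content of the Lazar--Lindenstrauss theorem cited in the paper (\cite{LaLi}, Corollary to Theorem~5.2), and is the entire nontrivial content of the implication; as written, your argument reduces to that statement without proving it.
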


Another equivalent condition, in which (2) is weakened to approximate refinement, 
follows from~\cite{Vill:Range} and it will be reproved
  during the course of the proof of Lemma~\ref{L.Choq.2} below.
 
\begin{lemma} \label{L.Choq.0}
The map from $\Kconv$ to $F(\CRD^n)$
that sends    $K$ to $ \PPU_n(K)$
is Borel for every fixed~$n$. 
\end{lemma}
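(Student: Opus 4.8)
The plan is to pin down $\PPU_n(K)$ concretely inside $\CRD^n$ using the realization of $\Aff(K)$ from Lemma~\ref{L.dual.2}, to verify that the graph of the multifunction $K\mapsto\PPU_n(K)$ is Borel, and then to upgrade this to Borel measurability by producing a dense sequence of Borel selectors. First I would set up notation: let $\Psi$ be the Borel map of Lemma~\ref{L.dual.2}, so $\Psi(K)=\cY_K\subseteq\CRD$ is closed and isometrically order-isomorphic to $\Aff(K)$, with order unit the constant function $\mathbf 1$, with the order equal to pointwise positivity on $\Delta$, and with order-unit norm equal to the sup norm of $\CRD$; recall also the explicit dense Borel selectors $h_m$ with $\{h_m(K):m\in\bbN\}$ dense in $\cY_K$. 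In these terms
\[
\PPU_n(K)=\Big\{(f_1,\dots,f_n)\in\cY_K^{\,n}: f_i\ge 0,\ \textstyle\sum_{i=1}^n f_i=\mathbf 1,\ \|f_i\|=1\ (1\le i\le n)\Big\}.
\]
It is convenient to record the equivalent description: such tuples correspond exactly to the affine continuous surjections $T\colon K\to\Sigma_n$, where $\Sigma_n=\{t\in\R^n:t_i\ge 0,\ \sum_i t_i=1\}$, via $T(x)=(f_1(x),\dots,f_n(x))$; here surjectivity of $T$ is equivalent to the peaking conditions $\|f_i\|=1$, since the image is a closed convex subset of $\Sigma_n$ containing every vertex.

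Next I would show that the graph $\mathcal P=\{(K,\bar f)\in\Kconv\times\CRD^n:\bar f\in\PPU_n(K)\}$ is Borel. Membership $\{(K,f):f\in\cY_K\}$ is Borel because $\Psi$ is Borel and the membership relation $\{(C,f)\in F(\CRD)\times\CRD:f\in C\}$ is Borel for the Effros structure (\S\ref{S.Eff}). The conditions $f_i\ge 0$ and $\sum_i f_i=\mathbf 1$ are closed in $\CRD^n$ because evaluation at each point of $\Delta$ is continuous, and $\|f_i\|=1$ is closed because the norm is continuous on $\CRD$; hence $\mathcal P$ is an intersection of Borel sets. Each section $\PPU_n(K)$ is closed, as already observed in the text.

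The remaining, and \textbf{principal}, step is to pass from the Borel graph with closed sections to Borel measurability of the multifunction. Because the sections $\PPU_n(K)$ are \emph{not} compact (the norm unit ball of $\CRD$ is not compact, and the affine functions occurring in a peaked partition need not be equicontinuous as $K$ varies), Lemma~\ref{L.Effros} does not apply. Instead I would verify the lower-semicontinuity criterion of \cite[12.13--12.14]{Ke:Classical}: for each basic open box $U=\prod_{i} B(g_i,\e)\subseteq\CRD^n$ the set $\{K:\PPU_n(K)\cap U\neq\emptyset\}$ is Borel, and then the Kuratowski--Ryll-Nardzewski theorem provides Borel selectors $S_m$ with $\{S_m(K)\}$ dense in $\PPU_n(K)$, giving the result via $\{K:\PPU_n(K)\cap U\neq\emptyset\}=\bigcup_m S_m^{-1}(U)$.

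I expect the verification of this lower-preimage condition to be the heart of the matter, precisely because the defining relations $\sum_i f_i=\mathbf 1$ and $\|f_i\|=1$ are \emph{exact}: density of the $h_m(K)$ in $\Aff(K)$ does not by itself yield a Borel certificate, since a tuple of selectors close to a genuine peaked partition need not be one. My attack would be a perturbation lemma internal to the order-unit space $\cY_K\cong\Aff(K)$. Parametrizing a tuple by its first $n-1$ coordinates and setting $f_n=\mathbf 1-\sum_{i<n}f_i$ makes the sum condition automatic, reducing matters to the positivity and peaking constraints; one then shows that any $(n-1)$-tuple drawn from $\{h_m(K)\}$ that \emph{approximately} satisfies positivity and peaking and lies near the targets $g_i$ can be adjusted, within $\Aff(K)$ and within a distance controlled by the defect, to an exact peaked partition still lying in $U$. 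In the affine-surjection picture this amounts to pushing an affine map whose image nearly fills $\Sigma_n$ onto an honest surjection without moving it much. Granting such an approximation, $\PPU_n(K)\cap U\neq\emptyset$ is witnessed by finitely much rational data about the Borel functions $h_m(K)$, hence is a Borel condition. Carrying out this perturbation using only the order and norm structure of $\Aff(K)$, with no appeal to compactness, is the main obstacle I anticipate.
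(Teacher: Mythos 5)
Your reduction of the problem is sound as far as it goes: the graph of $K\mapsto\PPU_n(K)$ is Borel with closed but generally non-compact sections, Lemma~\ref{L.Effros} is indeed unavailable, and what remains is to show that $\{K:\PPU_n(K)\cap U\neq\emptyset\}$ is Borel for basic open $U$. You have also correctly isolated where the difficulty lies. The gap is that the perturbation lemma on which your completion rests --- that a tuple in $\Aff(K)$ satisfying positivity, $\sum_i f_i=1$ and peaking up to a defect $\delta$ can be corrected, within distance controlled by $\delta$, to an exact peaked partition of unity --- is \emph{false} for general $K\in\Kconv$, and the lemma is stated for all of $\Kconv$, not only for Choquet simplexes. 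Concretely, for $n=3$ let $K_\e$ be the convex hull of the standard $2$-simplex $\Sigma_3\subseteq\bbR^3$ together with $q_\e=(\tfrac12+\e,\tfrac12+\e,-2\e)$. The restrictions of the three coordinate functions to $K_\e$ satisfy $\sum_if_i\equiv1$, $\sup f_i=1$ and $f_i\geq-2\e$, i.e.\ they form an approximate peaked partition with defect $2\e$; but $\PPU_3(K_\e)=\emptyset$, since an exact peaked partition of size $3$ is the same as an affine surjection of $K_\e$ onto $\Sigma_3$, which (both sets being two-dimensional) would be an affine bijection of a plane carrying a quadrilateral onto a triangle --- impossible, as such maps preserve the number of extreme points. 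So no adjustment of any size produces an exact peaked partition. Moreover, taking $K=\prod_mK_{1/m}$ (affinely embedded in the Hilbert cube) yields a single compact convex set carrying approximate peaked partitions of arbitrarily small defect (pull back along the coordinate projections) yet still no exact one: every affine function on a product splits as a sum over the factors, so any affine image of $K$ is, up to translation, a closed Minkowski sum of affine images of quadrilaterals, and a triangle admits no nontrivial summand other than its own homothets. Hence ``finitely much rational data about the $h_m(K)$'' witnessing approximate satisfaction can never certify $\PPU_n(K)\cap U\neq\emptyset$, and the proposed route cannot be repaired by sharpening the estimates.

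For comparison, the paper does not attempt any approximation argument here: its entire proof is the observation that the tuples satisfying the partition-of-unity conditions form a relatively closed subset of the Borel set $\{(K,f_1,\dots,f_n):f_i\in\Psi(K)\}$, from which the conclusion is asserted to follow. Whatever one makes of that step, your proposal replaces it with a specific mechanism --- dense selectors plus an approximate-to-exact perturbation --- whose key ingredient fails, so the argument as outlined does not go through.
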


\begin{proof} The set of all $(K,f_1,f_2,\dots, f_n)\in \Kconv\times (\CRD)^n$
such that $f_i\in \Psi(K)$ for $1\leq i\leq n$ and $\sum_{i\leq n} f_i\equiv 1$
is a relatively closed subset of the set of all $(K,f_1,\dots, f_n)$ such that $f_i\in \Psi(K)$ for 
all $i\leq n$, and the conclusion follows. 
\end{proof}

By  \cite[Theorem~12.13]{Ke:Classical} or the proof of Lemma~\ref{L.dual.2}  and the above 
we have Borel maps $h_n\colon \Kconv\to \CRD$
such that $\{h_n(K): n\in \bbN\}$ is a dense subset of $\Psi(K)$, and Borel maps $P_{i,n}\colon \Kconv\to (\CRD)^n$, for $i\in \bbN$, such that $\{P_{i,n}(K): i\in \bbN\}$ is a dense subset of $\PPU_n(K)$, 
 for every $K\in \Kconv$. Also fix  $h_i\colon \bbK\to \DeltaN$ 
such that $\{h_i(K): i\in \bbN\}$ is a dense subset of $K$ for all $K$.

\begin{lemma}\label{L.Choq.2} 
The set $\Kchoq$ is a Borel subset of $\bbK$. 
Moreover, there is a Borel map $\Upsilon\colon \Kchoq\to \LLLambda$ 
such that $\Upsilon(K)$ is a parameter for~$K$. 
\end{lemma}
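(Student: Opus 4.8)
The plan is to read off everything from the internal characterization of Choquet simplexes in Lemma~\ref{L.Choq.1}, but in a form weakened to \emph{approximate} refinement so that it can be tested against the fixed countable dense families $h_n$ and $P_{i,n}$. This single device will yield both the Borelness of $\Kchoq$ and the combinatorial data out of which $\Upsilon$ is assembled. Say that a peaked partition of unity $Q$ \emph{$\e$-refines} a peaked partition of unity $P=(f_1,\dots,f_n)$ if each $f_i$ lies within $\e$ (in the sup norm on $\CRD$) of $\conv(Q)$. The first step is to record the approximate version of Lemma~\ref{L.Choq.1}: a set $K\in\Kconv$ is a Choquet simplex if and only if for every peaked partition of unity $P$ in $\Aff(K)$, every finite $F\subseteq\Aff(K)$ and every $\e>0$ there is a peaked partition of unity $P'$ that $\e$-refines $P$ with every element of $F$ within $\e$ of $\Span(P')$. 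The nontrivial implication—that approximate refinement forces the exact refinement of condition (2) of Lemma~\ref{L.Choq.1}—is the consequence of \cite{Vill:Range} alluded to above; I would reprove it by a perturbation-and-compactness argument, iterating $\e$-refinements with $\e$ summable and using completeness of $\Aff(K)$ to extract an honest refinement in the limit.

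With this in hand the Borelness of $\Kchoq$ is formal. I would write
\[
K\in\Kchoq \iff K\in\Kconv \wedge (\forall n,i,k)(\exists m,j)\,\Theta(K,n,i,k,m,j),
\]
where $\Theta$ asserts that $P_{j,m}(K)\in\PPU_m(K)$, that $P_{j,m}(K)$ $(1/k)$-refines $P_{i,n}(K)$, and that each of $h_1(K),\dots,h_n(K)$ lies within $1/k$ of $\Span(P_{j,m}(K))$. Since $\{h_\ell(K)\}$ is dense in $\Psi(K)=\Aff(K)$ and $\{P_{i,n}(K):i\in\bbN\}$ is dense in $\PPU_n(K)$, letting $n,k\to\infty$ recovers quantification over all finite $F$, all $P$ and all $\e$, so the displayed condition is equivalent to the approximate characterization, hence to ``$K$ is a Choquet simplex.'' Each instance of $\Theta$ is Borel: membership in $\PPU_m$ is Borel by Lemma~\ref{L.Choq.0}, and ``$x$ is within $\e$ of $\conv$ or $\Span$ of finitely many explicitly given elements of $\CRD$'' is a Borel (indeed continuous) condition in $K$ via the Borel maps of Lemma~\ref{L.dual.2}. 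As $\Kconv$ is closed in $\bbK$ and countable quantifiers preserve Borelness, $\Kchoq$ is Borel.

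For the map $\Upsilon$, working on $\Kchoq$ I would recursively select indices producing peaked partitions $Q_1,Q_2,\dots$ in $\Aff(K)$ with $|Q_t|$ strictly increasing, each $Q_{t+1}$ being a $\delta_t$-refinement of $Q_t$ for a summable sequence $\delta_t$, and with $\bigcup_t\Span(Q_t)$ dense in $\Aff(K)$—the latter arranged by insisting that $h_t(K)$ lie within $2^{-t}$ of $\Span(Q_{t+1})$. At each stage the existential witness supplied by the characterization above is chosen to be the $\prec_{\cF}$-least valid $P_{j,m}(K)$, which keeps the selection Borel because its graph is analytic and hence Borel by \cite[14.12]{Ke:Classical}. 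The approximate refinements give order-unit maps between the spans that are within a small controlled error of honest maps $\bbR^{|Q_t|}\to\bbR^{|Q_{t+1}|}$ of the form \eqref{representing}; applying Lemma~\ref{L.Choq.-1} I replace each by a composition of maps drawn from the finite sets $\cF_i$ through every intermediate dimension, thereby producing $\Upsilon(K)\in\LLLambda=\prod_n\cF_n$. Finally, since $\{Q_t\}$ spans a dense subspace of the complete order unit space $\Aff(K)$ and the transition maps agree with the chosen $\cF_i$-maps up to summable errors, the direct limit $X(\id,\Upsilon(K))$ is isometrically order-isomorphic to $\Aff(K)$; dualizing, and using that the dual of such a limit is the inverse limit of the dual finite simplexes together with the duality between $\Aff(K)$ and $K$, gives that the simplex coded by $\Upsilon(K)$ is affinely homeomorphic to $K$.

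The main obstacle is precisely this last verification: controlling the accumulation of the two sources of error—the passage to the dense families and the discretization in Lemma~\ref{L.Choq.-1}—so that the perturbed inverse limit is \emph{genuinely} affinely homeomorphic to $K$, not merely close to it. This is what the geometric summability $\sum_n 2^{-2n}<\infty$ built into the definitions of $\cF_n$ and $\delta_n$ is for: together with completeness of $\Aff(K)$ it lets the telescoping errors be absorbed into an exact isomorphism in the limit. The remaining, more routine, difficulty is bookkeeping—ensuring that every one of these recursive choices is made simultaneously Borel in $K$, which is exactly why the fixed dense sequences $h_n$, $P_{i,n}$ and the well-ordering $\prec_{\cF}$ were set up in advance.
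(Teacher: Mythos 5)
Your proposal follows essentially the same route as the paper's proof: Borelness of $\Kchoq$ via the approximate-refinement form of Lemma~\ref{L.Choq.1} tested against the fixed dense families $h_n$ and $P_{i,n}$, then a recursive, $\prec_{\cF}$-least selection of refining peaked partitions of unity, discretized through Lemma~\ref{L.Choq.-1} with summable error control so that the direct limit is exactly $\Aff(K)$. The only step where the paper is more explicit is the passage from an approximate refinement to an honest isometry between the spans $\Span(P_j)\to\Span(P_{j+1})$, which it gets from \cite[Lemma~2.7]{Vill:Range} rather than the perturbation argument you sketch; otherwise the two arguments coincide.
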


\begin{proof} 
We shall prove both assertions simultaneously. 
Let $\e_i=i^{-2}2^{-i-4}$. 

Fix $K\in \Kchoq$ for a moment. Let us say that a partition of unity $P$ \emph{$\e$-refines} a partition of unity 
$P'$ if every element of $P'$ is within $\e$ of the span of $P$. 
By Lemma~\ref{L.Choq.1}, there are sequences $d(j)=d(j,K)$, 
$i(j)=i(j,K)$ and $n(j)=n(j,K)$, for $j\in \bbN$, 
such that for each $j$ we have
\begin{enumerate}
\item $P_{i(j+1), n(j+1)}(K)$ is in $\PPU_{d(j)}(K)$, 
\item  $\{h_i(K)\rs K: i\leq j\}$ and the restriction of all elements of $P_{i(j),n(j)}(K)$ to $K$  
are within $\e_j$ of the rational linear span of the restrictions of elements of 
$P_{i(j+1),n(j+1)}(K)$ to $K$, 
\item $i(j+1), n(j+1)$ is the lexicographically minimal pair for which (1) and (2) hold. 
\end{enumerate}
The set of all triples $(K,(i(j): j\in \bbN), (n(j): j\in \bbN))$ such that (1) and (2) hold
is Borel. Since a function is Borel if and only if its graph is Borel (\cite{Ke:Classical}), 
the function sending $K$ to $((i(j,K), n(j,K)): j\in \bbN)$ is Borel. 

Still having $K$ fixed, 
let us write $P_j$ for $P_{i(j,K), n(j,K)}(K)$. 
Since each $f\in P_j$ is within $\e_j$ of 
the span of $P_{j+1}$, 
 \cite[Lemma~2.7]{Vill:Range} implies 
 there is an isometry $\Phi_j\colon \Span(P_j)\to \Span(P_{j+1})$ such that 
$\|\Phi_j(f)-f\|<2^{-j}$ for all $f\in \Span(P_j)$. 
Using Lemma~\ref{L.Choq.-1}, we can fix  the $\prec_{\cF}$-least
 composition of operators in $\bigcup_n \cF_n$ (see \S\ref{S.LLambda}), $\psi(j)$, 
 that $2^{-j}$-approximates $\Phi_j$ in the operator norm. 
This  defines an element $\psi$ of $\LLLambda$. 
Again, the function that associates $\psi$ to $K$ is Borel since its graph is a Borel set.

It remains to prove that the  direct limit of $\bbR^{d(j)}$, for $j\in \bbN$,
determined by $\psi$  is isometric to $\Aff(K)$. 
For every fixed $k$ the sequence 
of linear operators 
$\psi(k+j)\circ \psi(k+j-1)\circ \dots \psi(k)$ for $j\in \bbN$ forms 
a Cauchy sequence in the supremum norm.  
Therefore the image of $P_k$ under this sequence converges to a peaked partition of unity, 
denoted by $Q_k$,  
of $\Aff(K)$. Then $Q_k$, for $k\in \bbN$, form a refining sequence of peaked partitions
of unity of $\Aff(K)$ such that the span of $\bigcup_ k Q_k$ is dense in $\Aff(K)$. 
Thus with the dependence of $\psi$ on $K$ understood, we have that $\Upsilon(K):=\psi$ is the required parameter for $K$ in $\LLLambda$.
\end{proof} 

The following Lemma will only be used later in Section \ref{borelreduction}, but we include it here as it fits thematically in this section.

\begin{lemma} \label{L.Choq.4} There is a Borel map $\Psi\colon K_c(\DeltaN)
\to \LLLambda$ such that $\Psi(K)$ represents a Choquet simplex 
affinely homeomorphic to the Bauer simplex $P(K)$. 
\end{lemma}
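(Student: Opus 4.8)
The plan is to realize the Bauer simplex $P(K)$ as a Borel-parametrized point of $\Kchoq$ and then to quote Lemma~\ref{L.Choq.2}. Recall that $P(K)$, the space of Borel probability measures on the compact metric space $K$, is a Bauer --- hence Choquet --- simplex, whose extreme boundary is the set $\{\delta_x:x\in K\}$ of point masses, canonically homeomorphic to $K$, and whose dual order unit space is $\rmCR(K)$. Since Lemma~\ref{L.Choq.2} provides a Borel map $\Upsilon\colon\Kchoq\to\LLLambda$ with $\Upsilon(S)$ a parameter for $S$, it suffices to produce a Borel map $\mathcal M\colon K_c(\DeltaN)\to\Kchoq$ with $\mathcal M(K)$ affinely homeomorphic to $P(K)$; then $\Psi:=\Upsilon\circ\mathcal M$ is as required.

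To build $\mathcal M$, first I would fix a sequence $(\phi_n)$ that is dense in the unit ball of $\rmCR(\DeltaN)$ and separates points, and consider the continuous maps
$$
\Theta\colon\DeltaN\to\DeltaN,\quad \Theta(x)=(\phi_n(x))_{n\in\bbN},
$$
and
$$
\iota\colon P(\DeltaN)\to\DeltaN,\quad \iota(\mu)=\Big(\textstyle\int\phi_n\,d\mu\Big)_{n\in\bbN}
$$
(the values $\int\phi_n\,d\mu$ lie in $[-1,1]$, so both maps land in the Hilbert cube). Because $(\phi_n)$ separates measures, $\iota$ is an injective, weak$^*$-continuous affine map, hence an affine homeomorphism of $P(\DeltaN)$ onto a member of $\Kconv$, and $\iota(\delta_x)=\Theta(x)$. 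Since every probability measure on $K$ is a weak$^*$ limit of convex combinations of point masses, $P(K)=\overline{\conv}\{\delta_x:x\in K\}$, and applying the affine homeomorphism $\iota$ (which preserves closed convex hulls of compacta) gives
$$
\iota(P(K))=\overline{\conv}\,\Theta(K).
$$
I therefore set $\mathcal M(K)=\overline{\conv}\,\Theta(K)$, which is affinely homeomorphic to $P(K)$ via $\iota$ and in particular is a Choquet simplex, so $\mathcal M(K)\in\Kchoq$.

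It remains to check that $\mathcal M$ is Borel. The pushforward $K\mapsto\Theta(K)$ is Borel on $K_c(\DeltaN)$ (for a fixed continuous $\Theta$ it is even continuous for the Vietoris/Hausdorff structure), and the closed-convex-hull operation $C\mapsto\overline{\conv}\,C$ is Borel from $K_c(\DeltaN)$ to $\Kconv$ --- e.g. by selecting via Kuratowski--Ryll-Nardzewski a Borel dense sequence in $C$ and taking the closure of its rational convex combinations. Composing, $\mathcal M\colon K_c(\DeltaN)\to\Kconv$ is Borel; since its range lies in the Borel set $\Kchoq\subseteq\bbK$ (Lemma~\ref{L.Choq.2}), it is Borel as a map into $\Kchoq$. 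Then $\Psi=\Upsilon\circ\mathcal M$ is Borel, and $\Psi(K)=\Upsilon(\mathcal M(K))$ represents the Choquet simplex $\mathcal M(K)$, which is affinely homeomorphic to $P(K)$, as required.

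The main obstacle is essentially bookkeeping: verifying the Borel measurability of the two hyperspace operations (pushforward under $\Theta$ and closed convex hull) and checking the identity $\iota(P(K))=\overline{\conv}\,\Theta(K)$. All the genuinely analytic work --- converting a refining sequence of peaked partitions of unity into an element of $\LLLambda$ --- has already been done in Lemma~\ref{L.Choq.2} and is inherited through $\Upsilon$. A self-contained alternative would bypass $\Upsilon$ by building peaked partitions of unity in $\rmCR(K)$ directly from a Borel maximal $\e$-separated net in $K$ (normalized tent functions peak at the net points, and their spans become dense as $\e\to0$) and then running the approximation machinery of Lemma~\ref{L.Choq.2}; this is more laborious, and I would prefer the composition above.
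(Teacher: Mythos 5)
Your proposal is correct and follows essentially the same route as the paper: reduce to producing a Borel map into $\Kchoq$ sending $K$ to a copy of $P(K)$, then compose with $\Upsilon$ from Lemma~\ref{L.Choq.2}, using the affine embedding $\mu\mapsto(\int f_n\,d\mu)_n$ of $P(\DeltaN)$ into the Hilbert cube. The only cosmetic difference is that you realize the image as $\overline{\conv}\,\Theta(K)$ and check Borelness of the closed-convex-hull operation, whereas the paper identifies $P(K)$ with the (Hausdorff-continuously varying) set $Y_K$ of measures concentrated on $K$ --- these are the same set under the embedding.
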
 

\begin{proof} By Lemma~\ref{L.Choq.2} it suffices to define a Borel 
map $\Psi_0\colon K_c(\DeltaN)\to \Kchoq$ so that $\Psi_0(K)$ is affinely homeomorphic to $P(K)$ for all $K$. 
For each $K\in K_c(\DeltaN)$ the set $P(K)$ is affinely homeomorphic to  
 a closed convex subset $Y_K$ of $P(\Delta^{\bbN})$, by identifying each measure $\nu$  on $K$ 
with its canonical extension $\nu'$ to $\Delta^{\bbN}$, $\nu'(A)=\nu(A\cap K)$. 
Moreover, the map $K\mapsto Y_K$ is continuous with respect to the Hausdorff metric. 
 Fix an   affine homeomorphism of $P(\Delta^{\bbN})$ into 
$\Delta^{\bbN}$. For example, if $f_n$, for $n\in \bbN$, is a sequence uniformly dense 
in $\{f\colon \Delta^{\bbN}\to \Delta\}$ then take $\nu\mapsto (\int f_n d\nu: n\in \bbN)$. 
By composing the map $K\mapsto Y_K$ with this map we conclude the proof.  
\end{proof}

\begin{proof}[Proof of Proposition~\ref{P.Choquet}]
A Borel homomorphism from the parametrization $\Kchoq$ to $\LLLambda$ was given in Lemma~\ref{L.Choq.2}.
If $\psi\in \LLLambda$ then (possibly after permuting the basis of $\bbR^{n+1}$) 
each $\psi(n)$ defines $a_{1n}, \dots, a_{nn}$  as in \S\ref{S.LLambda}. 
Therefore we have a canonical Borel homomorphism from the parametrization $\LLLambda$ into $\LLambda$. 
This map is continuous, and even Lipschitz in the sense that $\psi(n)$ determines 
all $a_{in}$ for $i\leq n$. Similarly, every representing matrix in $\LLambda$ canonically 
defines a directed system in $\Lambda$.

We therefore only need to check that there is a Borel homomorphism from $\Lambda$ to $\Kchoq$.

Given $(f,\psi)\in \Lambda$, we define $K=K(f,\psi)$ as follows. With $f(0)=0$ let $k_n=\sum_{i=0}^n f(i)$. For $a\in \bbR^{\bbN}$ and $n\geq 0$ let $a_n=a\rs [k_n,k_{n+1})$  and identify $a\in \bbR^{\bbN}$ with $(a_n)_{n\in \bbN}$. Let $B=\{ a\in \bbR^{\bbN}: (\forall n) \psi_n (a_n)=a_{n+1}\}$. 
Then $B=B(f,\psi)$  is a  separable subspace of  $\ell_\infty$ closed in the product topology. 
Also, $(f,\psi)\mapsto B(f,\psi)\cap \Delta^{\bbN}$ is a continuous map from $\Lambda$ into the hyperspace of $\Delta^{\bbN}$, and therefore the map $(f,\psi)\mapsto B(f,\psi)$ is a Borel map from $\Lambda$ into $F(\bbR^{\bbN})$. 

Let $K(f,\psi)$ denote the unit ball $B^*(f,\psi)$ of the dual of the Banach space $B(f,\psi)$.  When equipped with the weak*-topology, $K(f,\psi)$ is affinely homeomorphic to the Choquet simplex represented by $(f,\psi)$. 
We complete the proof by applying Lemma~\ref{L.dual.1}.
\end{proof}







\section{The isomorphism relation for AI algebras}\label{borelreduction}

Recall that an \emph{approximately interval} (or \emph{AI}) C$^*$-algebra is a direct limit
$$
A = \lim_{\longrightarrow} (A_i,\phi_i),
$$
where, for each $i \in \mathbb{N}$, $A_i \cong F_i \otimes \mathrm{C}([0,1])$ for some finite-dimensional C$^*$-algebra $F_i$ and $\phi_i:A_i \to A_{i+1}$ is a $*$-homomorphism. In this section we will prove the following ($\Lambda$ is the space defined in~\S \ref{S.choquet} and 
 notation $X(f,\psi)$ was introduced in \S\ref{S.Lambda}):

\begin{theorem}\label{t.reduction}
There is a Borel function $\zeta: \Lambda\to \Gamma$ such
that for all $(f,\psi)\in \Lambda$,
\begin{enumerate}
\item $C^*(\zeta(f,\psi))$ is a unital simple AI algebra.
\item $(K_0(C^*(\zeta(f,\psi)), K_0^+(C^*(\zeta(f,\psi)), 1)\simeq (\Q,\Q^+, 1)$ and $K_1(C^*(\zeta(f,\psi)))\simeq \{1\}$.
\item If $T$ is the tracial state simplex of $C^*(\zeta(f,\psi))$ then $\Aff(T)\simeq X(f,\psi)$.
\end{enumerate}
\end{theorem}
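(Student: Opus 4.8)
The plan is to realize $X(f,\psi)=\Aff(S)$ as the space of affine functions on the tracial simplex of a concretely built inductive limit of interval algebras, and to make every step uniform in $(f,\psi)$ so that the resulting code lands in $\Rdir$ and the limit is extracted by Proposition~\ref{P.Directed}. The underlying realization is classical: by a theorem of Thomsen every metrizable Choquet simplex is the tracial state space of a simple unital AI algebra, and inspecting the proof one may in addition arrange $(K_0,K_0^+,[1])\cong(\Q,\Q^+,1)$ and $K_1=0$. The content of the present theorem is therefore to carry out that construction by a Borel recipe.

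For the building blocks I would take $A_i=M_{n_i}(C([0,1]))$ with $n_i=\prod_{k\le i} m_k$, where the multiplicities $m_k$ are chosen once and for all (independently of $(f,\psi)$) so that every prime occurs with unbounded exponent; then $K_1(A_i)=0$, and once the connecting maps are in place one gets $(K_0(A),K_0^+(A),[1])\cong(\Q,\Q^+,1)$ while $K_1(A)=\varinjlim K_1(A_i)=0$. Each connecting $*$-homomorphism $\phi_i:A_i\to A_{i+1}$ is taken of diagonal form $\phi_i(a)(x)=u_i(x)\,\diag(a\circ\lambda^i_1(x),\ldots,a\circ\lambda^i_{s_i}(x))\,u_i(x)^*$, with $s_i=m_{i+1}$ continuous eigenvalue maps $\lambda^i_k:[0,1]\to[0,1]$. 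Using Lemmas~\ref{L.matrix}, \ref{L.tensor} and \ref{L.C(X,A)} together with the code formalism of \S\ref{S.Rhom}, each such $\phi_i$ is described by a code $\Phi_i$ computed Borel-measurably from its finite eigenvalue-and-multiplicity data.

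The eigenvalue maps are where the simplex is encoded, and this is the step I expect to be the main obstacle. Following Thomsen, at stage $i$ I would fix a finite $\e_i$-net $t^i_1,\ldots,t^i_{f(i)}$ of $[0,1]$ and let most of the maps $\lambda^i_k$ be the constant evaluations at these net points, with the number of repetitions of each point read off from a rational approximation (denominator $s_i$, hence error $O(1/s_i)$) of the column-stochastic matrix dual to $\psi_i\in L_{\rm ou}(\R^{f(i)},\R^{f(i+1)})$; a controlled number of the remaining $\lambda^i_k$ are taken to be spreading maps whose images, accumulated over all stages, become dense in $[0,1]$. On the dual side the induced map on $\Aff(T(A_i))=C_{\mathbb R}([0,1])$ is $g\mapsto \frac{1}{s_i}\sum_k g\circ\lambda^i_k$, and the constant-evaluation part forces this to factor, in the limit, through the nets, producing the finite-dimensional approximant $\R^{f(i)}$ and realizing $\psi_i$ on it up to $\e_i$; the spreading maps, via the standard criterion that an AI limit whose eigenvalue maps become asymptotically dense is simple, yield simplicity. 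The delicate point is that the bookkeeping must be done measurably and the errors $\e_i$ must telescope so that $\varinjlim(C_{\mathbb R}([0,1]),\Aff(T(\phi_i)))$ is isometrically order-isomorphic to $\varinjlim(\R^{f(i)},\psi_i)=X(f,\psi)$ \emph{on the nose} rather than approximately; here I would invoke the perturbation lemma \cite[Lemma~2.7]{Vill:Range} already used in Lemma~\ref{L.Choq.2}.

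Finally, the assignment $(f,\psi)\mapsto((\gamma_i)_i,(\Phi_i)_i)$ built above is Borel and lands in $\Rdir$, so setting $\zeta=\LIM\circ(\cdot)$ and invoking Proposition~\ref{P.Directed} gives a Borel $\zeta:\Lambda\to\Gamma$ with $C^*(\zeta(f,\psi))=\varinjlim(A_i,\phi_i)$. Property~(1) holds by construction together with the simplicity criterion; property~(2) is the $K$-theory computation above; and property~(3) is exactly the identification $\Aff(T(A))=\varinjlim\Aff(T(A_i))\cong X(f,\psi)$, where that $\Aff$ of the tracial simplex is computed by the direct limit and is Borel-accessible by Lemmas~\ref{L.SPT} and \ref{L.States}.
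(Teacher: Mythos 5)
Your overall strategy --- Thomsen's realization of a prescribed Choquet simplex as the tracial simplex of a simple unital AI limit, carried out with least-witness selections so that the code lands in $\Rdir$ and the limit is extracted by Proposition~\ref{P.Directed}, with divisibility of the matrix sizes giving $(\Q,\Q^+,1)$ in $K_0$, contractibility of $[0,1]$ giving trivial $K_1$, and dense eigenvalue patterns giving simplicity --- is the same as the paper's (Lemmas~\ref{l.conversion} and~\ref{biglimit}). But the step you yourself flag as the main obstacle, the choice of the eigenvalue maps, is wrong as written. If the characteristic functions $\lambda^i_k$ are globally \emph{constant} functions with values at the net points, then each $g\circ\lambda^i_k$ is a constant, so the induced map $\hat\phi_i(g)=\frac1{s_i}\sum_k g\circ\lambda^i_k$ on $\Aff(T(M_{n_i}(C[0,1])))\cong C_\R[0,1]$ has one-dimensional range (equivalently, $\phi_i$ factors through a finite-dimensional algebra). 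A single global multiplicity vector read off from the matrix of $\psi_i$ cannot encode that matrix: the map you must realize, namely $\eta_{f(i+1)}\circ\psi(f(i),f(i+1))\circ\beta_{f(i)}$ from Lemma~\ref{l.conversion}, sends $g$ to $\sum_j g(t^i_j)\,h_j(x)$ with weights $h_j(x)=\sum_l a_{lj}f_{f(i+1),l}(x)$ that genuinely depend on $x$. Hence the proportions with which the values $t^i_j$ occur among $\lambda^i_1(x),\dots,\lambda^i_{s_i}(x)$ must vary with $x$, so the $\lambda^i_k$ cannot all be constant; they must be non-constant maps whose value distribution changes as $x$ moves between the target net points. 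With your literal recipe the $\hat\phi_i$ are (up to the vanishing contribution of the spreading maps) rank-one, the inductive limit of the $\Aff(T(A_i))$ collapses to $\R$, and the tracial simplex of the limit is a point rather than the dual of $X(f,\psi)$; so conclusion~(3) fails except in trivial cases, and the ``finite-dimensional approximant'' your constant part produces is $\R$, not $\R^{f(i)}$.

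The paper avoids this by not prescribing the characteristic functions by hand: it first converts $(f,\psi)$ into a sequence of order-unit endomorphisms $\varsigma(f,\psi)_n$ of $C_\R[0,1]$ (Lemma~\ref{l.conversion}), and then uses Thomsen's Krein--Milman-type theorem (Lemma~\ref{l.thomsen}(4)) to approximate each $\varsigma_n$ in point-norm by an average of composition operators $\frac nm\sum g\circ\lambda_{t(i)}$ drawn from a fixed dense family, selecting the lexicographically least admissible tuple $t$ to keep everything Borel. Constants enter only through the forced slots $t(1)=1$ and $t(2)=2N$ of the relation $R_N$, with proportion $1/\mathbf{d}_{N+1}\to 0$; they serve solely to guarantee simplicity, not to encode $\psi_i$. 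The identification of the limit order unit space with $\lim(C_\R[0,1],\varsigma(f,\psi)_n)\cong X(f,\psi)$ is then done by the approximate-intertwining criterion of Lemma~\ref{l.thomsen}(3) with summable tolerances, rather than the perturbation lemma you cite. Once your eigenvalue-map recipe is replaced by this one, the rest of your argument (Borelness, the $K$-theory computation, and the simplicity criterion) goes through.
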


We note that Theorem \ref{t.reduction} immediately implies Theorem \ref{mainintro}:

\begin{corollary}\label{c.reduction}
The following relations are Borel reducible to isomorphism of simple unital $AI$ algebras:
\begin{enumerate}
\item Affine homeomorphism of Choquet simplexes.
\item Homeomorphisms of compact Polish spaces.
\item For any countable language $\mathcal L$, the isomorphism relation $\simeq^{\Mod(\mathcal L)}$ on countable models of $\mathcal L$.
\end{enumerate}
Moreover, isomorphism of simple unital AI algebras is not classifiable by countable structures, and is not a Borel equivalence relation.
\end{corollary}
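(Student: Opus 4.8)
The plan is to observe that the map $\zeta$ of Theorem~\ref{t.reduction} is \emph{already} a Borel reduction of affine homeomorphism of Choquet simplexes to isomorphism of simple unital AI algebras, and then to route the remaining relations into this reduction using the constructions of \S\ref{S.choquet}. To verify the core reduction, I would invoke Elliott's classification theorem for simple unital AI algebras: two such algebras are isomorphic if and only if their Elliott invariants coincide. For $A=C^*(\zeta(f,\psi))$, clauses (1)--(2) of Theorem~\ref{t.reduction} pin the ordered $K_0$-group to $(\Q,\Q^+,1)$ and make $K_1$ trivial, so the only moving part of the invariant is the tracial state simplex; moreover the state space of $(\Q,\Q^+,1)$ is a single point, so the pairing of traces with $K_0$ is constant and carries no information. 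Hence $C^*(\zeta(f,\psi))\cong C^*(\zeta(f',\psi'))$ if and only if the tracial simplexes are affinely homeomorphic. By clause (3) of Theorem~\ref{t.reduction} and the order-unit/simplex duality recalled in \S\ref{OrderUnit}, this holds exactly when the order unit spaces $X(f,\psi)$ and $X(f',\psi')$ are isomorphic, i.e. when $(f,\psi)\simeq^{\Lambda}(f',\psi')$. Since $\zeta$ is Borel and $\Lambda$ is a good parameterization of the Choquet simplexes with $\simeq^{\Lambda}$ being affine homeomorphism, this gives~(1).

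For~(2) I would precompose $\zeta$ with the map $\Psi$ of Lemma~\ref{L.Choq.4} (followed by the inclusion $\LLLambda\hookrightarrow\Lambda$, $\psi\mapsto(\id,\psi)$, from Proposition~\ref{P.Choquet}), which sends $K\in K_c(\DeltaN)$ to a representing matrix of the Bauer simplex $P(K)$. Since $\partial P(K)=K$, since two Bauer simplexes are affinely homeomorphic exactly when their extreme boundaries are homeomorphic (\S\ref{S.choquet}), and since every compact metrizable space is realized inside $\DeltaN$, the composite $K\mapsto\zeta(\id,\Psi(K))$ Borel-reduces homeomorphism of compact metrizable spaces to AI-isomorphism. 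For~(3) I would invoke the classical fact that for every countable language $\mathcal L$ the relation $\simeq^{\Mod(\mathcal L)}$ Borel-reduces to isomorphism of countable graphs (the $S_\infty$-universal relation), together with a Borel reduction of countable graph isomorphism to homeomorphism of compact metrizable spaces; composing with~(2) yields $\simeq^{\Mod(\mathcal L)}\leq_B$ AI-isomorphism.

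Finally, non-Borelness is immediate from~(3): isomorphism of countable graphs is properly analytic ($\Sigma^1_1$-complete), hence not Borel, and if $E\leq_B F$ with $F$ Borel then $E=(f\times f)^{-1}(F)$ is Borel; so AI-isomorphism cannot be Borel. Non-classifiability by countable structures follows from~(1) together with the existence of a turbulent Polish group action whose orbit equivalence relation Borel-reduces to affine homeomorphism of Choquet simplexes: by Hjorth's turbulence theorem such a relation is not classifiable by countable structures, and since non-classifiability is preserved upward along Borel reductions (if $F\leq_B E$ and $F$ is not so classifiable then neither is $E$), AI-isomorphism inherits the property.

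The main obstacles I expect are two. First, the reduction of countable graph isomorphism to homeomorphism of compact metrizable spaces used in~(3) needs an explicit encoding of graphs as compacta (or, alternatively, a direct encoding of countable structures into the Choquet-simplex parameterization), and its verification is the most delicate combinatorial point. Second, the non-classifiability claim rests on producing a genuinely turbulent action reducing into the Choquet-simplex parameterization; checking turbulence---density of orbits together with density of the local orbits along a suitable decreasing sequence of open sets---is the technical heart, whereas the $K$-theoretic and duality bookkeeping in the core reduction is routine once Theorem~\ref{t.reduction} and Elliott's theorem are in hand.
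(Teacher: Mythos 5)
Your items (1)--(3) and the non-Borelness claim track the paper's proof essentially verbatim: (1) is Elliott's classification of simple unital AI algebras plus the observation that $(\Q,\Q^+,1)$ has a unique state, so the Elliott invariant of $C^*(\zeta(f,\psi))$ collapses to the trace simplex and $\zeta$ is itself the reduction; (2) is Lemma~\ref{L.Choq.4} composed with~(1); (3) is the reduction of $\simeq^{\Mod(\mathcal L)}$ to homeomorphism of compacta, which the paper takes as a citation to \cite[4.21]{hjorth00} rather than something to re-derive (this is exactly the ``explicit encoding'' you flag as delicate --- it is a known theorem, not a new obstacle); and non-Borelness follows from (3) together with the Friedman--Stanley theorem \cite{frst89} that $\simeq^{\Mod(\mathcal L)}$ is non-Borel already for a single binary relation symbol, by the same pullback argument you give.

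The one genuine gap is the non-classifiability claim. You route it through~(1) and propose to exhibit a turbulent Polish group action whose orbit equivalence relation Borel-reduces to affine homeomorphism of Choquet simplexes, and you correctly identify the verification of turbulence as the unresolved ``technical heart'' --- but no such action is produced, so as written this part of the argument is incomplete. The paper closes it without constructing anything: it invokes Hjorth's theorem \cite[4.22]{hjorth00} that homeomorphism of compact subsets of the Hilbert cube is \emph{not} classifiable by countable structures, and then uses part~(2) together with the (correct) fact that non-classifiability is preserved upward along $\leq_B$. You already have every ingredient needed for this shortcut: your part~(2) is precisely a Borel reduction from homeomorphism of compacta, so replacing the proposed turbulence construction by the citation finishes the proof with no further work. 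In this paper the turbulence/non-classifiability assertions are obtained by transporting known hard relations into the AI class, not by a direct verification of dense orbits and dense local orbits for an action on $\Lambda$.
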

\begin{proof}
For (1), let $\zeta$ be as in Theorem \ref{t.reduction}. Since simple unital AI algebras are classified by their Elliott invariant and since $(\bbQ,\bbQ^+,1)$ has a unique state, it follows that $(f,\psi)\simeq^\Lambda (f',\psi')$ if and only if $C^*(\zeta(f,\psi))\simeq C^*(\zeta(f',\psi'))$.

For (2), note that by Lemma \ref{L.Choq.4}, homeomorphism of compact subsets of $[0,1]^\N$ is Borel reducible to affine homeomorphism in $\Lambda$.

(3) follows from (2) and \cite[4.21]{hjorth00}, where it was shown $\simeq^{\mathcal L}$ is Borel reducible to homeomorphism.

It was shown in \cite[4.22]{hjorth00} that homeomorphism of compact subsets of $\K$ is not classifiable by countable structures, and so by (2) neither is isomorphism of AI algebras. Finally, it was shown in \cite{frst89} that $\simeq^{\Mod(\mathcal L)}$ is not Borel when $\mathcal L$ consists of just a single binary relation symbol, and so it follows from (3) that isomorphism of simple unital AI algebras is not Borel.
\end{proof}

The strategy underlying the proof of Theorem \ref{t.reduction} is parallel to the main argument in \cite{thomsen}. As a first step, we prove the following:


\begin{lemma}\label{l.conversion}
There is a Borel map $\varsigma: \Lambda\to L_{\rm ou}(C_\R[0,1])^\N$ such that for all $(f,\psi)\in \Lambda$ we have 
\begin{equation}\label{eq.conversion}
X(f,\psi)\simeq \lim (C_\R[0,1],\varsigma(f,\psi)_n).
\end{equation}
\end{lemma}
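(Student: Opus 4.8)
The plan is to realize each finite-dimensional order unit space $\R^m$ occurring in the system defining $X(f,\psi)$ as an order-unit retract of $C_\R[0,1]$, and then to transport the connecting maps across this retraction; this is the order-unit-space form of the intertwining argument of \cite{thomsen}. For each $m\geq 1$ fix $m$ distinct points $0=t^{(m)}_1<\cdots<t^{(m)}_m=1$ in $[0,1]$ (the single point $0$ when $m=1$), and define the evaluation map $\epsilon_m\colon C_\R[0,1]\to\R^m$ by $\epsilon_m(g)=(g(t^{(m)}_1),\dots,g(t^{(m)}_m))$ and the piecewise-linear interpolation map $\iota_m\colon\R^m\to C_\R[0,1]$ sending $(a_1,\dots,a_m)$ to the function taking the value $a_i$ at $t^{(m)}_i$ and linear in between. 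Both are order unit morphisms, i.e. $\epsilon_m\in L_{\rm ou}(C_\R[0,1],\R^m)$ and $\iota_m\in L_{\rm ou}(\R^m,C_\R[0,1])$: each sends unit to unit and positive elements to positive elements, $\|\iota_m(a)\|=\max_i|a_i|=\|a\|$ and $\|\epsilon_m(g)\|\le\|g\|$. Crucially, $\epsilon_m\circ\iota_m=\id_{\R^m}$, since the interpolating function recovers its nodal values.

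Writing $m_n=f(n)$ and $\phi_n=\psi(f(n),f(n+1))\in L_{\rm ou}(\R^{m_n},\R^{m_{n+1}})$ for the connecting maps of $X(f,\psi)$, I would set $\varsigma(f,\psi)_n=\iota_{m_{n+1}}\circ\phi_n\circ\epsilon_{m_n}$, which lies in $L_{\rm ou}(C_\R[0,1])$ as a composition of order unit morphisms. Using $\epsilon_m\circ\iota_m=\id$ one obtains the \emph{exact} intertwining identities
\[
\epsilon_{m_{n+1}}\circ\varsigma(f,\psi)_n=\phi_n\circ\epsilon_{m_n},\qquad \varsigma(f,\psi)_n\circ\iota_{m_n}=\iota_{m_{n+1}}\circ\phi_n,
\]
so $(\iota_{m_n})_n$ and $(\epsilon_{m_n})_n$ are morphisms between the two directed systems and induce order unit maps $\iota_\infty$ and $\epsilon_\infty$ between $\lim(\R^{m_n},\phi_n)=X(f,\psi)$ and $\lim(C_\R[0,1],\varsigma(f,\psi)_n)$. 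From $\epsilon_{m_n}\iota_{m_n}=\id$ one gets $\epsilon_\infty\iota_\infty=\id$ at once, and from $\varsigma(f,\psi)_n(\iota_{m_n}\epsilon_{m_n}(g))=\iota_{m_{n+1}}\phi_n\epsilon_{m_n}(g)=\varsigma(f,\psi)_n(g)$ one sees that $\iota_{m_n}\epsilon_{m_n}(g)$ and $g$ are identified in the limit, whence $\iota_\infty\epsilon_\infty=\id$. Thus $\iota_\infty$ is an isomorphism of order unit spaces and \eqref{eq.conversion} holds.

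It then remains to check that $\varsigma$ is Borel. The maps $\iota_m,\epsilon_m$ are fixed in advance and $(f,\psi)\mapsto f(n)$ is locally constant, so $\Lambda$ decomposes into countably many clopen pieces $\{f(n)=a,\ f(n+1)=b\}$ on each of which $(f,\psi)\mapsto\varsigma(f,\psi)_n$ equals $\psi\mapsto\iota_b\circ\psi(a,b)\circ\epsilon_a$, a continuous coordinate projection followed by pre- and post-composition with the fixed bounded maps $\epsilon_a,\iota_b$; this is continuous for the strong topology on $L_{\rm ou}(C_\R[0,1])$. Being piecewise continuous over a countable clopen partition, each coordinate of $\varsigma$ is Borel, and hence so is $\varsigma$. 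I expect the only genuinely delicate point to be this Borelness verification — in particular confirming that two-sided composition with the fixed maps is strongly continuous and that the varying dimensions $f(n)$ are absorbed by the clopen decomposition; the analytic core is painless precisely because the interpolation–evaluation pair furnishes an exact intertwining rather than the merely approximate one typical of such arguments.
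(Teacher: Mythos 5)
Your proposal is correct and follows essentially the same route as the paper: your interpolation/evaluation pair $(\iota_m,\epsilon_m)$ is exactly the paper's $(\eta_m,\beta_m)$ built from the peaked partition of unity of hat functions at the nodes $i/(m-1)$, your definition $\varsigma(f,\psi)_n=\iota_{m_{n+1}}\circ\phi_n\circ\epsilon_{m_n}$ coincides with the paper's, and the identity $\epsilon_m\circ\iota_m=\id_{\R^m}$ plays the same role. Your write-up is in fact slightly more explicit than the paper's on why the exact intertwining yields mutually inverse maps $\iota_\infty,\epsilon_\infty$ on the limits and on the clopen decomposition underlying Borelness, but there is no substantive difference.
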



\begin{proof}
Let $f_{1,1}\in C_\R[0,1]$ be the constant 1 function, and for each $n>1$ and $0\leq i\leq n-1$, let $f_{n,i}:[0,1]\to\R$ be the function such that
$$
f_{n,i}\left(\frac j {n-1}\right)=\left\{ \begin{array}{ll}
1 & \text{if } j=i\\
0 & \text{if } j\neq i
\end{array}\right.
$$
and which is piecewise linear elsewhere. Then $\mathcal P_n=\{f_{n,i}: 0\leq i\leq n\}$ is a peaked partition of unity. For each $n$, let $\eta_n:\R^n\to C_\R[0,1]$ be the linear map given on the standard basis $(e_i)$ of $\R^n$ by $\eta_n(e_i)=f_{n,i}$, and let $\beta_n: C_\R[0,1]\to \R^n$ be given by $\beta_n(f)_i=f(\frac i {n-1})$. Then $\eta_n$ and $\beta_n$ are order unit space homomorphisms and $\beta_n\circ\eta_n=\id_{\R^n}$. Define $\varsigma(f,\psi)_n=\eta_{f(n+1)}\circ \psi(f(n),f(n+1))\circ\beta_{f(n)}$ and note that $\varsigma$ is continuous, and so it is Borel. Since the diagram
$$
\xymatrixcolsep{4pc}\xymatrix{
C_\R([0,1]) \ar[r]^{\varsigma(f,\psi)_1}\ar[d]_{\beta_{f(1)}}  &   C_\R[0,1]  \ar[r]^{\varsigma(f,\psi)_2}\ar[d]_{\beta_{f(2)}} & C_\R[0,1]  \ar[r]^{\varsigma(f,\psi)_3}\ar[d]_{\beta_{f(3)}} & \ \cdots\\
\R^{f(1)} \ar[r]_{\psi(f(1),f(2))} & \R^{f(2)}    \ar[r]_{\psi(f(2),f(3))}  & \R^{f(3)}    \ar[r]_{\psi(f(3),f(4))} & {\ }\cdots
}
$$
commutes, \eqref{eq.conversion} holds.
\end{proof}

Before proceeding, we fix our notation and collect the key results from \cite{thomsen} that we need. We identify $C[0,1]\otimes \M_n(\C)$ and $\M_n(C[0,1])$ in the natural way. We call a *-homomorphism $\phi: \M_n(C[0,1])\to \M_m(C[0,1])$ a \emph{standard homomorphism} when there are continuous functions 
$$
f_1,\ldots, f_{\frac m n}: [0,1]\to [0,1]
$$ 
such that $\phi(g)=\diag(g\circ f_1,\ldots, g\circ f_{\frac m n})$. Following \cite{thomsen}, we will call the sequence $f_1,\ldots, f_{\frac m n}$ the \emph{characteristic functions} of the standard homomorphism $\phi$. The tracial state space of $\M_n(C[0,1])$ is canonically identifed with the Borel probability measures on $[0,1]$ (see \cite[p. 606]{thomsen}), and so we canonically identify $\Aff(T(\M_n(C[0,1])))$ and $C_\R[0,1]$.

The following Lemma collects the results from \cite{thomsen} that we need.


\begin{lemma}[Thomsen]\label{l.thomsen} \ 

\begin{enumerate}
\item Any AI algebra 
can be represented as an inductive limit $\lim_n (\M_n(C[0,1],\phi_n)$, where each $\phi_n$ is a standard homomorphism.
\item If $\phi: \M_n(C[0,1])\to \M_m(C[0,1])$ is a standard homomorphism with characteristic functions $f_1,\ldots, f_{\frac m n}$, then the induced order unit space homomorphism $\hat\phi: C_\R[0,1]\to C_\R[0,1]$ (under the natural identification with the tracial state spaces) is given by
$$
\hat\phi(g)=\frac n m \sum_{i=1}^{\frac m n} g\circ f_i.
$$
\item Let $\phi_i,\psi_i\in L_{\rm ou} (C_\R[0,1])$ be order unit morphisms $(i\in\N)$ and let $\delta_i\in \R_+$ be a sequence such that $\sum_{i=1}^\infty \delta_n<\infty$. Suppose there are finite sets $F_k\subseteq C_\R[0,1]$ such that
\begin{enumerate}
\item $F_k\subseteq F_{k+1}$ for all $k\in\N$;
\item $\bigcup_{k} F_k$ has dense span in $C_\R[0,1]$;
\item for all $f\in F_k$ there are $g,h\in F_{k+1}$ such that $\|\phi_i(f)-g\|,\|\psi_i(f)-h\|\leq \delta_{k+1}$ for all $i\leq k$;
\item for all $f\in F_k$ we have $\|\phi_k(f)-\psi_k(f)\|\leq \delta_k$.
\end{enumerate}
Then $\lim_{\to} (C_\R[0,1],\phi_i)$ and $\lim_{\to} (C_\R[0,1],\psi_i)$ are isomorphic as order unit spaces.

\item For any order unit homomorphism $\psi: C_\R[0,1]\to C_\R[0,1]$, $f_0\in C_\R[0,1]$, finite $F\subseteq C_\R[0,1]$, $n,k\in\N$ and $\varepsilon>0$ there is $m=m_0 n k\in\N$ and continuous $f_1,\ldots f_{\frac m n}:[0,1]\to [0,1]$ such that for all $g\in F$ we have
\begin{equation}\label{eq.standardapprox}
\|\psi(g)-\frac n m\sum_{i=1}^{\frac m n} g\circ f_i\|_\infty<\varepsilon.
\end{equation}
\end{enumerate}
\end{lemma}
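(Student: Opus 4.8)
The plan is to treat all four items as restatements of results of Thomsen \cite{thomsen}, so that the substance of the proof is locating the corresponding statement there and reconciling it with the normalizations fixed just before the lemma (tracial states realized as Borel probability measures on $[0,1]$, and the identification $\Aff(T(\M_n(C[0,1])))\cong C_\R[0,1]$ sending a self-adjoint $g$ to the function $\bar g(t)=\frac1n\mathrm{Tr}(g(t))$). Only item (2) is genuinely self-contained, and I would prove it by direct computation: a tracial state on $\M_m(C[0,1])$ has the form $\tau_\mu(a)=\int\frac1m\mathrm{Tr}(a(t))\,d\mu(t)$ for some $\mu\in P([0,1])$, so for a standard homomorphism $\phi(g)=\diag(g\circ f_1,\dots,g\circ f_{m/n})$ one gets
\[
\tau_\mu(\phi(g))=\int\frac1m\sum_{i=1}^{m/n}\mathrm{Tr}(g(f_i(t)))\,d\mu(t)=\int\frac nm\sum_{i=1}^{m/n}\bar g(f_i(t))\,d\mu(t),
\]
which is exactly the pairing of $\mu$ with $\frac nm\sum_i\bar g\circ f_i$; hence $\hat\phi(\bar g)=\frac nm\sum_{i=1}^{m/n}\bar g\circ f_i$, as asserted.

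For (1) the plan is to begin from the defining presentation $A=\lim(F_i\otimes C[0,1],\phi_i)$, regroup the finite-dimensional blocks so that each stage becomes a single matrix algebra $\M_{N_i}(C[0,1])$ (telescoping and amplifying, embedding a finite direct sum of interval algebras block-diagonally into one matrix interval algebra), and then invoke Thomsen's approximate-diagonalization theorem to replace each connecting $*$-homomorphism, up to an inner perturbation and on larger and larger finite sets, by a standard homomorphism $g\mapsto\diag(g\circ f_1,\dots,g\circ f_{m/n})$. Passing to a subsequence and telescoping then absorbs the errors and yields the required presentation.

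For (3) I would run Elliott's approximate intertwining argument in the category of order unit spaces: conditions (a)--(b) supply a dense exhausting family $\bigcup_k F_k$, condition (c) says $\phi_i,\psi_i$ move $F_k$ into $F_{k+1}$ up to $\delta_{k+1}$, and (d) says $\phi_k,\psi_k$ agree on $F_k$ up to $\delta_k$; since $\sum_i\delta_i<\infty$ the partial composites stabilize in norm and produce mutually inverse order-unit isomorphisms of the two limits. For (4) the plan is to represent the order unit endomorphism $\psi$ of $C_\R[0,1]$ by a weak${}^*$-continuous family $(\mu_t)$ of probability measures via $\psi(g)(t)=\int g\,d\mu_t$ (Riesz representation applied to each positive unital functional $g\mapsto\psi(g)(t)$), and then to discretize: approximate each $\mu_t$, uniformly in $t$ and on the finite set $F$, by an evenly weighted average $\frac nm\sum_{i=1}^{m/n}\delta_{f_i(t)}$ of point masses whose locations $f_i$ can be chosen to depend continuously on $t$, with $m$ a suitable multiple $m_0nk$ of $nk$.

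The main obstacle is item (1), together with the closely related realization in (4): the approximate diagonalization of $*$-homomorphisms between interval algebras, and the continuous choice of characteristic functions, are the technical heart of Thomsen's analysis and are what I would cite rather than reprove. Everything else is either the direct trace computation (2) or the routine intertwining (3); beyond citation, the only real care needed is to check that the normalizing factors $\frac nm$ and the measure-theoretic identifications match the conventions adopted immediately before the lemma.
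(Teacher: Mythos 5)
Your proposal is correct and follows essentially the same route as the paper: items (1)--(3) are, as you say, direct citations of (or routine verifications of) Thomsen's Lemmas 1.1, 3.5 and 3.4, and item (4) rests on Thomsen's Krein--Milman type Theorem 2.1. The only point where the paper is more explicit than your sketch is in (4): Theorem 2.1 yields a convex combination $\sum_i \frac{n_i}{d}\, g\circ \tilde f_i$ with rational weights over a common denominator $d$ divisible by $k$, and the evenly weighted form $\frac{n}{m}\sum g\circ f_i$ with $m=dn$ is then obtained by listing each $\tilde f_i$ with multiplicity $n_i$ --- a repetition step your measure-theoretic reformulation glosses over but which is needed to get equal weights (and hence a standard homomorphism).
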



\begin{proof}
(1) and (2) are simply restatements of Lemma 1.1 and Lemma 3.5 in \cite{thomsen}, while (3) follows immediately from \cite[Lemma 3.4]{thomsen}. For (4), note that by the Krein-Milman type theorem \cite[Theorem 2.1]{thomsen}, we can find a multiple $d$ of $k$ and continuous $\tilde f_1,\ldots,\tilde f_N: [0,1]\to [0,1]$ such that $\|\psi(g)- \sum_{i=1}^{N} \frac {n_i} {d} (g\circ \tilde f_i)\|_\infty<\varepsilon$ where $1\leq n_i\leq d$ satisfy $\sum_{i=1}^N n_i=d$. Let $m=dn$ and let $f_1,\ldots, f_{\frac m n}$ be the list of functions obtained by repeating $n_1$ times $\tilde f_1$, then $n_2$ times $\tilde f_2$, etc. Then \eqref{eq.standardapprox} is clearly satisfied.
\end{proof}

For the next lemma we refer back to \S \ref{S.Rhom} and Lemma~\ref{L.matrix}
 for the definition of the relation $\Rhomx{H_0,H_1}$ and the functions $M_n:\Gamma(H)\to \Gamma(H^n)$ and $\theta_n:\Gamma(H)\times(\N^{\N})^n\to\N^{\N}$.


\begin{lemma}\label{biglimit}
View $C[0,1]$ as multiplication operators on $H=L^2([0,1])$. Then there is an element $\gamma\in\Gamma(H)$ such that $C^*(\gamma)$ is equal to $C[0,1]$ and such that there are Borel maps
$$
d_N:L_{\rm ou}(C_\R[0,1])^\N\to \N \ \ \text{and} \ \ \Phi_N: L_{\rm ou}(C_\R[0,1])^\N\to \N^\N
$$
for all $N\in\N$, so that for all $\vec\varsigma\in L_{\rm ou}(C_\R[0,1])^\N$ we have:
\begin{enumerate}[$(I)$]
\item For all $N\in\N$ we have $(M_{d_N(\vec\varsigma)}(\gamma),M_{d_{N+1}(\vec\varsigma)}(\gamma),\Phi_N(\vec\varsigma))\in \Rhomx{H^{d_N(\vec\varsigma)},H^{d_{N+1}(\vec\varsigma)}}$.
\item The limit
$$
A_{\vec\varsigma}=\lim_N (C^*(M_{d_N(\vec\varsigma)_i}(\gamma)),\hat\Phi_N(\vec\varsigma))
$$
is a unital simple AI algebra, which satisfies 
$$
(K_0(A_{\vec\varsigma}),K_0^+(A_{\vec\varsigma}), [1_{A_{\vec\varsigma}}])\simeq (\Q,\Q^+,1),\qquad K_1(A_{\vec\varsigma})=\{1\}
$$
and
$$
\Aff(T(A_{\vec\varsigma}))\simeq \lim_N (C_\R[0,1],\vec\varsigma_N).
$$
\end{enumerate}
\end{lemma}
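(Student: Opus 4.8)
The plan is to realize $A_{\vec\varsigma}$ as a Borel inductive limit of matrix algebras over $C[0,1]$ with standard connecting homomorphisms, chosen so that on the trace side the induced system approximates $(C_\R[0,1],\vec\varsigma_N)$ in the sense of Thomsen's Lemma~\ref{l.thomsen}. First I would fix $\gamma\in\Gamma(H)$ whose coordinates are the multiplication operators by a fixed countable dense $\Q(i)$-$*$-subalgebra of $C[0,1]$, so that $C^*(\gamma)=C[0,1]$; write $g_k$ for the function with $\gamma_k$ equal to multiplication by $g_k$. I would also fix, once and for all, a summable sequence $(\delta_k)$ of positive reals and a sequence dense in $C_\R[0,1]$ to seed the finite sets $F_k$ required by Lemma~\ref{l.thomsen}(3).

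Next I would build, by recursion on $N$ and Borel-measurably in $\vec\varsigma$, the dimensions $d_N=d_N(\vec\varsigma)$ and a standard homomorphism $\phi_N\colon\M_{d_N}(C[0,1])\to\M_{d_{N+1}}(C[0,1])$ with characteristic functions $f^N_1,\dots,f^N_{r_N}$, where $r_N=d_{N+1}/d_N$. At stage $N$, Lemma~\ref{l.thomsen}(4) applied to $\vec\varsigma_N$, the finite set $F_N$, and tolerance $\delta_N$ furnishes such functions with $\|\hat\phi_N(g)-\vec\varsigma_N(g)\|<\delta_N$ for $g\in F_N$, where $\hat\phi_N(g)=\tfrac1{r_N}\sum_i g\circ f^N_i$ by Lemma~\ref{l.thomsen}(2); I would simultaneously enlarge $F_{N+1}$ so that conditions (a)--(d) of Lemma~\ref{l.thomsen}(3) hold for the two systems $(C_\R[0,1],\hat\phi_N)$ and $(C_\R[0,1],\vec\varsigma_N)$. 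To obtain $K_0\simeq(\Q,\Q^+,1)$ I would force the multiplicities $r_N$ to be large and to run through every prime infinitely often, so that the limit of $\Z\xrightarrow{r_1}\Z\xrightarrow{r_2}\cdots$ is $\Q$; and to guarantee simplicity I would, following Thomsen, require the characteristic functions to spread points into a net of $[0,1]$ that refines with $N$. Since $r_N$ may be taken arbitrarily large, these extra ``sweeping'' functions form a vanishing fraction and are absorbed into the tolerance $\delta_N$. The one genuinely delicate point is Borel dependence on $\vec\varsigma$: the functions produced by Lemma~\ref{l.thomsen}(4) come from a Krein--Milman argument, so I would replace the bare existence statement by a Borel uniformization, selecting (by a minimal search against a fixed enumeration of rationally parametrized piecewise-linear maps) a witnessing tuple from the Borel set of tuples satisfying the required approximation.

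With the $f^N_i$ in hand I would assemble the code $\Phi_N(\vec\varsigma)$. Each $f^N_i$ determines the $*$-endomorphism $g\mapsto g\circ f^N_i$ of $C[0,1]=C^*(\gamma)$; since $g_k\circ f^N_i\in C[0,1]$, a Borel search using Lemmas~\ref{L.B.1} and~\ref{L.B.2} produces, uniformly in $\vec\varsigma$, Cauchy sequences of $\Q(i)$-$*$-polynomials in $\gamma$ converging to $g_k\circ f^N_i$, i.e. a code $\Psi^N_i$ for this endomorphism. Lifting each $\Psi^N_i$ by matrix amplification to a code $\tilde\Psi^N_i$ for the endomorphism $b\mapsto b\circ f^N_i$ of $\M_{d_N}(C[0,1])=C^*(M_{d_N}(\gamma))$, I would apply the diagonal-embedding code $\theta$ of Lemma~\ref{L.matrix} to form $\theta_{r_N}(M_{d_N}(\gamma),\tilde\Psi^N_1,\dots,\tilde\Psi^N_{r_N})$, which codes $b\mapsto\diag(b\circ f^N_1,\dots,b\circ f^N_{r_N})$ into $C^*(M_{r_N}(M_{d_N}(\gamma)))$. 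Composing with the fixed (Borel) reindexing $\M_{r_N}(\M_{d_N}(C[0,1]))\cong\M_{d_{N+1}}(C[0,1])$ induced by $H^{d_{N+1}}=(H^{d_N})^{r_N}$ yields a $\Phi_N(\vec\varsigma)\in\N^\N$ with $(M_{d_N}(\gamma),M_{d_{N+1}}(\gamma),\Phi_N(\vec\varsigma))\in\Rhomx{H^{d_N},H^{d_{N+1}}}$, which is conclusion $(I)$.

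Finally I would verify $(II)$. That $A_{\vec\varsigma}=\LIM$ of the system (formed via Proposition~\ref{P.Directed}) is a unital AI algebra is immediate; simplicity follows from the spreading condition on the characteristic functions, $K_0\simeq(\Q,\Q^+,1)$ from the prime-divisibility of the $r_N$, and $K_1(A_{\vec\varsigma})=\{1\}$ because $K_1(\M_{d_N}(C[0,1]))=K_1(C[0,1])=0$. For the trace data, Lemma~\ref{l.thomsen}(2) identifies the induced order-unit system with $(C_\R[0,1],\hat\phi_N)$, and Lemma~\ref{l.thomsen}(3), whose hypotheses were arranged in the recursion, gives an order-unit isomorphism $\Aff(T(A_{\vec\varsigma}))\simeq\lim_N(C_\R[0,1],\hat\phi_N)\simeq\lim_N(C_\R[0,1],\vec\varsigma_N)$. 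I expect the main obstacle to be precisely the Borel uniformity highlighted above: carrying out Thomsen's analytic approximations (especially part~(4)) together with the simplicity and divisibility bookkeeping simultaneously and measurably in the parameter $\vec\varsigma$, rather than for a single fixed limit algebra.
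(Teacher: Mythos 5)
Your proposal follows essentially the same route as the paper: the same choice of $\gamma$ (multiplication operators by a fixed dense sequence generating $C[0,1]$), the same recursive, lexicographically-minimal Borel search over a fixed countable family of characteristic functions against the open relation furnished by Lemma~\ref{l.thomsen}(4), the same divisibility bookkeeping on the multiplicities to force $(K_0,K_0^+,[1])\simeq(\Q,\Q^+,1)$, the same use of Lemma~\ref{L.matrix} to code the twisted diagonal embeddings, and the same appeal to Lemma~\ref{l.thomsen}(2)--(3) to identify $\Aff(T(A_{\vec\varsigma}))$ with $\lim_N(C_\R[0,1],\vec\varsigma_N)$. (The paper sidesteps your Cauchy-sequence search for the codes by closing the dense sequence $g_n$ under composition with the fixed family $\lambda_k$, so the codes can be taken constant; this is cosmetic.)

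The one step that would not go through as you wrote it is the simplicity argument. Since a standard homomorphism acts by $g\mapsto\diag(g\circ f_1,\dots,g\circ f_r)$, the composed characteristic functions from stage $l$ to stage $j$ are $f^l_{i_l}\circ f^{l+1}_{i_{l+1}}\circ\cdots\circ f^{j-1}_{i_{j-1}}$, with the \emph{stage-$l$} functions applied outermost; hence all their values lie in $\bigcup_i f^l_i([0,1])$. A family of ``sweeping'' functions whose values form a $2^{-N}$-net at stage $N$ therefore does not by itself guarantee that a nonzero $f\in\M_{d_l}(C[0,1])$ supported in a gap of the stage-$l$ net survives --- indeed $\phi_l$ need not even be injective on such an $f$. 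The paper's remedy is built into the relation $R_N$: the condition $t(1)=1$ forces the identity $\lambda_1=\mathrm{id}$ to be a characteristic function at every stage, so $f$ persists as a direct summand of every $\phi_{l,j}(f)$, and the condition $t(2)=2N$ forces a constant function at each stage, with $\lambda_{2n}$ enumerating the rational constants with infinite repetition; eventually some such constant lands in the set where $f\neq 0$, producing a nonzero \emph{constant} direct summand of $\phi_{l,j+1}(f)$, which is nonzero at every point $t$. Your refining-net mechanism works once the identity is retained at every stage, but that retention has to be written into the selection relation (and, like your extra net functions, it is harmless for the trace estimates only because the total multiplicity can be taken large enough that the forced summands are absorbed into the tolerance --- a point you do correctly anticipate).
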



\begin{proof}
Fix a sequence of continuous functions $\lambda_n:[0,1]\to [0,1]$ which is dense in $C([0,1],[0,1])$ and such that $\lambda_1(x)=x$ and $\lambda_{2n}$ enumerates all rational valued constant functions with infinite repetition. Also fix a dense sequence $g_n\in C_\R[0,1]$, $n\in\N$, closed under composition with the $\lambda_n$ (i.e., for all $i,j\in\N$ there is $k\in\N$ such that $g_i\circ \lambda_j=g_k$.)

Pick $\gamma\in\Gamma(H)$ to consist of the operators on $H$ that correspond to multiplication by the $g_n$. Each $\lambda_n$ induces an endomorphism $\psi_{n,m}$ of $C^*(M_m(\gamma))$ by entry-wise composition. Let $\Psi_{n,m}:\N\to\N^\N$ enumerate a sequence of codes corresponding to the $\psi_{n,m}$. These may even be chosen so that $\Psi_{n,m}(l)$ is always a constant sequence since we assumed that the sequence $(g_n)$ is closed under composition with the $\lambda_k$.

Define for each $N\in\N$ a relation $R_N\subseteq L_{\rm ou}(C_\R[0,1])\times\N\times\N\times\Q_+\times\N\times \N^{<\N}$ by
\begin{align*}
R_N(\psi,n,k,\varepsilon,m,t)\iff 
&\frac m {nk}\in\N\wedge \length(t)=\frac m n\wedge t(1)=1\wedge t(2)=2N\wedge \\
&(\forall j\leq N)\|\psi(g_j)-\frac n m \sum_{i=1}^{\frac m n} g_j\circ \lambda_{t(i)}\|_\infty<\varepsilon
\end{align*}
Note that this is an open relation in the product space when $L_{\rm ou}(C_\R[0,1])$ has the strong topology (and $\N$, $\Q_+$ and $\N^{<\N}$ have the discrete topology.) By Lemma \ref{l.thomsen}.(4) it holds that for all $\psi$, $n$, $k$ and $\varepsilon$ there is $m$ and $t$ such that $R_N(\psi,n,k,\varepsilon,m,t)$ holds. (Note that this still holds although we have fixed the first two elements of the sequence $t$, since $m$ can be picked arbitrarily large.) Let $t_N(\psi,n,k,\varepsilon)$ be the lexicographically least $t\in \N^{<\N}$ such that $R_N(\psi,n,k,\varepsilon,n\length(t),t)$ holds. We let $m_N(\psi,n,k,\varepsilon)=n\length(t)$, and note that $t_N$ and $m_N$ define Borel functions.

Fix a sequence $(\delta_i)_{i\in\N}$ in $\Q_+$ such that $\sum_{i=1}^\infty \delta_i<\infty$. Let $q_i\in\N$ enumerate the primes with each prime repeated infinitely often. We can then define Borel functions $G_N :L_{\rm ou}(C_\R[0,1])^\N\to\N$, $d_N: L_{\rm ou}(C_\R[0,1])^\N\to \N$, $\mathbf{d}_N: L_{\rm ou}(C_\R[0,1])^\N\to\N$ and $s_N: L_{\rm ou}(C_\R[0,1])\to \N^{<\N}$ recursively such that the following is satisfied:
\begin{enumerate}[\indent $(A)$]
\item $G_1$,$d_1$ and $\mathbf {d}_1$ are the constant 1 functions, $s_1$ is constantly the empty sequence.
\item $G_{N+1}(\vec\varsigma)$ is the least natural number $k$ such that for all $i\leq N$ and $j\leq G_{N}(\vec\varsigma)$ there are $j_0,j_1\leq k$ such that
$$
\|\sum_{l=1}^{\mathbf{d}_N} g_j\circ \lambda_{s_i(\vec\varsigma)_l}-g_{j_0}\|\leq\delta_N
$$
and
$$
\|\vec\varsigma_i(g_j)\circ \lambda_{s_i(\vec\varsigma)_l}-g_{j_1}\|\leq\delta_N.
$$

\item $d_{N+1}(\vec\varsigma)=m_{G_{N+1}(\vec\varsigma)}(\vec\varsigma_{N+1}, d_N(\vec\varsigma), q_1\cdots, q_N,\delta_{N+1})$.

\item $s_{N+1}(\vec\varsigma)=t_{G_{N+1}(\vec\varsigma)}(\vec\varsigma_{N+1},d_N(\vec\varsigma),q_1\cdots q_N,\delta_{N+1})$.

\item $\mathbf{d}_{N+1}(\vec\varsigma)=\frac {d_{N+1}(\vec\varsigma)}{d_{N}(\vec\varsigma)}=\length(s_{N+1}(\vec\varsigma))$.
\end{enumerate}
Note that $\mathbf{d}_N$ takes integer values by the definition of $d_N$. Define
$$
\Phi_N(\vec\varsigma)=\theta_{\mathbf{d}_{N+1}(\vec\varsigma)}(M_{d_N(\vec\varsigma)}(\gamma),\Psi_{s_{N+1}(\vec\varsigma)_1,d_N(\vec\varsigma)},\ldots, \Psi_{s_{N+1}(\vec\varsigma)_{\mathbf{d}_{N+1}(\vec\varsigma)},d_N(\vec\varsigma)}).
$$
Then $\Phi_N$ and $d_N$ are Borel functions for all $N\in\N$, and (I) of the Lemma holds by definition of $\theta_n$.

We proceed to prove that (II) also holds. Fix $\vec\varsigma\in L_{\rm ou}(C_\R[0,1])^\N$. Note that the inductive system $(C^*(M_{d_N(\vec\varsigma)}(\gamma), \hat\Phi_N(\vec\varsigma))$ is isomorphic to the system $(\M_{d_N(\vec\varsigma)}(C[0,1]),\phi_N)$ where
$$
\phi_N(f)=\diag(f\circ\lambda_{s_{N+1}(\vec\varsigma)_1},\ldots,f\circ \lambda_{s_{N+1}(\vec\varsigma)_{\mathbf{d}_{N+1}(\varsigma)}}).
$$
Since each natural number divides some $d_N(\vec\varsigma)$ we have
$$
(K_0(A_{\vec\varsigma}),K_0^+(A_{\vec\varsigma}), [1_{A_{\vec\varsigma}}])\simeq (\Q,\Q^+,1)
$$
while $K_1(A_{\vec\varsigma})=\{1\}$ since $[0,1]$ is contractible.

To establish that $\Aff(T(A_{\vec\varsigma}))\simeq \lim_i (C_\R[0,1],\vec\varsigma_i)$ we apply Lemma \ref{l.thomsen}. By Lemma \ref{l.thomsen}.(2) the order unit space morphism induced by $\phi_N$ is given by
$$
\hat\phi_N(f)=\frac 1 {\mathbf{d}_{N+1}(\vec\varsigma)} \sum_{i=1}^{\mathbf{d}_{N+1}(\vec\varsigma)} f\circ \lambda_{s_{N+1}(\vec\varsigma)_i}.
$$
Letting $F_N=\{g_i:i\leq G_N(\vec\varsigma)\}$, it is clear that (a) and (b) of Lemma \ref{l.thomsen}.(3) are satisfied. That (c) of \ref{l.thomsen}.(3) then also is satisfied for the sequences $\hat\phi_N,\vec\varsigma_N\in C_\R[0,1]$ follows from property (B) above. Finally, \ref{l.thomsen}.(3).(d) holds by (D) and the definition of $t_N$ and $R_N$. Thus 
$$
\lim_i(C_\R[0,1],\vec\varsigma_i)\simeq \lim_i(C_\R[0,1],\hat\phi_i)\simeq\Aff(T(A_{\vec\varsigma})).
$$
It remains only to verify that $A_{\vec\varsigma}$ is simple. For this we need only prove that if $0 \neq f \in \M_{d_N(\vec\varsigma)}(C[0,1])$, then for all $t\in [0,1]$ we have
$$
\phi_{N,j}(f) := \left( \phi_{j-1} \circ \phi_{j-2} \circ \cdots \circ \phi_N \right)(f)
$$
is nonzero at $t$ for some (and hence all larger) $j \geq N$.  By the definition of the sequence $(\lambda_n)$, there is some $j \geq l$ such that $f\circ\lambda_{2j}\neq 0$.  By the definition of the relations $R_n$, $f$ is a direct summand of $\phi_{l,j}(f)$, and so the constant function $f\circ\lambda_{2j} \neq 0$ is a direct summand of $\phi_{N,j+1}$.  This implies $\phi_{N,j+1}(f)(t) \neq 0$ for each $t \in [0,1]$, as required.
\end{proof}

\begin{proof}[Proof of Theorem \ref{t.reduction}]
Combine Lemma \ref{l.conversion} with Lemma \ref{biglimit}.
\end{proof}

\begin{corollary} There is a Borel measurable map $\Phi$ from $\{\gamma: C^*(\gamma)$ is 
unital and abelian$\}$ into $\{\gamma: C^*(\gamma)$ is simple and unital AI$\}$ such that $C^*(\gamma)\cong C^*(\gamma')$ 
if and only if $C^*(\Phi(\gamma))\cong C^*(\Phi(\gamma'))$. 

In other words,  unital abelian C*-algebras can be effectively classified by simple, unital AI algebras. 
\end{corollary}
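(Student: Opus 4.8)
The plan is to realize $\Phi$ as a composition of three Borel maps already at our disposal, exploiting the fact that, via Gelfand duality, isomorphism of unital abelian C*-algebras is nothing but homeomorphism of their compact metrizable spectra; the work is entirely in correctly assembling the cited constructions. (The domain $\{\gamma: C^*(\gamma)$ is unital and abelian$\}$ is itself Borel by Lemmas~\ref{L.B.3} and~\ref{l.unital}, so defining $\Phi$ as a Borel map on $\Gamma$ and restricting is legitimate.)

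First I would extract the spectrum in a Borel fashion. For $\gamma$ with $C^*(\gamma)\cong C(X)$, $X$ compact metrizable, the pure states of $C^*(\gamma)$ are exactly its characters, i.e.\ the evaluations at points of $X$; hence the pure state space is already compact and, in the coordinates used to embed states into $\DeltaN$, is homeomorphic to $X$ with its own topology. Thus the Borel map $\bbP\colon\Gamma\to\bbK=K_c(\DeltaN)$ of Lemma~\ref{L.SPT} sends $\gamma$ to a homeomorphic copy of the spectrum $X$, and by Gelfand duality $C^*(\gamma)\cong C^*(\gamma')$ if and only if $\bbP(\gamma)$ and $\bbP(\gamma')$ are homeomorphic.

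Next I would compose with the reduction of homeomorphism of compacta to simple unital AI algebras already assembled in the proof of Corollary~\ref{c.reduction}(2). Concretely, apply the Borel map $\Psi\colon K_c(\DeltaN)\to\LLLambda$ of Lemma~\ref{L.Choq.4}, so that $\Psi(\bbP(\gamma))$ represents the Bauer simplex $P(\bbP(\gamma))\simeq P(X)$; then regard $\LLLambda$ as a subset of $\Lambda$ as in \S\ref{S.LLLambda} and apply the map $\zeta$ of Theorem~\ref{t.reduction}. Setting $\Phi(\gamma)=\zeta(\Psi(\bbP(\gamma)))$, each step is Borel, and by Theorem~\ref{t.reduction} the algebra $C^*(\Phi(\gamma))$ is simple, unital and AI, with
$$
(K_0,K_0^+,[1])\simeq(\Q,\Q^+,1),\qquad K_1\simeq\{1\},\qquad \Aff(T(C^*(\Phi(\gamma))))\simeq\Aff(P(X)),
$$
so that its tracial simplex is affinely the Bauer simplex $P(X)$.

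Finally I would verify the required equivalence by chaining standard facts: $C(X)\cong C(X')$ iff $X\cong X'$ (Gelfand), iff $P(X)\simeq P(X')$ as Choquet simplexes (Bauer simplexes are affinely homeomorphic precisely when their extreme boundaries, here $X$ and $X'$, are homeomorphic; see \S\ref{S.choquet}), iff the Elliott invariants of $C^*(\Phi(\gamma))$ and $C^*(\Phi(\gamma'))$ agree, iff $C^*(\Phi(\gamma))\cong C^*(\Phi(\gamma'))$. The last two equivalences use Elliott's classification of simple unital AI algebras together with the observation (as in the proof of Corollary~\ref{c.reduction}) that the remaining invariants $K_0\simeq\Q$ and $K_1\simeq\{1\}$ are fixed across the range and that $(\Q,\Q^+,1)$ carries a unique state, so the pairing between $K_0$ and traces imposes no further constraint. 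The only points requiring care, and the main though modest obstacle, are the identification of $\bbP(\gamma)$ with the spectrum $X$ for abelian $\gamma$ and the Bauer-simplex identity $\partial P(X)=X$; both are routine consequences of Gelfand theory and the material of \S\ref{S.choquet}, so no genuinely new argument is needed beyond correctly composing the cited Borel maps.
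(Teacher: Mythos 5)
Your proposal is correct and follows exactly the paper's argument: the paper likewise defines $\Phi$ as the composition of the pure-state-space map of Lemma~\ref{L.SPT}, the Bauer-simplex map of Lemma~\ref{L.Choq.4}, and the map $\zeta$ of Theorem~\ref{t.reduction}, relying on Gelfand duality and the Elliott classification of simple unital AI algebras in the same way. Your write-up merely spells out in more detail the verification steps (compactness of the pure state space in the abelian unital case, and the chain of equivalences through Bauer simplexes and the Elliott invariant) that the paper leaves implicit.
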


\begin{proof} 
By Gelfand--Naimark duality a unital abelian C*-algebra $A$ is isomorphic to $C(\bbP(A))$, where $\bbP(A)$ denotes the pure states of $A$.    
We therefore only need to compose three Borel maps: 
The map taking the algebra $A$ to the space of its pure states (Lemma~\ref{L.SPT}), 
the map taking a compact Hausdorff space $X$ to the Bauer simplex $P(X)$ 
(Lemma~\ref{L.Choq.4}), and the map from the space of Choquet simplexes into the set of AI-algebras that was defined in Theorem~\ref{t.reduction}. 
\end{proof}


\section{A selection theorem for exact C$^*$-algebras}\label{s.exact}

For $2\leq n<\infty$, we will denote by $\mathcal O_n$ the Cuntz algebra generated by $n$ isometries $s_1,\ldots, s_n$ satisfying $\sum_{i=1}^n s_is_i^*=1$ (see \cite[4.2]{Ror:Classification}.)

Kirchberg's exact embedding Theorem states that the exact separable C$^*$-algebras are precisely those which can be embedded into $\mathcal O_2$. The purpose of this section is to prove a Borel version of this: There is a Borel function on $\Gamma$ selecting  an embedding of $C^*(\gamma)$ into $\mathcal O_2$ for each $\gamma\in\Gamma$ that codes an exact C$^*$-algebra. In the process we will also see that the set of $\gamma\in\Gamma$ such that $C^*(\gamma)$ is exact forms a Borel set.


\subsection{Parameterizing exact C$^*$-algebras.}\label{ss.exact} There is a multitude of ways of parameterizing exact separable C$^*$-algebras, which we now describe. Eventually, we will see that they are all equivalent good standard Borel parameterizations.

Define
$$
\Gammaex=\{\gamma\in\Gamma: C^*(\gamma)\text{ is exact}\},
$$
and let $\Gammaexu=\Gammaex\cap\Gammau$ denote the set of unital exact C$^*$-algebras\footnote{The sets $\Gammaex$ and $\Gammaexu$ are prima facie analytic, but since we will show they are Borel, the use of the language of Definition \ref{d.parameterization} is warranted.}. An alternative parameterization of the exact separable C$^*$-algebras is given by elements of $\Gamma(\mathcal O_2)=\mathcal O_2^\N$, equipped with the product Borel structure, where we identify $\gamma\in\mathcal O_2^\N$ with the C$^*$-subalgebra generated by this sequence. Let $\Gammau(\mathcal O_2)$ denote the set of $\gamma\in\Gamma(\mathcal O_2)$ which code unital C$^*$-subalgebras of $\mathcal O_2$.

Note that a parameterization weakly equivalent to $\Gamma(\mathcal O_2)$ is obtained by considering in the Effros Borel space $F(\mathcal O_2)$ of closed subsets of $\mathcal O_2$, the (Borel) set
$$
\SA(\mathcal O_2)=\{A\in F(\mathcal O_2): A\text{ is a sub-C}^*\text{-algebra  of } \mathcal O_2\}.
$$
Recall the parameterization $\Xi_{\Au}$ of unital separable C$^*$-algebras from \ref{ss.paramunital}. We define $\XiAuex$ to be the subset of $\Xi_{\Au}$ corresponding to exact unital C$^*$-algebras. Recall also that $\mathfrak A$ is the free countable unnormed $\Q(i)$-$*$-algebra, $\Au$ the unital counterpart. Define
$$
\hat\Gamma_{\mathfrak A}(\mathcal O_2)=\{\xi:\mathfrak A\to\mathcal O_2: \xi\text{ is a $\Q(i)$-$*$-algebra homomorphism } \mathfrak A\to\mathcal O_2\}
$$
and
$$
\hat\Gamma_{\Au}(\mathcal O_2)=\{\xi:\Au\to\mathcal O_2: \xi\text{ is a unital $\Q(i)$-$*$-algebra homomorphism } \Au\to\mathcal O_2\},
$$
and note that $\hat\Gamma_{\mathfrak A}(\mathcal O_2)$ and $\hat\Gamma_{\Au}(\mathcal O_2)$ are closed (and therefore Polish) in the subspace topology, when $\mathcal O_2^{\mathfrak A}$ and $\mathcal O_2^{\Au}$ are given the product topology. As previously noted, $\mathfrak A$ can be identified with the set of formal $\Q(i)$-$*$-polynomials $\mathfrak p_n$ in the formal variables $X_i$ without constant term, and $\Au$ with the formal $\Q(i)$-$*$-polynomials (allowing a constant term), which we enumerated as $\q_n$. We define $g:\hat\Gamma_{\Au}(\mathcal O_2)\to\Xi_{\Au}$ by $g(\xi)(\q_n)=\|\xi(\q_n)\|_{\mathcal O_2}$. Note that $g$ is continuous. By the exact embedding Theorem we have $g(\hat\Gamma_{\Au}(\mathcal O_2))=\XiAuex$.


Define an equivalence relation $E^g$ in $\hat\Gamma_{\Au}(\mathcal O_2)$ by
$$
\xi E^g\xi'\iff g(\xi)=g(\xi').
$$
For $\xi\in\hat\Gamma_{\Au}(\mathcal O_2)$, a norm is defined on $\Au / \ker(\xi)$ by letting $\|\mathfrak q_n\ker(\xi)\|_{\xi}=\|\xi(\mathfrak q_n)\|_{\mathcal O_2}$. We define $A_u(\xi)$ to be the unital C$^*$-algebra obtained from completing $(\Au,\|\cdot\|_\xi)$, and we note that $\xi$ extends to an injection $\bar\xi:A_u(\xi)\to\mathcal O_2$. It is clear that the definition of $A_u$ is $E^g$-invariant.


\begin{prop}\label{pr.selector}
With notation as above, there is a Borel set in $\hat\Gamma_{\Au}(\mathcal O_2)$ meeting every $E^g$ class exactly once (i.e., there is a Borel \emph{transversal} for $E^g$). 
\end{prop}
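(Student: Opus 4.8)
The plan is to produce the transversal as the range of a Borel section of the norm map $g$. Concretely, I would first reduce the statement to the following: there is a Borel map $s\colon \XiAuex\to \hat\Gamma_{\Au}(\mathcal O_2)$ with $g\circ s=\id$. Granting such an $s$, set $T=s(\XiAuex)=\{\xi: s(g(\xi))=\xi\}$. Then $T$ is Borel, being the preimage of the diagonal under the Borel map $\xi\mapsto(s(g(\xi)),\xi)$, and it meets every $E^g$-class exactly once: it meets $[\xi]_{E^g}$ because $s(g(\xi))\,E^g\,\xi$, and it cannot meet a class twice since $s(g(\xi))=s(g(\eta))$ forces $g(\xi)=g(\eta)$. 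So everything comes down to a Borel selection for $g$.

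For the selection I would invoke the Kuratowski--Ryll-Nardzewski theorem (\cite[Theorem~12.13]{Ke:Classical}). Since $g$ is continuous, each fibre $g^{-1}(\delta)$ is a nonempty closed subset of $\hat\Gamma_{\Au}(\mathcal O_2)$ for $\delta\in\XiAuex$, and by the exact embedding theorem $\XiAuex=g(\hat\Gamma_{\Au}(\mathcal O_2))$. Thus it suffices to check two things: that $\XiAuex$ is a Borel subset of $\Xi_{\Au}$, and that the fibre map $\delta\mapsto g^{-1}(\delta)$ is Borel as a map into the Effros Borel space $F(\hat\Gamma_{\Au}(\mathcal O_2))$ (cf.\ \S\ref{S.Eff}). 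The selection theorem then yields the desired Borel $s$ with $s(\delta)\in g^{-1}(\delta)$.

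The Borel-ness of the fibre map is the crux, and it is where the genuine content lies. Unwinding the definition of the Effros structure, it requires that for each basic open $U$ the set $g(U)=\{\delta\in\XiAuex: g^{-1}(\delta)\cap U\neq\emptyset\}$ be (relatively) Borel in $\XiAuex$. A priori $g(U)$ is only analytic, being a continuous image, and this is a real obstruction rather than a formality: a closed equivalence relation of this type need not admit a Borel transversal if its range fails to be Borel, and here the fibres are closed and bounded but \emph{not} $\sigma$-compact (norm balls in $\mathcal O_2$ are not compact), so neither Lusin--Novikov nor Arsenin--Kunugui applies and mere smoothness of $E^g$ is insufficient. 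The plan is to overcome this with a stability/perturbation statement for embeddings into $\mathcal O_2$: if $\xi$ is a unital homomorphism and $\delta'\in\XiAuex$ is sufficiently close to $g(\xi)$, then $\delta'$ is realized by some $\xi'$ close to $\xi$. This says exactly that $g$ is relatively open, whence $g(U)$ is relatively open in $\XiAuex$ and the fibre map is lower semicontinuous, hence Borel.

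The main obstacle is therefore this openness --- the uniform perturbability of embeddings into $\mathcal O_2$ --- which I expect to extract from the Borel form of Kirchberg's exact embedding theorem developed in this section, i.e.\ a construction, uniform in $\delta$, of an embedding of $A_u(\delta)$ into $\mathcal O_2$ realizing the prescribed norms, built as a limit of finite-dimensional approximations. With such a uniform construction in hand, $\XiAuex$ is seen to be Borel (it is the set of $\delta$ for which the construction runs), the fibre map is Borel, and the selection goes through; indeed one could alternatively read off a section $s$ directly from the construction, bypassing the appeal to Kuratowski--Ryll-Nardzewski altogether.
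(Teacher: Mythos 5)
Your reduction of the proposition to finding a Borel section $s\colon \XiAuex\to \hat\Gamma_{\Au}(\cO_2)$ of $g$ is correct (the equalizer $\{\xi: s(g(\xi))=\xi\}$ is indeed a Borel transversal), and you have put your finger on the right crux: a priori $g(U)$ is only analytic, and since the fibres are closed but not $\sigma$-compact, no off-the-shelf uniformization theorem applies. But the proposal has a genuine gap exactly there. The entire content is delegated to the claim that $g$ is relatively open --- that norm data $\delta'$ close to $g(\xi)$ can be realized by a homomorphism $\xi'$ close to $\xi$ --- and this is asserted, not proved. It is a strong \emph{existence-with-control} statement (one must embed the possibly quite different algebra $A_u(\delta')$ into $\cO_2$ while approximately matching the images of prescribed generators), and it does not follow from approximate unitary equivalence, which only compares two embeddings of the \emph{same} algebra. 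Worse, the tool you propose to extract it from --- a Borel-uniform construction of embeddings $A_u(\delta)\hookrightarrow\cO_2$ --- is, in this paper, a \emph{consequence} of Proposition \ref{pr.selector} (it is how the Borel exact embedding theorem is derived), so invoking it here is circular; the same remark applies to your use of the Borelness of $\XiAuex$, which the paper also deduces \emph{from} the transversal via injectivity of $g$ on it.

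The paper's route avoids any openness claim. It uses the continuous action of the unitary group $U(\cO_2)$ on $\hat\Gamma_{\Au}(\cO_2)$ by $u\cdot\xi=\Ad_u\circ\xi$: this action preserves each $E^g$-class, and by Kirchberg's uniqueness theorem (any two unital embeddings of the same exact unital algebra into $\cO_2$ are approximately unitarily equivalent, \cite[Theorem 6.3.8]{Ror:Classification}) every $U(\cO_2)$-orbit is \emph{dense} in its $E^g$-class. Lemma \ref{l.select} then converts the analytic condition ``$[\xi]_{E^g}$ meets a given basic ball'' into an equivalent co-analytic one by replacing the existential witness with ``some $g_i\cdot y$ lands in the ball for every $y$ in the class,'' making the class map $\xi\mapsto[\xi]_{E^g}$ Borel into the Effros space; Kuratowski--Ryll-Nardzewski then yields an $E^g$-invariant Borel selector whose fixed-point set is the transversal. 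If you want to salvage your approach, you must either supply a genuine proof of the perturbation statement (which would be a substantial result in its own right) or switch to the dense-orbit argument, which only needs the uniqueness half of Kirchberg's theory.
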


Before giving the proof, we first prove two general lemmas.


\begin{lemma}\label{l.borelassign}
Let $X,Y$ be Polish spaces. Suppose $B\subseteq X\times Y$ is a Borel relation such that for all $x\in X$ the section $B_x$ is closed (and possibly $\emptyset$.) Then the following are equivalent:
\begin{enumerate}
\item The map $X\to F(Y):x\mapsto B_x$ is Borel;
\item $\proj_X(B)$ is Borel and there are Borel functions $f_n:\proj_X(B)\to Y$ such that for all $x\in\proj_X(B)$ we have $f_n(x)\in B_x$ and $(f_n(x))_{n\in\N}$ enumerates a dense sequence in $B_x$;
\item the relation $R\subseteq X\times\N\times\Q_+$ defined by
$$
R(x,n,\varepsilon)\iff (\exists y\in Y) y\in B_x\wedge d(y,y_n)<\varepsilon
$$
is Borel for some (any) complete metric $d$ inducing the topology on $Y$ and $(y_n)_{n\in\N}$ dense in $Y$.
\end{enumerate}
In particular, if any of (1)--(3) above hold, there is a Borel function $F_0:X\to Y$ such that $F_0(x)\in B_x$ for all $x\in X$, and $F_0(x)$ depends only on $B_x$.
\end{lemma}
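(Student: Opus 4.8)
The plan is to prove the cycle of implications and then extract the selector. Throughout I would fix a complete metric $d$ compatible with the topology of $Y$ and a dense sequence $(y_n)_{n\in\N}$, and for $y\in Y$, $\varepsilon>0$ write $O(y,\varepsilon)=\{z\in Y: d(z,y)<\varepsilon\}$ for the open ball (I avoid the letter $B$, which denotes the given relation). The whole argument rests on two standard facts: the Effros Borel structure on $F(Y)$ is generated by the sets $\{F\in F(Y): F\cap O(y_n,\varepsilon)\neq\emptyset\}$ for $n\in\N$, $\varepsilon\in\Q_+$; and the Kuratowski--Ryll-Nardzewski theorem \cite[Theorem~12.13]{Ke:Classical}. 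The equivalence $(1)\Leftrightarrow(3)$ then falls out immediately: by definition $\{x: R(x,n,\varepsilon)\}=\{x: B_x\cap O(y_n,\varepsilon)\neq\emptyset\}$ is exactly the preimage under $x\mapsto B_x$ of the generating set $\{F: F\cap O(y_n,\varepsilon)\neq\emptyset\}$, so the map $x\mapsto B_x$ is Borel into $F(Y)$ precisely when each such preimage is Borel, which (since the last two coordinates of $R$ range over countable discrete sets, so Borelness of $R$ is equivalent to Borelness of every section fixing $(n,\varepsilon)$) is precisely the assertion that $R$ is Borel.

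To complete the cycle I would prove $(1)\Rightarrow(2)$ and $(2)\Rightarrow(3)$. For $(1)\Rightarrow(2)$: taking the open set $U=Y$, the set $\proj_X(B)=\{x:B_x\neq\emptyset\}$ is the preimage of the Effros-Borel set $\{F:F\neq\emptyset\}$, hence Borel; restricting the Borel map $x\mapsto B_x$ to the standard Borel space $\proj_X(B)$ yields a Borel map into the \emph{nonempty} closed subsets of $Y$, and Kuratowski--Ryll-Nardzewski supplies Borel $f_n\colon\proj_X(B)\to Y$ with $f_n(x)\in B_x$ and $\{f_n(x):n\in\N\}$ dense in $B_x$. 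For $(2)\Rightarrow(3)$: density of the selected points gives $R(x,n,\varepsilon)\iff x\in\proj_X(B)\wedge(\exists m)\,d(f_m(x),y_n)<\varepsilon$, which is Borel because each $f_m$ is Borel and $\proj_X(B)$ is Borel. With $(1)\Leftrightarrow(3)$ in hand this closes the loop.

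The substantive point is the final clause, where the selector $F_0$ must be \emph{simultaneously} Borel and a function of $B_x$ alone. Here I would build $F_0(x)$, for $x\in\proj_X(B)$, as the limit of a Cauchy sequence of dense points chosen by a ``least-index'' rule that refers to $x$ only through $R$. Recursively, let $n_0(x)$ be least with $R(x,n_0,1)$, and given $n_k(x)$ (for which $R(x,n_k,2^{-k})$ holds) let $n_{k+1}(x)$ be the least $n$ satisfying both $R(x,n,2^{-(k+1)})$ and $d(y_n,y_{n_k(x)})<2^{-k}$. Choosing a witness $w\in B_x\cap O(y_{n_k(x)},2^{-k})$, setting $\eta=2^{-k}-d(w,y_{n_k(x)})>0$, and picking $y_n$ with $d(y_n,w)<\min(2^{-(k+1)},\eta)$ shows such an $n$ exists, so the recursion never halts; the constraint $d(y_{n_{k+1}},y_{n_k})<2^{-k}$ makes $(y_{n_k(x)})_k$ Cauchy, and since each $O(y_{n_k(x)},2^{-k})$ meets the closed set $B_x$, the limit $F_0(x):=\lim_k y_{n_k(x)}$ lies in $B_x$. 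Each $x\mapsto n_k(x)$ is Borel (a least solution of a Borel-in-$x$ condition, by induction using that $R$ is Borel), so $F_0$ is a pointwise limit of Borel maps into $Y$ and hence Borel.

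The main obstacle is exactly this pairing of requirements: the naive selector $f_0$ produced by Kuratowski--Ryll-Nardzewski in $(2)$ is Borel but carries no guarantee of depending only on $B_x$. The resolution is to route every choice through the relation $R$: since $R(x,n,\varepsilon)$ is by definition the statement $B_x\cap O(y_n,\varepsilon)\neq\emptyset$, it depends only on the set $B_x$, and every clause of the recursion involves $x$ solely via the predicate $R(x,\cdot,\cdot)$ together with fixed data; always taking \emph{least} indices then makes the output canonical, so $B_x=B_{x'}$ forces $n_k(x)=n_k(x')$ for all $k$ and hence $F_0(x)=F_0(x')$. (Off $\proj_X(B)$ one sets $F_0$ to a fixed point of $Y$; the stated property $F_0(x)\in B_x$ is of course intended for $x\in\proj_X(B)$.)
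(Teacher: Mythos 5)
Your proof is correct and follows essentially the same route as the paper's: both arguments rest on the fact that the sets $\{F\in F(Y):F\cap\{y:d(y,y_n)<\varepsilon\}\neq\emptyset\}$ generate the Effros Borel structure (giving the equivalence with (3)), use the same density formula for $(2)\Rightarrow(3)$, and obtain (2) and the final selector from the Kuratowski--Ryll-Nardzewski theorem. The only difference is that where the paper simply cites Kuratowski--Ryll-Nardzewski for the last clause, you unwind its proof via the least-index Cauchy-sequence recursion routed entirely through $R$, which makes the claim that $F_0(x)$ depends only on $B_x$ visibly automatic rather than implicit in the citation.
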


\begin{proof}
The equivalence of the first two is well-known, see \cite[12.13 and 12.14]{Ke:Classical}. Clearly (2) implies (3) since
$$
R(x,n,\varepsilon)\iff (\exists i) d(y_n,f_i(x))<\varepsilon.
$$
To see (3)$\implies$(1), simply notice that (3) immediately implies that for all $n\in\N$ and $\varepsilon\in\Q_+$ the set
$$
\{x\in X: B_x\cap \{y\in Y: d(y,y_n)<\varepsilon\}\neq\emptyset\}
$$
is Borel. This shows that the inverse images under the map $x\mapsto B_x$ of the sets $\{F\in F(Y):F\cap\{y\in Y:d(y,y_n)<\varepsilon\}\neq\emptyset\}$  for $n\in\N$, and $\varepsilon\in\Q_+\}$ are Borel, and since the latter sets generate the Effros Borel structure, it follows that the map $X\to B(Y):x\mapsto F_x$ is Borel, as required.

Finally, the last statement follows from (1) and the Kuratowski--Ryll-Nardzewski Theorem (\cite[Theorem~12.13]{Ke:Classical}).
\end{proof}


\begin{lemma}\label{l.select}
Let $X,Y$ and $B\subseteq X\times Y$ be as in Lemma \ref{l.borelassign}, and suppose moreover that $\proj_X(B)$ is Borel. Let $G$ be a Polish group, and suppose there is a continuous $G$-action on $Y$ such that the sets $B_x$ are $G$-invariant for all $x\in X$, and that for all $(x,y)\in B$ we have that the $G$-orbit of $y\in B_x$ is dense in $B_x$. Let $d$ be a complete metric on $Y$ and let $y_n$ be dense in $Y$. Then $R$ defined as in the previous Lemma is Borel, and so in particular (1) and (2) hold for $B$.
\end{lemma}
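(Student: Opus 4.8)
The plan is to show directly that $R$ is Borel, bypassing any attempt to first construct a Borel selector (which would be essentially equivalent to the conclusion). Since $\N\times\Q_+$ is countable, it suffices to fix $(n,\varepsilon)\in\N\times\Q_+$, write $U=\{y\in Y:d(y,y_n)<\varepsilon\}$ for the corresponding open ball, and prove that $R_{n,\varepsilon}=\{x\in X:R(x,n,\varepsilon)\}$ is Borel. Unravelling the definition, $R_{n,\varepsilon}=\{x\in\proj_X(B):B_x\cap U\neq\emptyset\}$, so the whole point is to show this last set is Borel, using the group action to trade quantification over $Y$ for quantification over a countable dense subgroup.

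First I would record the geometric consequence of the hypotheses. Fix a countable dense set $\{g_k:k\in\N\}\subseteq G$. Since each $B_x$ is $G$-invariant and closed and every $y\in B_x$ has dense orbit, we have $\overline{G\cdot y}=B_x$ for \emph{every} $y\in B_x$. As the action is continuous and $U$ is open, density of $\{g_k\}$ gives $G\cdot y\cap U\neq\emptyset\iff(\exists k)\,g_k\cdot y\in U$, while openness of $U$ gives $G\cdot y\cap U\neq\emptyset\iff\overline{G\cdot y}\cap U\neq\emptyset$. Combining, for every $x\in\proj_X(B)$ and every $y$ with $(x,y)\in B$,
\[
B_x\cap U\neq\emptyset\iff(\exists k)\,g_k\cdot y\in U,
\]
independently of which $y\in B_x$ is chosen.

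The analytic bound is immediate: the equivalence $R(x,n,\varepsilon)\iff(\exists y)\big((x,y)\in B\wedge d(y,y_n)<\varepsilon\big)$ exhibits $R_{n,\varepsilon}$ as the projection of a Borel set, hence analytic. For the matching coanalytic bound I would use the displayed equivalence in its \emph{universal} form: for $x\in\proj_X(B)$,
\[
B_x\cap U\neq\emptyset\iff(\forall y)\big((x,y)\in B\Rightarrow(\exists k)\,g_k\cdot y\in U\big).
\]
The negation of the right-hand side is $(\exists y)\big((x,y)\in B\wedge(\forall k)\,g_k\cdot y\notin U\big)$; here $\{(x,y):(x,y)\in B\wedge(\forall k)\,g_k\cdot y\notin U\}$ is Borel by continuity of the action, so this negation defines an analytic set. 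Thus $R_{n,\varepsilon}$ is the difference of the Borel set $\proj_X(B)$ and an analytic set, hence coanalytic. Being both analytic and coanalytic, $R_{n,\varepsilon}$ is Borel by Souslin's theorem, and therefore so is $R$. Finally, $R$ Borel is exactly condition (3) of Lemma~\ref{l.borelassign}, whence (1) and (2) hold for $B$ as well.

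The crux, and the only place the action is essential, is the passage from the trivial analytic bound to the coanalytic bound: replacing ``some point of $B_x$ lies in $U$'' by ``every point of $B_x$ has some $g_k$-translate in $U$'', which is legitimate precisely because each $B_x$ is a single orbit closure. I expect the main care to lie in checking that the two boxed equivalences hold uniformly for \emph{every} $y\in B_x$ rather than for a single witness; it is the hypothesis that every orbit (not merely some orbit) is dense in $B_x$ that makes the universal reformulation genuinely equivalent to the existential one.
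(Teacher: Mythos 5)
Your proof is correct and follows essentially the same route as the paper: establish the analytic bound from the definition, then use density of all orbits in $B_x$ together with a countable dense subset of $G$ to rewrite the condition universally, obtaining a co-analytic bound, and conclude by Souslin's theorem. The extra care you take in checking that the orbit/closure equivalences hold uniformly for every $y\in B_x$ is exactly the point the paper's one-line justification relies on.
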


\begin{proof}
It is clear from the definition that
$$
R(x,n,\varepsilon)\iff (\exists y\in Y) y\in B_x\wedge d(y_n,y)<\varepsilon
$$
is an analytic set. To see that it is in fact Borel, fix a dense sequence $g_n\in G$. Then since all $G$-orbits are dense in $B_x$ we also have
$$
R(x,n,\varepsilon)\iff x\in\proj_X(B)\wedge (\forall y\in Y) y\notin B_x\vee (\exists i) d(g_i\cdot y,y_n)<\varepsilon,
$$
which gives a co-analytic definition of $R$, so that $R$ is Borel.
\end{proof}


We now turn to the proof of Proposition \ref{pr.selector}. Recall that if $A,B$ are C$^*$-algebras, $B$ is unital, and $\varphi_0,\varphi_1:A\to B$ are $*$-homormorphisms, we say that $\varphi_0$ and $\varphi_1$ are \emph{approximately unitarily equivalent} if for all finite $F\subseteq A$ and all $\varepsilon>0$ there is a unitary $u\in B$ such that $\|u^*\varphi_0(x)u-\varphi_1(x)\|<\varepsilon$ for all $x\in F$.

\begin{proof}[Proof of Proposition \ref{pr.selector}]
Let $U(\mathcal O_2)$ denote the unitary group of $\mathcal O_2$. The group $U(\mathcal O_2)$ acts continuously on $\hat\Gamma_{\Au}(\mathcal O_2)$ by
$$
u\cdot\xi(\q_n)=u^*\xi(\q_n)u=\Ad_u(\xi(\q_n)),
$$
and this action preserves the equivalence classes of $E^g$. Further, it is clear that $E^g$ is closed as a subset of $\hat\Gamma_{\Au}(\mathcal O_2)^2$.

We claim that for all $\xi\in\hat\Gamma_{\Au}(\mathcal O_2)$, the $U(\mathcal O_2)$-classes in $[\xi]_{E^g}$ are dense. To see this, let $\xi'E^g \xi$, and let $\bar\xi: A_u(\xi)\to\mathcal O_2$, $\bar\xi': A_u(\xi')\to\mathcal O_2$ be the injections defined before Proposition \ref{pr.selector}. Since $A_u(\xi)=A_u(\xi')$, it follows by \cite[Theorem 6.3.8]{Ror:Classification} that $\bar\xi$ and $\bar\xi'$ are approximately unitarily equivalent, and so we can find $u\in U(\mathcal O_2)$ such that $u\cdot\xi$ is as close to $\xi'$ as we like in $\mathcal O_2^{\mathfrak A}$.

Applying Lemma \ref{l.select} and \ref{l.borelassign}, we get a Borel function $F_0:\hat\Gamma_{\Au}(\mathcal O_2)\to \hat\Gamma_{\Au}(\mathcal O_2)$ selecting a unique point in each $E^g$-class. Then the set $F_0(\hat\Gamma_{\Au}(\mathcal O_2))=\{\gamma\in\hat\Gamma_{\Au}(\mathcal O_2): F_0(\gamma)=\gamma\}$ 
 is clearly a Borel transversal. 
\end{proof}

From Proposition \ref{pr.selector} we can obtain a Borel version of Kirchberg's exact embedding theorem. We first need a definition.


\begin{definition} Let $A$ be a separable C$^*$-algebra and $\gamma\in\Gamma$. Call $\Psi:\N\to A$ a \emph{code} for an embedding of $C^*(\gamma)$ into $A$ if for all $n,m,k\in\N$ we have:
\begin{enumerate}[\indent (1)]
\item If $\mathfrak p_m(\gamma)+\mathfrak p_n(\gamma)=\mathfrak \p_k(\gamma)$ then $\Psi(m)+\Psi(n)=\Psi(k)$;
\item if $\mathfrak p_m(\gamma)=\mathfrak p_n^*(\gamma)$ then $\Psi(m)=\Psi(n)^*$;
\item if $\mathfrak p_m(\gamma)\mathfrak p_n(\gamma)=\mathfrak \p_k(\gamma)$ then $\Psi(m)\Psi(n)=\Psi(k)$;
\item $\|\Psi(m)\|_{A}=\|\mathfrak p_m(\gamma)\|$.
\end{enumerate}
It is clear that if $\Psi:\N\to A$ is such a code then there is a unique $*$-monomorphism $\hat\Psi:C^*(\gamma)\to A$ satisfying $\hat\Psi(\p_n(\gamma))=\Psi(\p_n(\gamma))$. If $A$ is unital with unit $1_A$ and $C^*(\gamma)$ is unital, and $\Psi$ further satisfies
\begin{enumerate}[\indent (5)]
\item if $1_{C^*(\gamma)}$ is the unit in $C^*(\gamma)$ then $\hat\Psi(1_{C^*(\gamma)})=1_{A}$
\end{enumerate}
then we will call $\Psi$ a code for a \emph{unital} embedding into $A$.
Let $P^A\subseteq\Gamma\times A^\N$ be the relation
$$
P^A(\gamma,\Psi)\iff \Psi \text{ is a code for an embedding into } A
$$
and, assuming $A$ is unital, let $P^A_{\text{u}}\subseteq \Gammau\times A^\N$ be
$$
P^A_{\text{u}}(\gamma,\Psi)\iff \Psi \text{ is a code for a unital embedding into } A.
$$
\end{definition}

We note that the sections $P^A_\gamma$ and $(P^A_u)_\gamma$ are closed for all $\gamma\in\Gamma$.


\begin{theorem}[Borel Kirchberg exact embedding Theorem, unital case]

\ 

\begin{enumerate}[\indent\rm (1)]

\item The sets $\Gammaexu$, $\XiAuex$, $\Gammau(\mathcal O_2)$ and $\hat\Gamma_{\Au}(\mathcal O_2)$ are Borel and provide equivalent good parameterizations of the unital separable exact $C^*$-algebras.

\item There is a Borel function $f:\Gammaexu\to\mathcal O_2^\N$ such that $f(\gamma)$ is a code for a unital embedding of $C^*(\gamma)$ into $\mathcal O_2$ for all $\gamma\in\Gammaexu$. In other words, the relation $P_{\text{u}}^{\mathcal O_2}$ admits a Borel uniformization.
\end{enumerate}
\end{theorem}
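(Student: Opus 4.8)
The plan is to derive everything from the Borel transversal produced in Proposition~\ref{pr.selector}, together with the equivalences of the unital parameterizations already recorded in Corollary~\ref{c.paramunital}. The skeleton is: (i) use the selector attached to $E^g$ to show $\XiAuex$ is Borel and to manufacture a Borel right inverse of $g$; (ii) chain Borel homomorphisms to identify all four spaces as weakly equivalent, hence equivalent by Lemma~\ref{l.injection}; and (iii) feed the right inverse of $g$ into the definition of a code to uniformize $P^{\mathcal O_2}_{\mathrm u}$.

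For part (1), recall from the proof of Proposition~\ref{pr.selector} (via Lemmas~\ref{l.select} and \ref{l.borelassign}) the Borel map $F_0\colon \hat\Gamma_{\Au}(\mathcal O_2)\to\hat\Gamma_{\Au}(\mathcal O_2)$ selecting in each $E^g$-class a single point depending only on the class. Its fixed-point set $T=\{\xi:F_0(\xi)=\xi\}$ is a Borel transversal, and $g\restrict T$ is injective since $g(\xi)=g(\xi')$ forces $\xi\,E^g\,\xi'$ and hence $F_0(\xi)=F_0(\xi')$. As $T$ meets every class, $g(T)=g(\hat\Gamma_{\Au}(\mathcal O_2))=\XiAuex$, and by the Lusin--Souslin theorem (\cite[15.1]{Ke:Classical}) this injective Borel image is Borel; thus $\XiAuex$ is Borel, and the same theorem makes $(g\restrict T)^{-1}$ Borel, yielding a Borel $s\colon\XiAuex\to\hat\Gamma_{\Au}(\mathcal O_2)$ with $g\circ s=\id$ (equivalently $s(\delta)=F_0(\xi)$ for any $\xi$ with $g(\xi)=\delta$, Borel since its graph is both analytic and co-analytic). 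Now $g$ and $s$ are mutually inverse Borel homomorphisms between $\hat\Gamma_{\Au}(\mathcal O_2)$ and $\XiAuex$; the map $\psi\colon\Gammau\to\Xi_{\Au}$, $\psi(\gamma)=(\|\q_n(\gamma)\|)_n$, is Borel by Lemma~\ref{L.B.1} and restricts to a homomorphism $\Gammaexu\to\XiAuex$, while the reverse homomorphism of Corollary~\ref{c.paramunital} restricts to $\XiAuex\to\Gammaexu$ because exactness is an isomorphism invariant; finally the canonical identifications between $\hat\Gamma_{\Au}(\mathcal O_2)$ and $\Gammau(\mathcal O_2)$ (as in the passage $\hat\Gamma\leftrightarrow\Gamma$) are Borel homomorphisms. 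Borelness of $\Gammaexu=\Gammau\cap\psi^{-1}(\XiAuex)$ follows via Lemma~\ref{l.unital}, and $\Gammau(\mathcal O_2)$ is handled by the corresponding unital test inside $\mathcal O_2$. Chaining these homomorphisms gives weak equivalence of all four spaces, hence equivalence by Lemma~\ref{l.injection}; goodness is inherited from the ambient parameterizations, since a Borel restriction of an analytic relation is analytic.

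For part (2), set $\Psi=f(\gamma)$ and define $f\colon\Gammaexu\to\mathcal O_2^{\N}$ by
$$
f(\gamma)(n)=s(\psi(\gamma))(\p_n),
$$
where $\p_n\in\mathfrak A\subseteq\Au$ and $s(\psi(\gamma))\in\hat\Gamma_{\Au}(\mathcal O_2)$ is evaluated at $\p_n$. This is Borel, being a composition of the Borel maps $\psi$ and $s$ with the continuous evaluation $\xi\mapsto\xi(\p_n)$. Writing $\xi=s(\psi(\gamma))$, we have $\|\xi(\q)\|_{\mathcal O_2}=\psi(\gamma)(\q)$ for all $\q\in\Au$ because $g(s(\delta))=\delta$; in particular $\xi$ factors isometrically through $\Au/\ker(\psi(\gamma))$. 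The crux — which I expect to be the main obstacle — is verifying the code conditions, since these are phrased in terms of \emph{actual} operator identities in $C^*(\gamma)$ rather than formal identities in $\Au$. The point is that for $\gamma\in\Gammaexu$ an identity such as $\p_m(\gamma)+\p_n(\gamma)=\p_k(\gamma)$ holds exactly when $\p_m+\p_n-\p_k\in\ker(\psi(\gamma))$ (as $\psi(\gamma)(\q)=\|\q(\gamma)\|$), and this kernel is annihilated by $\xi$; hence $\xi(\p_m)+\xi(\p_n)=\xi(\p_k)$, giving the additive condition, with the multiplicative and adjoint conditions following identically. The norm condition $\|\Psi(n)\|_{\mathcal O_2}=\|\p_n(\gamma)\|$ is immediate from $\|\xi(\p_n)\|=\psi(\gamma)(\p_n)=\|\p_n(\gamma)\|$, and unitality (condition (5)) holds because $\xi$ is unital, so the induced monomorphism $\hat\Psi$ sends $1_{C^*(\gamma)}$ to $1_{\mathcal O_2}$. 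Thus $f(\gamma)$ is a code for a unital embedding of $C^*(\gamma)$ into $\mathcal O_2$, and $f$ is the desired Borel uniformization of $P^{\mathcal O_2}_{\mathrm u}$.
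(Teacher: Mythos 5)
Your proof is correct, and part (1) follows the paper's argument almost verbatim: a Borel transversal $T$ for $E^g$ from Proposition~\ref{pr.selector}, injectivity of $g\restriction T$, Lusin--Souslin to get that $\XiAuex=g(T)$ is Borel with a Borel right inverse, and then a cycle of Borel homomorphisms through $\Gammau(\mathcal O_2)$ and $\Gammaexu$ to close the weak equivalence. Where you genuinely diverge is part (2). The paper does \emph{not} route the uniformization through the right inverse $s$; instead it re-applies Lemma~\ref{l.select} directly to the relation $P^{\mathcal O_2}_{\mathrm u}$, whose sections $(P^{\mathcal O_2}_{\mathrm u})_\gamma$ are closed and carry the $U(\mathcal O_2)$-action by $\Ad_u$ with dense orbits (again by approximate unitary equivalence of unital embeddings into $\mathcal O_2$), so Lemma~\ref{l.borelassign} hands over a Borel selector immediately. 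Your version derives (2) as a corollary of (1) by setting $f(\gamma)(n)=s(\psi(\gamma))(\p_n)$ and checking the code axioms; this is valid, and the key transfer step --- that an operator identity such as $\p_m(\gamma)+\p_n(\gamma)=\p_k(\gamma)$ is equivalent to $\p_m+\p_n-\p_k$ lying in the kernel of the seminorm $\psi(\gamma)$, which $\xi=s(\psi(\gamma))$ annihilates because $\|\xi(\q)\|=\psi(\gamma)(\q)$ --- is exactly right. The trade-off: the paper's route is shorter and self-contained (one more instance of the same selection machinery), while yours avoids a second invocation of Lemma~\ref{l.select} at the cost of the bookkeeping between formal polynomials and operators. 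One small point you should make explicit: in defining $\psi\colon\Gammau\to\Xi_{\Au}$ on all of $\Au$ (not just on $\mathfrak A$), the constant term of $\q_n$ must be evaluated against the unit $1_{C^*(\gamma)}=\Proj(\gamma)(u(\gamma))$ of Lemma~\ref{l.unital}, not against $1_{\cB(H)}$, since these may differ; with that convention $\psi$ is still Borel and your unitality verification (condition (5)) goes through as you describe.
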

\begin{proof}
(1) Let $T$ be a selector for $E^g$ as guaranteed by Proposition \ref{pr.selector}. Then $g:\hat\Gamma_{\Au}(\mathcal O_2)\to\Xi_{\Au}$ is injective on $T$, and so $\ran(g)=\ran(g\restrict T)=\XiAuex$ is Borel, and admits a Borel right inverse $h:\XiAuex\to\hat\Gamma_{\Au}(\mathcal O_2)$. Since Lemma \ref{l.injection} also holds with $Y=\Xi_{\Au}$, there is a Borel injection $\tilde g:\hat\Gamma_{\Au}(\mathcal O_2)\to\Xi_{\Au}$ such that $g(\xi)\simeq^{\Xi_{\Au}} \tilde g(\xi)$ for all $\xi\in \hat\Gamma_{\Au}(\mathcal O_2)$, and this shows that $\hat\Gamma_{\Au}(\mathcal O_2)$ and $\XiAuex$ are equivalent good parameterizations.

Since $\Gammau$ and $\Xi_{\Au}$ are equivalent (Corollary \ref{c.paramunital}), any witness to this is also a witness to that $\Gammaexu$ and $\XiAuex$ are equivalent, in particular $\Gammaexu$ is also Borel. Finally, by fixing a faithful representation of $\mathcal O_2$ on the Hilbert space $H$ we obtain a Borel injection of $\Gammau(\mathcal O_2)$ into $\Gammaexu(H)$, while on the other hand there clearly is a natural Borel injection from $\hat\Gamma_{\Au}(\mathcal O_2)$ into $\Gammau(\mathcal O_2)$. This finishes the proof of (1).

(2) Arguing exactly as in the proof of Proposition \ref{pr.selector}, the action of $U(\mathcal O_2)$ on the sections of $P^{\mathcal O_2}_{\text{u}}$ satisfy Lemma \ref{l.select}, since any two injective unital embeddings of $C^*(\gamma)$ are approximately unitarily equivalent for $\gamma\in\Gammaexu$.
\end{proof}

Since by Lemma \ref{L.unitization} the map that assigns to $\gamma\in\Gamma$ its unitization is Borel, we obtain:

\begin{theorem}[Borel Kirchberg exact embedding Theorem, non-unital case]

\ 

\begin{enumerate}[\indent\rm (1)]

\item The sets $\Gammaex$, $\Gamma(\mathcal O_2)$ and $\hat\Gamma_{\mathfrak A}(\mathcal O_2)$ are Borel and provide equivalent parameterizations of the separable exact $C^*$-algebras.

\item There is a Borel function $f:\Gammaex\to\mathcal O_2^\N$ such that $f(\gamma)$ is a code for a embedding of $C^*(\gamma)$ into $\mathcal O_2$ for all $\gamma\in\Gammaex$. In other words, the relation $P^{\mathcal O_2}$ admits a Borel uniformization.
\end{enumerate}
\end{theorem}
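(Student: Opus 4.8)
The plan is to reduce the entire statement to the unital case of the theorem (established above) by passing through the unitization map $\Unit\colon\Gamma\to\Gamma$ of Lemma~\ref{L.unitization}, which is Borel and satisfies $C^*(\Unit(\gamma))\cong(C^*(\gamma))^+$. The starting observation is that a separable C$^*$-algebra $A$ is exact if and only if its unitization $A^+$ is: exactness passes to closed subalgebras, so $A\subseteq A^+$ exact forces $A$ exact, while the extension $0\to A\to A^+\to\bbC\to 0$ together with the stability of exactness under extensions by exact algebras gives the converse. Since $\Unit(\gamma)$ is always unital, this yields $\Gammaex=\Unit^{-1}(\Gammaexu)$; as $\Gammaexu$ is Borel (by the unital case) and $\Unit$ is Borel, $\Gammaex$ is Borel. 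The spaces $\Gamma(\mathcal O_2)=\mathcal O_2^{\bbN}$ and $\hat\Gamma_{\mathfrak A}(\mathcal O_2)$ (closed in $\mathcal O_2^{\mathfrak A}$) are standard Borel on their face.

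For part (2) I would first produce the embedding code and then read off part (1). Fix the partial isometry $v$ from Lemma~\ref{L.unitization}, so that $vv^*=1$ and $\Unit(\gamma)_{k+1}=v^*\gamma_k v$. Because $\mathfrak p_n$ has no constant term and $vv^*=1$, substituting $X_k\mapsto v^*\gamma_k v$ gives $\mathfrak p_n(v^*\gamma v)=v^*\mathfrak p_n(\gamma)v$, and the map $a\mapsto v^*av$ is an isometric $*$-isomorphism of $C^*(\gamma)$ onto the copy of $C^*(\gamma)$ sitting inside $C^*(\Unit(\gamma))$ (isometric since $a=v(v^*av)v^*$). Let $\rho\colon\bbN\to\bbN$ be the recursive, $\gamma$-independent reindexing with $\mathfrak p_{\rho(n)}$ equal to $\mathfrak p_n$ after the shift $X_k\mapsto X_{k+1}$, so that $\mathfrak p_{\rho(n)}(\Unit(\gamma))=v^*\mathfrak p_n(\gamma)v$. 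Writing $f_u$ for the Borel uniformization furnished by the unital case, I would set $f(\gamma)(n)=f_u(\Unit(\gamma))(\rho(n))$. Then $f$ is Borel, and $f(\gamma)(n)=\hat\Psi_u(v^*\mathfrak p_n(\gamma)v)$, where $\hat\Psi_u\colon C^*(\Unit(\gamma))\to\mathcal O_2$ is the unital $*$-monomorphism coded by $f_u(\Unit(\gamma))$. Thus $n\mapsto f(\gamma)(n)$ is precisely the list of images of $\mathfrak p_n(\gamma)$ under the $*$-monomorphism $a\mapsto\hat\Psi_u(v^*av)$, so it is a code for an embedding of $C^*(\gamma)$ into $\mathcal O_2$: clauses (1)--(4) hold because the composite is an isometric $*$-monomorphism, the isometry clause (4) surviving because $a\mapsto v^*av$ and $\hat\Psi_u$ are both isometric. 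This produces the Borel uniformization of $P^{\mathcal O_2}$.

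The main obstacle is exactly this bookkeeping in part (2): one must track the (possibly non-unital) copy of $C^*(\gamma)$ inside $C^*(\gamma)^+$ through the \emph{specific} unitization of Lemma~\ref{L.unitization}, i.e.\ the conjugation by $v$, check that the reindexing $\rho$ is recursive and independent of $\gamma$, and confirm that restricting a faithful unital embedding of $C^*(\gamma)^+$ to $C^*(\gamma)$ stays isometric so that the norm clause of the code is preserved. Everything else is routine.

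Finally, for the equivalence of the three parameterizations in part (1) I would exhibit Borel homomorphisms in a cycle and invoke Lemma~\ref{l.injection} together with the Borel Schr\"oder--Bernstein theorem (\cite[Theorem~15.7]{Ke:Classical}), exactly as in Proposition~\ref{p.gammaxiequiv}. The evaluation maps $\gamma\mapsto(\mathfrak p_n\mapsto\mathfrak p_n(\gamma))$ and $\xi\mapsto(\xi(X_n))_n$ are mutually inverse Borel bijections between $\Gamma(\mathcal O_2)$ and $\hat\Gamma_{\mathfrak A}(\mathcal O_2)$ preserving the generated algebra; fixing a faithful representation $\mathcal O_2\subseteq\cB(H)$ embeds $\Gamma(\mathcal O_2)$ Borel-homomorphically into $\Gammaex$; and $\gamma\mapsto(\mathfrak p_n\mapsto f(\gamma)(n))$ sends $\Gammaex$ into $\hat\Gamma_{\mathfrak A}(\mathcal O_2)$ with $C^*$ of the image isomorphic to $C^*(\gamma)$, by part (2). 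Since the padding used in Lemma~\ref{l.injection} stays inside $\mathcal O_2$ and never alters the generated algebra (hence preserves exactness), that lemma applies to each of these spaces, upgrading the homomorphisms to monomorphisms; Borel Schr\"oder--Bernstein then yields the asserted equivalences.
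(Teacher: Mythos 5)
Your proposal is correct and follows exactly the route the paper takes: the paper derives the non-unital case from the unital one in a single sentence by invoking the Borel unitization map of Lemma~\ref{L.unitization}, and your argument simply fills in the bookkeeping (the conjugation by $v$, the reindexing $\rho$, and the isometry of $a\mapsto v^*av$) that the paper leaves implicit. No gaps.
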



\section{Below a group action}\label{s.bga}

Conjugacy of unitary operators on a separable Hilbert space cannot be reduced to isomorphism of countable structures by \cite{KecSof:Strong}.  However, a complete classification  of this relation is provided by the spectral measures. We may therefore consider a more general notion of classifiability: being reducible to an orbit equivalence relation of a Polish group action. 
We don't know whether  isomorphism of separable (simple) C*-algebras is implemented 
by a Polish group action.
 (see Question~\ref{Q.action} and Problem~\ref{P.action}). 
In this section we will prove that isomorphism of \emph{unital} nuclear simple separable C$^*$-algebras is indeed Borel reducible to an orbit equivalence relation induced by a Polish group action, 
 Theorem \ref{t.belowgrp} below. We also note a simpler fact that this also applies to isomorphism 
of abelian separable C*-algebras (Proposition~\ref{P.abelian}). 
Before turning to the proof of this we briefly discuss Question \ref{Q.action} in general.


\begin{theorem} Assume $A$ and $B$ are separable weakly dense
subalgebras of $\cB(H)$. Then $A\cong B$ if and only if there is a unitary $u$ such that
$uAu^*=B$. 
\end{theorem}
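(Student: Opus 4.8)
The plan is to prove the nontrivial (forward) implication; the reverse is immediate, since if $uAu^*=B$ then $\Ad_u$ restricts to a $*$-isomorphism $A\to B$. First I would recast the problem representation-theoretically. Weak density of $A$ and $B$ means exactly that their commutants are trivial, so the inclusions $\iota_A\colon A\hookrightarrow\cB(H)$ and $\iota_B\colon B\hookrightarrow\cB(H)$ are \emph{irreducible} representations. Given a $*$-isomorphism $\phi\colon A\to B$, the two maps $\iota_A$ and $\iota_B\circ\phi$ are faithful irreducible representations of the single abstract algebra $A$ on $H$, and a unitary $u$ with $uAu^*=B$ is precisely a unitary carrying the image of $\iota_A$ onto the image of $\iota_B\circ\phi$. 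Thus it suffices to show that \emph{any two faithful irreducible representations of a separable C$^*$-algebra on $H$ have unitarily conjugate images}. I would then split according to whether $A$ meets the compact operators $\cK=\cK(H)$.

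\emph{Case 1: $A\cap\cK\neq 0$.} Since $A$ acts irreducibly this forces $\cK\subseteq A$, and $\cK$ is then an essential (and, one checks, unique) ideal of $A$. Any faithful irreducible representation $\pi$ of $A$ is nonzero on this ideal, so $\pi\restriction\cK$ is an irreducible representation of $\cK$, hence unitarily equivalent to the identity representation; a routine extension argument then forces $\pi$ itself to be unitarily equivalent to $\iota_A$. Applying this to $\pi=\iota_B\circ\phi$ produces a unitary $u$ with $u\,\iota_A(a)\,u^*=\phi(a)$ for all $a$, whence $uAu^*=B$.

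\emph{Case 2: $A\cap\cK=0$.} Then no nonzero element of $A$ (and likewise of $B$) is compact, so $\iota_A(a)$ and $(\iota_B\circ\phi)(a)$ have infinite rank for every $a\neq 0$; in particular the two representations have the same kernel and the same rank function. By Voiculescu's noncommutative Weyl--von Neumann theorem the representations $\iota_A$ and $\iota_B\circ\phi$ are \emph{approximately} unitarily equivalent. I would upgrade this to exact conjugacy of the images by an Elliott-style two-sided approximate intertwining: fixing dense sequences $(a_k)$ in $A$ and $(b_k)$ in $B$, one builds unitaries $u_n$ with $\sum_n\|u_{n+1}-u_n\|<\infty$ that alternately push larger and larger finite parts of $A$ to within $\e_n$ of $B$ and of $B$ back to within $\e_n$ of $A$. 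The norm limit $u=\lim_n u_n$ then satisfies $uAu^*=B$.

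The crux, and the step I expect to be the main obstacle, is the inductive refinement in Case 2: given $u_n$ that already approximately conjugates $A$ into $B$ on a finite set, one must produce a correction $u_{n+1}=v_n u_n$ with $\|v_n-1\|$ arbitrarily small while simultaneously improving the approximation on a larger finite set. This needs the \emph{stable} form of Voiculescu's theorem, namely that when two representations without compact operators already agree to within $\delta$ on a finite set the conjugating unitary can be chosen within $O(\delta)$ of the identity; this is exactly what makes the telescoping corrections summable and the back-and-forth converge. (Note this is where the full flexibility of $U(H)$ is used, in contrast with conjugacy inside a fixed ambient such as $\cO_2$, where only approximate conjugacy is available.) Keeping the scheme genuinely two-sided is what guarantees $uAu^*=B$ rather than merely $uAu^*\subseteq B$.
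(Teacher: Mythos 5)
Your reduction of the problem to comparing two faithful irreducible representations of one abstract algebra is fine, and Case 1 (the image contains the compacts) is a correct classical argument. The gap is in Case 2, and it sits exactly where you flagged it: the ``stable form of Voiculescu's theorem'' you invoke --- that if two representations with no compact operators in their ranges already agree to within $\delta$ on a finite set, then a conjugating unitary can be chosen within $O(\delta)$ of the identity --- is not an available result, and without it your intertwining does not close. Voiculescu's theorem produces unitaries $w_n$ with $w_n\iota_A(a)w_n^*\to(\iota_B\circ\phi)(a)$ but gives no control whatsoever on the $w_n$; by itself it yields only that $uAu^*$ is \emph{approximately} $B$ for various $u$, never that the images coincide. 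Making the corrections $v_n$ summable is a perturbation statement of Kadison--Kastler/Christensen type (``a finite part of $A$ nearly sits inside $B$, so a unitary near $1$ moves it into $B$''), and results of that kind are known only under much stronger hypotheses (near-containment of the entire unit ball, nuclearity, \dots), none of which are present here: the theorem is asserted for \emph{arbitrary} separable weakly dense subalgebras of $\cB(H)$. So the crux of your Case 2 is an unproved lemma that is at least as deep as the theorem itself.

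The paper avoids this difficulty by a different mechanism. Fix a unit vector $\xi\in H$; weak density makes $\omega_\xi\rs A$ and $\omega_\xi\rs B$ pure states whose GNS representations are just the identity representations of $A$ and $B$ on $H$ with cyclic vector $\xi$. By the Kishimoto--Ozawa--Sakai homogeneity theorem for pure state spaces of separable C$^*$-algebras, the given isomorphism $\alpha\colon A\to B$ can be corrected by an automorphism $\beta$ of $B$ so that $(\omega_\xi\rs B)\circ(\beta\circ\alpha)=\omega_\xi\rs A$; the corrected isomorphism then matches up the two pure states, hence extends to a spatial isomorphism of the corresponding GNS representations, i.e.\ is implemented by a unitary $u$ with $uAu^*=B$. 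The deep input (KOS) plays precisely the role your missing stable-uniqueness lemma was meant to play; to salvage your approach you would need to cite KOS (or prove an equivalent controlled transitivity statement) rather than Voiculescu.
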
 

\begin{proof} Only the direct implication 
requires a proof. Let $\alpha\colon A\to B$ be an isomorphism. 
Fix a unit vector $\xi\in H$ and let $\omega_\xi$ denote the 
vector state corresponding to $\xi$, $\omega_\xi(a)=(a\xi|\xi)$. 
Since $A$ is weakly dense in $\cB(H)$, the restriction of $\omega_\xi$
to $A$ is a pure state of $A$, and similarly the restriction of $\omega_\xi$ to $B$ 
is a pure state of $B$. 
By \cite{KiOzSa} there is an automorphism $\beta$ of $B$ such that 
$\omega_\xi\rs B=\beta\circ \alpha\circ \omega_\xi$. The isomorphism 
$\beta\circ \alpha$ extends to the isomorphism between the 
GNS representations of $A$ and $B$ corresponding to the pure states 
$\omega_\xi\rs A$ 
and $\omega_\xi\rs B$. This automorphism of $\cB(H)$ is implemented by a
unitary $u$ as required. 
\end{proof} 

By the previous theorem, in order to give a positive answer to Question~\ref{Q.action}
it would suffice to have a natural Borel space whose points  are separable C*-subalgebras of $\cB(H)$. The space $\Gamma$ defined in \ref{ss.gamma} appears to be similar to such a space, but
the following proposition, suggested to the first author by Alekos Kechris, is an obstacle to the direct approach. 

\begin{prop} \label{P.nonstandard}
On the space $\Gamma$ consider  the relation $\gamma\, E\,\gamma'$ iff 
$C^*(\gamma')=C^*(\gamma)$. Then the quotient Borel structure is nonstandard, even when restricted to $\{\gamma\in \Gamma: C^*(\gamma)$ is simple, unital,  and nuclear$\}$. 
\end{prop}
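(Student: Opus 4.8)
The plan is to show that the restriction of $E$ to $Z=\{\gamma\in\Gamma:C^*(\gamma)\text{ is simple, unital and nuclear}\}$ fails to be smooth. This suffices: a non-smooth Borel equivalence relation admits no countable separating family of invariant Borel sets, so its quotient Borel structure is not countably separated and hence is nonstandard, as every standard Borel space is countably separated (see \cite[\S 5]{gao09}). I will produce inside $E\restrict Z$ a Borel copy of the coset equivalence relation of a dense, proper, Borel subgroup of the unitary group, and then invoke the fact that such coset relations are never smooth.

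Fix a simple, unital, nuclear separable C*-algebra $A_0$ together with a faithful irreducible representation on $H$, so that $A_0$ is weakly dense in $\cB(H)$ (for instance $A_0=\cO_2$ in an irreducible representation), and fix $\gamma_0\in\Gamma$ with $C^*(\gamma_0)=A_0$. Let $U(H)$ denote the unitary group of $\cB(H)$ with the strong operator topology; this is a Polish group, and it acts on $\Gamma$ by $u\cdot\gamma=(u\gamma_n u^*)_{n\in\N}$. Define $r:U(H)\to\Gamma$ by $r(u)=u\cdot\gamma_0$; each coordinate $u\mapsto u\gamma_n u^*$ is strongly continuous, so $r$ is Borel. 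Every $C^*(r(u))=uA_0u^*$ is isomorphic to $A_0$, whence $r$ takes values in $Z$. Writing $N=N(A_0)=\{u\in U(H):uA_0u^*=A_0\}$ for the normalizer of $A_0$, a direct computation gives $r(u)\,E\,r(v)\iff uA_0u^*=vA_0v^*\iff v^{-1}u\in N$. Thus $r$ is a Borel reduction of the coset equivalence relation $E_N$ of $N$ to $E\restrict Z$, and it remains to see that $E_N$ is not smooth.

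For this I will verify that $N$ is a dense, proper, Borel subgroup of $U(H)$. It is Borel: by Lemma~\ref{L.equality.borel} the relation $C^*(\gamma)\subseteq C^*(\gamma')$ is Borel, and composing it with the Borel maps $u\mapsto r(u)$ and $u\mapsto r(u^*)$ shows that $N=\{u:uA_0u^*\subseteq A_0\text{ and }u^*A_0u\subseteq A_0\}$ is Borel. It is dense: since $A_0$ is unital with unit $1_H$ and weakly dense, the Kaplansky density theorem gives that the unitary group $U(A_0)$ is strongly dense in $U(H)$ (\cite{blackadar}), and $U(A_0)\subseteq N$ because inner automorphisms preserve $A_0$. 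It is proper: if $N=U(H)$ then $A_0$ would be invariant under $\Ad u$ for every unitary $u$, but the only $\Ad U(H)$-invariant C*-subalgebras of $\cB(H)$ are $\C 1$, $\C 1+\mathcal K(H)$ and $\cB(H)$, and none of these is simultaneously simple, unital, separable and weakly dense.

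Finally, a dense proper Borel subgroup $N\le U(H)$ yields a non-smooth coset relation. Being a proper Borel subgroup with the Baire property, $N$ cannot be non-meager (a non-meager subgroup with the Baire property is open, hence closed, hence not a dense proper subgroup), so $N$ and all its cosets are meager; in particular every $E_N$-class is meager. If $E_N$ were smooth, witnessed by a Borel $f:U(H)\to 2^\N$ with $u\,E_N\,v\iff f(u)=f(v)$, then each $B_n=f^{-1}(\{z:z(n)=1\})$ would be an $N$-invariant Borel set, hence meager or comeager by the topological zero-one law for a dense group of translations (\cite[8.46]{Ke:Classical}). Taking $z^*(n)=1$ iff $B_n$ is comeager, the set $f^{-1}(\{z^*\})$ would be comeager; but it is a single $E_N$-class and therefore meager, a contradiction. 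Hence $E_N$, and with it $E\restrict Z$, is not smooth, so the quotient Borel structure is nonstandard. The point requiring the most care is the verification that $N(A_0)$ is genuinely a proper (equivalently, since it is dense and Borel, non-closed) subgroup; once that is in place, the non-smoothness of the coset relation of a dense, non-closed Borel subgroup is a standard category argument.
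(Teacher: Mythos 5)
Your argument is correct, but it is not the route the paper takes. The paper also establishes non-smoothness, but by writing down an explicit Borel reduction of $E_0$ (eventual equality on $2^{\N}$) into $E$ restricted to the simple unital nuclear algebras: it represents the CAR algebra $A=\bigotimes_j M_2(\C)$ on $H=\bigotimes_j(H_j,\zeta_j)$ and, for $x\subseteq \N$, adjoins the diagonal unitary $u_x=\bigotimes_{j\in x}\diag(1,-1)$ as an extra generator, so that $C^*(\Phi(x))=C^*(\Phi(y))$ iff $u_xu_y^*\in A$ iff $x\Delta y$ is finite. At heart both proofs exploit the same phenomenon---a dense meager subgroup whose coset relation is non-smooth (in the paper's case the subgroup of finite sets inside the Cantor group $(2^{\N},\Delta)$, in yours the normalizer $N(A_0)\leq U(H)$)---but the paper's version is concrete and yields the sharper conclusion $E_0\leq_B E\restriction Z$, the canonical benchmark via the Glimm--Effros dichotomy, whereas yours is softer and more general (it works verbatim for any separable simple unital weakly dense subalgebra of $\cB(H)$) at the price of invoking Kaplansky density and the Pettis/zero--one-law machinery. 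One caveat: the classification of $\Ad U(H)$-invariant C$^*$-subalgebras of $\cB(H)$ that you cite for properness of $N$ is a heavier, and not obviously standard, fact than you need. It is cleaner to observe that if $N=U(H)$ then $A_0$ would contain the entire unitary orbit of one of its nontrivial projections (which has infinite rank and corank since $\pi(\cO_2)\cap\cK(H)=0$ in a faithful irreducible representation), hence every projection of infinite rank and corank; that set is non-separable in norm, while $A_0$ is norm-separable. With that substitution every step of your proof checks out.
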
 

\begin{proof} Note that $E$ is Borel by Lemma~\ref{L.equality.borel}. 
It will suffice to  construct a Borel map $\Phi\colon 2^{\bbN}\to \Gamma$ such that 
$x\, E_0 \, y$ if and only if $C^*(\Phi(x))=C^*(\Phi(y))$. (Here $E_0$ denotes eventual equality in the space $2^\N$.) We will assure that every parameter
in the  range of 
$\Phi$ corresponds to a simple nuclear algebra.  
Let $H_j$ be the two-dimensional complex Hilbert space and 
let $\zeta_j$ denote the vector $\begin{pmatrix} 1 & 0 \end{pmatrix}$ in $H_j$. 
 Identify $H$ with $\bigotimes_{j\in \bbN} (H_j,\zeta_j)$. 
For $x\subseteq \bbN$ let 
\[
u_x=\bigotimes_{j\in x} 
\begin{pmatrix} 
1 & 0 \\
0 & -1.
\end{pmatrix}
\]
This is a unitary  operator on $H$. 
Fix a set $\gamma_j$, for $j\geq 1$, that generates the CAR algebra
$A=\bigotimes_j M_2(\bbC)$, represented on $H$ so that the $j$'th copy of $M_2(\bbC)$
maps to $\cB(H_j)$. 
Let $\Phi(x)=\gamma$ be such that $\gamma_0=u_x$ and $\gamma_j$, for $j\geq 1$, 
are as above.  

Then $C^*(\Phi(x))=C^*(\Phi(y))$ if and only if $u_x u_y^*\in A$, 
if and only if $x\Delta y$ is finite. 
\end{proof}

\subsection{A reduction to an action of $\Aut(\mathcal O_2)$.}

Let $\Aut(\mathcal O_2)$ denote the automorphism group of $\mathcal O_2$, and equip $\Aut(\mathcal O_2)$ with the strong topology, which makes it a Polish group. We now aim to prove:


\begin{theorem}\label{t.belowgrp}
The isomorphism relation for nuclear simple unital separable C$^*$-algebras is Borel reducible to an orbit equivalence relation induced by a Borel action of $\Aut(\mathcal O_2)$ on a standard Borel space. 
\end{theorem}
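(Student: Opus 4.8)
The plan is to send each nuclear simple unital separable C$^*$-algebra to a copy of itself sitting as a unital subalgebra of $\mathcal{O}_2$, viewed as a point of the Effros Borel space $F(\mathcal{O}_2)$, and to let $\Aut(\mathcal{O}_2)$ act by $\alpha\cdot C=\alpha(C)$. Since a nuclear C$^*$-algebra is exact, the nuclear simple unital algebras form a subclass of those parameterized by the Borel set $\Gammaexu$, and the Borel version of Kirchberg's exact embedding theorem (unital case) proved above supplies a Borel map $f\colon\Gammaexu\to\mathcal{O}_2^{\bbN}$ with $f(\gamma)$ a code for a unital embedding $\hat f(\gamma)\colon C^*(\gamma)\to\mathcal{O}_2$. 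First I would compose $f$ with the assignment sending a sequence in $\mathcal{O}_2$ to the closed subalgebra it generates: that this lands in $\SA(\mathcal{O}_2)$ and is Borel into $F(\mathcal{O}_2)$ follows from Lemma~\ref{l.borelassign}, since $(\p_n(f(\gamma)))_n$ is norm-dense in that subalgebra. This produces a Borel map $\Theta\colon\Gammaexu\to\SA(\mathcal{O}_2)$ with $\Theta(\gamma)$ a unital subalgebra of $\mathcal{O}_2$ isomorphic to $C^*(\gamma)$.

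Next I would check that the natural action of $\Aut(\mathcal{O}_2)$ on $F(\mathcal{O}_2)$ given by $\alpha\cdot C=\alpha(C)$ is a Borel action on a standard Borel space. The Borel set $\SA(\mathcal{O}_2)$ is invariant since automorphisms carry subalgebras to subalgebras. For Borelness one argues on the generators of the Effros structure: $\alpha(C)\cap U\neq\emptyset$ iff $C\cap\alpha^{-1}(U)\neq\emptyset$, and feeding a Borel sequence of dense selectors for $C$ (Lemma~\ref{l.borelassign}) into the joint continuity of $(\alpha,x)\mapsto\alpha(x)$ on $\Aut(\mathcal{O}_2)\times\mathcal{O}_2$ shows that $\{(\alpha,C):\alpha(C)\cap U\neq\emptyset\}$ is Borel for each basic open $U$. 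Thus $\Theta$ takes values in a standard Borel $\Aut(\mathcal{O}_2)$-space.

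It then remains to verify that $\Theta$ is a reduction on the nuclear simple unital part, i.e. that $C^*(\gamma)\cong C^*(\gamma')$ iff $\Theta(\gamma)$ and $\Theta(\gamma')$ lie in the same $\Aut(\mathcal{O}_2)$-orbit. The backward direction is immediate: if $\alpha(\Theta(\gamma))=\Theta(\gamma')$ then $\alpha$ restricts to an isomorphism of the two subalgebras. For the forward direction, suppose $\phi\colon\Theta(\gamma)\to\Theta(\gamma')$ is an isomorphism, with $\iota,\iota'$ the inclusions of $\Theta(\gamma),\Theta(\gamma')$ into $\mathcal{O}_2$. Then $\iota$ and $\iota'\circ\phi$ are two unital embeddings of the same nuclear separable unital C$^*$-algebra into $\mathcal{O}_2$, hence approximately unitarily equivalent by \cite[Theorem~6.3.8]{Ror:Classification}; the same holds for $\iota'$ and $\iota\circ\phi^{-1}$. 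I would then promote these two approximate unitary equivalences to an honest automorphism of $\mathcal{O}_2$ by running an Elliott-style two-sided approximate intertwining: alternately correcting along $\phi$ and $\phi^{-1}$ by unitaries supplied by approximate unitary equivalence and passing to a point-norm limit yields $\alpha\in\Aut(\mathcal{O}_2)$ with $\alpha|_{\Theta(\gamma)}=\phi$, so in particular $\alpha(\Theta(\gamma))=\Theta(\gamma')$.

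The hard part will be this last step: converting approximate unitary equivalence of the embeddings into a single automorphism of $\mathcal{O}_2$ that carries $\Theta(\gamma)$ \emph{exactly} onto $\Theta(\gamma')$ and restricts to $\phi$. This is where Kirchberg's classification machinery — the uniqueness theorem behind \cite[Theorem~6.3.8]{Ror:Classification}, itself resting on $A\otimes\mathcal{O}_2\cong\mathcal{O}_2$ absorption — is essential, and where care is needed to arrange the back-and-forth so that the limit automorphism exists and genuinely intertwines the inclusions. By comparison, the Borelness of $\Theta$ and of the action are routine, relying on the selection results of Lemma~\ref{l.borelassign} and Lemma~\ref{l.select} and the transversal argument of Proposition~\ref{pr.selector} already in hand.
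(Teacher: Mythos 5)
Your setup --- embedding each algebra as a unital subalgebra of $\mathcal{O}_2$, acting by $\alpha\cdot C=\alpha(C)$ on $\SA(\mathcal{O}_2)$, and the backward direction of the reduction --- matches the paper, and the Borelness arguments you sketch are fine. But the forward direction has a genuine gap, and it is not just a delicate step: the statement you need is false for the embeddings you use. You need that any two abstractly isomorphic unital nuclear simple separable subalgebras of $\mathcal{O}_2$ are carried onto one another by an automorphism of $\mathcal{O}_2$. Approximate unitary equivalence of $\iota$ and $\iota'\circ\phi$ does hold by \cite[Theorem~6.3.8]{Ror:Classification}, but your two-sided intertwining only supplies diagonal maps defined on the subalgebras $\Theta(\gamma)$ and $\Theta(\gamma')$, not on all of $\mathcal{O}_2$; its point-norm limit is just an isomorphism $\Theta(\gamma)\to\Theta(\gamma')$ again, not an automorphism of the ambient algebra. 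To get an automorphism you would need the correcting unitaries at each stage to approximately fix a prescribed finite subset of $\mathcal{O}_2$ itself, which approximate unitary equivalence does not provide. A concrete obstruction: the isomorphism class of the relative commutant $\Theta(\gamma)'\cap\mathcal{O}_2$ is an invariant of the $\Aut(\mathcal{O}_2)$-orbit but is not determined by the isomorphism class of $\Theta(\gamma)$. For $A=\mathcal{O}_2$, the improper copy $\mathcal{O}_2\subseteq\mathcal{O}_2$ has relative commutant $\mathbb{C}$, while the unital copy $\mathcal{O}_2\otimes 1\subseteq\mathcal{O}_2\otimes\mathcal{O}_2\cong\mathcal{O}_2$ has relative commutant containing a unital copy of $\mathcal{O}_2$; these two subalgebras are isomorphic but no automorphism maps one onto the other (an automorphism cannot map $\mathcal{O}_2$ onto a proper subalgebra). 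The Borel exact embedding theorem gives you no control over which conjugacy class of embedding it selects, so $\Theta$ need not be a reduction.

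This is precisely why the paper does not use the exact embedding theorem at this point, but instead proves a Borel version of Kirchberg's $A\otimes\mathcal{O}_2\cong\mathcal{O}_2$ theorem (Theorem~\ref{t.AtensorO2}) and uses it to put every algebra in ``standard position'': the Borel map $g$ sends $A$ to $G_A(A\otimes 1_{\mathcal{O}_2})$ for a trivializing isomorphism $G_A\colon A\otimes\mathcal{O}_2\to\mathcal{O}_2$, so the relative commutant of the image is always a full copy of $\mathcal{O}_2$. With that normalization no intertwining is needed at the reduction step: an isomorphism $\theta\colon A\to B$ gives $\theta\otimes\mathrm{id}\colon A\otimes\mathcal{O}_2\to B\otimes\mathcal{O}_2$ carrying $A\otimes 1$ onto $B\otimes 1$, and $G_B\circ(\theta\otimes\mathrm{id})\circ G_A^{-1}$ is the required automorphism of $\mathcal{O}_2$. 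All the Elliott-intertwining work is confined to the proof of Theorem~\ref{t.AtensorO2}, where both towers are cofinal in the algebras actually being intertwined. To repair your argument, replace the exact-embedding step by this standard-position embedding.
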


The proof of this requires some preparation, the most substantial part being a version of Kirchberg's ``$A\otimes\mathcal O_2\simeq\mathcal O_2\otimes\mathcal O_2$ Theorem'' for nuclear simple unital and separable $A$. However, we start by noting the following:


\begin{prop}\label{p.simple}
The set
$$
\{\gamma\in\Gamma: C^*(\gamma) \text{ is simple}\}
$$ 
is Borel.
\end{prop}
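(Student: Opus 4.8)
The plan is to express simplicity through the standard criterion phrased on positive elements: $C^*(\gamma)$ is simple if and only if for every nonzero positive $a$ and every positive $b$ in $C^*(\gamma)$ one has $b\in\overline{C^*(\gamma)\,a\,C^*(\gamma)}$, i.e.\ $b$ lies in the closed two-sided ideal generated by $a$. The advantage of this formulation is that it refers only to norms and to the algebra operations, both of which are Borel by Lemmas~\ref{L.B.1} and~\ref{L.B.3.0}, and that the positive elements can be sampled along the Borel dense sequence $\Pos(\gamma)(n)$ supplied by Lemma~\ref{L.E.1}(3).

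To obtain a genuinely Borel definition I would work with the cut-down elements $(\Pos(\gamma)(n)-\delta)_+$ for $n\in\N$ and $\delta\in\Q_+$. Continuous functional calculus by the fixed continuous function $t\mapsto (t-\delta)_+$ is a Borel-computable operation on the self-adjoint part of $\cB(H)$, by the same approximation argument used in the proof of Lemma~\ref{L.E.1}(3) together with Lemma~\ref{L.B.2}; hence $(\gamma,n)\mapsto (\Pos(\gamma)(n)-\delta)_+$ is Borel. I would then define the set of $\gamma$ coding simple algebras by the condition
\begin{equation*}
\forall n,m\ \forall \delta,\varepsilon\in\Q_+\colon\ \big\|(\Pos(\gamma)(n)-\delta)_+\big\|>0\ \Longrightarrow\ \exists (a_1,\dots,a_p,b_1,\dots,b_p)\ \Big\|(\Pos(\gamma)(m)-\delta)_+-\sum_{i=1}^p \p_{a_i}(\gamma)(\Pos(\gamma)(n)-\delta)_+\,\p_{b_i}(\gamma)\Big\|<\varepsilon.
\end{equation*}
Each clause inside the quantifiers is Borel: the cut-downs are Borel-computable, the finite sums of products $\p_{a_i}(\gamma)(\Pos(\gamma)(n)-\delta)_+\p_{b_i}(\gamma)$ are Borel by Lemma~\ref{L.B.3.0}, and the norm inequalities are Borel by Lemma~\ref{L.B.1}. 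The inner existential is a countable union over finite tuples of indices, and the two outer quantifiers range over countable sets, so the whole set is Borel.

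For correctness, the forward implication is immediate: if $C^*(\gamma)$ is simple then any nonzero cut-down $a=(\Pos(\gamma)(n)-\delta)_+$ generates $C^*(\gamma)$ as a closed ideal, so $b=(\Pos(\gamma)(m)-\delta)_+$ is a norm limit of sums $\sum_i x_i a y_i$, and approximating the $x_i,y_i$ by polynomials $\p_{a_i}(\gamma),\p_{b_i}(\gamma)$ (using continuity of multiplication) produces the required tuples. The main obstacle is the reverse implication, where one must rule out a proper nonzero closed ideal $J$ using only the dense sequence; the difficulty is that $J$ need contain \emph{no} element of the form $\p_m(\gamma)$, so one cannot simply read off a nonzero generator of $J$ from the parameterization. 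This is exactly where I would invoke R\o rdam's lemma on Cuntz comparison (see \cite{Ror:Classification}): choosing a nonzero positive $a\in J$ with $\|a\|=1$ and an index $n$ with $\|\Pos(\gamma)(n)-a\|<\delta$, the implication $\|x-y\|\le\delta\Rightarrow (x-\delta)_+\precsim y$ gives $(\Pos(\gamma)(n)-\delta)_+\precsim a$, whence $(\Pos(\gamma)(n)-\delta)_+\in\overline{C^*(\gamma)\,a\,C^*(\gamma)}\subseteq J$, and this cut-down is nonzero once $\delta$ is small. Applying the displayed condition with this $n$ and $\delta$ then forces every $(\Pos(\gamma)(m)-\delta)_+$ into the ideal generated by an element of $J$, hence into $J$; letting $m$ range and $\delta\to 0$ yields $C^*(\gamma)_+\subseteq J$, so $J=C^*(\gamma)$, a contradiction. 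Verifying the Borel-computability of the cut-down functional calculus and executing this perturbation step correctly are the two points that require care.
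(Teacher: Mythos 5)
Your argument is correct, but it takes a genuinely different route from the paper's. The paper characterizes simplicity by requiring that the GNS representation of \emph{every} state be isometric, encodes the (compact) state space as in \S\ref{ss.codingstates}, and expresses failure of simplicity as a countable union of projections of Borel sets with compact sections, which are Borel by \cite[Theorem 28.8]{Ke:Classical}. You instead characterize simplicity by fullness of every nonzero positive element, sample positives along $\Pos(\gamma)(n)$, and use R{\o}rdam's perturbation lemma to push a nonzero element of a putative proper ideal onto a nonzero cut-down $(\Pos(\gamma)(n)-\delta)_+$ lying in that ideal --- exactly the right device for the genuine difficulty you identify, namely that an ideal need not meet the dense sequence. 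Your definition then involves only countable quantifiers over Borel conditions, so you avoid the compact-section projection theorem entirely, at the price of more C$^*$-algebraic input (Cuntz comparison) and of having to verify that $(\gamma,n)\mapsto(\Pos(\gamma)(n)-\delta)_+$ is Borel; the latter is fine, since $(t-\delta)_+$ vanishes at $0$ and is uniformly approximated on $[0,M]$ by polynomials without constant term, so on each Borel piece $\{\gamma:\|\Pos(\gamma)(n)\|\leq M\}$, $M\in\Q$, the cut-down is a norm limit of Borel maps and Lemma~\ref{L.B.2} applies. The one step to spell out in a final write-up is the end of your reverse implication: the $\delta$ in the hypothesis and in the conclusion of your displayed condition coincide, so to conclude $J=C^*(\gamma)$ you must, for each $\delta$, re-choose $n=n_\delta$ with $\|\Pos(\gamma)(n_\delta)-a\|<\delta$ before applying the condition and only then let $\delta\to 0$; your phrasing does this implicitly, but it deserves a sentence.
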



\begin{proof} We use the facts that  a C*-algebra $A$ is simple if and only if for 
every state $\phi$ the GNS representation 
$\pi_\phi$ is an isometry and that the operator norm in the GNS representation reads as 
$\|\pi_\phi(a)\|=\sup_{\phi(b^*b)\leq 1} \phi(b^*a^*ab)$ for all $a\in A$. 

Recall from \ref{ss.codingstates} the coding of states. We define $R\subseteq\Gamma\times\C^\N$ by
$$
R(\gamma,\hat\phi)\iff \hat\phi\text{ codes a state on } C^*(\gamma).
$$
Then $R$ is easily Borel, and as noted in \ref{ss.codingstates}, the sections $R_\gamma=\{\hat\phi\in\C^\N: R(\gamma,\hat\phi)\}$ are compact. For $n\in\N$ and $\varepsilon>0$, define $Q_{n,\varepsilon}\subseteq\Gamma\times\C^\N$ by
\begin{align*}
Q_{n,\varepsilon}(\gamma,\hat\phi)\iff  
 (\forall k)(\forall l)(\forall m) &(\p_k(\gamma)=\p_m(\gamma)^*\p_n(\gamma)^*\p_n(\gamma)\p_m(\gamma)\\
& \land \p_l(\gamma)=\p_m(\gamma)^*\p_m(\gamma)
 \land \hat\phi(l)\leq 1 \implies \hat\phi(k)+\varepsilon\leq\|\p_n(\gamma)\|).
\end{align*}
Then $Q_{n,\varepsilon}$ is Borel, and the sections $(Q_{n,\varepsilon})_\gamma$ are closed, and therefore compact. Thus the sets
$$
S_{n,\varepsilon}=\{\gamma\in\Gamma:(\exists\hat\phi) Q_{n,\varepsilon}(\gamma,\hat\phi)\}
$$
are Borel, by \cite[Theorem 28.8]{Ke:Classical}. We claim that
$$
\{\gamma\in\Gamma: C^*(\gamma)\text{ is simple}\}=\Gamma\setminus\bigcup_{n\in\N, \varepsilon>0} S_{n,\varepsilon}.
$$
To see this, first note that if $C^*(\gamma)$ is simple then the GNS representation of any state $\phi$ on $C^*(\gamma)$ is faithful, and so for any $n\in\N$ we have
\begin{equation}\label{eq.faithful}
\sup\{\phi(\p_m(\gamma)^*\p_n(\gamma)^*\p_n(\gamma)\p_m(\gamma)): 
\phi(\p_m(\gamma)\p_m(\gamma)^*\leq 1 \}
=\|\p_n(\gamma)\|
\end{equation}
Hence $\gamma\notin S_{n,\varepsilon}$ for all $n\in\N$ and $\varepsilon>0$. On the other hand, if $\gamma\notin S_{n,\varepsilon}$ for all $n\in\N$ and $\varepsilon>0$, then \eqref{eq.faithful} holds, and so all states are faithful. Hence $C^*(\gamma)$ is simple.
\end{proof}
A strengthening of Proposition~\ref{p.simple} will be given in \cite{FaToTo}. 

\medskip

Since Effros has shown that the class of nuclear separable C$^*$-algebras is Borel (see \cite[\S 5]{Kec:C*} for a proof), we now have:

\begin{corollary}\label{c.unsBorel}
The set
$$
\{\gamma\in\Gamma: C^*(\gamma)\text{ is simple, nuclear and unital}\}
$$
is Borel.
\end{corollary}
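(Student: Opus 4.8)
The plan is to exhibit the set as a finite intersection of three sets each of which has already been shown to be Borel. Writing out the defining conditions, the set in question is
$$
\{\gamma\in\Gamma: C^*(\gamma)\text{ is simple}\}\ \cap\ \{\gamma\in\Gamma: C^*(\gamma)\text{ is nuclear}\}\ \cap\ \Gammau.
$$
The first factor is Borel by Proposition~\ref{p.simple}. The second factor is Borel by the result of Effros mentioned just above the statement, for which a proof is available in \cite[\S 5]{Kec:C*}. The third factor, namely $\Gammau$, is Borel by Lemma~\ref{l.unital}.

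Since the class of Borel subsets of the standard Borel space $\Gamma$ is closed under finite (indeed countable) intersections, the displayed set is Borel, which is exactly the assertion of the corollary. This is the entire argument; no additional estimate or construction is required.

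I do not expect any genuine obstacle here, precisely because the corollary is designed to package together three facts proved or cited elsewhere. The substantive descriptive-set-theoretic work has already been carried out: the Borelness of simplicity is the content of Proposition~\ref{p.simple} (via the GNS characterization of simplicity and the compactness of the relevant state sections, invoking \cite[Theorem~28.8]{Ke:Classical}), the Borelness of nuclearity is Effros's theorem, and the Borelness of unitality is Lemma~\ref{l.unital}. The only ``step'' remaining is the trivial observation that a finite intersection of Borel sets is Borel, so the corollary follows at once.
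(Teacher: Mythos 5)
Your proposal is correct and is exactly the paper's (implicit) argument: the corollary is stated immediately after citing Effros's theorem on the Borelness of nuclearity, and it is obtained as the intersection of that set with the simple algebras (Proposition~\ref{p.simple}) and with $\Gammau$ (Lemma~\ref{l.unital}). Nothing further is needed.
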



\subsection{A Borel version of Kirchberg's $A\otimes \mathcal O_2$ Theorem}

For $A$ and $B$ fixed separable C$^*$-algebras, let 
$$
\Hom(A,B)=\{f:A\to B: f \text{ is a $*$-homomorphism}\}.
$$
Then $\Hom(A,B)\subseteq L_1(A,B)$, the set of bouned linear maps from $A$ to $B$ with operator norm at most 1, and is closed in the strong operator topology, hence is a Polish space. We let $\End(A)=\Hom(A,A)$.

Kirchberg's $A\otimes\mathcal O_2$ Theorem states that $A$ is nuclear simple separable and unital if and only if $A\otimes\mathcal O_2$ is isomorphic to $\mathcal O_2\otimes\mathcal O_2$. The latter is itself isomorphic to $\mathcal O_2$ by a result of Elliott, see e.g. \cite[7.1.2 and 5.2.1]{Ror:Classification}. Our next theorem is an effective version of this theorem. 

Let $\SAu(\mathcal O_2)$ denote the standard Borel space of closed unital $*$-subalgebras of $\mathcal O_2$. Since the parameterizations $\Gammaexu$ and $\SAu(\mathcal O_2)$ are weakly equivalent (see \ref{ss.exact}), it follows from Corollary~\ref{c.unsBorel} that the set
$$
\SAuns(\mathcal O_2)=\{A\in \SAu(\mathcal O_2): A\text{ is nuclear and simple}\}
$$
is Borel. We will work with this parameterization of unital nuclear simple separable C$^*$-algebras below.

\begin{theorem}\label{t.AtensorO2}
There is a Borel map $F:\SAuns(\mathcal O_2)\to \mathrm{End}(\mathcal O_2\otimes\mathcal O_2)$ such that $F(A)$ is a monomorphism of $\mathcal O_2\otimes\mathcal O_2$ onto $A\otimes\mathcal O_2$.
\end{theorem}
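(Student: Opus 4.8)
The plan is to mimic the Borel-uniformization strategy of Proposition~\ref{pr.selector}, replacing the exact embedding theorem by Kirchberg's $\mathcal O_2$-absorption theorem. Write $\mathcal{D}=\mathcal O_2\otimes\mathcal O_2$ and fix once and for all a $*$-isomorphism $\mathcal{D}\cong\mathcal O_2$, so that $\mathcal{D}$ is a fixed separable nuclear algebra and $\Aut(\mathcal{D})$, $U(\mathcal{D})$ are fixed Polish groups. First I would produce, using the Borel tensor map $\Tensor$ of Lemma~\ref{L.tensor} together with the weak equivalence of the parameterizations $\Gammaexu$ and $\SAu(\mathcal O_2)$, a Borel assignment $A\mapsto C_A$ sending each $A\in\SAuns(\mathcal O_2)$ to the closed unital subalgebra $C_A=A\otimes\mathcal O_2$ of $\mathcal{D}$; by Lemma~\ref{l.borelassign} I may also fix Borel maps $A\mapsto(c^A_k)_{k\in\N}$ enumerating a dense sequence in $C_A$, and a Borel $\gamma_A\in\Gamma$ with $C^*(\gamma_A)=C_A$. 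Kirchberg's theorem that $A\otimes\mathcal O_2\cong\mathcal O_2$ for unital simple separable nuclear $A$ (see \cite[7.1.2 and 5.2.1]{Ror:Classification}) guarantees that $C_A\cong\mathcal{D}$, so that isomorphisms $\mathcal{D}\to C_A$ exist for every $A$.

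The central difficulty is that the set of isomorphisms of $\mathcal{D}$ onto $C_A$, regarded as a subset of the Polish space $\End(\mathcal{D})$ of unital endomorphisms, is only $G_\delta$ (surjectivity onto $C_A$ is a $\forall\exists$ condition), whereas Lemmas~\ref{l.borelassign} and \ref{l.select} require \emph{closed} sections. To get around this I would carry the inverse as part of the data. Letting $\eta$ be a fixed generating sequence for $\mathcal{D}$, consider the relation $P\subseteq\SAuns(\mathcal O_2)\times\End(\mathcal{D})\times(\N^\N)^\N$ consisting of all $(A,\varphi,\Psi)$ such that $\varphi(\mathcal{D})\subseteq C_A$, $\Psi$ is a code (in the sense of \S\ref{S.Rhom}) for a $*$-homomorphism $C^*(\gamma_A)\to\mathcal{D}$, and the two intertwining identities $\hat\Psi\circ\varphi=\id_{\mathcal{D}}$ and $\varphi(\hat\Psi(c^A_k))=c^A_k$ (for all $k$) hold. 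Each of these is a closed condition on $(\varphi,\Psi)$, so the sections $P_A$ are closed; and any $(A,\varphi,\Psi)\in P$ forces $\hat\Psi$ to be a two-sided inverse of $\varphi$ over $C_A$, hence $\varphi$ to be an isomorphism of $\mathcal{D}$ onto $C_A$. The Borelness of $P$ reduces to the Borelness of the basic operations (Lemmas~\ref{L.B.3.0} and \ref{L.B.2}) and of $A\mapsto C_A$, $A\mapsto(c^A_k)$; and $\proj(P)=\SAuns(\mathcal O_2)$ is Borel by Corollary~\ref{c.unsBorel} and the existence just noted.

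To invoke Lemma~\ref{l.select} I would let $U(\mathcal{D})$ act by precomposition with inner automorphisms: $w\cdot(\varphi,\Psi)=(\varphi\circ\Ad_w,\Psi^w)$, where $\Psi^w$ is the (Borel) code for $\Ad_{w^{-1}}\circ\hat\Psi$ obtained by composing $\Psi$ with the inner automorphism $\Ad_{w^{-1}}$ of $\mathcal{D}=C^*(\eta)$. This action is continuous, and since $\Ad_w$ fixes $\mathcal{D}$ setwise it preserves the image $C_A$ and hence each section $P_A$. Given two isomorphisms $\varphi_0,\varphi_1\colon\mathcal{D}\to C_A$, the composite $\alpha=\varphi_0^{-1}\circ\varphi_1$ is an automorphism of $\mathcal{D}\cong\mathcal O_2$, and by the approximate innerness of automorphisms of $\mathcal O_2$ (equivalently, the approximate unitary equivalence of unital embeddings of $\mathcal O_2$, as used in the proof of Proposition~\ref{pr.selector}, cf.\ \cite[Theorem~6.3.8]{Ror:Classification}) there are $w_n\in U(\mathcal{D})$ with $\Ad_{w_n}\to\alpha$ strongly; hence $\varphi_0\circ\Ad_{w_n}\to\varphi_1$ and the $U(\mathcal{D})$-orbit of $(\varphi_0,\Psi_0)$ is dense in $P_A$. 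Lemma~\ref{l.select} then shows the associated relation $R$ is Borel, so by Lemma~\ref{l.borelassign} there is a Borel selector $A\mapsto(\varphi_A,\Psi_A)$ of $P$; setting $F(A)=\varphi_A$ yields the required Borel map, each $F(A)$ being a monomorphism of $\mathcal{D}=\mathcal O_2\otimes\mathcal O_2$ onto $C_A=A\otimes\mathcal O_2$.

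The main obstacle is precisely the surjectivity/closed-section tension just described: the only genuinely nontrivial inputs are Kirchberg's existence and uniqueness theorems, and the art is to package an isomorphism together with enough of its inverse so that ``onto $C_A$'' becomes a closed constraint while the uniqueness theorem still supplies dense group orbits. I expect the bookkeeping for the code-level $U(\mathcal{D})$-action, and the verification that all the defining conditions of $P$ are Borel with closed sections, to be the most delicate routine part, while conceptually everything is driven by the two Kirchberg theorems combined with the selection machinery of Lemmas~\ref{l.borelassign}--\ref{l.select}.
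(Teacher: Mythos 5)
Your proof takes a genuinely different route from the paper's. The paper makes Elliott's approximate intertwining argument itself effective: it recursively selects (via least-witness clauses) unitaries $u^A_{n_k}$, $v_{m_k}$ and finite sets $F^A_k$, $G^A_k$ so that the maps $\eta^A_k=\Ad(u^A_{n_k}\cdots u^A_{n_1})\circ\eta$ form a Borel-in-$A$ approximate intertwining, and then sets $F(A)=\lim_k\eta^A_k$. You instead black-box the existence of an isomorphism $\mathcal O_2\otimes\mathcal O_2\to A\otimes\mathcal O_2$ and make the \emph{selection} of one effective, by exhibiting the isomorphisms onto $C_A$ as the sections of a Borel relation with closed fibers on which $U(\mathcal O_2\otimes\mathcal O_2)$ acts with dense orbits, exactly as in Proposition~\ref{pr.selector}. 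Both arguments consume the same two inputs (Kirchberg's $A\otimes\mathcal O_2\cong\mathcal O_2$ and approximate unitary equivalence of unital endomorphisms of $A\otimes\mathcal O_2$ and of $\mathcal O_2\otimes\mathcal O_2$); yours is more modular and reuses the machinery of Lemmas~\ref{l.borelassign}--\ref{l.select}, while the paper's construction sidesteps any closed-section issues by building the limit map directly. Your key observation --- that carrying the inverse as part of the data converts surjectivity from a $G_\delta$ condition into a closed one --- is sound, and the density-of-orbits verification goes through, including the point (which you should state) that if isometric isomorphisms onto a fixed $C_A$ converge pointwise then so do their inverses.

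The one step that fails as written is the claim that requiring ``$\Psi$ is a code in the sense of \S\ref{S.Rhom}'' is a closed condition on $(\varphi,\Psi)$. Those codes are maps $\N\to\N^\N$; condition (1) of \S\ref{S.Rhom} --- that $\p_{\Psi(m)(k)}(\gamma')$ be Cauchy in $k$ for each $m$ --- is not a closed subset of $(\N^\N)^\N$, and conditions (2)--(5) refer to the limits $a_m$, which do not depend continuously on $\Psi$. Moreover, the proposed action ``replace $\Psi$ by a code for $\Ad_{w^{-1}}\circ\hat\Psi$'' is not a well-defined continuous action on index-valued codes. The repair is to use element-valued codes as in Section~\ref{s.exact}: take $\Psi\in(\mathcal O_2\otimes\mathcal O_2)^{\N}$ with $\Psi(k)$ the intended preimage of $c^A_k$, and define $P_A$ by the two closed conditions $\varphi(\eta_n)\in C_A$ for all $n$ and $\varphi(\Psi(k))=c^A_k$ for all $k$. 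Since $\mathcal O_2\otimes\mathcal O_2$ is simple and $\varphi$ is unital, $\varphi$ is automatically an isometric monomorphism, so these conditions already force $\varphi$ to map onto $C_A$ and $\Psi$ to determine its inverse there; the sections are closed (the paper itself records this for the analogous relation $P^A$), and the action $w\cdot(\varphi,\Psi)=(\varphi\circ\Ad_w,(\Ad_{w^{-1}}(\Psi(k)))_k)$ is continuous and preserves each section. With that substitution your argument is correct.
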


The proof uses an approximate intertwining argument\footnote{We refer the reader to \cite[2.3]{Ror:Classification} for a general discussion of approximate intertwining.  The argument is also known as Elliott's Intertwining Argument.} that we now describe. Let $A$ be a simple unital separable nuclear C$^*$-algebra, viewed as a unital subalgebra of $\mathcal{O}_2$.  Recall that for such $A$, the algebra $A \otimes \mathcal{O}_2$ (and so in particular $\mathcal{O}_2 \otimes \mathcal{O}_2 \cong \mathcal{O}_2$) has the property that every unital $*$-endomorphism is approximately inner (see \cite[6.3.8]{Ror:Classification}.)  Fix a $*$-isomorphism $\gamma: \mathcal{O}_2 \otimes \mathcal{O}_2 \to \mathcal{O}_2$ and a summable sequence $(\epsilon_n)$ of strictly positive tolerances.  We will apply Elliott's Intertwining Argument to the {\it a priori} non-commuting diagram
\[
\xymatrix{
{A \otimes \mathcal{O}_2}\ar[r]^{\mathbf{id}}\ar[d]^{\iota} & 
{A \otimes \mathcal{O}_2}\ar[r]^{\mathbf{id}}\ar[d]^{\iota} &
{A \otimes \mathcal{O}_2}\ar[r]^{\mathbf{id}}\ar[d]^{\iota} & \cdots \\
{\mathcal{O}_2 \otimes \mathcal{O}_2}\ar[r]^{\mathbf{id}}\ar[ur]^{\eta} &
{\mathcal{O}_2 \otimes \mathcal{O}_2}\ar[r]^{\mathbf{id}}\ar[ur]^{\eta} &
{\mathcal{O}_2 \otimes \mathcal{O}_2}\ar[r]^{\mathbf{id}}\ar[ur]^{\eta} & \cdots
}
\]
where $\iota$ is the tensor product of the inclusion $A \hookrightarrow \mathcal{O}_2$ with the identity map on $\mathcal{O}_2$ and $\eta$ is given by $a \mapsto 1_A \otimes \gamma(a)$.  Let us describe the procedure step-by-step, so that we may refer back to this description when arguing that the intertwining can be carried out effectively.  \vspace{2mm}

Fix a dense sequence $(x_n^A)$ in $A \otimes \mathcal{O}_2$ and a dense  sequence $(y_n)$ in $\mathcal{O}_2 \otimes \mathcal{O}_2$. Fix also a dense sequence of unitaries $u_n^A\in A\otimes\mathcal O_2$ and a dense sequence of unitaries $v_n\in\mathcal O_2\otimes\mathcal O_2$. We assume that $u_1^A=1_{A\otimes\mathcal O_2}$ and $v_1=1_{\mathcal O_2\otimes\mathcal O_2}$. We define by recursion a sequence of finite sets $F_k^A\subseteq A$ and $G_k^A\subseteq\mathcal O_2\otimes\mathcal O_2$, sequences $(n_k^A)_{k\in\N}$ and $(m_k^A)_{k\in\N}$ of natural numbers, and homomorphisms $\iota_k:A\otimes \mathcal O_2\to\mathcal O_2\otimes\mathcal O_2$, $\eta_k:\mathcal O_2\otimes\mathcal O_2\to A\otimes\mathcal O_2$ subject to the following conditions: $n_1^A=1$, $m_1^A=1$, $F_1^A=G_1^A=\emptyset$, $\iota_1^A=\iota$, $\eta_1^A=\eta$, and for $k>1$ we require that

\begin{enumerate}
\item $F_k^A=\{x_{k-1}^A\}\cup F_{k-1}^A\cup \eta_{k-1}(G_{k-1}^A)$.
\item $G_k^A=\{y_{k-1}\}\cup G_{k-1}^A\cup\iota_{k-1}(F_k^A)$.
\item $n_k^A> n_{k-1}^A$ is least such that if we let $\eta_k^A=\Ad(u^A_{n_k^A})\circ\eta$ then the diagram
$$
\xymatrix{
{A \otimes \mathcal{O}_2}\ar[r]^{\mathbf{id}}\ar[d]^{\iota_{k-1}^A} & 
{A \otimes \mathcal{O}_2}\\
{\mathcal{O}_2 \otimes \mathcal{O}_2}\ar[ur]^{\eta_k^A} &\\
}
$$
commutes up to $\epsilon_k$ on $F_k^A$.  This is possible because any two endomorphisms of $A \otimes \mathcal{O}_2$ are approximately unitarily equivalent and the sequence $(u^A_n)$ is dense in the unitaries of $A \otimes \mathcal{O}_2$.
\item $m_k^A>m_{k-1}^A$ is least such that if we let $\iota_k^A=\Ad(v_{m_k})\circ\iota$ then the diagram
$$
\xymatrix{
 & 
{A \otimes \mathcal{O}_2}\ar[d]^{\iota_k^A}\\
{\mathcal{O}_2 \otimes \mathcal{O}_2}\ar[r]^{\mathbf{id}}\ar[ur]^{\eta_k^A} &
{\mathcal{O}_2 \otimes \mathcal{O}_2}
}
$$
commutes up to $\epsilon_k$ on $G_k^A$.  This is possible because any two endomorphisms of $\mathcal{O}_2 \otimes \mathcal{O}_2$ are approximately unitarily equivalent and the sequence $(v_n)$ is dense in the unitaries of $\mathcal{O}_2 \otimes \mathcal{O}_2$.

\end{enumerate}

With these definitions the diagram
$$
\xymatrix{
{A \otimes \mathcal{O}_2}\ar[r]^{\mathbf{id}}\ar[d]^{\iota_1^A} & 
{A \otimes \mathcal{O}_2}\ar[r]^{\mathbf{id}}\ar[d]^{\iota_2^A} &
{A \otimes \mathcal{O}_2}\ar[r]^{\mathbf{id}}\ar[d]^{\iota_3^A} & \cdots \\
{\mathcal{O}_2 \otimes \mathcal{O}_2}\ar[r]^{\mathbf{id}}\ar[ur]^{\eta_2^A} &
{\mathcal{O}_2 \otimes \mathcal{O}_2}\ar[r]^{\mathbf{id}}\ar[ur]^{\eta_3^A} &
{\mathcal{O}_2 \otimes \mathcal{O}_2}\ar[r]^{\mathbf{id}}\ar[ur]^{\eta_4^A} & \cdots
}
$$
is an approximate intertwining (in the sense of \cite[2.3.1]{Ror:Classification}), and so
$$
\eta_\infty^A:\mathcal O_2\otimes\mathcal O_2\to A\otimes\mathcal O_2:\eta^A_\infty(b)=\lim_{k\to\infty}\eta_k(b)
$$
defines an isomorphism.

\begin{proof}[Proof of Theorem \ref{t.AtensorO2}]
Fix a dense sequence $z_n\in\mathcal O_2$. Also, let $y_n, v_n\in\mathcal O_2\otimes\mathcal O_2$ be as above. By the Kuratowski-Ryll-Nardzewski Selection Theorem we can find Borel maps $f_n:\SAu(\mathcal O_2)\to\mathcal O_2$ such that $(f_n(A))_{n\in\N}$ is a dense sequence in $A$. Let $\pi:\mathbb{N} \to \mathbb{N}^2$ be a bijection with $\pi(n) = (\pi_1(n),\pi_2(n))$.  Associating the $\mathbb{Q} + i \mathbb{Q}$ span of $\{ f_{\pi_1(n)}(A) \otimes z_{\pi_2(n)} \ | \ n \in \mathbb{N} \}$ to $A$ is clearly Borel, and this span is dense;  let us denote it by $x_n^A$.   From (the proof of) Lemma \ref{L.E.1}.(\ref{L.E.1.4}), we obtain a sequence of Borel maps $\Un_k:\SAu(\mathcal O_2)\to \mathcal O_2\otimes\mathcal O_2$ such that $\Un_k(A)$ is dense in the set of unitaries in $A\otimes \mathcal O_2$. We let $u_k^A=\Un_k(A)$.

With these definitions there are unique Borel maps $A\mapsto F_k^A$, $A\mapsto G_k^A$, $A\mapsto n_k^A$ and $A\mapsto m_k^A$ satisfying (1)--(4) above; in particular, if these maps have been defined for $k=l-1$, and $F_l^A$ is defined, then $n_l^A$ is defined as the least natural number $n$ greater than $n_k^A$ such that 
$$
(\forall a\in F_k^A) \|\Ad(u^A_{n} u^A_{n_{l-1}} \cdots u^A_{n_1})\circ\eta\circ \Ad(v_{m_{l-1}}\cdots v_{m_1})\circ\iota(a)-a\|_{\mathcal O_2}<\varepsilon_k.
$$
Thus the graph of $A\mapsto n_l^A$ is Borel. Similarly, $A\mapsto m_k^A$ is seen to be Borel for all $k\in\N$. But now we also have that the map $F:\SAu(\mathcal O_2)\to\mathrm{End}(\mathcal O_2\otimes\mathcal O_2):A\mapsto\eta_\infty^A$ is Borel, since
$$
F(A)=\eta_\infty^A\iff (\forall l) F(A)(y_l)=\lim_{k\to\infty} \Ad(u_{n_k}u_{n_{k-1}}\cdots u_1)\circ\eta(y_l)
$$
provides a Borel definition of the graph of $F$.
\end{proof}

\begin{proof}[Proof of Theorem \ref{t.belowgrp}]
The Polish group $\Aut(\mathcal O_2)$ acts naturally in a Borel way on $\SAu(\mathcal O_2)$ by $\sigma\cdot A=\sigma(A)$. Let $E$ be the corresponding orbit equivalence relation. We claim that isomorphism in $\SAuns(\mathcal O_2)$ is Borel reducible to $E$.

By the previous Theorem there is a Borel map $g:\SAuns(\mathcal O_2)\to\SAuns(\mathcal O_2)$ with the following properties:
\begin{itemize}
\item $g(A)\cong A$.
\item For all $A\in\SAuns(\mathcal O_2)$ there is an isomorphism $G_A:A\otimes\mathcal O_2\to\mathcal O_2$ under which\\ $G_A(A\otimes 1_{\mathcal O_2})=g(A)$. 
\end{itemize}
In other words, there is an effective unital embedding of nuclear unital simple separable C$^*$-algebras into $\mathcal{O}_2$ with the property that the relative commutant of the image of any such algebra in $\mathcal{O}_2$ is in fact isomorphic to $\mathcal{O}_2$.  We claim that $g$ is a Borel reduction of $\cong^{\SAuns(\mathcal O_2)}$ to $E$.

Fix $A,B\in\SAuns(\mathcal O_2)$. Clearly if $g(A) E g(B)$ then $A\cong B$. On the other hand, if $A\cong B$ then there is an isomorphism $\varphi:A\otimes \mathcal O_2\to B\otimes\mathcal O_2$ which maps $A\otimes 1_{\mathcal O_2}$ to $B\otimes 1_{\mathcal O_2}$. Thus
$$
\sigma=G_B\circ\varphi\circ G_A^{-1}\in\Aut(\mathcal O_2)
$$
satisfies $\sigma\cdot A=B$.
\end{proof}

Since by Corollary \ref{c.reduction} it holds that the homeomorphism relation for compact subsets of $[0,1]^\N$ is Borel reducible to isomorphism of nuclear simple unital AI algebras, we recover the following unpublished result of Kechris and Solecki:

\begin{theorem}[Kechris-Solecki]\label{t.kechrissolecki}
The homeomorphism relation for compact subsets of $[0,1]^\N$ is below a group action.
\end{theorem}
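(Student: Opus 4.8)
The plan is to obtain the desired reduction as a composition of two reductions already established in the paper, so that the only genuine work lies in checking that the intermediate class of C*-algebras falls within the scope of Theorem \ref{t.belowgrp}.

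First I would invoke Corollary \ref{c.reduction}(2), which furnishes a Borel reduction $r_1$ of the homeomorphism relation on compact subsets of $[0,1]^\N$ to the isomorphism relation on simple unital AI algebras. Concretely, $r_1$ is built by composing the map of Lemma \ref{L.Choq.4} (sending a compact set $K$ to a parameter for the Bauer simplex $P(K)$) with the map $\zeta$ of Theorem \ref{t.reduction}; it takes values in $\Gamma$, each value being a code for a simple unital AI algebra.

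The key observation is then that every simple unital AI algebra is nuclear: it is an inductive limit of building blocks $F_i \otimes C([0,1])$, each of which is nuclear (being a tensor product of the nuclear algebras $F_i$ and $C([0,1])$), and the class of nuclear C*-algebras is closed under inductive limits. Hence the range of $r_1$ consists of codes for nuclear simple unital separable C*-algebras, and Theorem \ref{t.belowgrp} applies to it. Using the Borel equivalence of $\Gamma$ (restricted to exact algebras) with $\SAu(\mathcal O_2)$ established in Section \ref{s.exact}---and recalling that nuclear algebras are exact, so this applies---I would view $r_1$ as taking values in $\SAuns(\mathcal O_2)$.

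Finally, Theorem \ref{t.belowgrp} supplies a Borel reduction $r_2$ of isomorphism on $\SAuns(\mathcal O_2)$ to the orbit equivalence relation $E$ induced by the Borel action of the Polish group $\Aut(\mathcal O_2)$ on $\SAu(\mathcal O_2)$. The composition $r_2 \circ r_1$ is then a Borel reduction of homeomorphism of compact subsets of $[0,1]^\N$ to $E$, witnessing that this relation is below a group action. Since each individual step is already proved, no serious obstacle remains; the only points requiring any care are the nuclearity of AI algebras and the Borel translation between the parameterizations $\Gamma$ and $\SAu(\mathcal O_2)$, and both are immediate from results proved earlier.
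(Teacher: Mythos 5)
Your proposal is correct and follows exactly the route the paper takes: the paper derives Theorem \ref{t.kechrissolecki} in one line by composing Corollary \ref{c.reduction}(2) with Theorem \ref{t.belowgrp}. Your additional checks (nuclearity of simple unital AI algebras and the passage between the parameterizations $\Gamma$ and $\SAuns(\mathcal O_2)$) are exactly the implicit points the paper relies on, so there is nothing to add.
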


Note that the set $A=\{\gamma\in \Gamma: C^*(\gamma)$ is abelian$\}$ is
Borel since  $A$ is clearly closed in the weak operator topology in $\Gamma$. 
As a subspace of $\Gamma$ it therefore provides a good standard Borel parameterization
for abelian C*-algebras, used in the following. 

\begin{prop}\label{P.abelian} 
The isomorphism relation for unital 
abelian separable C*-algebras is Borel reducible to isomorphism of AI algebras, and therefore to an orbit equivalence relation induced by a Polish group action. 
\end{prop}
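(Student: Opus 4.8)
The plan is to realize the desired reduction as a composition of three Borel maps already constructed in the paper, exactly as in the Corollary following Theorem~\ref{t.reduction}, and then to invoke Theorem~\ref{t.belowgrp} for the final clause. The guiding principle is Gelfand--Naimark duality: a unital separable abelian C*-algebra $A$ is isomorphic to $C(X)$ for the compact metric space $X=\bbP(A)$ of its pure states, and two such algebras are isomorphic precisely when their spectra are homeomorphic. Thus the first task is to reduce isomorphism of unital abelian C*-algebras to homeomorphism of compact subsets of the Hilbert cube, and from there to climb the chain of reductions already available for Choquet simplexes and AI algebras.

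First I would apply Lemma~\ref{L.SPT} to obtain the Borel map $\bbP\colon\Gamma\to\bbK$, restricted to the Borel set of $\gamma$ coding unital abelian algebras. On this set the pure states coincide with the characters and already form a weak*-compact set, so $\bbP(\gamma)$ is exactly the spectrum and $C^*(\gamma)\cong C(\bbP(\gamma))$; hence $\gamma\simeq^{\Gamma}\gamma'$ if and only if $\bbP(\gamma)$ and $\bbP(\gamma')$ are homeomorphic. Next I would compose with the Borel map $\Psi\colon K_c(\DeltaN)\to\LLLambda$ of Lemma~\ref{L.Choq.4}, whose value $\Psi(K)$ parameterizes the Bauer simplex $P(K)$. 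Since Bauer simplexes are affinely homeomorphic exactly when their extreme boundaries are homeomorphic (as recorded in \S\ref{S.choquet}), this step both preserves and reflects the relevant equivalence. Finally, identifying $\psi\in\LLLambda$ with $(\id,\psi)\in\Lambda$ via the canonical continuous homomorphism of \S\ref{S.LLLambda} (cf.\ Proposition~\ref{P.Choquet}), I may feed the output into the Borel map $\zeta\colon\Lambda\to\Gamma$ of Theorem~\ref{t.reduction}.

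The composite $\gamma\mapsto\zeta(\id,\Psi(\bbP(\gamma)))$ is then Borel and lands in the simple unital AI algebras. By Theorem~\ref{t.reduction}(3) its value has tracial simplex whose $\Aff$ is affinely homeomorphic to $P(\bbP(\gamma))$, while its ordered $K_0$ is always $(\Q,\Q^+,1)$ with trivial $K_1$; since $(\Q,\Q^+,1)$ has a unique state, Elliott's classification (as used in Corollary~\ref{c.reduction}) shows that two such algebras are isomorphic if and only if their trace simplexes are affinely homeomorphic. Chaining the three equivalences yields $\gamma\simeq^{\Gamma}\gamma'$ iff the AI algebras coded by the composite are isomorphic, which is the required Borel reduction. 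For the concluding clause, observe that the target algebras are nuclear, simple, unital, and separable, so Theorem~\ref{t.belowgrp} furnishes a further Borel reduction of their isomorphism relation to an orbit equivalence relation of the Polish group $\Aut(\mathcal O_2)$, completing the argument.

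The main obstacle is not any single construction, since each map is already in hand, but verifying that every link is a genuine two-sided reduction: one must check the Gelfand correspondence between isomorphism and homeomorphism, the Bauer-simplex correspondence between affine homeomorphism and homeomorphism of extreme boundaries, and the Elliott classification step, and must confirm that the intermediate parameter spaces match up — in particular that the domain of $\zeta$ is reached through the embedding $\LLLambda\hookrightarrow\Lambda$. Care is also needed in restricting $\bbP$ to the Borel set of $\gamma$ coding unital abelian algebras, where the closure of the pure states really is the full spectrum.
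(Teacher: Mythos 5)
Your proposal is correct and follows essentially the same route as the paper: Gelfand--Naimark duality, the Borel pure-state-space map of Lemma~\ref{L.SPT}, the Bauer-simplex map of Lemma~\ref{L.Choq.4}, and the map $\zeta$ of Theorem~\ref{t.reduction} (i.e., Corollary~\ref{c.reduction}), with Theorem~\ref{t.belowgrp} supplying the final clause. The only difference is that you spell out the verifications (compactness of the character space, the Bauer-simplex and Elliott classification equivalences) that the paper leaves implicit.
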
 

\begin{proof} 
For $\gamma\in A$ we have that 
 $C^*(\gamma)\cong C(X)$ where $X$ is the pure state space of $C^*(\gamma)$. 
The result now follows by Lemma~\ref{L.SPT} and Corollary \ref{c.reduction}. An alternative proof of the last claim appeals to Theorem~\ref{t.kechrissolecki} instead. 
\end{proof}

\section{Bi-embeddability of AF algebras}\label{S.biembed}

In this section we will show that the bi-embeddability relation of separable unital AF algebras is not Borel-reducible to a Polish group action (Corollary~\ref{C.Biembed}). 
More precisely, we prove that every $K_\sigma$-equivalence relation is Borel reducible to this analytic equivalence relation. (Recall that a subset of  a Polish space is $K_\sigma$ if it is a countable union of compact sets.) 
This Borel reduction is curious since bi-embeddability of separable unital UHF algebras is bi-reducible
with the isomorphism of separable unital UHF algebras, and therefore smooth.   
For $f$ and $g$ in the Baire space $\bbN^\bbN$ we define
\begin{align*}
f\leq^\infty g&  \text{ if and only if } (\exists m)(\forall i) f(i)\leq g(i)+m\\
f=^\infty g & \text{ if and only if } f\leq^\infty g \text{ and } g\leq^\infty f
\end{align*}
This equivalence relation,  also denoted $E_{K_\sigma}$, was introduced by Rosendal in  \cite{Ro:Cofinal}. Rosendal proved that $E_{K_\sigma}$ is \emph{complete} for $K_\sigma$ equivalence relation in the sense that (i) every $K_\sigma$ equivalence relation is Borel reducible to it and (ii) $E_{K_\sigma}$ is itself $K_\sigma$. By  a result of Kechris and Louveau (\cite{KeLou:Structure}), $E_{K_\sigma}$ is not Borel reducible to  any orbit equivalence relation of a Polish group action. In particular, $E_{K_\sigma}$, or any analytic equivalence relation that Borel-reduces $E_{K_\sigma}$, is not effectively classifiable by 
the Elliott invariant.

If $A$ and $B$ are C*-algebras then we denote by $A\hookrightarrow B$ the existence of a $*$-monomorphism of $A$ into $B$;   $A$ is therefore bi-embeddable with $B$ if and only if $A\hookrightarrow B$ and $B\hookrightarrow A$.


\begin{prop} There is a Borel-measurable map $ \bbN^\bbN\to\Gamma:f\mapsto C_f$
such that 
\begin{enumerate}
\item each $C_f$ is a unital AF algebra;
\item $C_f$ isomorphically embeds into $C_g$ if and only if $f\leq^\infty g$; 
\item $C_f$ is bi-embeddable with $C_g$ if and only if $f=^\infty g$.
\end{enumerate}
\end{prop}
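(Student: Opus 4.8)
The plan is to realise each $f$ by a unital AF algebra $C_f$ whose ideal structure rigidly records the index $i$ while its local matricial data records the value $f(i)$, arranged so that the only freedom available to a $*$-monomorphism $C_f\hookrightarrow C_g$ is a uniformly bounded increase of those local sizes. Fixing an enumeration $p_0<p_1<\cdots$ of the odd primes and writing $U_{p_i^\infty}$ for the UHF algebra of type $p_i^\infty$, I would start from the unitization of the $c_0$-direct sum $\bigoplus_i\big(M_{2^{f(i)}}(\mathbb C)\otimes U_{p_i^\infty}\big)$; here the $i$-th block is the UHF algebra of supernatural type $2^{f(i)}p_i^\infty$, whose finite $2$-part is a rigid isomorphism invariant recording $f(i)$, while the distinct odd primes force block preservation. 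As stated this bare construction only yields \emph{pointwise} domination, so the essential addition is a single shared scaling datum (an extra tensor or mapping-telescope coordinate) whose sole purpose is to let an embedding spend the \emph{same} matricial budget $M_{2^m}$ on every block simultaneously. I would present $C_f$ as an inductive limit of finite-dimensional algebras, so that $f\mapsto C_f$ factors through the coding $\Rdir$ of directed systems and the Borel limit map $\LIM$ of Proposition~\ref{P.Directed}, with the amplifications coded via Lemma~\ref{L.matrix}; Borel measurability of $f\mapsto C_f\in\Gamma$ follows.

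The heart of the matter is the embeddability lemma $C_f\hookrightarrow C_g\iff f\le^\infty g$. For $f\le^\infty g\Rightarrow C_f\hookrightarrow C_g$, I would pick $m$ with $f(i)\le g(i)+m$ for all $i$ and build the embedding block by block, sending the $i$-th block of $C_f$ into the $i$-th block of $C_g$ after the uniform amplification by $M_{2^m}$ supplied by the shared datum; since the cost $2^m$ is identical on every block this is a genuine bounded $*$-homomorphism of the $c_0$-sums, and it extends to the unitizations. For the converse I would argue by invariants: as the primes $p_i$ are pairwise distinct there is no nonzero $*$-homomorphism between the $i$-th and $j$-th blocks for $i\ne j$ (a nonzero homomorphism out of a UHF algebra is injective and would force $p_i^\infty\mid p_j^\infty$), so any monomorphism is block preserving; injectivity on the $i$-th block then yields an embedding $U_{2^{f(i)}p_i^\infty}\hookrightarrow U_{2^{g(i)}p_i^\infty}\otimes M_{2^{m_i}}$, i.e.\ $f(i)\le g(i)+m_i$, and finiteness of the shared budget bounds $\sup_i m_i<\infty$. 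Both directions may equivalently be routed through Elliott's classification of AF embeddings by positive, scaled homomorphisms of ordered $K_0$-groups (see \cite{blackadar}), reducing the lemma to an order-theoretic comparison of the dimension groups $(K_0(C_f),K_0^+(C_f),[1])$.

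I expect the main obstacle to be exactly the engineering of the shared scaling datum: it must provide a uniformly bounded matricial budget, enough to forgive a single additive constant on all coordinates at once (so as to obtain $\le^\infty$ rather than the naive pointwise $\le$), yet not so much as to forgive an unbounded or coordinate-dependent defect (which would collapse the relation to comparison of suprema, or trivialise it). Making this precise, and in particular ruling out that a monomorphism secretly exploits the UHF tails of the blocks to exceed the intended budget, is where the real work lies; the prime-tagging, the rigidity of the finite $2$-part of a supernatural number, and the available trace and $K_0$ obstructions are the tools I would use to control it. Once the embeddability lemma is established, the remaining clauses are immediate: each $C_f$ is unital AF by construction, $f\mapsto C_f$ is Borel by the direct-limit machinery of \S\ref{S.Basicdef}, and $C_f$ is bi-embeddable with $C_g$ iff $f\le^\infty g$ and $g\le^\infty f$, that is, iff $f=^\infty g$.
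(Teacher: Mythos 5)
There is a genuine gap, and you have located it yourself: the ``shared scaling datum'' that is supposed to upgrade pointwise domination $f(i)\le g(i)$ to domination up to a \emph{uniform} additive constant is never constructed. Everything in clauses (2) and (3) hinges on it, so the argument as written does not prove the proposition. Worse, the natural candidates for such a datum within your block-diagonal design do not work. If you attach an extra tensor factor or an extra direct-sum coordinate $m$ to the decomposition $\bigoplus_i\bigl(M_{2^{f(i)}}(\bbC)\otimes U_{p_i^\infty}\bigr)$, the resulting algebra is a direct sum of simple blocks indexed by pairs $(i,m)$; a $*$-monomorphism restricted to the $(i,m)$-block is injective into \emph{some} block of the target with the same odd prime, yielding $f(i)+m\le g(i)+m'$ for some $m'$ depending on both $i$ and $m$ --- and nothing in the ideal structure, the trace simplex, or $K_0$ forces $\sup_i(m'-m)<\infty$. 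The obstruction is structural: because your coordinate-recording algebra is a direct sum over $i$, it is not simple, so an embedding may treat each block with an independent budget.

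The paper's construction resolves exactly this by inverting your two roles. The algebra recording all coordinates at once is a \emph{single simple} UHF algebra $A_f=\bigotimes_{i}M_{p_i^{f(i)}}(\bbC)$, and the direct sum ranges over the budget: $C_f$ is the unitization of $A_f\otimes\bigoplus_{n}B_n$ with $B_n=\bigotimes_i M_{p_i^n}(\bbC)$. Given $C_f\hookrightarrow C_g$, one gets $A_f\hookrightarrow\bigoplus_n A_g\otimes B_n$, and simplicity of $A_f$ forces its image into a \emph{single} summand $A_g\otimes B_n$; comparison of supernatural numbers then gives $f(i)\le g(i)+n$ for all $i$ with that one $n$. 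This is the mechanism your proposal is missing, and I do not see how to supply an equivalent one while keeping the sum indexed by $i$. (Your Borel-measurability plan via $\Rdir$ and the $\LIM$ map would be fine once an actual algebra is specified; the paper instead realizes $A_f$ concretely inside the GNS representation of the maximal UHF algebra, but that difference is cosmetic.)
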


  
\begin{proof} 
We first describe the construction and then verify it is Borel. Let $p_i$, $i\in \bbN$, be the increasing enumeration of all primes. For $f\in \bbN^\bbN$ and $n\in \bbN$ define UHF algebras 
\begin{align*} 
  A_f& = \bigotimes_{i=1}^\infty M_{p_i^{f(i)}}(\bbC)\qquad\text{and}\qquad
  B_n  =\bigotimes_{i=1}^\infty M_{p_i^n}(\bbC). 
\end{align*}
Hence $B_n$ is isomorphic to $A_f$ if $f(i)=n$ for all $i$. For $f$ and $g$ we have that $A_f\hookrightarrow A_g$ if and only if $f(i)\leq g(i)$ for all $i$. Also, $A_f\hookrightarrow A_g\otimes B_n$ if and only if $f(i)\leq g(i)+n$ for all $i$. Therefore, $f\leq^\infty g$ if and only if $A_f\hookrightarrow A_g\otimes B_n$ for a large enough $n$. Let $C_f$ be the unitization of 
$
  A_f\otimes \bigoplus_{n=1}^\infty B_n. 
$
We claim that $f\leq^\infty g$ if and only if $C_f\hookrightarrow C_g$.

First assume   $f\leq^\infty g$ and let $n$ be such that $f(i)\leq g(i)+n$ for all $i$. Then by the above $A_f\otimes B_m \hookrightarrow A_g\otimes B_{m+n}$ for all $m$, and therefore $C_f\hookrightarrow C_g$. 

Now assume $C_f\hookrightarrow C_g$. Then in particular $A_f\hookrightarrow \bigoplus_{n=1}^\infty A_g\otimes B_n$. Since $A_f$ is simple, we have $A_f\hookrightarrow A_g\otimes B_n$ for some $n$ and therefore $f\leq^\infty g$. We have therefore proved that the map $f\mapsto C_f$ satisfies (2). Clause (3) follows immediately.

It remains to find a Borel measurable map $\Phi\colon \bbN^\bbN\to \Gamma$ such that $C^*(\Phi(f))$ is isomorphic to $C_f$ for all $f$. Since $\otimes$ (for nuclear C*-algebras) and $\bigoplus$ are Borel (by Lemma~\ref{L.B.3.0} for the former;  the latter is trivial), it suffices to show that there is a Borel map $\Psi\colon \bbN^\bbN\to \Gamma$ such that $C^*(\Psi(f))$ is isomorphic to $A_f$ for all $f$. 

Let $D$ denote the maximal separable UHF algebra, $\bigotimes_{i=1}^\infty \bigotimes_{n=1}^\infty M_{p_i}(\bbC)$. Let $\phi$ be its unique trace and let $\pi_\phi\colon D\to \cB(H_\phi)$
be the GNS representation corresponding to $\phi$. Then $H_\phi$ is a
tensor product of finite-dimensional Hilbert spaces $H_{n,i}$ 
such that $\dim(H_{n,i})=i^2$. Also, for each pair $n,i$ there is an isomorphic copy $D_{n,i}$ of $M_i(\bbC)$ acting on $H_{n,i}$ and a unit vector $\xi_{n,i}$ such that $\omega_{\xi_{n,i}}$ agrees with the normalized trace on $D_{n,i}$. 
The algebra generated by $D_{n,i}$, for $n,i\in\N$, is isomorphic to $D$. 

Now identify $H_\phi$ with $H$ as used to define $\Gamma$. Each $D_{n,i}$ is singly generated, so we can fix a generator $\gamma_{n,i}$. Fix a bijection $\chi$ between $\bbN$ and $\bbN^2$, and write $\chi(n)=(\chi_0(n), \chi_1(n))$. For $f\in \bbN^\bbN$ let $\Psi(f)=\gamma$ be defined by $\gamma_n=0$ if $f(\chi_1(n))<\chi_0(n)$ and $\gamma_n=\gamma_{\chi_0(n),\chi_1(n)}$ if $f(\chi_1(n))\geq \chi_0(n)$. Then $C^*(\gamma)$ is isomorphic to the tensor product of the $D_{n,i}$ for $n\leq f(i)$, which is in turn isomorphic to $A_f$. Moreover, the map $f\mapsto \Psi(f)$ is continuous when $\Gamma$ is considered with the
product topology, because finite initial segments of $\Psi(f)$ are determined by finite initial segments of $f$.
\end{proof} 


\begin{corollary} \label{C.Biembed} 
$E_{K_\sigma}$ is Borel reducible to the bi-embeddability relation $E$ on separable AF C*-algebras. Therefore $E$ is not Borel reducible to a Polish group action. \qed
\end{corollary}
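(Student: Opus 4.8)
The plan is to read the Corollary off the Proposition just established, together with the two structural facts about $E_{K_\sigma}$ recorded earlier in this section. Since the relation $=^\infty$ on $\bbN^\bbN$ is \emph{by definition} the equivalence relation $E_{K_\sigma}$, clause (3) of the Proposition says precisely that the Borel map $f\mapsto C_f$ satisfies
\[
f \mathrel{E_{K_\sigma}} g \iff C_f \text{ is bi-embeddable with } C_g .
\]
As each $C_f$ is a unital (hence separable) AF algebra, this exhibits $f\mapsto C_f$ as a Borel reduction of $E_{K_\sigma}$ to the bi-embeddability relation $E$ on separable AF C*-algebras, which is the first assertion $E_{K_\sigma}\leq_B E$.

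For the second assertion I would argue by contradiction, using transitivity of $\leq_B$. Recall from the discussion preceding the Proposition that, by the theorem of Kechris and Louveau \cite{KeLou:Structure}, $E_{K_\sigma}$ is not Borel reducible to the orbit equivalence relation of any Polish group action. Now suppose toward a contradiction that $E$ were Borel reducible to such an orbit equivalence relation $F$. Composing the reduction $f\mapsto C_f$ with the hypothetical Borel reduction of $E$ to $F$ would produce a Borel reduction of $E_{K_\sigma}$ to $F$, contradicting Kechris--Louveau. Hence $E$ is not Borel reducible to any Polish group action.

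All the genuine content is carried by the Proposition; the Corollary itself requires no new work, only the observation that $=^\infty$ and $E_{K_\sigma}$ name the same relation and a one-line transitivity argument. Accordingly there is no real obstacle at this stage. The single point I would double-check is that the image of $f\mapsto C_f$ lies inside the class over which $E$ is defined, namely the separable AF algebras; this is immediate since the $C_f$ are unital AF algebras, so bi-embeddability among the images is governed exactly by clause (3), and the reduction into the (larger) class of all separable AF algebras is therefore valid.
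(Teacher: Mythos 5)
Your proposal is correct and matches the paper's intended argument exactly: the paper marks the corollary with a \qed precisely because it follows immediately from clause (3) of the preceding proposition (since $=^\infty$ is $E_{K_\sigma}$) together with the Kechris--Louveau theorem and transitivity of $\leq_B$. No further comment is needed.
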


\section{Concluding remarks and open problems}\label{S.problems}

In this section we discuss several open problems and possible directions for further investigations related to the theme of this paper.  The first is related to Section \ref{S.biembed}.

\begin{problem}\label{nucbiembed}
Is the bi-embeddability relation for nuclear simple separable C$^*$-algebras a complete analytic equivalence relation? What about bi-embeddability of AF algebras?
\end{problem}
\noindent
We remark that the bi-embeddability, and even isomorphism, of separable Banach spaces is known to be complete for analytic equivalence relation (\cite{FeLouRo}). Moreover, bi-embeddability of countable graphs is already complete for analytic equivalence relations by \cite{LouRo}


\begin{quest} \label{Q.action} 
Is isomorphism of separable (simple) C*-algebras implemented 
by a Polish group action? 
\end{quest}

George Elliott observed that 
the isomorphism 
of nuclear simple separable C*-algebras is Borel reducible to an orbit equivalence relation 
induced by a Borel action of the automorphism group of $\cO_2\otimes \cK$. 
This is proved by an extension of the proof of Theorem~\ref{t.belowgrp} together 
with Borel version of 
Kirchberg's result that $A\otimes \cO_2\otimes\cK$ is isomorphic to $\cO_2\otimes \cK$
for every nuclear simple separable C*-algebra $A$. The  following is an extension of Question~\ref{Q.action}. 

\begin{problem}  \label{P.action} 
What is the Borel cardinality of the isomorphism relation of 
larger classes of separable C*-algebras (simple or not), such as:
\begin{enumerate}
\item[(i)] nuclear C*-algebras;
\item[(ii)] exact C*-algebras;
\item[(iii)] arbitrary C*-algebras? 
\end{enumerate}
Do these problems have strictly increasing Borel cardinality? 
Are all of them Borel reducible to the orbit equivalence relation of a Polish group action on a standard Borel space? 
\end{problem}

\noindent
We can ask still more of the classes in Problem \ref{P.action}.
On the space $\Delta^{\bbN}$ (recall that $\Delta$ is the closed unit disk in $\mathbb{C}$) define the relation $E_1$ by letting $x\, E_1\, y$ if 
$x(n)=y(n)$ for all but finitely many $n$. 
In \cite{KeLou:Structure} it was proved that $E_1$ is not Borel-reducible 
to any orbit equivalence relation of a Polish group action, and therefore $E_1\leq_B E$ 
implies $E$ is not  Borel-reducible to 
an orbit equivalence relation of a Polish group action. 
Kechris and Louveau
have  even conjectured that for Borel equivalence relations 
reducing $E_1$ is equivalent to not being induced by a Polish group action. 
While Corollary~\ref{c.reduction} implies that 
 the relations considered in Problem~\ref{P.action} are not Borel, 
it is natural to expect that  they either reduce~$E_1$ or can be reduced to
an orbit equivalence relation of a Polish group action.

We defined several Borel parameterizations (see Definition~\ref{d.parameterization}) 
of separable C*-algebras that were subsequently shown to be equivalent. 
The phenomenon that all natural  Borel parameterizations of a given classification problem
seem to wind up being equivalent has been observed by other authors. 
One may ask the following general question.


\begin{problem} Assume $\Gamma_1$ and $\Gamma_2$ are good standard 
Borel parameterizations that model isomorphism of structures in the same category $\cC$. 

Find optimal assumptions that guarantee the existence of a Borel-isomorphism $\Phi\colon \Gamma_1\to \Gamma_2$ that preserves the isomorphism in class $\cC$ in the sense that 
\[
A\cong B\text{ if and only if } \Phi(A)\cong \Phi(B). 
\]
\end{problem} 
\noindent
A theorem of this kind could be regarded as an analogue of an automatic continuity theorem.


We have addressed many basic C*-algebra constructions here and proved that they are Borel.  We have further proved that various natural subclasses of separable C*-algebras are Borel.  There is, however, much more to consider.

\begin{problem}\label{classes} Determine whether the following C*-algebra constructions and/or subclasses are Borel:
\begin{enumerate}
\item[(i)] the maximum tensor product, and tensor products more generally;
\item[(ii)] crossed products, full and reduced;
\item[(iii)] groupoid C*-algebras;
\item[(iv)] $\mathcal{Z}$-stable C*-algebras;
\item[(v)] C*-algebras of finite or locally finite nuclear dimension;
\item[(vi)] approximately subhomogeneous (ASH) algebras;
\item[(vii)] the Thomsen semigroup of a C$^*$-algebra.
\end{enumerate}
\end{problem}

\noindent
Items (iv)--(vi) above are of particular interest to us as they are connected to Elliott's classification program (see \cite{et}).  
Items (iv) and (v) are connected to the radius of comparison by the following conjecture of Winter and the second author.
\begin{conj}  Let $A$ be a simple separable unital nuclear C*-algebra.  The following are equivalent:
\begin{enumerate}
\item[(i)] $A$ has finite nuclear dimension;
\item[(ii)] $A$ is $\mathcal{Z}$-stable;
\item[(iii)] $A$ has radius of comparison zero.
\end{enumerate}
\end{conj}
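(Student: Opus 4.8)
The statement to be established is the Toms--Winter conjecture, which lies well outside the descriptive set-theoretic methods developed above; what follows is therefore a plan tracing the structural route the problem demands rather than a self-contained argument. The natural strategy is to close a cycle of implications among the three regularity properties, two of which are by now comparatively soft. For $(ii)\Rightarrow(iii)$ I would invoke R\o rdam's theorem that $\mathcal Z$-stability forces the Cuntz semigroup $\Cu(A)$ to be almost unperforated, which for a simple algebra is exactly strict comparison of positive elements by traces, i.e. radius of comparison zero. For $(i)\Rightarrow(ii)$ the plan is to run Winter's absorption argument: finite nuclear dimension produces, via order-zero completely positive approximations, a unital embedding of a dimension-drop algebra into the central sequence algebra $F(A)=(A'\cap A^\omega)/\mathrm{Ann}(A)$ together with the requisite path of unitaries, whence $A\cong A\otimes\mathcal Z$ because $\mathcal Z$ is strongly self-absorbing.

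The two remaining implications are the heart of the matter. For the converse $(iii)\Rightarrow(ii)$ I would follow the Matui--Sato template and its successors: reduce $\mathcal Z$-stability to the existence of a unital homomorphism from a prime dimension-drop algebra into $F(A)$, and obtain this from (a) comparison and excision estimates in the tracial (von Neumann) completion and (b) property (SI), a smallness principle that lets one manufacture the two required order-zero maps with almost-orthogonal ranges out of strict comparison. The key intermediate object is the uniform trace-kernel quotient; one must show that strict comparison transfers from $A$ to this quotient and that the extreme tracial boundary is sufficiently regular. For $(ii)\Rightarrow(i)$ I would aim to bound the nuclear dimension directly by $1$, building on the classification-free technology of Castillejos--Evington--Tikuisis--White--Winter: use $\mathcal Z$-stability to produce complemented, approximately order-zero finite-dimensional approximations of the identity, colored by a fixed finite palette, and then glue the colors using the trace-space structure.

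In both hard directions the single pervasive obstacle is control of the central sequence algebra $F(A)$ uniformly over the whole tracial state simplex $T(A)$. Pointwise estimates at each trace are available from von Neumann algebra theory --- the tracial completion is a McDuff II$_1$ factor --- but the conjecture requires that these estimates be made uniform in the trace, which is precisely the content of \emph{uniform property $\Gamma$}. Establishing uniform property $\Gamma$ from strict comparison, and leveraging it to pass from pointwise to global approximations, is the decisive step; it is here that the compactness and metrizability of $T(A)$, and the continuity of the dimension functions arising from $\Cu(A)$, must be exploited. The remaining bookkeeping --- assembling the cycle, and checking that the dimension-drop embeddings respect units and traces --- is routine once this uniformity is in hand.
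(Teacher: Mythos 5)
The statement you are addressing is not proved in the paper at all: it is stated there as a \emph{conjecture} (of Winter and the second author), recorded in the concluding section among open problems, and the authors draw no consequences from it beyond remarking that the class of algebras satisfying (iii) is Borel. So there is no argument in the paper to compare yours against, and the relevant question is simply whether your proposal constitutes a proof. It does not, and you say as much yourself. Two of the four implications you describe ($(ii)\Rightarrow(iii)$ via R\o rdam's almost-unperforation theorem, and $(i)\Rightarrow(ii)$ via Winter's absorption argument) are indeed established theorems that could be cited. But the cycle does not close: the implication $(iii)\Rightarrow(ii)$ --- from radius of comparison zero (equivalently, strict comparison) back to $\mathcal{Z}$-stability --- is precisely the part of the conjecture that remains open in general. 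The Matui--Sato machinery you invoke, and its successors, deliver this implication only under additional hypotheses on the trace simplex (finitely many extreme traces, or a Bauer simplex with finite-dimensional boundary, or uniform property $\Gamma$). Your own text locates the obstruction correctly --- ``establishing uniform property $\Gamma$ from strict comparison\ldots is the decisive step'' --- and then does not carry out that step. Deferring it is not bookkeeping; it is the entire open content of the conjecture, and no argument for it is known.

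A secondary point: the implication $(ii)\Rightarrow(i)$ that you sketch via colored finite-dimensional approximations is also a deep theorem whose proof itself routes through uniform property $\Gamma$ and the structure of the trace-kernel quotient; gesturing at the palette-and-gluing strategy does not discharge it. In short, your submission is an accurate map of the research landscape around the Toms--Winter conjecture, but it proves nothing that was not already known, and the statement as a whole remains a conjecture --- exactly the status the paper assigns to it.
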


\noindent
We have shown here that simple unital nuclear separable C$^*$-algebras form a Borel set. 
We will show in a forthcoming article that separable C*-algebras with radius of comparison zero also form a Borel set.  Thus, those $A$ as in the conjecture which satisfy (iii) form a Borel set.  It would be interesting to see if the same is true if one asks instead for (i) or (ii).  
Clearly, the $\cZ$-stable algebras form an analytic set. 
As for item (vi) of Problem \ref{classes}, the question of whether every unital simple separable nuclear C$^*$-algebra with a trace is ASH has been open for some time.  One might try to attack this question by asking where these formally different classes of algebras sit in the Borel hierarchy.  

In \cite{Ell:Towards}, Elliott introduced an abstract approach to  functorial classification. 
A feature of his construction is that  morphisms between classifying invariants lift
to morphisms between the objects to be classified.
This  property is shared with the classification of C*-algebras, where  
morphisms between K-theoretic invariants lift to (outer, and typically not unique)
automorphisms of the original objects. 
It would be interesting to have a set-theoretic analysis of this phenomenon
parallel to the set-theoretic analysis of abstract classification problems
used in the present paper. 
 
\begin{problem} Is there a set-theoretic model for the functorial inverse in the sense of 
Elliott? More precisely, if the categories are modelled by Borel spaces and the 
functor that assigns invariants is Borel, is the inverse functor necessarily Borel? 
\end{problem}

Finally, the following question was posed by Greg Hjorth to the third author:

\begin{problem}
Is $\simeq^{{\bf Ell}}$ Borel reducible to $\simeq^{\Lambda}$? That is, does isomorphism of Elliott invariants Borel reduce to affine isomorphism of separable metrizable Choquet simplexes?
\end{problem}

We will show in a forthcoming article that at least the Elliott invariant is below a group action. 

 It would also be natural to try to obtain an answer to the following:

\begin{problem}\label{Pr.Choquet}
Is isomorphism of separable metrizable Choquet simplexes Borel reducible to homeomorphism of compact Polish spaces? I.e., is $\simeq^\Lambda$ Borel reducible to $\simeq^{\K}_{{\rm homeo}}$?
\end{problem}

In connection to this problem we should point out a misstatement in \cite{Hj:Borel}.
In \cite[p. 326]{Hj:Borel} it was stated that Kechris and Solecki have proved that 
the homeomorphism 
of compact Polish spaces is Borel bi-reducible to $E^{X_\infty}_{G_\infty}$. 
The latter is  
an  orbit equivalence relation of a Polish group action with the property that every 
other orbit equivalence relation of a Polish group action is Borel-reducible to it. 
If true, this would give a positive solution to   Problem~\ref{Pr.Choquet}. 
However, Kechris and Solecki have proved only one direction, referred to in Theorem~\ref{T.KS}. 
 The other direction is still open.

\bibliographystyle{amsplain}
\bibliography{aabib}

\end{document}